\newcommand\MGradingSet{S}
\newcommand\OurRing{\mathcal R}
\newcommand\PartInv{\mathcal Q}
\newcommand\orK{\vec{K}}
\newcommand\gr{\mathbf{gr}}
\newcommand\Partition{\Matching}
\newcommand\Matching{M}
\newcommand\DuAlg{{\mathcal A}'}
\tikzset{cdlabel/.style={above,sloped,
    execute at begin node=$\scriptstyle,execute at end node=$}}
\tikzset{algarrow/.style={->, thick}}
\tikzset{blgarrow/.style={->, thick}}
\tikzset{clgarrow/.style={->, thick}}
\tikzset{tensoralgarrow/.style={double, double equal sign distance, -implies}}
\tikzset{tensorblgarrow/.style={double, double equal sign distance, -implies}}
\tikzset{tensorclgarrow/.style={double, double equal sign distance, -implies}}
\tikzset{modarrow/.style={->, dashed}}
\tikzset{othmodarrow/.style={->, thick}}
\tikzset{Amodar/.style={->, dashed}}
\tikzset{Dmodar/.style={->, dashed}}
\newcommand\op{\mathrm{op}}
\newcommand\opp{\op}
\newcommand\TerMin{t\GenMin}
\newcommand\Min{\GenMin}
\newcommand\GenMin{\mho}
\newcommand\KHm{{\mathcal H}^-}
\newcommand\KCm{{\mathcal C}^-}
\newcommand\Diag{\mathcal D}
\newcommand\Cwz{\mathcal{C}}
\newcommand\Hwz{\mathcal{J}}
\def\endproof{\relax\ifmmode\expandafter\endproofmath\else
  \unskip\nobreak\hfil\penalty50\hskip.75em\hbox{}\nobreak\hfil\bull
  {\parfillskip=0pt \finalhyphendemerits=0 \bigbreak}\fi}
\def\endproofmath$${\eqno\bull$$\bigbreak}
\def\bull{\vbox{\hrule\hbox{\vrule\kern3pt\vbox{\kern6pt}\kern3pt\vrule}\hrule}}
\def\mathcenter#1{%
  \vcenter{\hbox{$#1$}}%
}
\newcommand\CanonDD{\mathcal K}
\newcommand\North{\mathbf N}
\newcommand\South{\mathbf S}
\newcommand\East{\mathbf E}
\newcommand\West{\mathbf W}
\newcommand\Pos{\mathcal P}
\newcommand\Neg{\mathcal N}
\newcommand\Upwards{\mathcal{S}}
\newtheorem{thm}{Theorem}[section]
\newtheorem{lemma}[thm]{Lemma}
\newtheorem{prop}[thm]{Proposition}
\newtheorem{defn}[thm]{Definition}
\newtheorem{example}[thm]{Example}
\newtheorem{rem}[thm]{Remark}
\newtheorem{remark}[thm]{Remark}
\numberwithin{equation}{section}
\newcommand\OneHalf{\frac{1}{2}}
\newcommand{\smargin}[1]{}
\newcommand\IdempRing{\mathbf{I}}
\newcommand\Idemp[1]{\mathbf{I}_{#1}}
\newcommand\DT{\boxtimes}
\newcommand\x{\mathbf x}
\newcommand\w{\mathbf w}
\newcommand\y{\mathbf y}
\newcommand\lsup[2]{^{#1}{#2}}
\newcommand\lsub[2]{{}_{#1}{#2}}
\newcommand{\AlgB}{{\mathcal B}}
\newcommand{\Mor}{\mathrm{Mor}}
\newcommand\z{\mathbf z}
 \newcommand{\Z}{\mathbb Z}  \newcommand{\Q}{\mathbb Q} \newcommand{\R}{\mathbb R}
\newcommand{\HFa}{{\widehat {\rm {HF}}}}
\newcommand\XX{\mathbf X}
\newcommand\YY{\mathbf Y}
\newcommand\ZZ{\mathbf Z}
\newcommand\Max{\Omega}
\newcommand\Crit{\mathcal E}
\newcommand\Field{\mathbb F}
\DeclareMathOperator{\Hom}{Hom}
\DeclareMathOperator{\Id}{Id}
\newcommand\VRot{\mathcal R}
\newcommand\Alg{\mathcal A}
\newcommand\Blg{\mathcal B}
\newcommand\Clg{\mathcal C}
\newcommand\Tensor{\mathcal T}
\newcommand\Ainf{{\mathcal A}_{\infty}}
\newcommand\Ainfty\Ainf
\newcommand\Zmod[1]{{\mathbb Z}/{#1}{\mathbb Z}}
\newcommand\AlgBZ{\AlgB_0}
\newcommand\BlgZ\AlgBZ
\newcommand\Opposite{o}
\newcommand\KC{C^-}
\newcommand\MinGen{\mathbf T}
\newcommand\DDmin{\mho}
\newcommand\Ynew{\mathcal Y}
\newcommand\BigMin{\Theta}
\newcommand\gen{K}
\newcommand\AlgLoc{\Alg^{\mathrm{loc}}}
\renewenvironment{proof}[1][\proofname]{\par
\pushQED{\qed}%
\normalfont \topsep6\p@\@plus6\p@\relax
\trivlist
\item\relax
{\bf#1\@addpunct{.}}\hspace\labelsep\ignorespaces
}{%
\popQED\endtrivlist\@endpefalse
}
\begin{document}
\title{Bordered knot algebras with matchings}

\author[Peter S. Ozsv\'ath]{Peter Ozsv\'ath}
\thanks {PSO was supported by NSF grant number DMS-1405114 and DMS-1708284}
\address {Department of Mathematics, Princeton University\\ Princeton, New Jersey 08544} 
\email {petero@math.princeton.edu}

\author[Zolt{\'a}n Szab{\'o}]{Zolt{\'a}n Szab{\'o}}
 \thanks{ZSz was supported by NSF grant numbers DMS-1309152 and DMS-1606571}
\address{Department of Mathematics, Princeton University\\ Princeton, New Jersey 08544}
\email {szabo@math.princeton.edu}

\begin{abstract}
  This paper generalizes the bordered-algebraic knot invariant introduced in an
  earlier paper, giving an invariant now with more algebraic
  structure. It also introduces signs to define these invariants with
  integral coefficients. We describe effective computations of the
  resulting invariant.
\end{abstract}

\newcommand\Hw{{\mathcal J}^{U}}
\maketitle
\section{Introduction}
\label{sec:Intro}

The aim of the present paper is to generalize the bordered-algebraic
knot invariant from~\cite{BorderedKnots}.  This generalization gives a
knot invariant with more algebraic structure than the earlier
invariant.  Another aim is to develop techniques that make both the
new invariant and the one from~\cite{BorderedKnots} more readily
computable.  In fact, C++ code for computing these invariants
can be found here~\cite{Program}.
The constructions presented here are very similar to and build on the material
from~\cite{BorderedKnots}. For other similar algebraic constructions, 
see~\cite{HFa,KhovanovFunctor,KhovanovSeidel,Manion,PetkovaVertesi,Zarev}.
The identification between these constructions and constructions
from the {\em knot Floer homology} of~\cite{OSKnots,RasmussenThesis}
will be given in~\cite{HolKnot}.
A further generalization of this algebra will be given in~\cite{Pong}.

The basic set-up is the following. We start with a collection of $2n$
points on the real line, which we will think of as
$\{1,\dots,2n\}$ equipped with a {\em
  matching}, which is a partition $\Partition$ 
of $\{1,\dots,2n\}$ into pairs of points.
To the collection of $2n$ points on the real line, equipped with a
matching and an integer $k$ with $0\leq k\leq 2n+1$, we associate an
algebra.  Like the algebras from~\cite{BorderedKnots}, idempotents 
$\Idemp{\x}$ in
the algebra correspond to {\em idempotent states}
$\x\subset\{0,\dots,2n\}$ with $|\x|=k$. Additional generators
for the algebra are elements $\{L_i\}_{i=1,\dots,2n}$ and
$\{R_i\}_{i=1,\dots,2n}$, where
\begin{align*}
  L_i &= \sum_{\{\x\big|i\in \x, i-1\not\in \x\}}
  \Idemp{\x}\cdot L_i \cdot\Idemp{(\x\setminus\{i\})\cup\{i-1\}} \\
  R_i &= \sum_{\{\x\big|i-1\in \x, i\not\in \x\}}
  \Idemp{\x}\cdot R_i \cdot\Idemp{(\x\setminus\{i-1\})\cup\{i\}}
\end{align*}
and $\{U_i\}_{i=1,\dots,2n}$, which in certain idempotents can be
expressed as $L_i\cdot R_i$ or $R_i\cdot L_i$. 
The remaining generators for the algebra are supplied by the matching:
for each $\{a,b\}\in{\Partition}$, there is a
central algebra element $C_{\{a,b\}}$ with $d C_{\{a,b\}}=U_a \cdot
U_b$ and $C_{\{a,b\}}^2=0$. Denote this algebra $\Alg(n,k,{\Partition})$.
(For details on these constructions, see Section~\ref{sec:Algebras}.)

We wish to construct a chain complex associated to a knot diagram
$\Diag$.  As a preliminary point, observe that for a generic value of
$t$, the slice in the $(x,y)$ plane with $y=t$ meets $\Diag$ in a
collection of $2n$-points; moreover, the arcs in the portion of the
diagram with $y\geq t$ induce a matching on the $y=t$-points in the
diagram. Thus, for generic $t$, there is an associated algebra for the
$y=t$ slice of the diagram. Now, slice up the diagram into strips;
i.e.  restrict the diagram to the strip in the $(x,y)$-plane with
$t_i\leq y\leq t_{i+1}$, for a suitable increasing sequence
$t_1,\dots,t_m$.  These pieces are called {\em partial knot diagram}s,
and by using sufficiently thin slices, and assuming that the knot
projection is in general position, we can assume that each partial
knot diagram contains either one maximum, one minimum, or one
crossing. To these basic pieces, we associate a $DA$ bimodule, whose
incoming algebra is associated to the $y=t_{i+1}$-slice of the knot
diagram, and whose outgoing algebra is associated to the
$y=t_{i}$-slice.

Tensoring together $DA$ bimodules associated to partial knot diagrams,
we obtain a chain complex
$\Cwz(\Diag)$ associated to an oriented knot diagram $\Diag$ with a
distinguished edge, which we think of as containing the unique global
minimum.  Generators of this chain complex correspond to Kauffman
states for the Alexander polynomial as in~\cite{Kauffman}.

The resulting complex has the following algebraic structure.
Consider the bigraded ring $\OurRing=\Field[U,V]/U\cdot V = 0$,
equipped with a $\Delta$-grading with
\[ \Delta(U)=\Delta(V)=-1\]
and an {\em Alexander}-grading $A$, determined by 
\[ A(U)=-1,\qquad A(V)=+1.\]
An orientation on the knot gives 
the complex $\Cwz(\Diag)$ 
the structure of a  bigraded module over that ring; i.e.
the complex is also equipped with two integer-valued gradings, called $\Delta$ and $A$,
i.e. 
$\Cwz(\Diag)=\bigoplus_{\delta,s} \Cwz_{\delta}(\Diag,s)$,
and \[
  U\colon \Cwz_{\delta}(\Diag,s)\to \Cwz_{\delta-1}(\Diag,s-1),\qquad
  V\colon \Cwz_{\delta}(\Diag,s)\to \Cwz_{\delta-1}(\Diag,s+1).
\]
\[  \partial\colon \Cwz_{\delta}(\Diag,s)\to \Cwz_{\delta-1}(\Diag,s) \]

Recall in~\cite{BorderedKnots}, we used a Maslov grading $M$; that is
related to $\Delta$ and $A$ by the formula
\[ \Delta=M-A.\] While the local formulas take values in $\OneHalf\Z$,
for a knot, summing over the local contributions, the gradings of
generators take values in $\Z$. (For example, the Alexander grading of a generator
corresponding to a Kauffman state is the exponent of $t$ in the
corresponding contribution to the Alexander polynomial in the state
sum formula; see~\cite{Kauffman}.)

 \begin{figure}[h]
 \centering
 \input{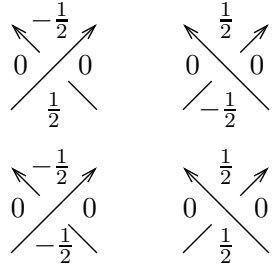}
 \caption{{\bf Local Alexander and
     $\Delta$-contributions.}  The first and second rows illustrates 
   the Alexander and $\Delta$-gradings  of each
   quadrant, respectively.}
 \label{fig:LocalCrossing}
 \end{figure}

Taking homology of this complex, we obtain a bigraded
module $\Hwz(\orK)=H(\Cwz({\mathcal D}))$  over $\OurRing$; i.e.
\[ \Hwz(\orK)=\bigoplus_{\delta,s} \Hwz_{\delta}(\orK,s),\]
with
\[ U\colon \Hwz_{\delta}(\orK,s)\to \Hwz_{\delta-1}(\orK,s-1)
\qquad 
V\colon \Hwz_{\delta}(\orK,s)\to \Hwz_{\delta-1}(\orK,s+1)\]

As the notation suggests, these modules are invariants of the knot:

\begin{thm}
  \label{thm:Invariance}
  The bigraded module $\Hwz(\orK)$ is an invariant of the oriented knot $\orK$.
  The module itself is the homology of a chain complex whose generators correspond to Kauffman states.
\end{thm}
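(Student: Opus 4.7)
The strategy is to reduce invariance of $\Hwz(\orK)$ under ambient isotopy of the oriented knot to a finite list of elementary local moves on the decorated, horizontally-sliced planar diagram, and then to verify that each such move induces a quasi-isomorphism on the chain complex $\Cwz(\Diag)$. First I would fix the basic combinatorial data—an oriented diagram $\Diag$ with a distinguished edge containing the global minimum, sliced by generic levels $t_1<\dots<t_m$ so that each strip contains one maximum, one minimum, or one crossing—and check that refining or perturbing the slicing levels does not alter $\Cwz(\Diag)$ up to quasi-isomorphism. This step is essentially formal: it follows from associativity of the box tensor $\DT$ of $DA$ bimodules, together with the fact that a strip containing no critical event contributes the identity bimodule up to quasi-isomorphism.

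Next I would enumerate the local moves that, together with slice-refinements, generate ambient isotopy of oriented knots presented as sliced diagrams. These are: (a) exchanging the heights of two features lying over disjoint horizontal intervals; (b) commutation of a crossing with an adjacent max/min, and commutations or braid relations between two crossings; (c) cancellation of a canceling min/max pair (pair creation/annihilation); and (d) the three Reidemeister moves themselves, which can be reduced to a sequence of the previous types after putting the diagram into a suitable normal form. Move~(a) is automatic from the bimodule construction; move~(b) is handled by the braid relations and near-braid relations of Section~\ref{BK:sec:Braid}, in particular Theorem~\ref{BK:thm:BraidRelation} and Lemma~\ref{BK:lem:NearBraids}, together with the crossing $DA$ bimodules of Section~\ref{BK:sec:CrossingDA}; move~(c) follows from Lemma~\ref{BK:lem:PairCreation}. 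With these in hand, Reidemeister~I becomes the creation of a canceling min/max pair followed by a crossing-commutation past it, Reidemeister~II becomes a pair of opposite crossings trivialized via the braid relation, and Reidemeister~III is the type-III braid relation. The full verification is carried out in Section~\ref{BK:sec:InvarianceProof}.

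For the second assertion—that generators of $\Cwz(\Diag)$ correspond to Kauffman states—the argument is immediate from the construction. Each of the three partial-knot-diagram $DA$ bimodules has a distinguished generating set, indexed by the local choices of quadrant at the corresponding critical point or crossing (cf.\ Figure~\ref{fig:LocalCrossing}). Under the $\DT$-operation these local generators compose precisely along compatible idempotent states $\x\subset\{0,\dots,2n\}$, which encode consistent assignments of a distinguished region on each horizontal slice. A consistent global such assignment is, by definition, a Kauffman state of $\Diag$; matching up the local $\Delta$- and $A$-contributions recovers Kauffman's state-sum formula for the Alexander polynomial.

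\textbf{Main obstacle.} I expect the hardest part to be the braid-type commutations and Reidemeister~III, which require exhibiting explicit homotopy equivalences between tensor products of the crossing $DA$ bimodules. These quasi-isomorphisms are constructed via the homological perturbation machinery of Lemma~\ref{BK:lem:HomologicalPerturbation2}, and one must carefully track the bigrading $(\Delta,A)$ and—relative to~\cite{BorderedKnots}—the new sign data so that $U$ and $V$ act with the declared bidegrees throughout. Once these calculations are in place, the invariance statement and the Kauffman-state description follow by assembling the pieces above.
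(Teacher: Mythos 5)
Your overall strategy---reduce invariance to a finite list of local moves on the sliced diagram (height exchanges, commutations of critical points with crossings, braid relations, pair creation/annihilation), verify each by a homotopy equivalence of bimodules, and read off the Kauffman-state description from the local generators and the idempotent matching in $\DT$---is the same as the paper's, and the reduction itself is sound.

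The gap is in where you source the key local identities. You invoke the braid relations, near-braid relations, and pair-creation lemma from the \emph{earlier} paper, but those statements concern bimodules over the algebras $\Blg(2n,k,\Upwards)$, whereas $\Cwz(\Diag)$ is assembled from bimodules over the matched algebras $\Alg(n,k,\Partition)$, which contain the new central elements $C_{\{i,j\}}$ with $dC_{\{i,j\}}=U_iU_j$. The earlier results do not apply as stated, and re-establishing them for the extended bimodules is the bulk of the present paper. The mechanism is also different from what you describe: rather than homological perturbation (which here is used only to construct $\Min^1$), the paper proves each identity at the level of type $DD$ bimodules and then transfers it to the $DA$ level using the invertibility of the canonical bimodule $\CanonDD$ (Theorem~\ref{thm:InvertibleDD}) together with the duality statements $\Pos^i\DT\CanonDD\simeq\Pos_i$, $\Max^c\DT\CanonDD\simeq\Max_c$, $\Min^c\DT\CanonDD\simeq\Min_c$ (Propositions~\ref{prop:DualCross} and~\ref{prop:MaxDual}, Theorem~\ref{thm:MinDual}). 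The braid relations themselves are deduced from the old ones via the functor $F$ on special $DD$ bimodules (Lemmas~\ref{lem:Fexamples} and~\ref{lem:PosPreservesF}), not by direct recomputation. Most importantly, the trident move for a \emph{minimum}---which you fold into ``commutation of a crossing with an adjacent max/min'' without further comment---cannot be handled symmetrically to the maximum case: it requires constructing crossing bimodules over the Koszul-dual algebra $\DuAlg$ (Section~\ref{sec:DuAlg}) and proving Lemma~\ref{lem:DuBasicTrident} there. Your outline is missing this ingredient, and without it the chain of equivalences verifying that $\Min^c\DT\Pos^{c+1}\simeq\Min^{c+1}\DT\Neg^c$ does not close up. Finally, pair annihilation must be re-verified as $\Min^1\DT\Max_2\simeq\CanonDD$ for the new algebras; it is not a citation to the old lemma.
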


Setting $U=V=0$, and taking the homology of the resulting complex, we
obtain a knot invariant whose bigraded Euler characteristic (using the
Alexander and Maslov gradings) is the symmetrized Alexander
polynomial. This is obvious from the local description of the
bigradings from Figure~\ref{fig:LocalCrossing}; for more on this,
see~\cite{BorderedKnots}.

The complex $\Cwz(\Diag)$ is constructed in
Section~\ref{sec:Construction}, building on the work from earlier
sections; Theorem~\ref{thm:Invariance} is also proved in that section,
where the base algebra is taken with coefficients in
$\Field=\Zmod{2}$, a simplification which we make for the rest of this
introduction. The generalization of the construction with coefficients
in $\Z$ is given in Section~\ref{sec:Signs}.

Given any bigraded $\OurRing$-module $M$, there is a new bigraded module $S(M)$ whose underlying Abelian group is the same as that for $M$, equipped with a map
\[ \sigma\colon M \to S(M) \]
that restricts to $\Field$-isomorphisms $\sigma\colon M_{\delta,s}\to S(M)_{\delta,-s}$, and satisfies the rule
\[ \sigma(U\cdot m)=V\cdot \sigma(m)\qquad{\text{and}}\qquad \sigma(V\cdot m)=U\cdot \sigma(m).\]
Our bordered knot invariants are symmetric in the following sense:

\begin{prop}
        \label{prop:Symmetry}
        The bigraded module $\Hwz(\orK)$ is symmetric, in the sense that $\Hwz(\orK)\cong S(\Hwz(-\orK))$.
\end{prop}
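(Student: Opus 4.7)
The plan is to exhibit an explicit chain isomorphism between $\Cwz(\Diag, \orK)$ and $S(\Cwz(\Diag, -\orK))$ built from the same underlying unoriented diagram $\Diag$ equipped with opposite orientations, and then pass to homology; by Theorem~\ref{thm:Invariance} we are free to work with any convenient diagram in each isotopy class.

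The key observation is that the algebras $\Alg(n, k, \Partition)$ and the DA bimodules assigned to the three types of elementary partial diagram (with a single maximum, minimum, or crossing) are constructed from the unoriented diagram together with its matching, and so do not themselves depend on the orientation of the knot. Orientation enters only in the choice of which of the two formal variables $U$ or $V$ acts on each strand, and in the local contributions to the $\Delta$- and Alexander gradings recorded in Figure~\ref{fig:LocalCrossing}. A direct inspection of the figure shows that reversing the orientation at a crossing swaps $U$ with $V$, leaves the $\Delta$-contribution of each quadrant unchanged, and negates its Alexander contribution; the analogous statement for maxima and minima is immediate from the definitions.

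Granting this, I would verify on each of the three elementary partial diagrams that the identity map on Kauffman-state generators intertwines the $\OurRing$-module structure for the two orientations after the swap $U \leftrightarrow V$, preserves $\Delta$, and negates $A$. Tensoring these local identifications over the algebras assigned to the intermediate slices $y = t_i$ then produces the desired global chain isomorphism $\Cwz(\Diag, \orK) \cong S(\Cwz(\Diag, -\orK))$, and Proposition~\ref{prop:Symmetry} follows on passing to homology.

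The main obstacle is coherence at these algebraic interfaces: one must check that the relabelings $U \leftrightarrow V$ and the sign reversals of the Alexander contributions are implemented compatibly across each $DA$-tensor product, so that the local isomorphisms assemble into a genuine bimodule isomorphism commuting with all of the $\delta^1_k$ structure maps. Once this compatibility is in place, nothing further is required, since the underlying map on vector spaces is literally the identity after the prescribed relabeling.
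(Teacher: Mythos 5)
Your proposal is correct and follows essentially the same route as the paper: the paper's proof simply observes that reversing orientation negates all the local Alexander contributions of Figure~\ref{fig:LocalCrossing}, exchanges the roles of $U$ and $V$ (which enter only through the terminal bimodule $\TerMin$), and leaves the complex otherwise unchanged, so the identity on generators induces the claimed isomorphism after passing to homology. Your extra attention to coherence across the tensor products is a reasonable elaboration but not a point the paper treats as requiring argument.
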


Let $\KCm(\orK)$ be the bordered knot complex from~\cite{BorderedKnots};
which is a bigraded module over $\Field[U]$.
The relationship between the $\KCm(\orK)$
from~\cite{BorderedKnots} and $\Cwz(\orK)$ is summarized as follows:

\begin{prop}
  \label{intro:Compare}
  There is a homotopy equivalence of bigraded chain complexes 
  \[\frac{\KCm(-\orK)}{U=0}\simeq \frac{\Cwz(\orK)}{U=V=0}.\]
  Moreover, $\Cwz(\orK)/(V=0)\cong \KCm(-\orK)$.
\end{prop}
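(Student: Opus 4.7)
The plan is to compare the two constructions at three levels: the underlying algebras, the $DA$ bimodules for elementary partial knot diagrams, and their tensor products. The starting observation is that the single variable $U$ of~\cite{BorderedKnots} corresponds, under orientation reversal, to the variable surviving the quotient by $V=0$; this is compatible with the symmetry of Proposition~\ref{prop:Symmetry}, under which $U$ and $V$ swap and $\orK\leftrightarrow -\orK$.

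First I would identify $\Alg(n,k,\Partition)/(V=0)$ with the bordered knot algebra of~\cite{BorderedKnots}. After killing $V$, only the $U$-weighted half of the generating set survives, products of $L_i$ and $R_i$ collapse to $U_i$ in the appropriate idempotents, and the central elements $C_{\{a,b\}}$ retain the relations $dC_{\{a,b\}}=U_a U_b$ and $C_{\{a,b\}}^2=0$. Reading the surviving $U$ here as the $U$ of the older algebra attached to $-\orK$, one gets an isomorphism of differential algebras on the nose.

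Next I would go through each elementary $DA$ bimodule — for a local minimum, local maximum, and crossing — and verify case-by-case that reducing its structure maps modulo $V=0$ reproduces the corresponding elementary $DA$ bimodule from~\cite{BorderedKnots}. The crossing bimodule is constructed in Sections~\ref{BK:sec:CrossingDD} and~\ref{BK:sec:CrossingDA}, and the max/min bimodules in the neighborhood of Theorem~\ref{BK:thm:MaxDA}. Since tensor product of $DA$ bimodules commutes with base change on the underlying algebras (cf.\ Proposition~\ref{BK:prop:AdaptedTensorProducts}), tensoring the reduced bimodules over the knot diagram $\Diag$ yields the on-the-nose isomorphism $\Cwz(\orK)/(V=0)\cong \KCm(-\orK)$ claimed in the second assertion. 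The first assertion then follows by further setting $U=0$ on both sides, which gives an isomorphism of chain complexes and in particular a homotopy equivalence.

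The main obstacle is the crossing case in Step~2: the new $DA$ bimodule is substantially richer than its predecessor, involving $L_i$, $R_i$, $U_i$, and (through the matching) the central generators $C_{\{a,b\}}$ in a non-trivial $\Ainf$-structure. One has to check term-by-term that every $V$-carrying operation vanishes after the quotient, and that the surviving operations reproduce exactly the older crossing bimodule — consistently with the orientation-reversal convention already fixed at the level of maxima and minima, so that idempotents, gradings, and the distinguished edge carrying the global minimum all match up. Once this verification is complete, assembly via the tensor-product formalism is routine.
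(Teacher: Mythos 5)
Your proposal rests on a step that does not exist: there is no ``$V=0$ quotient'' of the intermediate algebras $\Alg(n,k,\Partition)$. The variables $U$ and $V$ live only in the ground ring $\OurRing=\Field[U,V]/UV$ of the final complex; they appear only after tensoring with the terminal bimodule $\TerMin$ at the global minimum. The slice algebras have generators $U_1,\dots,U_{2n}$, $L_i$, $R_i$ and the central elements $C_{\{i,j\}}$, none of which is a ``$V$,'' so Step~1 of your plan is not well posed. More importantly, even the correct local comparison is not an isomorphism: the algebra of~\cite{BorderedKnots} has elements $C_j$ (for $j$ in the orientation-determined section $\Upwards$ of the matching) with $dC_j=U_j$, while the new algebra has $C_{\{i,j\}}$ with $dC_{\{i,j\}}=U_iU_j$. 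These are genuinely different differential algebras, related only by the non-invertible algebra homomorphism $\phi\colon\Alg(n,k,\Partition)\to\Blg(2n,k,\Upwards)$, $\phi(C_{\{i,j\}})=U_i\cdot C_j$. Consequently the elementary bimodules cannot be matched ``on the nose after quotient'' as you propose.

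The paper's actual argument replaces your Steps~1--2 by an intertwining statement: for each elementary partial diagram one proves a homotopy equivalence $\lsup{\Blg_2}[\phi]_{\Alg_2}\DT\PartInv({\mathcal D})\simeq\KC({\mathcal D})\DT\lsup{\Blg_1}[\phi]_{\Alg_1}$, where $[\phi]$ is the $DA$ bimodule induced by the algebra map. This is proved not term-by-term but by tensoring with the canonical type $DD$ bimodule, invoking its invertibility (Theorem~\ref{thm:InvertibleDD}), and comparing the resulting $DD$ bimodules via explicit isomorphisms of the form $\Id+\sum_{j\in\Upwards}C_{\tau(j)}\otimes E_j$ (together with the companion map $\psi$ into $\DuAlg$ and Lemma~\ref{lem:CompareMorphisms}). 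Only at the very last stage, after tensoring with $\TerMin$, does one set $V=0$ in $\OurRing$, and the conclusion is a homotopy equivalence $\Cwz(\orK)/V\simeq\KCm(-\orK)$, not an isomorphism of complexes. To repair your write-up you would need to replace the quotient-of-algebras step by the morphism $\phi$ and establish the intertwining homotopy equivalences for each elementary piece.
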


See Section~\ref{sec:Compare} for a proof of a more detailed version.

\subsection{Numerical knot invariants}
\label{intro:NumKnotInv}

The above bigraded knot invariants naturally give rise to some
numerical knot invariants. Their construction hinges on the behaviour of 
$\Hwz(\orK)$ under crossing changes (see Proposition~\ref{prop:CrossingChangeHwz}),
which leads to the  following structure theorem.

For the statement of the structure theorem, will consider the module
$\Hwz(\orK)\otimes_{\Field[U]}\Field[U,U^{-1}]$, which can be thought
of as obtained from $\Hwz(\orK)$ by inverting $U$.

\begin{prop}
        \label{prop:UNonTorsion}
        For any oriented knot $\orK$, we have an identification of
        $\OurRing$-modules $\Hwz(\orK)\otimes_{\Field[U]}
        \Field[U,U^{-1}]\cong \Field[U,U^{-1}]$, where on the right
        hand side, $\Field[U,U^{-1}]$ is thought of as the $\OurRing$
        module for which $V$ acts as multiplication by $0$.
\end{prop}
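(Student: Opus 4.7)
The plan is to reduce the assertion to the analogous statement for the earlier invariant $\KCm$ from~\cite{BorderedKnots}, via Proposition~\ref{intro:Compare}. The starting observation is purely algebraic: in $\OurRing=\Field[U,V]/(UV)$, once $U$ is inverted one has $V=U^{-1}\cdot(UV)=0$, so tensoring any $\OurRing$-module with $\Field[U,U^{-1}]$ over $\Field[U]$ automatically forces $V$ to act as zero. Consequently the target module $\Field[U,U^{-1}]$ described in the statement is the unique compatible $\OurRing$-module structure, and the assertion reduces to showing that $\Hwz(\orK)\otimes_{\Field[U]}\Field[U,U^{-1}]$ is free of rank one over $\Field[U,U^{-1}]$.

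Since tensoring with $\Field[U,U^{-1}]$ kills the $V$-action already at the chain level, the quotient map $\Cwz(\orK)\to \Cwz(\orK)/V$ becomes an isomorphism after this tensor. Combined with the chain isomorphism $\Cwz(\orK)/V\cong \KCm(-\orK)$ supplied by Proposition~\ref{intro:Compare}, and the flatness of $\Field[U,U^{-1}]$ over $\Field[U]$, this yields
\[ \Hwz(\orK)\otimes_{\Field[U]}\Field[U,U^{-1}] \;\cong\; \KHm(-\orK)\otimes_{\Field[U]}\Field[U,U^{-1}]. \]
The right-hand side is the $U$-non-torsion part of the earlier bordered knot homology, which was identified with $\Field[U,U^{-1}]$ in~\cite{BorderedKnots} (and reflects the standard $\HFKm$-type calculation), and so the proof is complete.

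An alternative route, closer in spirit to the reference to Proposition~\ref{prop:CrossingChangeHwz} from the discussion preceding the statement, is to argue intrinsically by crossing changes. One would first verify the claim for the unknot by direct calculation: a one-maximum, one-minimum diagram produces a single Kauffman state generating $\OurRing$, whose $U$-localization is $\Field[U,U^{-1}]$. One would then use Proposition~\ref{prop:CrossingChangeHwz} to show that the $U$-localization is unchanged by a crossing change, because the error term in the associated mapping cone is $U$-torsion; since any knot reaches the unknot through a finite sequence of crossing changes, this would also suffice. The principal obstacle along this second route is establishing the $U$-torsion property of the crossing-change error term from its local $DA$-bimodule description; the first approach, routed through Proposition~\ref{intro:Compare}, sidesteps this issue entirely and is evidently the cleaner option.
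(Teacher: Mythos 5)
Your main route is genuinely different from the paper's, and its algebraic skeleton is sound: inverting $U$ kills $V$ because $UV=0$, the localization of $V\cdot\Cwz(\orK)$ vanishes since it is $U$-torsion, and flatness of $\Field[U,U^{-1}]$ lets you pass to homology, so indeed $\Hwz(\orK)\otimes_{\Field[U]}\Field[U,U^{-1}]\cong \KHm(-\orK)\otimes_{\Field[U]}\Field[U,U^{-1}]$ via Proposition~\ref{intro:Compare}. The weak point is the final step, where you assert that the right-hand side "was identified with $\Field[U,U^{-1}]$ in~\cite{BorderedKnots}." That is precisely the nontrivial content of the proposition, and it is not clearly available there; note moreover that in this paper's own logical order the non-torsion statement for $\Hw(\orK)=\KHm(-\orK)$ is \emph{deduced from} Proposition~\ref{prop:UNonTorsion} (via the map $\Hwz(\orK)\to\Hw(\orK)$), so routing the proof back through $\KHm$ risks circularity unless you supply an independent proof of the rank-one localization for the earlier invariant.

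The paper's actual proof is the "alternative route" you sketch and then set aside, and the obstacle you perceive there does not exist. Proposition~\ref{prop:CrossingChangeHwz} already produces maps on homology with $c_+\circ c_-=U$ and $c_-\circ c_+=U$; after tensoring with $\Field[U,U^{-1}]$, multiplication by $U$ is invertible, so $c_+$ and $c_-$ are isomorphisms of the localized modules. There is no mapping cone and no "error term" whose $U$-torsion must be checked. Hence the localized module is invariant under crossing changes, and the computation for the unknot ($\Hwz\cong\OurRing$, whose $U$-localization is $\Field[U,U^{-1}]$, the $U$-torsion summand $V\cdot\Field[V]$ dying) finishes the argument in two lines. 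If you want to keep your first route, you should replace the citation in its last step by exactly this crossing-change argument applied to $\KHm$, at which point the detour through Proposition~\ref{intro:Compare} buys you nothing.
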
  

It follows quickly from the 
above proposition (see Lemma~\ref{lem:NuMakesSense})
that we can make the following:

\begin{defn} 
   Given any oriented knot $\orK$, let ${\underline\nu}(\orK)\in\Z$ be the knot invariant defined by
   \[ {\underline\nu}(\orK)=-\max\{s\big| U^d\cdot \Hwz(\orK,s)\neq 0~~ \forall d\geq 0\}.\]
\end{defn}

The above defined integer ${\underline\nu}(\orK)$ is clearly a knot invariant, since $\Hwz(\orK)$ is.
Consider $\Hw(\orK)=H(\Cwz(\Diag)/V=0)$. (Recall that $\Hw(\orK)=\KHm(-\orK)$,
by  Proposition~\ref{intro:Compare}.)
There is a canonical map
$\Hwz(\orK)\to \Hw(\orK)$. Clearly, any $U$-nontorsion element in $\Hwz(\orK)$ has $U$-non-torsion image in $\Hw(K)$.
Moreover, for all sufficiently large $s$, $\Hw(\orK,s)$ is zero. Thus, we have the following:

\begin{defn} 
   Given any oriented knot $\orK$, let ${\underline\tau}(\orK)\in\Z$ be the knot
      invariant defined by \[ {\underline\tau}(\orK)=-\max\{s\big|
      U^d\cdot \Hw(\orK,s)\neq 0~~ \forall d\geq 0\}.\]
\end{defn}

Clearly, ${\underline\tau}(\orK)\leq {\underline\nu}(\orK)$. 

Our crossing change result stated in Section~\ref{sec:CrossingChange}
implies the following inequalities:
\begin{prop}
  \label{prop:BoundUnknotting}
        Let $\orK_+$ and $\orK_-$ be two oriented knots that differ in a single crossing, which is positive for $\orK_+$ and negative for $\orK_-$;
        then
        \begin{align*}  
            0 & \leq {\underline \tau}(\orK_+)-{\underline \tau}(\orK_-)\leq 1 \\
            0 & \leq {\underline \nu}(\orK_+)-{\underline \nu}(\orK_-)\leq 1.
        \end{align*}
        In particular, for any knot $\orK$, the unknotting number $u(\orK)$ is bounded by:
        \[ |{\underline\tau}(\orK)|\leq u(\orK)\qquad{\text{and}}\qquad |{\underline\nu}(\orK)|\leq u(\orK).\]
\end{prop}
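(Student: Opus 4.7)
The proof splits naturally into two parts: establishing the crossing-change inequalities, then iterating them to bound the unknotting number.

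For the crossing-change inequalities, the plan is to extract them directly from Proposition~\ref{prop:CrossingChangeHwz}. In the standard knot Floer setup, results of this type are proved by producing a pair of chain maps between the complexes of $\orK_+$ and $\orK_-$ whose two compositions are chain-homotopic to multiplication by $U$, with Alexander grading shifts of $0$ and $-1$ respectively. Since multiplication by $U$ is injective on $U$-nontorsion classes, both maps must preserve $U$-nontorsion. Applying each map to a $U$-nontorsion class of maximal Alexander grading in $\Hw(\orK_\mp)$ produces a $U$-nontorsion class in $\Hw(\orK_\pm)$ whose Alexander grading is shifted by either $0$ or $-1$; reading this off in terms of $\underline\tau$ yields precisely
\[ 0 \leq \underline\tau(\orK_+) - \underline\tau(\orK_-) \leq 1. \]
The identical argument applied to $\Hwz$ in place of $\Hw$, using the maps supplied by Proposition~\ref{prop:CrossingChangeHwz} at the refined level, gives the corresponding inequality for $\underline\nu$.

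For the unknotting-number bound, fix a sequence of $u(\orK)$ crossing changes converting $\orK$ into the unknot $\mathbf{O}$. Along such a sequence each step changes $\underline\tau$ by an element of $\{-1,0,1\}$ by the first part, so the triangle inequality gives $|\underline\tau(\orK) - \underline\tau(\mathbf{O})| \leq u(\orK)$. It remains only to check the base case $\underline\tau(\mathbf{O}) = 0$, which follows from the direct computation $\Hw(\mathbf{O}) \cong \Field[U]$ supported in Alexander grading zero; this can be obtained either by applying the construction of Section~\ref{sec:Construction} to the minimum--maximum diagram of the unknot, or by importing the corresponding result from~\cite{BorderedKnots} via Proposition~\ref{intro:Compare}. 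The same reasoning with $\Hwz$ in place of $\Hw$ gives $|\underline\nu(\orK)| \leq u(\orK)$.

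The principal technical point is the grading-shift bookkeeping for the crossing-change maps: one must verify that the shifts of the two maps are exactly $0$ and $-1$ on the Alexander grading (so that their composition shifts by $-1$, in accordance with multiplication by $U$), and that these shifts fall into the right range to produce the tight window $\{0,1\}$ rather than something wider. Once the crossing-change result has been stated with these shifts in place, the deductions above are purely formal.
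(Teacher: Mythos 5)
Your proposal is correct and follows essentially the same route as the paper: the crossing-change inequalities are read off from the bidegrees of the maps $c_\pm$ of Proposition~\ref{prop:CrossingChangeHwz} (the paper packages the "non-torsion is preserved, so the maximal non-torsion Alexander grading is monotone" step as Lemma~\ref{lem:MonotonicityOfNu}, and obtains the $\underline\tau$ case by specializing the chain-level maps to $V=0$), and the unknotting bound follows by iteration together with the unknot computation $\Hwz(\mathbf{O})\cong\OurRing$ from the proof of Proposition~\ref{prop:UNonTorsion}.
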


The invariants ${\underline \tau}(\orK)$ and ${\underline \nu}(\orK)$
are analogues of the knot invariants $\tau(K)$ and $\nu(K)$, defined
in~\cite{FourBall} and~\cite{RatSurg} respectively, using the
holomorphic construction; see also~\cite{RasmussenThesis,HomCables}.

\subsection{Organization}
This paper is organized as follows. In Section~\ref{sec:Algebras}, we
define the relevant algebras, and verify some of their basic formal
properties.  Indeed, in that section we define two algebras $\Alg$ and
$\DuAlg$ which are Koszul dual to one another; we prefer to work with
$\Alg$ whenever possible.  In Section~\ref{sec:Cross} we construct the
bimodules associated to a crossing (both of type $DD$ and of type
$DA$, over $\Alg$), and verify that they satisfy the braid relations
in Section~\ref{sec:BraidRelations}.  In Section~\ref{sec:Max}, we
define the bimodule associated to a local maximum.  In
Section~\ref{sec:DuAlg}, we give DA bimodules associated to crossings
over $\DuAlg$ (instead of $\Alg$, as was constructed earlier).  In
Section~\ref{sec:Min} we construct the bimodule associated to a local
minimum, using the material from Section~\ref{sec:DuAlg} to verify
that this bimodule satisfies the ``trident relation''. In
Section~\ref{sec:Construction}, the pieces are assembled to define the
knot invariant $\Hwz(K)$. In that section, we also prove topological
invariance of the construction. In Section~\ref{sec:Compare}, we
identify specializations of these constructions with the invariants
from~\cite{BorderedKnots}. In that section, we are also able to verify
that the specialized knot invariants are multiplicative under
connected sums.  In Section~\ref{sec:Symmetries}, we verify some
symmetries of our constructions.  In Section~\ref{sec:CrossingChange},
we investigate how the bordered invariants change under crossing
changes, to verify Proposition~\ref{prop:BoundUnknotting}.  In
Section~\ref{sec:Signs}, we explain how to lift the present invariant
to coefficients in $\Z$.
Finally, in Section~\ref{sec:Fast}, we describe some methods for 
optimizing the computer computations of these invariants.

\subsection{Further remarks}

Note that there is an analogous construction of knot Floer homology
with coefficients in $\OurRing$: the differential in that construction
counts pseudo-holomorphic disks in a homotopy class $\phi$, weighted
by $U^{n_{w}(\phi)} V^{n_{z}(\phi)}$. Since $UV=0$, the differential
is counting disks that can cross either one of the two
basepoints. This construction is sufficient for computing $\HFa$ for
surgeries on $K$~\cite{IntSurg,RatSurg}, the concordance invariant
$\tau(K)$~\cite{FourBall}, and indeed Hom's countable collection of
concordance homomorphisms~\cite{HomConc} (defined using $\tau$ and
$\nu$ from~\cite{RatSurg}).  In forthcoming
work~\cite{HolKnot}, we identify the presently defined
invariants with their knot-Floer homological analogues.

In view of that identification, the concordance invariants $\tau(K)$
and $\nu(K)$ agree ${\underline\tau}(K)$ and ${\underline\nu}(K)$
described above; and so we will be able to conclude
Proposition~\ref{prop:BoundUnknotting} from corresponding properties
of the holomorphically defined invariants.  Nonetheless, it is
instructive to have a self-contained proof of
Proposition~\ref{prop:BoundUnknotting} within the present algebraic
framework;
compare~\cite{RasmussenSlice}, \cite{Sarkar}, and \cite[Chapters~6~and
8]{GridBook}.

\newcommand\Image{\mathrm{Image}}
\newcommand\Alex{\mathrm{Alex}}
\newcommand\Filt{\mathcal{F}}
\newcommand\Maslov{\mathrm{Maslov}}
\newcommand\SubCx{{\mathcal E}}

\section{Bordered algebras}
\label{sec:Algebras}

Throughout most of this paper, we suppress signs,
working with coefficients over $\Zmod{2}$.
See Section~\ref{sec:Signs} for the generalization to coefficients in $\Z$.

\subsection{Previous bordered algebras for knot diagrams}

We recall the construction of the algebras $\BlgZ(m,k)$ and $\Blg(m,k)$ from~\cite{BorderedKnots}.
Fix integers $k$ and $m$ with $0\leq k\leq m+1$. 
The algebra $\BlgZ(m,k)$ is an algebra over $\Field[U_1,\dots,U_m]$, whose basic idempotents
correspond to {\em idempotent states}, or {\em $I$-states} which are $k$-element subsets of $\{0,\dots,m\}$. 
Given an I-state $\x$, define its {\em weight}
$v^{\x}\in \Z^{m}$ by
\begin{equation}
  \label{eq:DefOfV}
  v^{\x}_{i} = \# \{x\in \x\big| x\geq i\}.
\end{equation}
Given two I-states $\x$ and $\y$, define their {\em minimal relative weight vector} $w^{\x,\y} \in (\frac{1}{2} \Z)^m$ to be vector with components
\[ w^{\x,\y}_{i} = \frac{1}{2}\big|v^{\x}_i-v^{\y}_i\big|.\]
$\AlgB_0(m,k)$ is defined so that  there is an identification of $\Field[U_1,\dots,U_m]$-modules
$\Idemp{\x} \cdot \AlgB_0(m,k) \cdot \Idemp{\y} \cong \Field[U_1,\dots,U_m]$; denote the identification by 
\[ \phi^{\x,\y}\colon \Field[U_1,\dots,U_m] \rightarrow 
\Idemp{\x} \cdot \AlgB_0(m,k) \cdot \Idemp{\y}. \]

An element of $\BlgZ(m,k)$ is called {\em pure} if it is of the form
$\phi^{\x,\y}(U_1^{t_1}\cdots U_m^{t_m})$ for some non-negative sequence of integers
$t_1,\dots,t_m$.

A grading by $(\OneHalf \Z)^{m}$ on $\Idemp{\x} \cdot \AlgB_0(m,k) \cdot \Idemp{\y}$  is specified by its values on pure algebra elements:
\begin{equation}
  \label{eq:WeightFunction}
  w(\phi^{\x,\y}(U_1^{t_1}\dots U_m^{t_m}))= w^{\x,\y}+(t_1,\dots,t_m).
\end{equation}
An element in $\BlgZ(m,k)$
is called {\em homogeneous of degree $(w_1,\dots,w_m)$} if it can be written
as a sum of pure algebra elements, all of which have 
weight $(w_1,\dots,w_m)$.

Multiplication 
\[ 
\Big(\Idemp{\x} \cdot \AlgB_0(m,k) \cdot \Idemp{\y} \Big)
*
\Big(\Idemp{\y} \cdot \AlgB_0(m,k) \cdot \Idemp{\z} \Big)
\to \Big(\Idemp{\x} \cdot \AlgB_0(m,k) \cdot \Idemp{\z} \Big)
\] 
is the unique non-trivial, grading-preserving
$\Field[U_1,\dots,U_m]$-equivariant map.

If $\x$ is an I-state with $j-1\in \x$ but $j\not\in\x$, we
can form a new I-state $\y=\x\cup \{j\}\setminus\{j-1\}$. Let 
$R_j^\x = \phi^{\x,\y}(1)$,
$L_j^\y = \phi^{\y,\x}(1)$, 
\[ R_{j} = \sum_{\{\x\big|j-1\in \x, j\not\in\x\}} R_j^\x
\qquad{\text{and}}\qquad L_j = \sum_{\{\y\big| j\in \y, j-1\not\in\y\}} L_j^\y.\]

The algebra
$\AlgB(m,k)$ (also denoted $\AlgB(m,k,\emptyset)$ in~\cite{BorderedKnots})
is the quotient of $\AlgBZ(m,k)$ by the relations
\[
 L_{i+1}\cdot L_i = 0, \qquad
 R_i\cdot R_{i+1}=0; \]
and also, if 
$\{x_1,...,x_k\}\cap \{j-1,j\}=\emptyset$, then 
\begin{equation}
\label{eq:KillUs} 
\Idemp{\x}\cdot U_j=0.
\end{equation}

We will use the following terminology from~\cite{BorderedKnots}:

\begin{defn}
  \label{def:CloseEnough}
Let $\x$ and $\y$ be two $I$-states for $\Blg(m,k)$. Write
$\x=x_1<\dots<x_k$ and $\y=y_1<\dots<y_k$. If for all $i=1,\dots,k$,
$|x_i-y_i|\leq 1$, we say that $\x$ and $\y$ are {\em close enough}.
\end{defn}

According to~\cite[Proposition~\ref{BK:prop:IdentifyJ}]{BorderedKnots},
the $I$-states $\x$ and $\y$ are close enough if and only if
$\Idemp{\x}\cdot\Blg\cdot \Idemp{\y}\neq 0$.

The grading on $\BlgZ(m,k)$ by $(\OneHalf \Z)^m$ descends to a grading
on $\Blg(m,k)$. A non-zero element of $\Blg(m,k)$ is called {\em pure} if
it is the image of a pure algebra element in $\BlgZ(m,k)$.

For more on the construction of $\Blg(m,k)$,
see~\cite[Section~\ref{BK:sec:Algebras}]{BorderedKnots}.

\subsection{Algebras associated to matchings}

In~\cite{BorderedKnots}, we constructed various algebras containing
$\Blg(m,k)$, associated to orientations on the points $\{1,\dots,m\}$
or, equivalently, a subset of the points $\{1,\dots,m\}$. In the
present paper, we associate instead an algebra containing
$\Blg(2n,k)$, associated to matchings, as follows.  Let $\Partition$
be a {\em matching}, a partition of $\{1,\dots,2n\}$ into $n$
two-element subsets. Fix any integer $0\leq k \leq 2n+1$.  We will
describe presently two algebras associated to matchings,
$\Alg(n,k,\Partition)$ and $\Alg'(n,k,\Partition)$, both containing
the algebra $\Blg(2n,k)=\Blg(2n,k,\emptyset)$ from above.

The algebra $\Alg(n,k,\Partition)$ is obtained from $\Blg(2n,k)$ by
including $n$ further central elements $C_{\{i,j\}}$
for each $\{i,j\}\in\Partition$, satisfying the properties that
\[ C_{\{i,j\}}^2=0 \qquad dC_{\{i,j\}}=U_i U_j. \]

For $\DuAlg(n,k,\Partition)$, we start from $\Blg(2n,k)$, and  include $2n$ further  algebra
elements $E_i$, one for each $i\in\{1,\dots,2n\}$, so that 
$E_i^2=0$, $dE_i= U_i$, $E_i\cdot b=b\cdot E_i$ for all $b\in\Blg(2n,k)$;
and
\[ E_i \cdot E_j + E_j \cdot E_i=0 \qquad{\text{if $\{i,j\}\not\in\Partition$}}.\]
Observe that for each $\{i,j\}\in\Partition$, there is an associated non-zero algebra element
\[ \llbracket E_i,E_j\rrbracket=E_i E_j+E_jE_i,\] which is closed and central.

There are two gradings on $\Alg(n,k,\Matching)$; one is an {\em
  Alexander multi-grading} with values in $\Q^{2n}$.  To define this,
we say that an algebra element in $\Alg(n,k,\Matching)$ is {\em pure}
if it is the product of a pure algebra element $b\in\Blg(2n,k)$ with
elements of the form $C_{\{i,j\}}$, with $\{i,j\}\in\Matching$.  The
Alexander grading of such element is the grading of $b$ (an element of
$\Q^{2n}$) plus, for each factor of $C_{\{i,j\}}$, the sum of the
basis vectors $e_i+e_j$.  A non-zero algebra element in $\Alg(n,k,\Matching)$
is called {\em homogeneous of degree $v\in\Q^{2n}$} if it can be
written as a sum of pure algebra elements with multi-grading equal to $v$.

Equivalently, the Alexander grading is characterized by the following properties:
\begin{itemize}
\item Each idempotent is homogenous, with weight specified
by the $0$ vector in $\Q^{2n}$.
\item The algebra elements $L_i$ and $R_i$
are both homogeneous, and their weight is
half of the $i^{th}$ standard basis vector $e_i$ in $\Q^{2n}$.
\item The weight of $U_i$ is $e_i$.
\item The weight of $C_{i,j}$ is $e_i+e_j$.
\item Weight is additive under multiplication; i.e.
  if $a$ and $b$ are homogeneous algebra elements
  whose product $a\cdot b$ is non-zero,
  then $a\cdot b$ is homogeneous, too; and 
  \[ w_i(a\cdot b)=w_i(a)+w_i(b)\]
  for all $i=1,\dots,2n$.
\end{itemize}

There is a second grading
$\Delta$, specified by its values on 
pure algebra elements, by
\[ \Delta(a)=\#(\text{$C_{i,j}$~that divide $a$})-\sum_{i} w_i(a).\]

There are analogous gradings on $\Alg'(n,k,\Matching)$, as above.  A
pure algebra element is a product of a pure algebra element in
$\Blg(2n,k)$ and some word in the various $E_i$.   The weight of
$E_i$ is $e_i$, and
\[ \Delta(a)=\#(\text{$E_j$~that divide $a$})-\sum_{i} w_i(a),\]
where now the number of $E_j$ is counted with multiplicity. For example,
if $\{1,2\}\in\Matching$, then $E_1\cdot E_2\cdot E_1\neq 0$, and
\[\Delta(E_1\cdot E_2 \cdot E_1)=0
\qquad{\text{and}}\qquad 
\Delta(E_1 \cdot E_2\cdot U_1)= -1.\]

\subsection {Canonical $DD$-bimodules}
\label{sec:DefCanonDD}

Let 
\begin{equation}
  \label{eq:SpecifyCanonicalDDAlgebras}
    \Alg=\Alg(n,k_1,\Partition), \ \ \ \DuAlg=\DuAlg(n,k_2,\Partition)
\end{equation}
where $k_1+k_2=2n+1$.

Note that there is a natural one-to-one correspondence between the
$I$-states for $\Alg$ and those for 
$\DuAlg$: if $\x\subset \{0,\dots,2n\}$
is a $k_1$-element subset, then its complement $\x'$ is a
$k_2$-element subset of $\{0,\dots,2n\}$. In this case, we say that
$\x$ and $\x'$ are {\em complementary $I$-states}.

A $DD$ bimodule over $\Alg-\DuAlg$ is specified as follows.
Let $\CanonDD$ be the $\Field$-vector space whose generators $\gen_\x$ correspond
to $I$-states for $\Alg(n,k_1,\Partition)$. 
We give $\CanonDD$ the structure of 
a left module over $\IdempRing(\Alg)\otimes \IdempRing(\DuAlg)$,
so that the action of $\IdempRing(\Alg)\otimes \IdempRing(\DuAlg)$ is specified
by
\[(\Idemp{\y} \otimes \Idemp{\w})  \cdot \gen_\x=
\left\{\begin{array}{ll}
\gen_\x &{\text{if $\x=\y$ and $\w$ is complementary to $\x$}} \\ 
0 & {\text{otherwise.}}
\end{array}\right.\]

The algebra element
\[
A = \sum_{i=1}^{2n} \left(L_i\otimes R_i + R_i\otimes L_i\right) + \sum_{i=1}^{2n}
U_i\otimes E_i + 
\sum_{\{i,j\}\in\Partition} C_{\{i,j\}}\otimes \llbracket E_i,E_j\rrbracket\in
  \Alg\otimes\DuAlg\]
specifies a map
\[ \delta^1 \colon \CanonDD \to \Alg\otimes\DuAlg \otimes \CanonDD.\]
by
$\delta^1(v)=A\otimes v$
(where the tensor product is taken over $\IdempRing(\Alg)\otimes \IdempRing(\DuAlg)$).

\begin{lemma}
  \label{lem:CanonicalIsDD}
  The map $\delta^1$ satisfies the type $DD$ structure relation.
\end{lemma}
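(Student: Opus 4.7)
The plan is to verify the type $DD$ structure relation for $\delta^1$ directly. Since $\CanonDD$ has exactly one generator $\gen_\x$ in each allowed idempotent pair $\Idemp{\x}\otimes\Idemp{\x'}$ (where $\x'$ is the complement of $\x$) and $\delta^1$ is defined by multiplication against a single element $A\in\Alg\otimes\DuAlg$, the structure relation unpacks into the Maurer--Cartan--style equation $dA+A\cdot A=0$ in $\Alg\otimes\DuAlg$, checked in each complementary idempotent pair. I split $A=A_1+A_2+A_3$ with
\[A_1=\sum_i(L_i\otimes R_i+R_i\otimes L_i),\quad A_2=\sum_i U_i\otimes E_i,\quad A_3=\sum_{\{i,j\}\in\Matching}C_{\{i,j\}}\otimes\llbracket E_i,E_j\rrbracket,\]
and compute the two sides piece by piece.

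First I compute $dA$. The pieces of $A_1$ are closed; the differential on $A_2$ yields $\sum_i U_i\otimes U_i$ using $dE_i=U_i$. For $A_3$, we get $dC_{\{i,j\}}\otimes\llbracket E_i,E_j\rrbracket+C_{\{i,j\}}\otimes d\llbracket E_i,E_j\rrbracket$; the second piece vanishes because $d\llbracket E_i,E_j\rrbracket=U_iE_j+E_iU_j+U_jE_i+E_jU_i=2(U_iE_j+U_jE_i)=0$ in characteristic $2$, using that $E_i$ commutes with $U_j$ in $\DuAlg$. Thus $dA=\sum_i U_i\otimes U_i+\sum_{\{i,j\}\in\Matching}U_iU_j\otimes\llbracket E_i,E_j\rrbracket$.

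Next I compute $A\cdot A$. The product $A_1\cdot A_1$ is the familiar Koszul term: the identities $L_iR_i=U_i\cdot\Idemp{i-1\in\x,\,i\notin\x}$ and $R_iL_i=U_i\cdot\Idemp{i\in\x,\,i-1\notin\x}$ on both the $\Alg$ and $\DuAlg$ sides, combined with $L_{i+1}L_i=0$, $R_iR_{i+1}=0$, and the complementarity of idempotents, produce $A_1\cdot A_1=\sum_i U_i\otimes U_i$; this is essentially the calculation of Lemma~\ref{BK:lem:CanonicalIsDD} of~\cite{BorderedKnots} applied to $\Blg(2n,k_1)\otimes\Blg(2n,k_2)$. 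Every mixed cross term $A_p\cdot A_q+A_q\cdot A_p$ with $\{p,q\}\in\{\{1,2\},\{1,3\},\{2,3\}\}$ vanishes over $\Zmod{2}$: the factors $U_j$ and $C_{\{i,j\}}$ are central in $\Alg$, while $E_j$ commutes with every generator of $\Blg(2n,k_2)\subseteq\DuAlg$ and $\llbracket E_i,E_j\rrbracket$ is central in $\DuAlg$, so the two orderings agree and cancel in pairs. For $A_2\cdot A_2=\sum_{i,j}U_iU_j\otimes E_iE_j$, the diagonal terms die because $E_i^2=0$; the off-diagonal contributions pair $(i,j)$ with $(j,i)$ into $U_iU_j\otimes(E_iE_j+E_jE_i)$, which vanishes when $\{i,j\}\notin\Matching$ and equals $U_iU_j\otimes\llbracket E_i,E_j\rrbracket$ when $\{i,j\}\in\Matching$. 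Finally $A_3\cdot A_3=0$: the diagonal terms die because $C_{\{i,j\}}^2=0$, and off-diagonal pairs cancel because both tensor factors are central, so $C_IC_J\otimes\llbracket E_I\rrbracket\llbracket E_J\rrbracket+C_JC_I\otimes\llbracket E_J\rrbracket\llbracket E_I\rrbracket=2C_IC_J\otimes\llbracket E_I\rrbracket\llbracket E_J\rrbracket=0$.

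Adding everything together, the two summands of $dA$ cancel exactly against $A_1\cdot A_1$ and $A_2\cdot A_2$ over $\Zmod{2}$, giving $dA+A\cdot A=0$ and hence the $DD$ structure relation. The main obstacle is the identity $A_1\cdot A_1=\sum_i U_i\otimes U_i$, which demands careful tracking of how the $\Blg$-relations interact through complementary idempotents; this is essentially the content already established in~\cite{BorderedKnots}, so the genuinely new ingredient is the algebraic matching between $dE_i$, $dC_{\{i,j\}}$ and the square $A_2\cdot A_2$, which is precisely the statement that $\Matching$ is compatible with the Koszul-dual pairing.
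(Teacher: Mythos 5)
Your proposal is correct and follows exactly the route the paper takes: reduce the $DD$ structure relation to the Maurer--Cartan equation $dA+A\cdot A=0$ in $\Alg\otimes\DuAlg$, cite the earlier paper for the Koszul identity $A_1\cdot A_1=\sum_i U_i\otimes U_i$, and match the remaining terms ($\sum_i U_i\otimes U_i$ from $dE_i=U_i$, and $\sum_{\{i,j\}\in\Matching}U_iU_j\otimes\llbracket E_i,E_j\rrbracket$ from $dC_{\{i,j\}}=U_iU_j$ against $A_2\cdot A_2$). The paper leaves this verification as "straightforward"; you have simply written it out in full, with all the cancellations correctly justified by centrality and the $\Zmod{2}$ coefficients.
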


\begin{proof}
  This is equivalent to the statement that
  \[ dA + A\cdot A =0, \]
  thought of as an element of $\Alg\otimes \DuAlg$.
  This is a straightforward verification; compare~\cite[Lemma~\ref{BK:lem:CanonicalIsDD}]{BorderedKnots}.
\end{proof}

The above defined type $DD$ bimodule $\CanonDD$ is called the 
{\em canonical type $DD$ bimodule} over  $\Alg(n,k_1,\Partition)$ and $\DuAlg(n,k_2,\Partition)$.

\subsection{Koszul duality}
\label{subsec:Koszul}

We consider the candidate inverse module
\[ \Ynew_{\Alg',\Alg}=\Mor^{\Alg}(\lsub{\Alg'}\Alg'_{\Alg'}\DT~ \lsup{\Alg,\DuAlg}\CanonDD,~\lsup{\Alg}\Id_{\Alg}).\]

In words: take $\lsup{\Alg,\DuAlg}\CanonDD$, and turn it into a type
$D$ structure over $\Alg$ by forming the tensor product over $\DuAlg$
with the algebra $\DuAlg$, viewed as a bimodule itself; denote this
type $D$ structure
$\lsub{\Alg'}\Alg'_{\Alg'}\DT~\lsup{\Alg,\DuAlg}\CanonDD$.  The space
of type $D$ morphisms from this type $D$ structure to the identity
$DA$ bimodule $\lsup{\Alg}\Id_{\Alg}$ inherits the structure of a
$\DuAlg$-$\Alg$ module, where the (right) action by $\DuAlg$ is induced from
the left action of $\DuAlg$ on
$\lsub{\Alg'}\Alg'_{\Alg'}\DT~\lsup{\Alg,\DuAlg}\CanonDD$; and the
action by $\Alg$ is induced on the right action on
$\lsup{\Alg}\Id_{\Alg}$.

Note that the pure algebra elements give a basis for $\DuAlg$. If $a\in \DuAlg$ is a pure algebra element, let ${\overline a}$ denote the linear map
$\DuAlg$ to $\Zmod{2}$ which is non-trivial on $a$ and vanishes on all other pure algebra elements. 

As a vector space, $\Ynew$ is spanned by elements of the form
$({\overline a}|b)$, 
where $a\in\Alg'$ and $b\in\Alg$
are pure algebra elements,
subject to the constraint that the left idempotent of ${\overline a}$
(i.e. the right idempotent of $a$) is complementary to the left idempotent of
$b$. Thus, the vector space $\Ynew$ is spanned by pairs of 
pure non-zero algebra elements $a\in\Alg'$ and $b\in\Alg$, with
$a=a\cdot \Idemp{\x}$, $b=\Idemp{\y}\cdot b$, so that $\x$ and $\y$
are complementary idempotent states; the corresponding generator is
denoted $({\overline a}|b)$.

The differential on $\Ynew$ has the form
\begin{align*}
  \partial({\overline a}|b)&=
({\overline a}|d b) + 
({\overline d}({\overline a})|b) \\
&+ \sum_i  (L_i\cdot {\overline a} | R_i \cdot b) +
 (R_i\cdot  {\overline a} | L_i \cdot b) +
 (E_i\cdot {\overline a} | U_i \cdot b) \\ 
& + \sum_{\{i,j\}\in\Partition}
 (\llbracket E_i,E_j\rrbracket  \cdot {\overline a}| C_{i,j}\cdot b).
\end{align*}
The right $\DuAlg$-$\Alg$ bimodule structure on $\Ynew$ is specified by
\[ ({\overline a}|b)\cdot
 (a_2\otimes a_1) =(\xi\mapsto {\overline a}(a_2\cdot \xi)\big| b \cdot
a_1) \]

\begin{thm}
  \label{thm:InvertibleDD}
  The canonical type $DD$ bimodule $\lsup{\Alg,\DuAlg}\CanonDD$ is invertible.
\end{thm}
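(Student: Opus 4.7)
The plan is to show that the DA bimodule $\Ynew_{\Alg',\Alg}$ is a homotopy inverse for $\lsup{\Alg,\DuAlg}\CanonDD$ by verifying that the tensor product $\Ynew \DT \CanonDD$ is quasi-isomorphic to the identity DA bimodule $\lsup{\Alg'}\Id_{\Alg'}$. This parallels the proof of Theorem~\ref{BK:thm:DDisInvertible} in \cite{BorderedKnots}, with additional terms in the canonical bimodule element $A$ coming from the matching generators $C_{\{i,j\}} \otimes \llbracket E_i, E_j\rrbracket$ and the $U_i \otimes E_i$ summands; once the candidate inverse is established, invertibility in the sense required then follows from Proposition~\ref{BK:prop:AdaptedTensorProducts}.

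First I would verify that the formula for $\partial$ on $\Ynew$ together with the specified right $\Alg' \otimes \Alg$ action defines a valid DA bimodule structure. The key check $\partial^2 = 0$ reduces, after unpacking definitions and using $d^2 = 0$ in $\Alg$ and $\DuAlg$, to the equation $dA + A \cdot A = 0$ that underlies Lemma~\ref{lem:CanonicalIsDD}; compatibility with the right action follows from the associativity of multiplication in $\Alg$ and $\DuAlg$ and the fact that $\CanonDD$'s structure is given by left multiplication by $A$.

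Next I would compute the homology of $\Ynew \DT \CanonDD$ in each pair of $\Alg'$-idempotents. Concretely, the relevant summand is spanned by triples $({\overline a} \,|\, b \,|\, \gen_\z)$ with $\z$ complementary to the left idempotent of $b$, and the differential combines the internal differential on $\Ynew$ with the action of $A$ on $\gen_\z$. I would introduce a filtration — for example, by total degree in the generators $C_{\{i,j\}}$ and $\llbracket E_i, E_j\rrbracket$ — so that the associated graded splits into a piece mirroring the matching-free situation of \cite{BorderedKnots} and auxiliary pieces built from the matching generators. The first piece contributes rank-one homology by Proposition~\ref{BK:prop:RankOneHomology}, while the auxiliary pieces are acyclic thanks to the relations $C_{\{i,j\}}^2 = 0$, $dC_{\{i,j\}} = U_iU_j$, and the parallel relations $E_i^2 = 0$, $dE_i = U_i$, $d\llbracket E_i,E_j\rrbracket = 0$ on the $\DuAlg$ side, which provide pairwise cancellation. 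Once the homology is identified as rank-one in complementary idempotents, homological perturbation (Lemma~\ref{BK:lem:HomologicalPerturbation2}) transfers the DA structure, and a direct check identifies the transferred structure with $\lsup{\Alg'}\Id_{\Alg'}$.

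The main obstacle will be step two: constructing a filtration under which the spectral sequence degenerates cleanly, and carefully tracking the interplay between the $U_iU_j$ boundaries of $C_{\{i,j\}}$ and the $E_i E_j$-type monomials in $\DuAlg$, since these intertwine nontrivially with the bar differential. This interaction is the genuinely new content beyond \cite{BorderedKnots}; once the desired cancellation is verified, the rest of the argument follows the established pattern, and the rank-one calculation upgrades to the required DA bimodule isomorphism.
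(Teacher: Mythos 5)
Your high-level strategy coincides with the paper's: take $\Ynew$ as the candidate inverse, reduce to the matching-free computation of~\cite{BorderedKnots} (rank-one homology of the analogous complex $Y$ over $\Blg_1$ and $\Blg_2$), and argue that the generators coming from the matching contribute acyclically. The fact that the paper computes the homology of the morphism complex $\Ynew$ itself rather than of $\Ynew\DT\CanonDD$ is a reformulation, not a different proof. The problem is that the step you set aside as ``the main obstacle'' is exactly the new mathematical content of the theorem, and the device you propose for it does not obviously work. A filtration by ``total degree in $C_{\{i,j\}}$ and $\llbracket E_i,E_j\rrbracket$'' runs into two issues. First, pure elements of $\DuAlg$ are words in the individual $E_i$ (e.g.\ $E_1E_2E_1$ when $\{1,2\}\in\Matching$), not in the brackets $\llbracket E_i,E_j\rrbracket$, so the proposed degree is not defined on a basis of $\overline{\Alg'}$. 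Second, the differential mixes the counts in both directions: $({\overline a}|db)$ lowers the $C$-count via $dC_{\{i,j\}}=U_iU_j$; the term $(\llbracket E_i,E_j\rrbracket\cdot{\overline a}|C_{i,j}\cdot b)$ raises the $C$-count on one factor while lowering the bracket-count on the other; and $(E_i\cdot{\overline a}|U_i\cdot b)$ together with ${\overline d}({\overline a})$ changes the $E$-count without touching the $C$-count. You would need to verify monotonicity and then actually compute the associated graded; neither is done, so the claimed ``pairwise cancellation'' of the auxiliary pieces is an assertion rather than an argument.

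What the paper does instead, and what your proof is missing, is an explicit contraction. For each $\{i,j\}\in\Matching$ it defines $h_0^{\{i,j\}}$ on $\overline{\Alg'}$ by $h_0^{\{i,j\}}({\overline a})=\overline{\llbracket E_i,E_j\rrbracket\cdot a}$ (or $\overline{E_i E_j\cdot a}$ when neither $E_i\cdot a$ nor $E_j\cdot a$ vanishes); this is a one-sided inverse to multiplication by $\llbracket E_i,E_j\rrbracket$ that commutes with ${\overline d}$ and with multiplication by elements not divisible by $E_i$ or $E_j$. It then sets $h^{\{i,j\}}({\overline a}|C_{\{i,j\}}\cdot b)=(h_0^{\{i,j\}}({\overline a})|b)$ and checks that $\Pi^{\{i,j\}}=\Id+\partial\circ h^{\{i,j\}}+h^{\{i,j\}}\circ\partial$ maps into the subcomplex $\SubCx^{\{i,j\}}$ of pairs with $b$ not divisible by $C_{\{i,j\}}$ and $\llbracket E_i,E_j\rrbracket\cdot{\overline a}=0$, while preserving each $\SubCx^{\{i',j'\}}$. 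The composite of the $\Pi^p$ retracts $\Ynew$ onto $\bigcap_{p\in\Matching}\SubCx^p$, which Lemma~\ref{lem:IdentifySubcomplex} identifies with the image of the matching-free complex $Y$; the rank-one statement then follows from~\cite{BorderedKnots}. Supplying this contraction (or carrying out the filtration argument with the checks above) is what your proposal still needs.
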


The above theorem is proved by reducing to a case considered
in~\cite{BorderedKnots}. We review the necessary background, first.

Consider the algebras $\Blg_1=\Blg(2n,k,\emptyset)$ and
$\Blg_2=\Blg(2n,2n+1-k,\{1,\dots,2n\})$.  There is a type $DD$ bimodule
over $\Blg_1$ and $\Blg_2$, $\lsup{\Blg_1,\Blg_2}\CanonDD$, whose
generators correspond to pairs of complementary idempotent states, and
whose differential is specified by the algebra element
\[
\sum_{i=1}^{2n} \left(L_i\otimes R_i + R_i\otimes L_i\right) + \sum_{i=1}^{2n}
U_i\otimes C_i \in \Blg_1\otimes\Blg_2.\]

The candidate inverse  to $\lsup{\Blg_1,\Blg_2}\CanonDD$ 
is given by
\[
Y_{\AlgB_1,\AlgB_2}=\Mor^{\AlgB_1}((\lsub{\AlgB_2}{(\AlgB_2)}_{\AlgB_2})\DT_{\AlgB_2}~
(\lsup{\AlgB_1,\AlgB_2}\CanonDD),\lsup{\AlgB_1}\Id_{\AlgB_1}).\] 
(This is naturally a  $\AlgB_2-\AlgB_1$-bimodule, with both
actions on the right.)
Explicitly, as a vector space, $Y$ is spanned by
elements of the form $({\overline a}\big| b)$, where ${\overline
  a}\in{\overline{\AlgB_2}}$ and $b\in\AlgB_1$, where here
${\overline{\AlgB}}$ is opposite bimodule to $\AlgB$, subject the
restriction that the left idempotent of ${\overline a}$ is
complementary to the left idempotent of $b$.  Recall that ${\overline
  {\AlgB_2}}$ is the $\AlgB_2$-bimodule consisting of maps from
$\AlgB_2$ to $\Field$.

The differential on $Y$ is given by
\[ \partial ({\overline a}|b)
= \sum_{i=1}^{2n}
 (L_i\cdot {\overline a} | R_i \cdot b) + 
 (R_i\cdot  {\overline a} | L_i \cdot b) +
 (C_i\cdot {\overline a} | U_i \cdot b) +
 ({\overline d}({\overline a})|b) +
 ({\overline a}| db).
\]
The key step in showing that $\lsup{\Blg_1,\Blg_2}\CanonDD$ is
invertible consists of verifying that the homology of $Y$ is generated
by elements of the form $({\overline{\Idemp{\x'}}} |\Idemp{\x} )$,
where $\x$ and $\x'$ are complementary idempotents. (This is proved
in~\cite[Proposition~\ref{BK:prop:RankOneHomology}]{BorderedKnots}.)
This argument can be used to verify that $\lsup{\Alg,\DuAlg}\CanonDD$
is invertible, as well:

To relate $Y$ and $\Ynew$, we use the following maps.
First, observe that there is a canonical map 
of differential graded algebras
\[ \alpha\colon \Alg'\to\Blg_2\]
with 
\[ \alpha(\Idemp{\x})=\Idemp{\x},\qquad \alpha(L_i)=L_i,\qquad \alpha(R_i)=R_i,
\qquad 
\alpha(U_i)=U_i,\qquad \alpha(E_i)=C_i.\]
This map is surjective, and so it has an injective dualization
\[ {\overline \alpha}\colon {\overline \Blg}_2\to {\overline \Alg}'.\]
Also, there is a canonical inclusion of $\Blg_1$ into $\Alg$. These
tensor together to give a map
\[ \phi\colon {\overline \Blg}_2\otimes \Blg_1 \to {\overline
  \Alg}'\otimes \Alg,\] where the tensors are taken over the ring of
idempotents of the rings $\Blg_1$, $\Blg_2$, $\Alg$, and $\DuAlg$
(all of which are canonically identified).

\begin{lemma}
  \label{lem:IdentifySubcomplex}
  For $\{i,j\}\in\Matching$, let $\SubCx^{\{i,j\}}\subset {\overline
    \Alg}'\otimes\Alg$ be the subset generated by pairs $({\overline
    a}|b)$ where $b$ is a pure algebra element of $\Alg$ which is not
  divisible by $C_{\{i,j\}}$,
  and $\llbracket E_i, E_j\rrbracket\cdot
  {\overline a}=0$.
  \begin{enumerate}[label=(C-\arabic*),ref=(C-\arabic*)]
    \item 
      \label{eq:IsASubCx}
      $\SubCx^{\{i,j\}}$ is a subcomplex of $\Ynew$.
    \item 
      \label{eq:IsAChainMap}
      The above defined map $\phi$ gives an injective chain map
      from $Y$ to $\Ynew$, and hence realizing
      $Y$ as a subcomplex of $\Ynew$.
    \item 
      \label{eq:IdentifyImagePhi}
      The  intersection $\bigcap_{p\in\Matching}\SubCx^p$ 
      is the image of $\phi$.
  \end{enumerate}
\end{lemma}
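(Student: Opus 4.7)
The plan is to prove the three parts in order, with the key leverage coming from two algebraic facts: the elements $\llbracket E_i,E_j\rrbracket\in\Alg'$ are closed and central, and the dg map $\alpha\colon\Alg'\to\Blg_2$ sends $\llbracket E_i,E_j\rrbracket$ to $C_iC_j+C_jC_i=0$ (using characteristic $2$ and centrality of the $C_i$ in $\Blg_2$).

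For \ref{eq:IsASubCx}, I check term by term that $\partial({\overline a}|b)$ lies in $\SubCx^{\{i,j\}}$ whenever $({\overline a}|b)$ does. On the $b$-coordinate, the differential $d$ in $\Alg$ reduces rather than creates $C$-factors since $dC_{\{i,j\}}=U_iU_j$, and multiplication by $L_\ell,R_\ell,U_\ell$ or $C_{\{i',j'\}}$ (with $\{i',j'\}\neq\{i,j\}$) never introduces new $C_{\{i,j\}}$ factors; the only potentially offending term is $(\llbracket E_i,E_j\rrbracket\cdot{\overline a}|C_{\{i,j\}}\cdot b)$, which vanishes by the hypothesis $\llbracket E_i,E_j\rrbracket\cdot{\overline a}=0$. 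On the ${\overline a}$-coordinate, centrality of $\llbracket E_i,E_j\rrbracket$ in $\Alg'$ means its left action on ${\overline\Alg'}$ commutes with every other left action appearing in $\partial$ (including ${\overline d}$, since $\llbracket E_i,E_j\rrbracket$ is closed), so the condition $\llbracket E_i,E_j\rrbracket\cdot{\overline a}=0$ is preserved.

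For \ref{eq:IsAChainMap}, injectivity of $\phi$ follows from the injectivity of ${\overline\alpha}$ (as $\alpha$ is surjective) and of the inclusion $\Blg_1\hookrightarrow\Alg$. To see $\phi$ is a chain map, I compare $\partial^Y({\overline a}|b)$ with $\partial^{\Ynew}\phi({\overline a}|b)$: since ${\overline\alpha}$ intertwines left multiplications, in the sense that ${\overline\alpha}(C_\ell\cdot{\overline a})=E_\ell\cdot{\overline\alpha}({\overline a})$ and similarly for $L_\ell,R_\ell$, and commutes with ${\overline d}$, each term of $\partial^Y$ is carried to its counterpart in $\partial^{\Ynew}$. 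The extra terms $(\llbracket E_i,E_j\rrbracket\cdot{\overline\alpha}({\overline a})|C_{\{i,j\}}\cdot b)$ appearing in $\partial^{\Ynew}$ vanish because $\bigl(\llbracket E_i,E_j\rrbracket\cdot{\overline\alpha}({\overline a})\bigr)(\eta)={\overline a}(\alpha(\eta)\cdot\alpha(\llbracket E_i,E_j\rrbracket))=0$.

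For \ref{eq:IdentifyImagePhi}, the inclusion $\mathrm{Image}(\phi)\subseteq\bigcap_{p\in\Matching}\SubCx^p$ is already contained in the computation above: the $b$-coordinate of $\phi({\overline a}|b)$ lies in $\Blg_1$ and so is divisible by no $C_p$, while the ${\overline a}$-coordinate is annihilated by every $\llbracket E_i,E_j\rrbracket$. For the reverse inclusion, I will identify $\ker(\alpha)$ as the two-sided ideal in $\Alg'$ generated by $\{\llbracket E_i,E_j\rrbracket:\{i,j\}\in\Matching\}$: this follows by comparing the defining relations of $\Alg'$ and $\Blg_2$, since the only relations present in $\Blg_2$ but absent in $\Alg'$ are precisely the commutation relations $C_iC_j+C_jC_i=0$ on matched pairs, i.e.\ the vanishing of these brackets. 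Dualizing, ${\overline a}\in\mathrm{Image}({\overline\alpha})$ iff ${\overline a}$ annihilates this ideal; by centrality of each $\llbracket E_i,E_j\rrbracket$, that condition is equivalent to $\llbracket E_i,E_j\rrbracket\cdot{\overline a}=0$ for all $\{i,j\}\in\Matching$. Combined with the condition on $b$, this exhibits $({\overline a}|b)$ as lying in $\mathrm{Image}(\phi)$. The main obstacle will be this last step, in particular pinning down $\ker(\alpha)$ explicitly as a two-sided ideal and cleanly translating between its annihilator and the left-action condition; the earlier parts are essentially centrality bookkeeping.
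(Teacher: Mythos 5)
Your proposal is correct, and parts \ref{eq:IsASubCx} and \ref{eq:IsAChainMap} run essentially parallel to the paper's argument: the paper also reduces \ref{eq:IsASubCx} to the centrality and closedness of $\llbracket E_i,E_j\rrbracket$, and for \ref{eq:IsAChainMap} it verifies the same vanishing $\llbracket E_i,E_j\rrbracket\cdot{\overline\alpha}({\overline a})=0$, just phrased dually (it computes ${\overline\alpha}({\overline{C_iC_j}})={\overline{E_iE_j}}+{\overline{E_jE_i}}$ explicitly rather than observing $\alpha(\llbracket E_i,E_j\rrbracket)=0$ upstairs; these are the same fact). Where you genuinely diverge is the reverse inclusion in \ref{eq:IdentifyImagePhi}. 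The paper argues concretely: it shows $\SubCx^{\{i,j\}}$ is spanned by $({\overline a}|b)$ with ${\overline a}$ in the span of ${\overline c}$, ${\overline{E_ic}}$, ${\overline{E_jc}}$, and ${\overline{E_iE_jc}}+{\overline{E_jE_ic}}$ for $c$ not divisible by $E_i,E_j$, and then runs an induction over the matched pairs. You instead identify $\ker\alpha$ with the central two-sided ideal generated by the brackets and dualize, trading the induction for the general fact $\Image({\overline\alpha})=\mathrm{Ann}(\ker\alpha)$. This is cleaner structurally, but be aware that the one step you wave at --- ``$\ker\alpha$ is the ideal generated by the $\llbracket E_i,E_j\rrbracket$, by comparing defining relations'' --- is not automatic from presentations alone: you need to check that $\DuAlg/I\to\Blg_2$ is injective, which amounts to exhibiting a normal form (modulo $I$ every word in the $E$'s reduces to a square-free ordered monomial, since $E_iE_jE_i\equiv E_i^2E_j=0$, and these map bijectively to the basis monomials $C_{i_1}\cdots C_{i_k}$ of $\Blg_2$). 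That normal-form computation is exactly the combinatorial content the paper's spanning-set-plus-induction handles explicitly, so the two proofs end up doing comparable work; yours packages it more conceptually, while the paper's avoids having to discuss annihilators in the restricted dual. With that one step filled in, your argument is complete.
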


\begin{proof}
  First, we note that  $d \llbracket E_i,E_j\rrbracket=0$.
  It follows immediately that 
  \[ \llbracket E_i,E_j\rrbracket \cdot ({\overline d}({\overline a}))
  = {\overline d}(\llbracket E_i,E_j\rrbracket\cdot  {\overline a}); \]
  from which Condition~\ref{eq:IsASubCx} is an easy consequence.

  Next, we verify Condition~\ref{eq:IsAChainMap}.  It is easy to see
  that for $({\overline a}|b)\in{\overline \Blg_2}\otimes \Blg_1$,
  \[ \phi(\partial({\overline a}|b))=\partial(\phi({\overline a}|b)) + 
  \sum_{\{i,j\}\in\Matching} (\llbracket E_i,E_j\rrbracket \cdot
  {\overline\alpha}({\overline a})|C_{i,j} \cdot b).\]
  It remains to check that 
  \begin{equation}
    \label{eq:RemainsToCheck}
    \llbracket E_i,E_j\rrbracket \cdot {\overline\alpha}({\overline a})=0
  \end{equation}
  for all ${\overline a}\in{\overline\Blg_2}$.
  To this end, note that
  $\alpha(E_i
  E_j)=\alpha(E_j E_i)=C_i C_j$; and dually
  \[{\overline \alpha}({\overline{C_i\cdot  C_j}})=
  {\overline{E_i \cdot E_j}} + {\overline {E_j \cdot E_i}}.\]
  
  To verify Condition~\ref{eq:IdentifyImagePhi}, observe that
  Equation~\eqref{eq:RemainsToCheck} gives the containment
  \[ \Image(\phi)\subseteq \bigcap_{p\in\Matching}\SubCx^p.\]

  For the other containment, a straightforward computation shows that
  $\SubCx^{\{i,j\}}$ is contained in the span of $({\overline a}|b)$,
  where $b$ is a pure algebra element not divisible by $C_{\{i,j\}}$;
  and ${\overline a}$ is in the span of ${\overline c}$,
  ${\overline{E_i \cdot c}}$, ${\overline{E_j \cdot c}}$, and
  ${\overline{E_i E_j c}}+{\overline{E_j E_i c}}$, where $c$ is a pure
  algebra element not divisible by $E_i$ or $E_j$.  A straightforward
  induction on the number of $E_k$ so that $E_k\cdot {\overline a}=0$
  now shows that ${\overline c}$ is in the image of ${\overline
    \alpha}$.  
\end{proof}

\begin{proof}[Proof of Theorem~\ref{thm:InvertibleDD}] 
  We wish to show that $\phi$ induces an isomorphism on homology.

  Fix any $\{i,j\}\in\Matching$ with $i<j$, and consider the associated
  linear map $h_0^{\{i,j\}}\colon {\overline\Alg}'\to {\overline\Alg'}$ 
  characterized by its values on ${\overline a}\in{\overline \Alg'}$ 
  dual to a pure algebra element $a$, specified as
  \[ h_0^{\{i,j\}}({\overline a})=\left\{\begin{array}{ll}
      {\overline {\llbracket E_i,E_j\rrbracket \cdot a}} & {\text{if $E_i \cdot a = 0$ or $E_j\cdot a=0$,}} \\ \\
      {\overline {E_i \cdot E_j\cdot a}} &{\text{otherwise.}}
      \end{array}\right.\]
    The following identities are easily verified:
    \begin{align}
      \label{eq:EasyToSee}
      \llbracket E_i,E_j\rrbracket \cdot h_0^{\{i,j\}}({\overline a}) &= {\overline a} \\
      {\overline d}(h_0^{\{i,j\}}({\overline a}))&=h_0^{\{i,j\}}({\overline d}({\overline a})) \label{eq:Easy2}
      \\
       c\cdot h_0^{\{i,j\}}({\overline a}) &= h_0^{\{i,j\}}(c\cdot {\overline a}), \label{eq:Easy3}
      \end{align}
      for any pure algebra element $c$ not divisible by $E_i$ or $E_j$.

  Now, fix some $\{i,j\}\in\Matching$, and
  consider the operator on $h^{\{i,j\}}\colon \Ynew\to \Ynew$ given by
  $h^{\{i,j\}}({\overline a}|C_{\{i,j\}}\cdot b)=(h_0^{\{i,j\}}({\overline a})|b)$.
  Let 
  \[ \Pi^{\{i,j\}}=\Id + \partial \circ h^{\{i,j\}} + h^{\{i,j\}}\circ\partial.\]

  We argue that $\Pi^{\{i,j\}}$ maps into $\SubCx^{\{i,j\}}$.  First
  consider an element of the form $({\overline a}|C_{\{i,j\}}\cdot
  b)$.  The component of $\Pi^{\{i,j\}}({\overline a}|C_{\{i,j\}}b)$ in
  $({\overline a}|C_{\{i,j\}} b)$ vanishes by
  Equation~\eqref{eq:EasyToSee}; and the other components vanish by
  Equations~\eqref{eq:Easy2} and~\eqref{eq:Easy3}.  Next, consider an
  element of the form $({\overline a}|b)$ where $b$ is pure and
  $C_{\{i,j\}}$ does not divide $b$. Then,
  \[ \Pi^{\{i,j\}}({\overline a}|b)=({\overline a}+h_0^{\{i,j\}}(\llbracket E_i,E_j\rrbracket \cdot {\overline a})|b).\]
  From Equation~\eqref{eq:EasyToSee}, it follows that 
  \[\llbracket E_i,E_j\rrbracket \left({\overline a}+h_0^{\{i,j\}}(\llbracket E_i,E_j\rrbracket \cdot {\overline a}\right)=0.\]

  We claim also that for all $\{i',j'\}\in\Matching$, the 
  map
  $\Pi^{\{i,j\}}$ maps $\SubCx^{\{i',j'\}}$ into
  itself. This follows from the fact that the elements $\llbracket E_{i'},E_{j'}\rrbracket$
  are both central and closed, and they commute with $h_0^{\{i,j\}}$ (Equation~\eqref{eq:Easy3}). 
  Thus, the composition $\Pi$ of all the $\Pi^{p}$ for $p\in\Matching$
  induces a quasi-isomorphism of
  ${\overline\Alg'}\otimes \Alg$ with its subcomplex
  $\cap_{p}\SubCx^{p}$; i.e. in view of
  Lemma~\ref{lem:IdentifySubcomplex}, the inclusion map $\phi$ is a
  quasi-isomorphism.
\end{proof}

\subsection{Symmetries}
\label{sec:AlgSymm}
The algebras $\Alg$ and $\Alg'$ inherit symmetries from $\Blg$;
cf.~\cite[Section~\ref{BK:subsec:Symmetry}]{BorderedKnots}.

Consider the map $\rho_n\colon \{0,\dots,2n\}\to \{0,\dots,2n\}$ with
$\rho_n(i)=2n-i$; and let $\rho'_n\colon
\{1,\dots,2n\}\to\{1,\dots,2n\}$ be the map 
\[ \rho'_n(i)=2n+1-i.\] We
will drop the subscript $n$ on $\rho$ and $\rho'$ when it is clear
from the context. There is a map
\begin{equation}
  \label{eq:DefVRot}
  \VRot\colon \Alg(n,k,\Matching)\to \Alg(n,k,\rho'(\Matching))
\end{equation}
characterized as follows.
First,
\[ \VRot(\Idemp{\x})=\Idemp{\rho(\x)};\]
and if $a\in\Blg(2n,k)$ is non-zero and homogeneous with specified weights 
\[(w_1(a),\dots,w_{2n}(a)),\] 
then $\VRot(a)=b$ is the non-zero element that is homogeneous with specified weights 
$w_{i}(b)=w_{\rho'(i)}(a)$. 
This defines an isomorphism $\VRot \colon \Blg(2n,k)\to\Blg(2n,k)$.

We extend this 
to the desired map on $\Alg$ by
requiring 
\[\VRot(C_p\cdot a)=C_{\rho'(p)}\cdot \VRot(a).\]
Note that for all $i=1,\dots,2n$ 
\[\begin{array}{llll}
 \VRot(L_i)=R_{\rho'(i)} &\VRot(R_i)=L_{\rho'(i)} &
 \VRot(U_i)=U_{\rho'(i)} & \VRot(C_{\{p,q\}})=C_{\{\rho'(p),\rho'(q)\}}.
\end{array}\]
Clearly, this induced map $\VRot$ induces an isomorphism of differential graded algebras.

We can also extend $\VRot$ to an isomorphism
\[ \VRot\colon \DuAlg(n,k,\Matching)\to \DuAlg(n,k,\rho'(\Matching)).\]
For this extension, we require that
\[\VRot(E_i\cdot a)=E_{\rho'(i)}\cdot \VRot(a).\]

Another symmetry is constructed as follows.
Recall that 
the map
$\Opposite(\Idemp{\x})=\Idemp{\x}$
extends to an isomorphism of differential graded algebras
\begin{equation}
  \label{eq:OppositeIsomorphism}
  \Opposite\colon \Blg(2n,k)\to \Blg(2n,k)^{\op},
\end{equation}
with 
\[ \begin{array}{lll}
 \Opposite(L_i)=R_i &\Opposite(R_i)=L_{i} &
 \Opposite(U_i)=U_{i} 
\end{array}\]
The map can be extended to either $\Alg(n,k,\Partition)$ or $\DuAlg(n,k,\Partition)$;
we denote these extensions also by $\Opposite$.

\subsection{Relationship with knot diagrams}

Recall that the algebras used here are associated to slices of a knot
diagram: generic horizontal slices of the knot diagram give rise to an
even number of points. The matchings correspond to pairs of points
that are connected by arcs in the portion above the horizontal slice.

Moreover, if the knot diagram is oriented, the orientation can be used
to choose a preferred section $\Upwards$ of the matching: $\Upwards$
consists of those points on the boundary where the knot is oriented to
point into the upper part of the knot diagram.

In~\cite{BorderedKnots}, we defined an Alexander
grading, depending on the choice of $\Upwards$,
given by
\[ 
  \Alex(a)= -\sum_{s\in \Upwards} w_s(a)+ \sum_{t\not\in \Upwards} w_t (a).
\]

With the choice of orientation, we could now define a Maslov grading
on the algebras $\Alg$ and $\DuAlg$, defined by
\[ \Delta=\Maslov-\Alex.\]
This grading generalizes the Maslov grading from~\cite{BorderedKnots}.
For more on the relationship between this algebra and the one from~\cite{BorderedKnots}, see Section~\ref{sec:Compare}.

\subsection{Gradings}
As in~\cite{BorderedKnots}, the bimodules we will construct here will
have a special grading set.  Let $W_1$ be a disjoint union of finitely
many intervals, equipped with a partition of its boundary $\partial
W_1=Y_1 \cup Y_2$ into two sets of points, where both $|Y_1|$ and
$|Y_2|$ are even.  In this case, we write $W_1\colon Y_1\to Y_2$.

Fix a matching $M_1$ on $Y_1$. There is an equivalence relation on
points in $Y=Y_1\cup Y_2$, generated by the relation $y\sim y'$ if
they are connected by an interval in $W_1$, or $y\sim y'$ if both are in
$Y_1$, and $\{y,y'\}\in M_1$. It is easy to see that each equivalence
class of points contains either zero or two points in $Y_2$; let $M_2$
denote the induced matching on $Y_2$. 

\begin{defn}
  \label{def:Compatible}
  The matching $M_1$ on $Y_1$ and the one-manifold $W_1$ with
  $\partial W_1=Y_1\cup Y_2$ are called {\em compatible} if every
  point point $y\in Y_1$ is equivalent to some point $y'\in Y_2$;
  equivalently, if we think of $M_1$ as a union $W_0$ of abstract arcs
  joining the points in $Y_1$, then $M_1$ and $W_1$ are compatible if
  the one-manifold $W_0\cup W_1$ has no closed components.
\end{defn}

Let
\begin{equation}
\label{eq:DefAlg1Alg2}
\Alg_1=\Alg(|Y_1|/2,s+s_1,\Matching_1)\qquad\text{and}\qquad
\Alg_2=\Alg(|Y_2|/2,s+s_2,\Matching_2),
\end{equation}
where $s_1=s_1(W_1)$ is the number of components of $W_1$ that connect $Y_1$ to itself;
$s_2=s_2(W_2)$ is the number of components of $W_1$ that connect $Y_2$ to itself,
and 
$0\leq s \leq s_0+1$, where $s_0=s_0(W_1)$ is the number of intervals in $W_1$ 
that match some point in $Y_1$ to $Y_2$.
We can think of the Alexander
multi-grading on $\Alg_i$ as taking values in $H^0(Y_i;\Q)$: the weights
of algebra elements are functions on the points in $Y_i$.
The sum of grading groups $H^0(Y_1;\Q)\oplus H^0(Y_2;\Q)$ act on 
$H^1(W,\partial W;\Q)$, via the map
\[ H^0(Y_1)\oplus H^0(Y_2)\to H^1(W,\partial W)\]
given by $(y_1,y_2)\mapsto -d^0(y_1)+d^0(y_2)$.

\begin{defn}
  \label{def:AdaptedDA}
  Fix $W_1$, $\Alg_1$, and $\Alg_2$ as above.  A  type $DA$ bimodule
  $X=\lsup{\Alg_2}X_{\Alg_1}$ is called {\em adapted to $W_1$} if the following conditions hold:
  \begin{enumerate}[label=(Ad-\arabic*),ref=(Ad-\arabic*)]
    \item $X$
      \label{Adapted:grading} 
      is multi-graded by $H^1(W_1,\partial W_1)$, as described above;
      and it has an additional $\Q$-grading, denoted $\Delta_X$, which
      is compatible with the $\Z$-grading on the algebra, in the sense
      that if $b$ and $a_1,\dots,a_\ell$ are $\Delta$-homogeneous
      algebra elements, and $\x,\y$ are $\Delta_X$-homogeneous module
      elements in $X$, so that $b\otimes \y$ appears with non-zero coefficient in
      $\delta^1_\ell(\x,a_1,\dots,a_\ell)$, then
      \[ \Delta(b)+\Delta_X(\y) =
      \Delta_X(\x)+\Delta(a_1)+\dots+\Delta(a_{\ell-1})-\ell+2.\]
  \item \label{Adapted:FD}
    $X$ is a finite dimensional $\Field$ vector space
    \end{enumerate}
  The above definition specializes readily to the case of $D$ modules
    $\lsup{\Alg_2}X$:
    in this case, $\partial W_1=Y_2$ (i.e. $Y_1=\emptyset$).
    Similarly, if we let $\DuAlg_1=\Alg(|Y_1|/2,|Y_1|-(s+s_1),\Matching_1)$, a type $DD$ bimodule
    $X=\lsup{\Alg_2,\DuAlg_1}X$ is called {\em adapted to $W_1$} if 
    the above two properties hold.
\end{defn}

\begin{example}
  The type $DD$ bimodule $\lsup{\Alg,\DuAlg}\CanonDD$ is adapted to
  the one-manifold $W_1=\{1,\dots,2n\}\times [0,1]$.
\end{example}

Let $\Alg_1$ and $\Alg_2$ be as in Equation~\eqref{eq:DefAlg1Alg2}.
In addition, choose a one-manifold $W_2$ connecting $Y_2$ to $Y_3$, and let
\begin{equation}
  \label{eq:DefAlg3}
  \Alg_3=\Alg(|Y_3|/2,s+s_3,\Matching_3),
\end{equation}
where $M_3$ is the matching induced by $M_2$ and $W_2$, and $s$ is
chosen so that $0\leq s\leq s_0(W_1\cup W_2)+1$, where $s_0$ is the
number of components of $W_1\cup W_2$ connecting $Y_1$ to $Y_3$.

The following is an adaptation
of~\cite[Proposition~\ref{BK:prop:AdaptedTensorProducts}]{BorderedKnots}:

\begin{prop}
  \label{prop:AdaptedTensorProd}
  Choose $W_1\colon Y_1\to Y_2$, $W_2\colon Y_2\to Y_3$, $\Alg_1$,
  $\Alg_2$, and $\Alg_3$ as above.  Suppose moreover that $W_1\cup
  W_2$ has no closed components, i.e. it is a disjoint union of
  finitely many intervals joining $Y_1$ to $Y_3$.  Given any two
  bimodules $\lsup{\Alg_2}X^1_{\Alg_1}$ and
  $\lsup{\Alg_3}X^2_{\Alg_2}$ adapted to $W_1$ and $W_2$ respectively,
  we can form their tensor product
  $\lsup{\Alg_3}X^2_{\Alg_2}\DT~\lsup{\Alg_2}X^1_{\Alg_1}$ (i.e. only
  finitely many terms in the infinite sums in its definition
  are non-zero); and moreover, it is a bimodule that is adapted to
  $W=W_1\cup W_2$.
\end{prop}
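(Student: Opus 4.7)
The plan is to adapt the proof of Proposition~\ref{BK:prop:AdaptedTensorProducts} from~\cite{BorderedKnots}; the new features to verify are that the central closed generators $C_{\{i,j\}}\in\Alg$ (absent from the algebras used there) break neither the grading structure nor the finiteness of the tensor product sums.

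For the grading structure, I would first check that the multi-gradings of $X^1$ and $X^2$ by $H^1(W_1,\partial W_1;\Q)$ and $H^1(W_2,\partial W_2;\Q)$ combine to give a grading on the box tensor product that descends to a grading by $H^1(W,\partial W;\Q)$.  Since $W_1\cup W_2$ has no closed components by assumption, $W$ is a disjoint union of intervals, and the natural map
\[
H^0(Y_2;\Q)\to H^1(W_1,\partial W_1;\Q)\oplus H^1(W_2,\partial W_2;\Q)
\]
is injective with cokernel $H^1(W,\partial W;\Q)$.  The image of $H^0(Y_2;\Q)$ is exactly the subspace one quotients out when forming $X^2\DT X^1$, reflecting the matching of $\Alg_2$-weights across the tensor product.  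The $\Delta$-grading is additive over $\DT$, yielding \ref{Adapted:grading}.

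For the finiteness, each contribution to a structure map of $X^2\DT X^1$ corresponds to a finite sequence $(b_1,\dots,b_j)$ of pure algebra outputs of $\delta^1$ on $X^1$ that are absorbed by $\delta^1_{1+j}$ on $X^2$.  Since $X^1$ and $X^2$ are both finite-dimensional and graded by \ref{Adapted:grading} and \ref{Adapted:FD}, the Alexander multi-grading and $\Delta$-grading of the total output $b_j\cdots b_1\in\Alg_2$ lie in a finite set of values determined by the pairs of generators they connect.  In each fixed Alexander multi-degree $\Alg_2$ is a finite-dimensional $\Field$-vector space: the contributions from $L_i,R_i,U_i$ factors are controlled as in~\cite{BorderedKnots}, while the relation $C_{\{i,j\}}^2=0$ ensures that any pure element contains at most one factor of $C_{\{i,j\}}$ for each matched pair, hence at most $n$ central factors in total.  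Therefore only finitely many sequences $(b_1,\dots,b_j)$ can contribute, and $X^2\DT X^1$ is finite-dimensional, giving \ref{Adapted:FD}.

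The main obstacle is this final finiteness step, since $C_{\{i,j\}}$ contributes positively to $\Delta$ via $\Delta(a)=\#(C_{\{i,j\}}\text{ dividing }a)-\sum_i w_i(a)$, so one could a priori worry that a fixed Alexander multi-degree permits unbounded $\Delta$-growth in $\Alg_2$.  The nilpotency relations $C_{\{i,j\}}^2=0$ combined with the finiteness of $\Partition$ rule this out.  Once this is secured, the remainder of the argument follows the pattern of~\cite{BorderedKnots} essentially verbatim.
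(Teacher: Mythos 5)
Your overall strategy matches the paper's: reduce to the argument of the corresponding proposition in~\cite{BorderedKnots}, with the new ingredient being that the nilpotency $C_{\{i,j\}}^2=0$ forces every pure element of $\Alg$ to be divisible by at most $n$ central factors, so that the $\Delta$-grading still controls the weights. This is exactly the point the paper singles out (see the remark following the proposition, contrasting $\Alg$ with $\DuAlg$, where $\llbracket E_i,E_j\rrbracket^m\neq 0$ for all $m$), and your identification of it as the crux is correct. Your description of the grading by $H^1(W,\partial W;\Q)$ is also consistent with what the paper does.

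However, your final inference --- ``only finitely many sequences $(b_1,\dots,b_j)$ can contribute'' because the total output lies in a finite-dimensional graded piece of $\Alg_2$ --- has a gap. What must be bounded is the \emph{length} $j$ of the sequence of intermediate outputs of $\delta$ on $X^1$ that are fed into $\delta^1_{j+1}$ on $X^2$, and finite-dimensionality of the algebra in each multidegree does not do this: a fixed total weight is compatible with arbitrarily long sequences, e.g.\ ones containing many idempotent factors $b_i$ (which have weight zero and $\Delta(b_i)=0$), or interleavings of weight-gaining and idempotent steps. The paper closes this by combining two facts: (i) the equivalence-class argument (using that $W_1\cup W_2$ has no closed components, so every point of $Y_2$ is connected through $W$ to $Y_1$ or $Y_3$) bounds $w_i(b_1\otimes\cdots\otimes b_j)$ for every $i\in Y_2$ in terms of the inputs, and together with the bound on the number of $C_p$-factors this bounds $\sum_i\Delta(b_i)$ from below; and (ii) each iteration of $\delta^1$ on $X^1$ lowers the total $\Delta$-grading by exactly one, so $j$ is expressed in terms of $\Delta(\x_1)-\Delta(\x_1')$, $\sum\Delta(a_t)$, $\ell$, and $-\sum\Delta(b_i)$, all of which are now bounded (the first by finite-dimensionality of $X^1$). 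Equivalently, as the paper phrases it, an unbounded $j$ would produce terms $y'\in\IdempRing(\Alg_2)\otimes X^1$ in $\delta^k(x')$ with $\Delta(y')=\Delta(x')-k$ for arbitrarily large $k$, contradicting finite-dimensionality. You should make step (ii) explicit; once it is in place, the rest of your argument goes through.
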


\begin{proof}
  Consider equivalence classes of points in $Y_1\cup Y_2\cup Y_3$,
  where two points are considered equivalent if
  they can be joined by arcs in $W$. Fix
  $\x_1, \x_1' \in X_1$, $\x_2, \x_2'\in X_2$, and a sequence $a_1,\dots,a_\ell$
  in $\Alg_1$.
  Suppose that
  $(b_1\otimes \dots\otimes b_j)\otimes \x_1'$ appears with non-zero 
  multiplicity in $\lsup{X_1}\delta^{j}_{\ell+1}(\x_1,a_1,\dots,a_{\ell})$, and
  $c\otimes \x_2'$ appears with non-zero multiplicity in
  $\lsup{X_2}\delta^1_{j+1}(\x_2,b_1,\dots,b_j)$. Then,
  \[ \Delta(c)+\Delta(\x_1')+\Delta(\x_2') =
  \ell-1+\Delta(\x_1)+\Delta(\x_2)+\sum \Delta(a_i).\] In $\Alg_3$, the
  number of $C_{p}$ that divide any algebra element is universally
  bounded, so we obtain a universal bound on $|w_i(c)|$ for all
  $i\in 1,\dots,|Y_3|$ in terms of the input.

  At points $i\in Y_2$ that are equivalent to points $Y_1$ in $W$, the
  grading assumptions bound
  $|w_i(\delta^j_{\ell+1}(\x_1,a_1,\dots,a_{\ell}))|$ in terms of 
  the inputs $\x_1$ and $a_1,\dots, a_{\ell}$.  In more detail,
  first
  suppose that some point in $Y_2$ is matched with a point $z\in Y_1$ by
  $W_1$. Let $\xi_i$ and $\xi_i'$ be the coefficient of $\gr(\x_1)$
  and $\gr(\x_1')$ in the component of $W_1$ that contains $i\in Y_2$.
  By the grading hypotheses,
  \begin{equation}
    \label{eq:MatchY1Y2}
    w_i(b_1\otimes\dots\otimes b_j)+\xi_i'=\xi_i+w_{z}(a_1\otimes\dots\otimes a_{\ell}).
  \end{equation}
  The claimed bound on $|w_i(b_1\otimes\dots\otimes b_j)|$
  in terms of the inputs (appearing on the right-hand-side) is immediate in this case. 
  Next, suppose that $i,i'\in Y_2$ are matched by some component in $W_2$,
  let $\eta_i$ and $\eta_i'$ be the corresponding components of $\gr(\x_2)$
  and $\gr(\x_2')$,
  then
  \begin{equation}
    \label{eq:MatchY2inW1}
    w_i(b_1\otimes\dots\otimes b_j)+\eta_i
    = w_{i'}(b_1\otimes\dots\otimes b_j)+\eta_i'.
  \end{equation}
  Finally, if $i,i'\in Y_2$ are matched by some component in $W_1$, 
  then 
  \begin{equation}
    \label{eq:MatchY2inW2}
    w_i(b_1\otimes\dots\otimes b_j)+\xi_i
    = w_{i'}(b_1\otimes\dots\otimes b_j)+\xi_{i'}.
  \end{equation}
  Putting together
  Equations~\eqref{eq:MatchY1Y2},~\eqref{eq:MatchY2inW1}
  and~\eqref{eq:MatchY2inW2}, and using the fact that the $\xi_i$,
  $\xi_{i'}$, and $\eta_i$ are universally bounded (since $X_1$ and
  $X_2$ are finite dimensional), we obtain a universal bound on
  $|w_i(b_1\otimes\dots\otimes b_j)|$ in terms of
  $w_z(a_1\otimes\dots\otimes a_\ell)$ provided that $i\in Y_2$ is
  equivalent to $z\in Y_1$; and hence the claimed bound of
  $|w_i(\delta^j_{\ell+1}(\x_1,a_1,\dots,a_\ell))|$ in terms of the
  inputs.

  For points $i\in Y_2$ that are equivalent to points $i'\in Y_3$, we
  obtain a similar bound of $|w_i(\delta^j_{\ell+1}(\x_1,a_1,\dots,a_{\ell}))-
  w_{i'}(c)|$.   

  Since $W_2\cup W_1$ has no closed components, 
  for given input sequence
  $a_1,\dots,a_{\ell}$,
  we have obtained an upper bound on
  $w_i(b_1\otimes\dots\otimes b_j)$ for all $i\in Y_2$, depending only on the weights of the inputs $a_1,\dots,a_\ell$.
  If $j$ could be arbitrarily large, we would be able to find
  arbitrarily large $k$ for which
  $\delta^{k}(x')$ contains some term of the form $y'\in \IdempRing(\Alg_2)\otimes X_1 \subset \Alg_2\otimes X_1$,
  so that $\Delta(y')=\Delta(x')-k$, violating the hypothesis that $X_1$ is finitely generated.
  
  Having defined $\lsup{\Alg_3}X^2_{\Alg_2}\DT~\lsup{\Alg_2}X^1_{\Alg_1}$,
  the verification that it is adapted to $W_1\cup W_2$ is straightforward.
\end{proof}

\begin{rem}
  \label{rem:OtherBoundedness}
  It is a key point in the above proof that in $\Alg(n,k)$, the number
  of $C_p$ that divide any pure algebra element is universally
  bounded. The corresponding statement does not hold over
  $\DuAlg(n,k)$: if $\{i,j\}\in\Matching$, the elements $\llbracket E_i,E_j\rrbracket^m$
  are non-zero for all $m\geq 0$; and correspondingly other boundedness
  criteria must be satisfied when forming $\DT$ over $\DuAlg(n,k)$.
  (See for example Lemma~\ref{lem:DuAlgDualCross}.)
\end{rem}

Proposition~\ref{prop:AdaptedTensorProd} has the following statement
for type $DD$ bimodules.  Choose $W_1\colon Y_1\to Y_2$, $W_2\colon Y_2\to Y_3$,
$\Alg_2$ and $\Alg_3$ as above; and let
 \begin{align*}
   \DuAlg_1&=\DuAlg(|Y_1|/2,|Y_1|-(s+s_1),\Matching_1).
 \end{align*}

\begin{prop}
  \label{prop:AdaptedTensorDD}
  Fix $\Alg_1$, $\Alg_2$, and $\Alg_3$ as in
  Equation~\eqref{eq:DefAlg1Alg2} and~\eqref{eq:DefAlg3}. Let $W_0$ be
  the one-manifold with boundary consisting of intervals that connect
  pairs of points in $Y_1$ matched by $M_1$. Suppose that $W_0\cup
  W_1\cup W_2$ has no closed components.  Given any two bimodules
  $\lsup{\Alg_2,\DuAlg_1}X^1$ and $\lsup{\Alg_3}X^2_{\Alg_2}$ adapted
  to $W_1$ and $W_2$ respectively, we can form their tensor product
  $\lsup{\Alg_3}X^2_{\Alg_2}\DT~\lsup{\Alg_2,\DuAlg_1}X^1$; and
  moreover, it is a bimodule that is adapted to $W=W_1\cup W_2$.
\end{prop}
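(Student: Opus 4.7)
The proof strategy is to adapt the argument of Proposition~\ref{prop:AdaptedTensorProd} to the $DD\DT DA$ setting, treating the $\DuAlg_1$ outputs of $X^1$ as a third factor that must be controlled by the grading data. First I would set up the candidate tensor product differential: iterating $\delta^1_{X^1}$ produces, for any $x^1\in X^1$, terms in $\Alg_2^{\otimes k}\otimes \DuAlg_1^{\otimes k}\otimes X^1$; feeding the $\Alg_2$ factors into $\lsup{\Alg_3}X^2_{\Alg_2}$ via its $DA$ action gives a candidate output in $\Alg_3\otimes \DuAlg_1\otimes X^2\otimes X^1$ (the $\DuAlg_1$ factors multiplying together). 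The content of the proposition is that for each generator only finitely many $k$ contribute, after which adaptedness of the result is formal.

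To bound $k$, I would repeat the chain of weight identities in the proof of Proposition~\ref{prop:AdaptedTensorProd}, namely \eqref{eq:MatchY1Y2}, \eqref{eq:MatchY2inW1}, and \eqref{eq:MatchY2inW2}, and supplement them with one new identity for each arc of $W_0$: since the DD-adaptedness of $X^1$ means that the combined $H^1(W_1,\partial W_1;\Q)$-grading of an output pair $(a_{\Alg_2},a_{\DuAlg_1})$ accounts for the shift from $\gr(x^1)$ to $\gr(x^1{}')$, the $\DuAlg_1$-weight at a point $i\in Y_1$ matches the $\DuAlg_1$-weight at its $M_1$-partner up to bounded terms coming from the finitely many generators of $X^1$. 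Combining this $W_0$-identity with the $W_1$- and $W_2$-identities, and using the hypothesis that $W_0\cup W_1\cup W_2$ has no closed components, I can trace every weight on $Y_1$ or $Y_2$ to a weight on $Y_3$ through a bounded chain of arcs. This gives a universal bound (depending only on $x^1$, $x^2$, and the outgoing $\Alg_3$-weights, all of which are bounded) on $\sum_{i\in Y_2}|w_i(b_1\otimes\cdots\otimes b_k)|$, hence on $k$ (via the $\Delta$-grading as in the DA case), as required.

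The main obstacle is precisely the one flagged in Remark~\ref{rem:OtherBoundedness}: the elements $\llbracket E_i,E_j\rrbracket^m\in \DuAlg_1$ are nonzero for all $m$, so one cannot bound $\DuAlg_1$-weights \emph{a priori} from $\DuAlg_1$ alone. The hypothesis on $W_0\cup W_1\cup W_2$ is exactly what prevents a ``closed'' loop of unbounded $\DuAlg_1$-weight that is invisible to $Y_3$; without it, arbitrarily large powers of $\llbracket E_i,E_j\rrbracket$ could appear with the same shift in $\gr(x^1)$ and produce infinitely many nonzero terms. Getting this step clean will require being careful about signs in the $W_0$-identity and about the direction in which each weight is read off, but the argument is otherwise parallel to Proposition~\ref{prop:AdaptedTensorProd}.

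Finally, having established well-definedness, adaptedness of $\lsup{\Alg_3,\DuAlg_1}(X^2\DT X^1)$ to $W=W_1\cup W_2$ follows by additivity of weights under $\DT$: the $H^1(W,\partial W;\Q)$-grading of a tensor $x^2\otimes x^1$ is determined by the gradings of $x^1$ and $x^2$ via the obvious map from $H^1(W_1,\partial W_1)\oplus H^1(W_2,\partial W_2)$, and the $\Delta$-grading is additive as well. Finite-dimensionality is immediate since $X^1$ and $X^2$ are finite-dimensional and the tensor is taken over $\IdempRing(\Alg_2)$. So condition~\ref{Adapted:grading} and condition~\ref{Adapted:FD} both transfer, completing the proof.
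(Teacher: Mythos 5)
Your overall plan coincides with the paper's: chain weight identities along the arcs of $W_0\cup W_1\cup W_2$, use the no-closed-components hypothesis to propagate every weight at $Y_1\cup Y_2$ to a weight at $Y_3$, and then convert the resulting weight bounds into a bound on the number $j$ of iterations of $\delta^1_{X^1}$ via the $\Delta$-grading and finite dimensionality. However, your justification of the one genuinely new ingredient --- the identity along the arcs of $W_0$ --- does not work. You derive the relation between the $\DuAlg_1$-weights at a matched pair $\{i,k\}\in\Matching_1$ from the $H^1(W_1,\partial W_1;\Q)$-grading of $X^1$. That grading only relates the weight at a point of $Y_1\cup Y_2$ to the weight at the \emph{other endpoint of the same arc of $W_1$} (up to the bounded generator gradings); it imposes no relation between two points of $Y_1$ that are matched by $\Matching_1$ but lie on different components of $W_1$ --- and those are exactly the pairs that the arcs of $W_0$ are introduced to handle. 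So adaptedness of $X^1$ to $W_1$ cannot supply the $W_0$-identity, and without it your chain of identities breaks at every $W_0$-arc.

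The correct source of that identity is the algebra $\DuAlg_1$ itself, used together with the $\Delta$-grading identity that should open the argument. Writing $c'=b_1'\cdots b_j'$ for the product of the $\DuAlg_1$-outputs, one has $\Delta(c)+\Delta(c')+\Delta(\x_1')+\Delta(\x_2')=\Delta(\x_1)+\Delta(\x_2)-1$; since $\Delta(c')\le 0$ for every element of $\DuAlg_1$, this bounds $w_i(c)$ for $i\in Y_3$ (which you asserted without proof), and it also bounds $\Delta(c')$ from below, which forces the $\Blg$-part of $c'$ to have bounded total weight. The only remaining source of large weight in $c'$ is the word in the $E_i$'s, and a nonzero such word can acquire large weight only through powers of $\llbracket E_i,E_k\rrbracket$ with $\{i,k\}\in\Matching_1$, which contribute equal weight at $i$ and at $k$. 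Hence $|w_i(c')-w_k(c')|$ is bounded for each matched pair: this is the $W_0$-identity, and it is an algebraic fact about $\DuAlg_1$ (the precise reason Remark~\ref{rem:OtherBoundedness} is not fatal here), not a consequence of the grading of $X^1$. With that substitution the rest of your argument goes through.
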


\begin{proof}
  Let $W$ be the grading manifold for
  $X=\lsup{\Alg_3}X^2_{\Alg_2}\DT~\lsup{\Alg_2,\DuAlg_1}X^1$, with
  $\partial W =Y_1\cup Y_3$.  Fix $\x_1,\x_1'\in X_1$ and
  $\x_2,\x_2'\in X_2$.  Suppose that $(b_1\otimes b_1')\otimes\dots
  (b_j\otimes b_j')\otimes \x_1'$ appears with non-zero multiplicity
  in $\lsup{X_1}\delta^j(\x_1)$ and $c\otimes \x_2'$ appears with
  non-zero multiplicity in
  $\lsup{X_2}\delta^1_{j+1}(\x_2,b_1,\dots,b_j)$.  To show that the
  sums defining $\DT$ are finite amounts to establishing an upper
  bound on $j$.

  Note first that
  \begin{equation} 
    \label{eq:DeltaBound}
    \Delta(c)+\Delta(c')+\Delta(\x_1')+\Delta(\x_2')
    =\Delta(\x_1)+\Delta(\x_2)-1,
  \end{equation}
  for $c'=b_1'\cdots b_j'$.
  Since $\Delta(c')\leq 0$ 
  (in fact, this is true for any $c'\in \Alg_1'$), 
  Equation~\eqref{eq:DeltaBound} gives an upper bound on 
  $w_i(c)$ for all $i\in Y_3$. 

  For all $i\in Y_2$ that are equivalent via $W_2$ to some $j\in Y_3$,
  the fact that $X$ finitely generated  gives
  an upper bound on $w_i(b_1\otimes\dots \otimes b_j)$.  Moreover
  since $X^1$ is finitely generated and graded by $W_1$, for every $i\in
  Y_2$ that is equivalent via $W_1$ to some $k\in Y_1$, we obtain a
  lower bound on $|w_i(b_1\otimes\dots \otimes b_j)-w_k(c')|$.
  Finally, in the algebra $\DuAlg_1$, there is a bound on
  $|w_i(c')-w_k(c')|$ for any $\{i,k\}\in \Matching_1$.  Since
  $W_0\cup W_1\cup W_2$ has no closed components, it follows that any
  any $i\in Y_1\cup Y_2$ is equivalent to some $k\in Y_3$, and hence
  the above reasoning gives bounds on $w_i((b_1\otimes
  b_1')\otimes\dots \otimes (b_j\otimes b_j'))$ for all $i\in Y_1\cup
  Y_2$.  The desired bound on $j$ now follows from the
  $\Delta$-grading on $X^1$, and the fact that it is finite
  dimensional, as before.

  The fact that the tensor product is adapted to $W_1\cup W_2$ follows
  easily.
\end{proof}

\subsection{Standard type $D$ structures}
\label{sec:Standard}

A type $D$ structure $X$, specified by
\[ \delta^1\colon X \to \Alg(n,k,\Partition)\otimes X, \]
is called {\em standard} 
if it is adapted to a one-manifold with boundary $W$, and it has the following form:
\[ \delta^1(\x)= (\sum_{p\in\Partition} C_p)\otimes \x + \epsilon^1(\x),\]
where $\epsilon^1(\x)\in \Blg(2n,k)\otimes X \subset \Alg(n,k,\Partition)\otimes X$,
for all $\x\in X$.

A {\em standard sequence} $(a_1,\dots,a_\ell)$ in $\Alg(n,k,\Partition)$
is a sequence of algebra elements for which each $a_i$ is either 
in $\Blg(2n,k)$, or it is equal to some $C_{p}$ for $p\in\Partition$.

A type $A$ module is characterized up to homotopy by its values on
standard sequences.  This follows from Lemma~\ref{lem:DeterminedByStandard} below,
stated in terms of the following generalization of type $D$ structures to bimodules.

For $i=1,2$, fix integers $n_i$ and $k_i$ with $0\leq k_i\leq 2n_i+1$;
fix also matchings $\Partition_i$ on $\{1,\dots,2n_i\}$.
Let $\Blg_i=\Blg(2n_i,k_i)\subset \Alg_i(n_i,k_i,\Partition_i)=\Alg_i$
for $i=1,2$.
Let $C^i=\sum_{p\in\Partition_i} C_p\in\Alg_i$.

\begin{defn}
  \label{def:Standard}
  A type $DA$ bimodule $\lsup{\Alg_2}X_{\Alg_1}$ is called {\em standard} if
  the following conditions hold:
  \begin{enumerate}[label=(DA-\arabic*),ref=(DA-\arabic*)]
    \item 
      The bimodule $X$ is adapted to some one-manifold $W$, 
      in the sense of Definition~\ref{def:AdaptedDA}.
    \item 
      The one-manifold $W$ is compatible with the 
      matching $\Matching_1$ used in the algebra $\Alg_1$.
  \item\label{prop:LandsInB} For any standard sequence of elements $a_1,\dots,a_{\ell-1}$ 
    with at least some $a_i\in\Blg_1$, 
    $\delta^1_{\ell}(\x,a_1,\dots,a_{\ell-1})\in\Blg_2\otimes X$.
  \item 
    \label{prop:CStandard}
    For any $\x\in X$, 
    \[
      C^2\otimes \x +    \sum_{\ell=0}^{\infty}\delta^1_{1+\ell}(\x,\overbrace{C^1,\dots,C^1}^{\ell})
      \in \Blg_2\otimes X.
      \]
\end{enumerate}
\end{defn}

\begin{lemma}
\label{lem:StandardTimesStandard}
Let $\Alg_i=\Alg(n_i,k_i,\Partition_i)$
for $i=1,2,3$. 
Suppose that $\lsup{\Alg_3}X_{\Alg_2}$ and $\lsup{\Alg_2}Y_{\Alg_1}$ 
are standard type $DA$ bimodules adapted to one-manifolds $W_1$ and $W_2$, so that $W_1\cup W_2$ contains no closed components.
Then their
product  $X\DT Y$ is standard, too.
\end{lemma}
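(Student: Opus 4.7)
The plan is to verify the four conditions (DA-1)--(DA-4) defining a standard type $DA$ bimodule for $\lsup{\Alg_3}(X \DT Y)_{\Alg_1}$ with respect to the one-manifold $W = W_1 \cup W_2$.

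Condition (DA-1) is immediate from Proposition~\ref{prop:AdaptedTensorProd}. For (DA-2), we must check that $W$ is compatible with $\Matching_1$. Since $Y$ is standard, $W_2$ is compatible with $\Matching_1$, so every point $y \in Y_1$ is equivalent via arcs in $W_2$ and pairs in $\Matching_1$ to some point of $Y_2$. Since $X$ is adapted to $W_1$ (with induced matching $\Matching_2$ on $Y_2$, compatible with $W_1$) and $W_1 \cup W_2$ has no closed components, each such point of $Y_2$ is equivalent via arcs in $W_1$ and pairs in $\Matching_2$ to a point of $Y_3$; chaining gives the required compatibility.

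The substance is in (DA-3) and (DA-4). Both flow from analyzing the tensor-product operation
\[
  \delta^{X \DT Y}_{1+\ell}(\x \otimes \y, a_1, \dots, a_\ell) = \sum_{j \geq 0} \delta^X_{1+j}\bigl(\x, \Pi^Y_j(\y, a_1, \dots, a_\ell)\bigr),
\]
where $\Pi^Y_j$ sums over all ways of splitting $(a_1, \dots, a_\ell)$ into $j$ ordered contiguous (possibly empty) groups and feeding each group through a $\delta^Y$ operation, chaining the intermediate $Y$-states. For (DA-4) I set all $a_k = C^1$ and sum over $\ell$. Defining
\[
  \Phi(\y) := \sum_{n \geq 0} \delta^Y_{1+n}(\y, \underbrace{C^1, \dots, C^1}_{n}) \in \Alg_2 \otimes Y,
\]
condition (DA-4) for $Y$ says $\Phi(\y) = C^2 \otimes \y + \beta(\y)$ with $\beta(\y) \in \Blg_2 \otimes Y$. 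Iterating $\Phi$ slot by slot decomposes each contribution to the DT sum into one of $2^j$ branches: a branch picking $\beta$ in any slot yields a sequence fed to $\delta^X$ which is standard in $\Alg_2$ and contains a $\Blg_2$ entry, so by (DA-3) for $X$ it lands in $\Blg_3 \otimes X$; the unique all-$C^2$ branch contributes $\sum_j \delta^X_{1+j}(\x, C^2, \dots, C^2) \otimes \y$, which by (DA-4) for $X$ equals $C^3 \otimes \x \otimes \y$ modulo $\Blg_3 \otimes X$. Adding $C^3 \otimes (\x \otimes \y)$ then yields zero modulo $\Blg_3 \otimes (X \DT Y)$.

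For (DA-3), fix a standard input $(a_1, \dots, a_\ell)$ containing some $a_{i_0} \in \Blg_1$. In every grouping, the group $s_{m_0}$ containing $a_{i_0}$ is a standard subsequence with a $\Blg_1$ entry, so by (DA-3) for $Y$ its output $b_{m_0}$ lies in $\Blg_2$. The other groups consist only of $C_p$ inputs, and their outputs are general elements of $\Alg_2$. I organize the sum over all groupings by the left/right endpoints $L \leq i_0 \leq R$ of the group containing $a_{i_0}$ and by the partitions of $(a_1, \dots, a_{L-1})$ and $(a_{R+1}, \dots, a_\ell)$; on each side the inner sum over partition lengths can be recognized as a slot-wise instance of the $\Phi$-decomposition used in the (DA-4) argument. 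The non-$\Blg_2$ outputs from these $C$-only subgroups are then absorbed into the $\beta$-part via the identity $\Phi(\y) \equiv C^2 \otimes \y \pmod{\Blg_2 \otimes Y}$, reducing each remaining contribution to $\delta^X$ of a standard $\Alg_2$-sequence containing the $\Blg_2$ entry $b_{m_0}$; (DA-3) for $X$ then places it in $\Blg_3 \otimes X$.

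The main obstacle is the (DA-3) case: (DA-4) for $Y$ only controls $\Phi$ after summing over all lengths and all $C_p$-choices, yet the input to (DA-3) for the tensor contains fixed specific $C_p$-entries on either side of $a_{i_0}$. The key is that when we sum over the partitions of the $C$-only sides of $a_{i_0}$'s group, the resulting outputs reconstruct exactly the data appearing in (DA-4) for $Y$ applied to the state on each side; standardness of $X$ then finishes the argument.
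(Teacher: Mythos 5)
For calibration: the paper's own proof of this lemma is two sentences — existence of the tensor product follows from Proposition~\ref{prop:AdaptedTensorProd}, and standardness is declared to be ``clear from the definition of $\DT$.'' Your attempt to actually verify the conditions of Definition~\ref{def:Standard} therefore goes well beyond what the paper records, and most of it is sound. The first two conditions and your verification of Property~\ref{prop:CStandard} are correct: summing over all lengths and all groupings does factor slot-by-slot into copies of $\Phi$, every branch containing a $\beta$-factor feeds $\delta^X$ a (sum of) standard sequence(s) in $\Alg_2$ with a $\Blg_2$ entry once $C^2=\sum_q C_q$ is expanded, and the all-$C^2$ branch is handled by Property~\ref{prop:CStandard} for $X$.

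The gap is in Property~\ref{prop:LandsInB}, exactly at the step you flag as the main obstacle, and your proposed resolution does not work. The groups flanking the one containing $a_{i_0}$ consist of the \emph{specific} elements $C_{p_1},\dots,C_{p_m}$ occurring in the given standard sequence. Property~\ref{prop:CStandard} for $Y$ controls only the sum $\sum_{\ell}\delta^1_{1+\ell}(\y,C^1,\dots,C^1)$ with $C^1=\sum_{p}C_p$, i.e.\ the sum over all lengths \emph{and over all matched pairs}. Summing over the ways of partitioning a fixed sequence $(C_{p_1},\dots,C_{p_m})$ into groups produces the iterated $\delta^Y$ on that fixed sequence, which is not the quantity appearing in Property~\ref{prop:CStandard}, and about which Definition~\ref{def:Standard} says nothing at all. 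Consequently the outputs of these groups are a priori arbitrary elements of $\Alg_2$ (e.g.\ products $C_q\cdot b$ with $b\in\Blg_2$ nontrivial, which can occur while Property~\ref{prop:CStandard} still holds after cancellation in the full sum); the sequence fed to $\delta^X$ then need not be standard, and neither Property~\ref{prop:LandsInB} nor Property~\ref{prop:CStandard} for $X$ can be invoked. So the sentence ``the resulting outputs reconstruct exactly the data appearing in (DA-4) for $Y$'' is false as stated, and this is a genuine hole rather than a presentational one.

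What makes the conclusion true in the paper's applications is that the concrete bimodules satisfy constraints strictly stronger than Definition~\ref{def:Standard}: the $C$-equivariance of Equation~\eqref{eq:CEquivariance} together with Equations~\eqref{eq:CtoC} and~\eqref{eq:HighCsVanish} forces each operation with a single $C_p$ input to output $C_{\tau(p)}$ times the same generator plus terms in $\Blg_2\otimes X$, and kills (or controls) operations with several $C$'s, so the outputs of $C$-only groups are themselves sums of standard sequences and the argument closes by induction through the tensor factors. To repair your proof you would either need to add such a ``$C$-inputs have standard outputs'' clause to the definition of standard (and check it for crossings, maxima, and minima), or verify Property~\ref{prop:LandsInB} for the tensor product directly from those explicit formulas rather than from the abstract axioms.
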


\begin{proof}
  The fact that $W_1\cup W_2$ has no closed components ensures that
  the tensor product makes sense, according to
  Proposition~\ref{prop:AdaptedTensorProd}. The fact that $X\DT Y$ is
  standard is clear now from the definition of $\DT$.
\end{proof}

\begin{lemma}
  \label{lem:DeterminedByStandard}
  A standard type $DA$ bimodule is determined uniquely (up to homotopy) by its
  bimodule structure over the idempotent algebras, together with 
  values on standard sequences.
\end{lemma}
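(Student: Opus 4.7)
My plan is to show that the values of $\delta^1_{\ell+1}$ on arbitrary input sequences are determined up to homotopy by the values on standard sequences, via the $A_\infty$ relations; then to construct an explicit DA morphism between any two standard bimodules that agree on idempotents and on standard sequences.

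The first technical step is a decomposition: every pure algebra element in $\Alg_1=\Alg(n_1,k_1,\Matching_1)$ can be written as $b\cdot\prod_{p\in S}C_p$ for some $b\in\Blg_1$ and some subset $S\subseteq\Matching_1$, using the centrality of the $C_p$ and $C_p^2=0$. By multilinearity of $\delta^1_{\ell+1}$ in each algebra argument, it suffices to know the operations on tuples of such pure elements. The second step uses the $A_\infty$ relations for $\delta$, specifically the term involving $\delta^1_{\ell}(\x,a_1,\dots,a_i\cdot a_{i+1},\dots,a_\ell)$, to rewrite the action on any pure-but-non-standard input as a combination of actions on longer standard sequences, modulo composition terms built from lower-order $\delta$-operations and from internal differentials of $\Alg_1$. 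This rewriting can be iterated and terminates because $C_p^2=0$ forces only finitely many $C_p$'s to appear in any pure element, and the $\Delta$-grading combined with the finite-dimensionality of $X$ bounds the lengths of sequences that can contribute a given output.

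For the uniqueness statement, given two standard DA bimodules $X,X'$ with identical idempotent structures and equal values on standard sequences, I would build a DA morphism $f\colon X\to X'$ by setting $f^1_1=\Id$ and defining $f^1_{\ell+1}$ inductively in $\ell$. At each stage, the obstruction to extending $f$ is a cocycle built from $\delta-\delta'$ and the previously constructed $f^1_{\leq\ell}$; the agreement of $\delta$ and $\delta'$ on standard sequences, together with the reduction from the previous paragraph, forces this obstruction to be a coboundary, and the extension exists. The resulting morphism is a homotopy equivalence because it is the identity on idempotents and on the associated graded pieces for the $\Delta$-grading.

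The hard part will be keeping careful track of the two structural conditions in Definition~\ref{def:Standard}: condition~\ref{prop:LandsInB} restricts where the outputs of $\delta$ land (in $\Blg_2\otimes X$ rather than all of $\Alg_2\otimes X$) on mixed standard inputs, while condition~\ref{prop:CStandard} pins down the $C^2$-contribution on pure sequences of $C^1$'s. Both constraints must be respected by the inductively defined $f^1_{\ell+1}$ so that the induction closes up, and one must check that the $A_\infty$-reduction of an arbitrary input to standard inputs does not accidentally produce outputs outside the allowed subspaces. Termination and well-definedness of the induction will again follow from finite-dimensionality of $X$ and $X'$ together with the $\Delta$-grading, paralleling the boundedness arguments used in the proof of Proposition~\ref{prop:AdaptedTensorProd}.
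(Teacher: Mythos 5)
Your overall strategy is different from the paper's, and its key step has a genuine gap. The reduction in your first paragraph --- that the $\Ainfty$ relations let you ``rewrite the action on any pure-but-non-standard input as a combination of actions on longer standard sequences'' --- does not work as a determination procedure. The structure relations are constraints, not formulas: already for an input like $C_p C_q\cdot b$ with $b\in\Blg_1$, the structure relation applied to the refined sequence $(C_p,C_q,b)$ contains \emph{both} unknowns $\delta^1_3(\x,C_pC_q,b)$ and $\delta^1_3(\x,C_p,C_q\cdot b)$ in a single equation, so the recursion does not close up in standard sequences. This is not an accident of bookkeeping: the values on non-standard sequences are genuinely not determined on the nose (the lemma only asserts uniqueness up to homotopy), so no algebraic rewriting can recover them. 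Since your obstruction-theory step in the second half leans on this reduction to show the obstruction cocycle is a coboundary, that step is unsupported as written; you would need an independent acyclicity argument or an explicit primitive for the obstruction, which is exactly the hard part you have deferred.

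The paper avoids all of this by a duality trick. Given two standard bimodules $X$ and $Y$ with the same idempotent structure and the same values on standard sequences, one forms $X\DT\lsup{\Alg_1,\DuAlg_1}\CanonDD$ and $Y\DT\lsup{\Alg_1,\DuAlg_1}\CanonDD$ (Proposition~\ref{prop:AdaptedTensorDD} guarantees these tensor products exist). The crucial observation is that the outputs of $\CanonDD$ in the $\Alg_1$ factor are always either elements of $\Blg_1$ or bare elements $C_p$, so the tensor product only ever feeds \emph{standard sequences} into the $DA$ bimodule; hence $X\DT\CanonDD\cong Y\DT\CanonDD$ on the nose. Invertibility of $\CanonDD$ (Theorem~\ref{thm:InvertibleDD}) then gives $X\simeq Y$. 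If you want to salvage your approach, the cleanest fix is to adopt this mechanism rather than trying to control the non-standard operations directly.
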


\begin{proof}
  Suppose that $\lsup{\Alg_2}X_{\Alg_1}$ and $\lsup{\Alg_2}Y_{\Alg_1}$
  are two type $DA$ bimodule structures on the same underlying
  $\Idemp{\Alg_2}\otimes\Idemp{\Alg_1}$-bimodules, that have the same
  values on standard sequences. By
  Proposition~\ref{prop:AdaptedTensorDD}, we can form the tensor
  products $\lsup{\Alg_2}X_{\Alg_1}\DT~\lsup{\Alg_1,\DuAlg_1}\CanonDD$
  and $\lsup{\Alg_2}Y_{\Alg_1}\DT~\lsup{\Alg_1,\DuAlg_1}\CanonDD$.

  We claim that in this case,
  \[ \lsup{\Alg_2}X_{\Alg_1}\DT~\lsup{\Alg_1,\DuAlg_1}\CanonDD
  \cong 
  \lsup{\Alg_2}Y_{\Alg_1}\DT~\lsup{\Alg_1,\DuAlg_1}\CanonDD,\]
  because $\CanonDD$ has the property that its outputs in the $\Alg_1$ tensor factor
  are either
  in $\Blg_1$ or they are algebra elements $C_p\in\Alg_1$;
  and so the tensor product of $X$ (or $Y$) with $\CanonDD$
  is determined by the values of the $DA$ bimodule on standard sequences.
  By Theorem~\ref{thm:InvertibleDD}, we can conclude that
  $X\simeq Y$, as needed.
\end{proof}

\newcommand\LocPos{P}
\newcommand\Initial{I}
\newcommand\ed{\widetilde\delta}
\newcommand\Xelt{X}
\newcommand\Yelt{Y}
\newcommand\Zelt{Z}
\section{Bimodules associated to crossings}
\label{sec:Cross}

Bimodules associated to crossings are defined very similarly to those from~\cite{BorderedKnots}.
We start with the type $DD$ bimodules, and then describe the corresponding type $DA$ bimodules.

\subsection{$DD$ bimodules}
\label{subsec:DDcross}

Fix $i$ with $1\leq i\leq 2n-1$,
fix a matching $\Partition$ on $\{1,\dots,2n\}$,
and fix an auxiliary integer $k$ with $0\leq k \leq 2n+1$.

Let $\tau=\tau_i\colon \{1,\dots,2n\}\to  \{1,\dots,2n\}$ 
be the transposition that 
switches $i$ and $i+1$, and let
$\tau(\Partition)$ be the induced matching that matches
$\tau(\alpha)$ and $\tau(\beta)$ iff $\alpha$ and $\beta$ 
are matched in $\Partition$.
Let 
\begin{equation}
  \label{eq:DefA1A2}
  \Alg_1=\Alg(n,k,\Partition)~\qquad{\text{and}}\qquad\DuAlg_2=
  \Alg'(n,2n+1-k,\tau_i(\Partition));
\end{equation} 
correspondingly
\[ \IdempRing_1=\IdempRing(2n,k) \qquad{\text{and}}\qquad\IdempRing_2'=\IdempRing(2n,2n+1-k).\]
We think of the algebra $\Alg_1$ as being below the crossing and $\DuAlg_2$ as
above the crossing.

\begin{figure}[ht]
\input{PosCrossDD.pstex_t}
\caption{\label{fig:PosCrossDD} {\bf{Positive crossing $DD$ bimodule generators.}}  
The four generator types are pictured to the right.}
\end{figure}

As an $\IdempRing_1-\IdempRing_2'$-bimodule, $\Pos_i$ is the submodule
of $\IdempRing_1\otimes_{\Field}\IdempRing_2'$ generated by elements
$\Idemp{\x}\otimes \Idemp{\y}$ where $\x\cap\y=\emptyset$ or 
\begin{equation}
  \label{eq:PosGens}
  \x\cap\y=\{i\}\qquad{\text{and}}\qquad \{0,\dots,2n\}\setminus (\x\cup \y) =\{i-1\}~\text{or}~\{i+1\}.
\end{equation}

In a little more detail,
generators correspond to certain pairs of idempotent
states $\x$ and $\y$, where $|\x|=k$ and $|\y|=2n+1-k$. They are further classified into four types,
$\North$, $\South$, $\West$, and $\East$. For generators of type $\North$ 
the subsets $\x$ and $\y$ are complementary subsets of $\{0,\dots,2n\}$ 
and $i\in \x$.
For generators of type $\South$, 
$\x$ and $\y$ are complementary subsets of $\{0,\dots,2n\}$ with $i\in \y$.
For generators of type $\West$, $i-1\not\in \x$ and $i-1\not\in \y$,
and $\x\cap\y=\{i\}$.
For generators of type $\East$, $i+1\not\in \x$ and $i+1\not\in \y$,
and $\x\cap\y=\{i\}$.

The differential has the following types of terms:
\begin{enumerate}[label=(P-\arabic*),ref=(P-\arabic*)]
\item 
  \label{type:OutsideLRP}
  $R_j\otimes L_j$ and $L_j\otimes R_j$ for all $j\in
  \{1,\dots,2n\}\setminus \{i,i+1\}$; these connect generators of the
  same type.
\item
  \label{type:UCP}
  $U_{j}\otimes E_{\tau(j)}$
  for all $j=1,\dots,2n$, connecting generators of the same type.
\item
  \label{type:UCCP}
  $C_{\{\alpha,\beta\}}\otimes [E_{\tau(\alpha)},E_{\tau(\beta)}]$,
  for all  $\{\alpha,\beta\}\in\Partition$; these connect generators of the same type
\item 
  \label{type:InsideP}
  Terms in the diagram below that connect  generators
  of different types:
  \begin{equation}
    \label{eq:PositiveCrossing}
    \begin{tikzpicture}[scale=1.8]
    \node at (0,3) (N) {$\North$} ;
    \node at (-2,2) (W) {$\West$} ;
    \node at (2,2) (E) {$\East$} ;
    \node at (0,1) (S) {$\South$} ;
    \draw[->] (S) [bend left=7] to node[below,sloped] {\tiny{$R_i\otimes U_{i+1}+L_{i+1}\otimes R_{i+1}R_i$}}  (W)  ;
    \draw[->] (W) [bend left=7] to node[above,sloped] {\tiny{$L_{i}\otimes 1$}}  (S)  ;
    \draw[->] (E)[bend right=7] to node[above,sloped] {\tiny{$R_{i+1}\otimes 1$}}  (S)  ;
    \draw[->] (S)[bend right=7] to node[below,sloped] {\tiny{$L_{i+1}\otimes U_i + R_i \otimes L_{i} L_{i+1}$}} (E) ;
    \draw[->] (W)[bend right=7] to node[below,sloped] {\tiny{$1\otimes L_i$}} (N) ;
    \draw[->] (N)[bend right=7] to node[above,sloped] {\tiny{$U_{i+1}\otimes R_i + R_{i+1} R_i \otimes L_{i+1}$}} (W) ;
    \draw[->] (E)[bend left=7] to node[below,sloped]{\tiny{$1\otimes R_{i+1}$}} (N) ;
    \draw[->] (N)[bend left=7] to node[above,sloped]{\tiny{$U_{i}\otimes L_{i+1} + L_{i} L_{i+1}\otimes R_i$}} (E) ;
  \end{tikzpicture}
\end{equation}
\end{enumerate}

Note that for a generator of type $\East$, the terms of
Type~\ref{type:OutsideLRP} with $j=i+2$ vanish; while for one of type
$\West$, the terms of Type~\ref{type:OutsideLRP} with $j=i-1$ vanish.

The bimodules $\Pos_i$ are graded by the set $\MGradingSet=\Q^{2n}$ 
as follows.
Let $e_1,\dots,e_{2n}$ be the standard basis for $\Q^{2n}$, and set
\begin{equation}
  \label{eq:GradeCrossing}
  \begin{array}{llll}
  \gr(\North)=\frac{e_i + e_{i+1}}{4} & 
  \gr(\West)=\frac{e_{i}-e_{i+1}}{4} & 
  \gr(\East)=\frac{-e_{i}+e_{i+1}}{4} &
  \gr(\South)=\frac{-e_{i}-e_{i+1}}{4}.
  \end{array}
\end{equation}

\begin{prop}
  \label{prop:PosIsDD}
  The bimodule $\lsup{\Alg_1,\DuAlg_2}\Pos_i$ is a type $DD$ bimodule.
\end{prop}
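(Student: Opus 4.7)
The proof is a direct verification of the type $DD$ structure equation. The plan is to decompose $\delta^1 = \delta_{(1)} + \delta_{(2)} + \delta_{(3)} + \delta_{(4)}$ according to the four term types listed in Section~\ref{subsec:DDcross} and to separate the genuinely new contributions from those already handled in the analogous positive-crossing verification of~\cite{BorderedKnots}.

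The sum $\delta_{(1)} + \delta_{(2)} + \delta_{(4)}$ has exactly the same form as the differential of the corresponding positive-crossing type $DD$ bimodule treated in~\cite{BorderedKnots}, with the elements $E_{\tau(j)} \in \DuAlg_2$ playing the role of the generators $C_{\tau(j)}$ of the dual algebra used there. In particular, the closures of the crossing diagram (compositions among (P-4) terms and between (P-4) and (P-1)), the internal (P-1) compositions, and the cancellation of the $U_j\otimes U_{\tau(j)}$ terms coming from $d(U_j\otimes E_{\tau(j)})$ against the $L R$--products of (P-1) terms, are all identical to that verification and require nothing new. Moreover, for unmatched pairs $\{j,k\}\notin\Partition$ (and for $j=k$), the symmetrized (P-2)$\times$(P-2) composition
\[ (U_j \otimes E_{\tau(j)}) \cdot (U_k \otimes E_{\tau(k)}) + (U_k \otimes E_{\tau(k)}) \cdot (U_j \otimes E_{\tau(j)}) \]
vanishes on the nose by the defining relations $E_{\tau(j)} E_{\tau(k)} + E_{\tau(k)} E_{\tau(j)} = 0$ and $E_{\tau(j)}^2 = 0$ in $\DuAlg_2$.

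The genuinely new verification concerns $\delta_{(3)}$. Compositions of $\delta_{(3)}$ with itself vanish because $C_{\{\alpha,\beta\}}^2 = 0$, while compositions of $\delta_{(3)}$ with $\delta_{(1)}$, $\delta_{(2)}$, and $\delta_{(4)}$ come in cancelling pairs mod $2$, using that $C_{\{\alpha,\beta\}}$ is central in $\Alg_1$ and $\llbracket E_{\tau(\alpha)}, E_{\tau(\beta)}\rrbracket$ is central in $\DuAlg_2$. The key remaining cancellation is, for each matched pair $\{\alpha,\beta\}\in\Partition$,
\[ d\bigl(C_{\{\alpha,\beta\}}\otimes \llbracket E_{\tau(\alpha)}, E_{\tau(\beta)}\rrbracket\bigr) + (U_\alpha\otimes E_{\tau(\alpha)})(U_\beta\otimes E_{\tau(\beta)}) + (U_\beta\otimes E_{\tau(\beta)})(U_\alpha\otimes E_{\tau(\alpha)}) = 0. \]
Using $dC_{\{\alpha,\beta\}} = U_\alpha U_\beta$, the centrality of the $U_i$'s, and the identity $d\llbracket E_i,E_j\rrbracket = 0$ (an easy consequence of $dE_i = U_i$ in characteristic $2$), both contributions equal $U_\alpha U_\beta \otimes \llbracket E_{\tau(\alpha)}, E_{\tau(\beta)}\rrbracket$, and their sum therefore vanishes. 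The main obstacle to the proof is bookkeeping: carefully isolating the new contributions from those already verified in~\cite{BorderedKnots}; no substantive new algebraic phenomenon arises.
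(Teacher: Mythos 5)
Your proof is correct and takes essentially the same route the paper does: the paper's proof is simply ``straightforward; compare the analogous proposition in \cite{BorderedKnots}'', i.e.\ a direct check of $dA+A\cdot A=0$ in which everything except the matched-pair terms is inherited from the earlier computation. You have correctly isolated the one genuinely new cancellation, namely $d\bigl(C_{\{\alpha,\beta\}}\otimes\llbracket E_{\tau(\alpha)},E_{\tau(\beta)}\rrbracket\bigr)$ against the symmetrized product of the $U\otimes E$ self-arrows, using $d\llbracket E_i,E_j\rrbracket=0$ and the (anti)commutation relations in $\DuAlg_2$.
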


\begin{proof}
  The proof is straightforward; compare~\cite[Proposition~\ref{BK:prop:PosIsDD}]{BorderedKnots}.
\end{proof}

Taking opposite modules, we can form
\[{\overline{\lsup{\Alg_1,\DuAlg_2}
    \Pos_i}}=\overline{\Pos}_i^{\Alg_1,\DuAlg_2}
=\lsup{\Alg_1^{\opp},(\DuAlg_2)^{\opp}}{\overline{\Pos}}_i.\] Combining
this with the identification $\Opposite$ of $\Alg_1$ and $\DuAlg_2$
with their opposites, we arrive at a type $DD$ bimodule, denoted
$\lsup{\Alg_1,\DuAlg_2}\Neg_i$. In a little more detail, 
$\Neg_i$ has generators of type $\North$, $\South$, $\West$, $\East$ as in the definition of $\Pos_i$.
Differentials are also as enumerated earlier (Types~\ref{type:OutsideLRP}-\ref{type:UCCP}),
with those of Type~\ref{type:InsideP} replaced by those specified in the following diagram:
\begin{equation}
  \label{eq:NegativeCrossing}
\begin{tikzpicture}[scale=1.8]
    \node at (0,3) (N) {$\North$} ;
    \node at (-2,2) (W) {$\West$} ;
    \node at (2,2) (E) {$\East$} ;
    \node at (0,1) (S) {$\South$} ;
    \draw[->] (W) [bend left=7] to node[above,sloped] {\tiny{${U_{i+1}}\otimes{L_i}+{L_{i} L_{i+1}}\otimes{R_{i+1}}$}}  (N)  ;
    \draw[->] (N) [bend left=7] to node[below,sloped] {\tiny{${1}\otimes{R_{i}}$}}  (W)  ;
    \draw[->] (N)[bend right=7] to node[below,sloped] {\tiny{${1}\otimes{L_{i+1}}$}}  (E)  ;
    \draw[->] (E)[bend right=7] to node[above,sloped] {\tiny{${U_i}\otimes{R_{i+1}} + {R_{i+1} R_{i}}\otimes{L_i}$}} (N) ;
    \draw[->] (S)[bend right=7] to node[above,sloped] {\tiny{${R_i}\otimes{1}$}} (W) ;
    \draw[->] (W)[bend right=7] to node[below,sloped] {\tiny{${L_i}\otimes{U_{i+1}} + {R_{i+1}}\otimes{L_{i} L_{i+1}}$}} (S) ;
    \draw[->] (S)[bend left=7] to node[above,sloped]{\tiny{${L_{i+1}}\otimes{1}$}} (E) ;
    \draw[->] (E)[bend left=7] to node[below,sloped]{\tiny{${R_{i+1}}\otimes{U_{i}} + {L_i}\otimes{R_{i+1} R_{i}}$}} (S) ;
  \end{tikzpicture}
\end{equation}

\subsection{The $DA$ bimodules}
\label{subsec:DAcross}

In~\cite[Section~\ref{BK:sec:CrossingDA}]{BorderedKnots}, we
associated a bimodule to a crossing.  Specifically, continuing notation
from the previous section (for $i$, $n$, $k$), we constructed a
bimodule $\lsup{\Blg(2n,k)}\Pos^i_{\Blg(2n,k)}$, corresponding to a
partial knot diagram with a crossing in it.
To assist in computations occurring later in this paper, 
we recall the construction presently.

\begin{figure}[ht]
\input{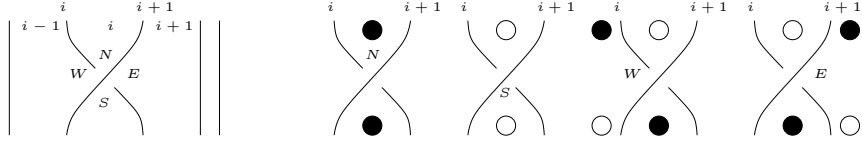}
\caption{\label{fig:PosCrossDA} {\bf{Positive crossing $DA$ bimodule generators.}}  
The four generator types are pictured to the right.}
\end{figure}

As a bimodule over $\IdempRing(2n,k)$-$\IdempRing(2n,k)$, $\Pos^i$ is
the submodule of $\IdempRing(2n,k) \otimes_{\Field}\IdempRing(2n,k)$,
generated by of $\Idemp{\x}\otimes \Idemp{\y}$ where either $\x=\y$ or
there is some $\w\subset \{1,\dots,i-1,i+1,\dots,2n\}$ with
$\x=\w\cup\{i\}$ and $\y=\w\cup \{i-1\}$ or $\y=\w \cup\{i+1\}$.
Thus, as in the case of the type $DD$ bimodules, there are  four
types of generators, of type $\North$, $\South$, $\West$, $\East$, as
pictured in Figure~\ref{fig:PosCrossDA}; i.e.
\begin{align*}
  \sum_{i\in\x} \Idemp{\x}\cdot \North \cdot \Idemp{\x}= \North, \qquad &
  \sum_{i\not\in\x} \Idemp{\x}\cdot \South \cdot \Idemp{\x} = \South, \\
  \sum_{\begin{small}\begin{array}{c}
      i\in\x \\
      i-1\not\in\x
    \end{array}
  \end{small}} \Idemp{\x}\cdot \West \cdot \Idemp{\{i-1\}\cup \x\setminus\{i\}}
  =  \West, \qquad &
  \sum_{\begin{small}
      \begin{array}{c}
        i\in\x \\
        i+1\not\in\x
        \end{array}
        \end{small}}\Idemp{\x}\cdot \East \cdot \Idemp{\{i+1\}\cup \x\setminus\{i\}}=\East.
\end{align*}

In cases where $i=1$, the $\delta^1_1$ and $\delta^1_2$ actions are specified by
the following diagram:
  \begin{equation}
    \label{eq:PositiveCrossingDA}
    \begin{tikzpicture}
    \node at (0,5) (N) {$\North$} ;
    \node at (-5,2) (W) {$\West$} ;
    \node at (5,2) (E) {$\East$} ;
    \node at (0,0) (S) {$\South$} ;
    \draw[->] (N) [loop above] to node[above,sloped]{\tiny{$1\otimes 1 + L_{1} L_{2}\otimes L_{1} L_{2}+ R_{2} R_{1} \otimes R_{2} R_{1}$}} (N);
    \draw[->] (W) [loop left] to node[above,sloped]{\tiny{$1\otimes 1$}} (W);
    \draw[->] (E) [loop right] to node[above,sloped]{\tiny{$1\otimes 1$}} (E);
    \draw[->] (S) [loop below] to node[below,sloped]{\tiny{$1\otimes 1$}} (S);
    \draw[->] (N) [bend right=7] to node[above,sloped]{\tiny {$U_{2}\otimes L_{1} + R_{2} R_{1} \otimes R_{2}$}}  (W)  ;
    \draw[->] (W) [bend right=7] to node[below,sloped,pos=.6]{\tiny {$1\otimes R_{1} + L_{1} L_{2} \otimes L_{2} U_{1}$}}  (N)  ;
    \draw[->] (E)[bend left=7] to node[below,sloped,pos=.6]{\tiny {$1 \otimes L_{2} + R_{2} R_{1} \otimes R_{1} U_{2}$}}  (N)  ;
    \draw[->] (N)[bend left=7] to node[above,sloped]{\tiny {$U_{1}\otimes R_{2} + L_{1} L_{2} \otimes L_{1}$}} (E) ;
    \draw[->,dashed] (W)to node[below,sloped]{\tiny {$L_{1}$}} (S) ;
    \draw[->,dashed] (E) to node[below,sloped]{\tiny{$R_{2}$}} (S) ;
    \draw[->] (W) [bend left=7] to node[above,sloped]{\tiny {$L_{1} L_{2} \otimes U_{1}$}} (E);
    \draw[->] (E) [bend left=7] to node[below,sloped]{\tiny {$R_{2} R_{1}\otimes U_{2}$}} (W);
  \end{tikzpicture}
\end{equation}
(We are using the conventions that a dashed arrow represents an
  operation $\delta^1_1$, so that the labelling algebra element
  appears in the output; and the other arrows indicate $\delta^1_2$
  actions, so that the algebra element on the right tensor factor is
  fed into the $\delta^1_2$ operation and the first tensor factor
  denotes the algebra element appearing in the output.)

These actions are further extended to the algebra with the following conventions.
For any $X\in \{\North,\West,\East,\South\}$ and
any pure algebra element $a\in\Blg(2)=\Blg(2,0)\oplus\Blg(2,1)\oplus\Blg(2,2)\oplus \Blg(2,3)$,
\begin{equation}
  \label{eq:U1U2}
  \delta^1_2(X, U_1 U_2\cdot a)=U_1 U_2 \cdot \delta^1_2(X,a).
\end{equation}
and also:
\begin{itemize}
\item If $b\otimes Y$ appears with non-zero coefficient in $\delta^1_2(\North,a)$,
then $(b\cdot U_2)\otimes Y$ appears with non-zero coefficient in $\delta^1_2(\North,a \cdot U_1)$
and $(b\cdot U_1)\otimes Y$ appears with non-zero coefficient in $\delta^1_2(\North,a \cdot U_2)$.
\item If $b\otimes Y$ appears with non-zero coefficient in 
$\delta^1_2(\West, a)$, then $(U_2\cdot b)\otimes \Yelt$ appears with non-zero coefficient
in $\delta^1_2(\West,U_1\cdot a)$.
\item If $b\otimes \Yelt$ appears with non-zero coefficient in 
$\delta^1_2(\East, a)$, then $(U_1\cdot b)\otimes \Yelt$ appears with non-zero coefficient
in $\delta^1_2(\East,U_2\cdot a)$.
\end{itemize}

Next we specify the actions $\delta^1_3$. To this end,
an algebra element in $\Blg(2)$  is called {\em elementary} if it is of the form
$p \cdot e$, where $p$ is a monomial in $U_{1}$ and $U_{2}$, and 
\[ e\in\{ 1,\qquad L_{1},\qquad R_{1},\qquad L_{2},\qquad R_{2},\qquad L_{1} L_{2},\qquad R_{2} R_{1}\}. \]
So far, we have defined $\delta^1_2$ by specifying the $\delta^1_2$ actions of the form
$\delta^1_2(X,a)$, where
$X\in\{\North,\South,\West,\East\}$ and $a$ is elementary.

We will now specify $\delta^1_3(X,a_1,a_2)$ where
$a_1$ and $a_2$ are elementary. Suppose that
$a_1\otimes a_2\neq 0$ (i.e. there is an idempotent state $\x$ so that 
$a_1\cdot\Idemp{\x}\neq 0$ and $\Idemp{\x}\cdot a_2\neq 0$); and suppose 
moreover that $U_1\cdot U_2$ does not divide either $a_1$ nor $a_2$. In this case,
$\delta^1_3(\South,a_1,a_2)$ is the sum of terms:
\begin{itemize}
  \item $R_{1} U_{1}^t \otimes \East$ if $(a_1,a_2)=(R_{1}, R_{2} U_{2}^t)$ and $t\geq 0$
  \item $L_2 U_1^t U_2^n\otimes \East$ if   $(a_1,a_2)\in$
\[
\begin{array}{llll}
 \{(U_1^{n+1},U_2^t), & (R_1U_1^n,L_1U_2^t),& (L_2U_1^{n+1}, R_2U_2^{t-1})\}
  &{\mbox{when $0\leq n<t$}} \\
 \{(U_2^t,U_1^{n+1}),& (R_1U_2^t, L_1U_1^n), &
(L_2U_2^{t-1}, R_2U_1^{n+1})\}& \mbox{when $1\leq t\leq n$}
\end{array}
\]
  \item $L_{2} U_{2}^n \otimes \West$ if $(a_1,a_2)=(L_{2},L_{1} U_{1}^n)$ and $n\geq 0$
\item $R_1 U_1^t U_2^n\otimes \West$ if 
$(a_1,a_2)\in $ 
\[ \begin{array}{llll}
\{(U_2^{t+1},U_1^n), &
(L_2U_2^t,R_2U_1^n), &
(R_1U_2^{t+1}, L_1U_1^{n-1})\} &\mbox{when $0\leq t<n$}\\
\{(U_1^n,U_2^{t+1}), &
(L_2U_1^n, R_2U_2^t), &
(R_1U_1^{n-1}, L_1U_2^{t+1})\}, &\mbox{when $1\leq n\leq t$}
\end{array}\]
\item $L_2 U_1^t U_2^n\otimes \North$  if
$(a_1,a_2)\in$
\[
\begin{array}{llll}
\{(U_1^{n+1},L_2U_2^t), &
(R_1U_1^n,L_1L_2U_2^t), &
(L_2U_1^{n+1}, U_2^{t})\} &\mbox{when $0\leq n<t$}\\
\{(L_2U_2^{t}, U_1^{n+1}), & (U_2^t,L_2U_1^{n+1})& 
(R_1U_2^t, L_1L_2U_1^n)\} &\mbox{when $1\leq t\leq n$}\\
\{(L_2, U_1^{n+1})\} &  & &\mbox{when $0=t\leq n$}
\end{array}
\]
\item 
$R_1 U_1^t U_2^n\otimes \North$ if 
$(a_1,a_2)$ is in the following list:
\[\begin{array}{llll}
\{(U_2^{t+1},R_1U_1^n), &
(L_2U_2^t,R_2R_1U_1^n), &
(R_1U_2^{t+1}, U_1^{n})\} &\mbox{when $0\leq t<n$ }\\
\{(R_1U_1^{n}, U_2^{t+1}) & 
(U_1^n,R_1U_2^{t+1}), &
(L_2U_1^n, R_2R_1U_2^t)\} &\mbox{when $1\leq n\leq t$}\\
\{(R_1, U_2^{t+1})\}& & & \mbox{when $0= n\leq t$.}
\end{array}\]
\end{itemize}

These operations are generalized to the case of arbitrary $i$,
by first defining a map $t$ from $\Blg(2n,k)$ to expressions in $U_1$, $U_2$, $R_1$, $L_1$, $R_2$, and $L_2$
\begin{equation} t(a)=\left\{
\begin{array}{ll}
R_{2}R_{1}U_{1}^{w_i(a)-\OneHalf} U_{2}^{w_{i+1}(a)-\OneHalf}
&{\text{if $w_i(a)\equiv w_{i+1}(a)\equiv \OneHalf\pmod{\Z}$}} \\
&{\text{and $v^{\x}_{i+1}<v^{\y}_{i+1}$}} \\
L_{1}L_{2}U_1^{w_i(a)-\OneHalf} U_{2}^{w_{i+1}(a)-\OneHalf}
&{\text{if $w_i(a)\equiv w_{i+1}(a)\equiv \OneHalf\pmod{\Z}$}} \\
&{\text{and $v^{\x}_{i+1}>v^{\y}_{i+1}$}} \\
R_{2}U_1^{w_i(a)} U_{2}^{w_{i+1}(a)-\OneHalf}
&{\text{if $w_i(a)\in\Z$ and $w_{i+1}(a)\equiv\OneHalf\pmod{\Z}$,}} \\
& {\text{and $v^{\x}_{i+1}<v^{\y}_{i+1}$}} \\
L_{2}U_1^{w_i(a)} U_{2}^{w_{i+1}(a)-\OneHalf}
&{\text{if $w_i(a)\in\Z$ and $w_{i+1}(a)\equiv\OneHalf\pmod{\Z}$,}} \\
& {\text{and $v^{\x}_{i+1}>v^{\y}_{i+1}$}} \\
R_{1}U_{1}^{w_i(a)-\OneHalf} U_{2}^{w_{i+1}(a)}
&{\text{if $w_i(a)\equiv \OneHalf\pmod{\Z}$ and 
    $w_{i+1}(a)\in\Z$,}} \\
& {\text{and $v^{\x}_{i}<v^{\y}_{i}$}} \\
L_{1}U_1^{w_i(a)-\OneHalf} U_{2}^{w_{i+1}(a)}
&{\text{if $w_i(a)\equiv \OneHalf\pmod{\Z}$ and 
    $w_{i+1}(a)\in\Z$,}} \\
& {\text{and $v^{\x}_{i}>v^{\y}_{i}$}} \\
 U_1^{w_i(a)} U_{2}^{w_{i+1}(a)}
&{\text{if $w_i(a)$ and $w_{i+1}(a)$ are integers.}}
\end{array}
\right.
\label{eq:DefType}
\end{equation}
Similarly, there is a map $t$ from generators of $\Pos^{i}$ to 
the four generators of $\LocPos$, that remembers only the type
($\North$, $\South$, $\West$, $\East$) of the generator of $\Pos^{i}$.

\begin{defn}
  \label{def:DefD}
  For $X\in\Pos^i$, an integer $\ell\geq 1$,
  and a sequence of algebra elements
  $a_1,\dots,a_{\ell-1}$ in $\Blg_0(m,k)$
  with specified weights, so that there exists a sequence of idempotent
  states $\x_0,\dots,\x_{\ell}$ with
  \begin{itemize}
    \item   $X=\Idemp{\x_0}\cdot X\cdot \Idemp{\x_1}$ \
    \item
      $a_t=\Idemp{\x_t}\cdot a_t\cdot \Idemp{\x_{t+1}}$ for
      $t=1,\dots,\ell-1$
      \item $\x_t$ and $\x_{t+1}$ are close enough for $t=0,\dots,\ell-1$
        (in the sense of Definition~\ref{def:CloseEnough}),
        \end{itemize}
  define
  $\delta^1_{\ell}(X,a_1,\dots,a_{\ell-1})\in \Blg(2n,k)\otimes \Pos^i$ 
  as the sum of pairs $b\otimes Y$ where $b\in\Blg(2n,k)$
  and $Y$ is a generator of $\Pos^i$, satisfying the following conditions:
  \begin{itemize}
  \item The weights of $b$ and $Y$ satisfy
    \begin{equation}
      \label{eq:GradedOperations}
      \gr(X)+\tau_i^{\gr}(\gr(a_1)+\dots+\gr(a_{\ell-1}))=\gr(b)+\gr(Y),
      \end{equation}
      where here $\gr(a_i)$ and $\gr(b)$ denote the weight gradings on
      $\Blg_1$ and $\Blg_2$; $\gr(X)$ and $\gr(Y)$ are as in Equation~\eqref{eq:GradeCrossing}, 
      and $\tau_i^{\gr}\colon\Q^{2n}\to\Q^{2n}$ is the linear transformation which acts as
      $\tau_i$ on the basis vectors for $\Q^{2n}$.
  \item
    There are generators $X_0$ and $Y_0$ with the same type
    (i.e. with the same label $\{\North,\South,\West,\East\}$)
    as $X$ and $Y$ respectively, so that 
    $t(b)\otimes Y_0$ appears with non-zero multiplicity in
    $\delta^1_{\ell}(X_0,t(a_1),\dots,t(a_{\ell-1}))$.
    \end{itemize}
\end{defn}

According to~\cite[Proposition~\ref{BK:prop:DAnoS}]{BorderedKnots}, the above operations
give $\Pos^i$ the structure of a type $DA$ bimodule.

Let $\Alg_1=\Alg(n,k,\Partition)$ and
$\Alg_2=\Alg(n,k,\tau_i(\Partition))$.
Our goal is to extend $\lsup{\Blg_2}\Pos^i_{\Blg_1}$ to a type DA bimodule
$\lsup{\Alg_2}\Pos^i_{\Alg_1}$.
For simplicity of notation, suppose that $i=1$. 

First, extend the actions coming from $\Blg_1$, 
so that they commute with the actions of all $C_{\{\beta,\gamma\}}$
with $\{\beta,\gamma\}\in\Partition$, in the sense that
\begin{align}
  \delta^1_2(X,C_{p}\cdot a_1)&=C_{\tau(p)}\cdot \delta^1_2(X,a_1) 
  \label{eq:CEquivariance} \\
  \delta^1_3(X,C_{p}\cdot a_1,a_2)&=
  \delta^1_3(X,a_1,C_{p}\cdot a_2)=
  C_{\tau(p)}\cdot   \delta^1_3(X,a_1,a_2). \nonumber
\end{align}
for all $p\in \Matching$.

When $\{1,2\}\not\in M$,
we add the further terms in $\delta^1_2$
from $\South$ to $\{\North,\West,\East\}$, as follows.
Choose $\alpha$ and $\beta$ so that $\{1,\alpha\},\{2,\beta\}\in\Matching$, and 
write
$C_1=C_{\{1,\alpha\}}$, $C_2=C_{\{2,\beta\}}$,
${\widetilde C}_1=C_{\{1,\beta\}}$, and ${\widetilde C}_2=C_{\{2,\alpha\}}$.
The additional terms are of the form:
\begin{equation}
  \label{ExtendingCs}
  \begin{array}{lll}
  \delta^1_2(\South,C_{2})\rightarrow U_\beta R_1\otimes \West & \\
  \delta^1_2(\South,C_{1})\rightarrow U_\alpha L_2\otimes \East &
  \delta^1_2(\South,C_{1} C_{2})\rightarrow U_{\beta}{\widetilde C}_{2} R_1\otimes \West + U_{\alpha}{\widetilde C}_{1} L_2\otimes \East \\
  \delta^1_{2}(\South,U_1C_{2})\rightarrow U_{\beta}U_1 L_2 \otimes \East &
  \delta^1_{2}(\South,U_1C_{1} C_{2})\rightarrow U_{\beta} U_1 {\widetilde C}_{2} L_2 \otimes \East \\
  \delta^1_{2}(\South,C_{1} U_2)\rightarrow U_{\alpha}R_1  U_2 \otimes \West &
    \delta^1_{2}(\South,C_{1} C_{2} U_2)\rightarrow U_{\alpha}{\widetilde C}_{1} R_1  U_2 \otimes \West \\
  \delta^1_{2}(\South, R_1 C_{2})\rightarrow U_{\beta}R_1\otimes \North &
  \delta^1_{2}(\South, R_1 C_{1} C_{2})\rightarrow U_{\beta} R_1 {\widetilde C}_{2}\otimes \North \\
  \delta^1_{2}(\South, L_2 C_{1})\rightarrow U_{\alpha}L_2\otimes \North
  &
  \delta^1_{2}(\South, C_{1} L_2 C_{2})\rightarrow U_{\alpha}{\widetilde C}_{1} L_2\otimes \North \\
  \delta^1_{2}(\South, R_1 C_{1} U_2)\rightarrow U_{\alpha}R_1 U_2\otimes \North 
  &
  \delta^1_{2}(\South, R_1 C_{1} U_2 C_{2})\rightarrow U_{\alpha}R_1 {\widetilde C}_{1} U_2\otimes \North \\
  \delta^1_{2}(\South, U_1 L_2 C_{2})\rightarrow U_{\beta} L_2 U_1\otimes \North & 
  \delta^1_{2}(\South, U_1 C_{1} L_2 C_{2})\rightarrow U_{\beta}U_1 L_2  {\widetilde C}_{2}\otimes \North 
\end{array}
\end{equation}
These are further extended to commute with multiplication by $U_1 U_2$, and multiplication by algebra elements with $w_1=w_2=0$.

With the above definition, we have, for example:
\[ \delta^1_2(\South,C_2)={\widetilde C}_1 \otimes \South + (U_\beta R_1)\otimes \West.\]

For the case of general $i$, use $i$ and $i+1$ in place of $1$ and $2$ in the subscripts for the algebra elements above.

\begin{prop}
  \label{prop:PosExt}
  The operations defined above give $\Pos^i$ the structure of a type $DA$ bimodule,
  $\lsup{\Alg(n,k,\tau(\Partition))}\Pos^i_{\Alg(n,k,\Partition)}$.
  Moreover, $\Pos^i$ is standard in the sense
  of Definition~\ref{def:Standard}.
\end{prop}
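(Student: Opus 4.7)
The plan is to verify the type $DA$ structure equations for $\Pos^i$ over $\Alg_2, \Alg_1$ and then check the standardness conditions of Definition~\ref{def:Standard}. The structure equations split into three classes based on which inputs contain factors of $C_p$: those with no $C_p$ inputs, those in which each $C_p$ input is ``passed through'' to yield a $C_{\tau(p)}$ output on the left, and those in which a $C_p$ input is consumed by one of the new terms listed in Equation~\ref{ExtendingCs}. The first class is already handled by Proposition~\ref{BK:prop:DAnoS}. The second reduces to the first: the centrality of $C_p$ in $\Alg_1$ combined with the equivariance imposed by Equation~\ref{eq:CEquivariance} makes such relations formally identical to the corresponding $C_p$-free relations.

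The third class is the content of the proof. The key identity driving the cancellation is $dC_p=U_iU_j$ in $\Alg_1$, together with the corresponding identity for $C_{\tau(p)}$ in $\Alg_2$. When $\delta^1\circ \delta^n$ is computed on a sequence containing one or two $C_p$ inputs, the new terms in Equation~\ref{ExtendingCs} must balance against (i) applications of the differential to outputs involving $C_{\tau(p)}$, which produce $U$-monomial terms, and (ii) applications of $\Blg$-level operations to outputs produced by the new terms. The table in Equation~\ref{ExtendingCs} is engineered precisely so that these contributions cancel for each generator type $\North,\South,\West,\East$ and each relevant input sequence. The verification is a direct case check, organized by source and target type and by the $C_p$-content of the inputs; for example, the term $U_\beta R_1\otimes \West$ in $\delta^1_2(\South,C_{\{2,\beta\}})$ is needed so that applying the differential to $\widetilde C_1\otimes \South$ (which produces $U_1 U_\beta\otimes \South$) cancels against the composite of $\delta^1_2(\South,C_{\{2,\beta\}})$ with the $\West\to \South$ action $L_1\otimes 1$ in Equation~\ref{eq:PositiveCrossingDA}.

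For standardness, we take $W=\{1,\dots,2n\}\times [0,1]$ with the $i$-th and $(i+1)$-st strands crossed; this is compatible with $\Matching_1$, and the induced matching on the top boundary is $\tau_i(\Matching_1)$. The conditions~\ref{Adapted:grading} and~\ref{Adapted:FD} are immediate from Equation~\ref{eq:GradeCrossing}, the weight-additivity in Equation~\ref{eq:GradedOperations}, and the fact that $\Pos^i$ has finitely many generators. Condition~\ref{prop:LandsInB} follows by inspecting the definition: both the underlying $\lsup{\Blg_2}\Pos^i_{\Blg_1}$ actions and the extensions in Equation~\ref{ExtendingCs} that consume a $C_p$ produce outputs lying in $\Blg_2\otimes \Pos^i$. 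For condition~\ref{prop:CStandard}, the sum $\sum_\ell \delta^1_{1+\ell}(\x,C^1,\dots,C^1)$ has only finitely many nonzero terms because $C_p^2=0$ forces each $\{i,j\}\in\Matching$ to be used at most once among the inputs; inspection of the pure $C_p$-input rows of Equation~\ref{ExtendingCs} shows that the $\widetilde C_p\otimes \x$ outputs from $\delta^1_2(\x,C_p)$ assemble to exactly $C^2\otimes \x$, while the remaining $U$-monomial outputs lie in $\Blg_2\otimes \Pos^i$.

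The main obstacle is the third class of structure equations, especially when multiple $C_p$ inputs interleave with $L_j, R_j, U_j$ inputs. Since the extensions in Equation~\ref{ExtendingCs} are designed to balance the $dC_p=U_iU_j$ contributions against the additional operations, the difficulty is entirely in the careful bookkeeping, rather than in any conceptually new ingredient.
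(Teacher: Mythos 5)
Your proposal is correct and follows essentially the same outline as the paper's proof: the $C_p$-free relations come from Proposition~\ref{BK:prop:DAnoS} of~\cite{BorderedKnots}, the ``pass-through'' relations reduce to those by $C_p$-equivariance together with the fact that multiplication by $dC_p$ commutes with the actions (including $dC_{\{i,i+1\}}=U_iU_{i+1}$ when $i$ and $i+1$ are matched), and standardness is checked exactly as you do, with the $C_{\tau(p)}\otimes X$ outputs of $\delta^1_2(X,C_p)$ summing to $C^2\otimes X$ modulo $\Blg_2\otimes\Pos^i$ and the higher $\delta^1_{m+1}$ with a $C$-input vanishing.

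The one place you diverge is the hard case, the relations in which a $C_{\{\alpha,i\}}$ or $C_{\{i+1,\beta\}}$ input is consumed by a term of Equation~\eqref{ExtendingCs}. You propose a direct case-by-case verification (and your sample cancellation $U_1U_\beta\otimes\South = \mu_2(U_\beta R_1,L_1)\otimes\South$ via $d\widetilde{C}_1$ against $\delta^1_1(\West)=L_1\otimes\South$ is correct). The paper instead observes that the table~\eqref{ExtendingCs} is obtained from the extension of~\cite[Proposition~\ref{BK:prop:CrossingGeneral}]{BorderedKnots} --- where the algebra has elements $C_i$, $C_{i+1}$ with $dC_i=U_i$, $dC_{i+1}=U_{i+1}$ --- by inserting the extra factors $U_\alpha$, $U_\beta$ forced by $dC_{\{\alpha,i\}}=U_\alpha U_i$ and $dC_{\{i+1,\beta\}}=U_\beta U_{i+1}$, so the structure relations follow from the same analysis as in that earlier case. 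The paper's reduction saves the bookkeeping you would otherwise have to carry out in full; your route is more self-contained but, as written, the ``direct case check'' is asserted rather than performed, so to be complete you would need to run through all source/target types and input sequences (or make the comparison with the earlier extension explicit, which is what the paper does).
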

\begin{proof}
  There are two slightly different cases, depending on whether
  or not $i$ and $i+1$ are matched.

  Suppose that they are
  matched. In~\cite[Proposition~\ref{BK:prop:DAnoS}]{BorderedKnots},
  we constructed a type $DA$ bimodule $\lsup{\Blg(2n,k)}\Pos^i_{\Blg(2n,k)}$.
  Multiplication by $dC_{p}$ for all $p\in\Matching$ commute with the
  actions on this bimodule. (This is true also for $dC_{\{i,i+1\}}=U_i
  U_{i+1}$ by the construction from ~\cite{BorderedKnots}.)  It follows that
  the extension of $\Pos^i$ to $\Alg(n,k)$, extending so that all maps are
  $C_{p}$-equivariant, is a $DA$ bimodule, as well.

  The case where $i$ and $i+1$ are not matched requires a little extra
  care, when considering multiplication by $C_{\{\alpha,i\}}$ and
  $C_{\{i+1,\beta\}}$. Recall
  from~\cite[Proposition~\ref{BK:prop:CrossingGeneral}]{BorderedKnots}
  that the $DA$ bimodules for crossings are extended to an algebra
  containing elements $C_i$ with $d C_i=U_i$ and $d C_{i+1}=U_{i+1}$.
  Instead, here, we have here $d C_{\{\alpha,i\}}=U_\alpha U_{i}$ and
  $d C_{\{i+1,\beta\}}=U_\beta U_{i+1}$; and the formulas for the
  extension here have extra factors of $U_\alpha$ or $U_\beta$ in the
  output. Thus, the $DA$ bimodule relations in the present case follow
  from the same analysis as the $DA$ bimodule relations in that
  earlier case.

  Next, we verify that $\Pos^i$ is standard. Property~\ref{prop:LandsInB} is clear from
  the definition, so we turn to Property~\ref{prop:CStandard}. Suppose that
  $\{i,i+1\}\in\Partition$. Then, for all $X\in \Pos^i$
  and $\{\alpha,\beta\}\in\Partition$, 
  \begin{equation}
    \label{eq:CtoC}
    \delta^1_2(X,C_{\alpha,\beta})= C_{\alpha,\beta}\otimes X;
  \end{equation}
  also, if $a_1,\dots,a_m$ is any standard sequence 
  with $m>1$, so that $a_j=C_{\{\alpha,\beta\}}$ for some $j$, then
  \begin{equation}
    \label{eq:HighCsVanish}
    \delta^1_{m+1}(X,a_1,\dots,a_m)=0.
  \end{equation}
  Property~\ref{prop:CStandard} follows readily.
  
  When $\{i,i+1\}\not\in\Partition$, Equation~\eqref{eq:CtoC} is not true when $\{\alpha,\beta\}$
  contains one of $i$ or $i+1$; in those cases,  Equation~\eqref{eq:CEquivariance}
  gives
  \[ \delta^1_2(X,C_{\{\alpha,i\}})=C_{\{\alpha,i+1\}} \otimes X + V
  \qquad\text{and}\qquad
  \delta^1_2(X,C_{\{i+1,\beta\}})=C_{\{i,\beta\}} \otimes X + V'
  \]
  for $V, V'\in \Blg_2\otimes \Pos^i$.
  It still follows
  that 
  \[ \sum_{\{\alpha,\beta\}\in\Partition} \delta^1_{2}(X,C_{\alpha,\beta})
  =  \left(\sum_{\{\alpha,\beta\}\in\tau(\Partition)} C_{\{\alpha,\beta\}}\right) \otimes X + V'',\]
  with $V''\in\Blg_2\otimes \Pos^i$; 
  which, together with Equation~\eqref{eq:HighCsVanish} verifies
  Property~\ref{prop:CStandard}.
\end{proof}

We have the following analogue of ~\cite[Lemma~\ref{BK:lem:CrossingDADD}]{BorderedKnots}:

\begin{prop}
  \label{prop:DualCross}
  Let 
  \[ \Alg_1=\Alg(n,k,\Matching), \qquad
  \Alg_2=\Alg(n,k,\tau_i(\Matching)),\qquad 
  \DuAlg_1=\DuAlg(n,2n+1-k,\Matching).\]
  $\Pos^i$ is dual to $\Pos_i$, in the sense that
  \[ \lsup{\Alg_2}\Pos^i_{\Alg_1}\DT~
    \lsup{\Alg_1,\DuAlg_1}\CanonDD \simeq \lsup{\Alg_2,\DuAlg_1}\Pos_i.\]
\end{prop}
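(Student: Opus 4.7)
The strategy is to combine the invertibility of $\CanonDD$ (Theorem~\ref{thm:InvertibleDD}) with the uniqueness result for standard $DA$ bimodules (Lemma~\ref{lem:DeterminedByStandard}). Let $\lsup{\DuAlg_1}\Ynew_{\Alg_1}$ denote the homotopy inverse of $\lsup{\Alg_1,\DuAlg_1}\CanonDD$ supplied by Theorem~\ref{thm:InvertibleDD}. Applying $-\DT\Ynew$ to both sides of the desired equivalence and invoking associativity of $\DT$, the claim reduces to
\[ \lsup{\Alg_2}\Pos^i_{\Alg_1} \simeq \lsup{\Alg_2,\DuAlg_1}\Pos_i \DT \lsup{\DuAlg_1}\Ynew_{\Alg_1} \]
as $DA$ bimodules over $\Alg_2$-$\Alg_1$.

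I would first check that $\Pos_i\DT\Ynew$ is a standard $DA$ bimodule in the sense of Definition~\ref{def:Standard}; the bimodule $\Pos^i$ is already standard by Proposition~\ref{prop:PosExt}. The grading and finite-dimensionality conditions~\ref{Adapted:grading}--\ref{Adapted:FD} follow from the adapted tensor product machinery of Proposition~\ref{prop:AdaptedTensorProd} together with its $DD$-variant (Proposition~\ref{prop:AdaptedTensorDD}). Property~\ref{prop:LandsInB} holds because the $\Alg_1$-outputs of $\CanonDD$ (and hence of $\Ynew$) lie in $\Blg_1\cup\{C_p\}_{p\in\Partition}$. The key conceptual point is Property~\ref{prop:CStandard}: the type~\ref{type:UCCP} summands $C_p\otimes\llbracket E_\alpha,E_\beta\rrbracket$ of the $\Pos_i$-differential contract against the dual $\llbracket E_\alpha,E_\beta\rrbracket$-factors in $\Ynew$ to produce exactly the required $C^1$-contribution modulo $\Blg_2\otimes X$.

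By Lemma~\ref{lem:DeterminedByStandard}, it then suffices to verify that the two standard $DA$ bimodules agree on an arbitrary standard sequence $(a_1,\ldots,a_{\ell-1})$. For inputs $a_t\in\Blg(2n,k)$, the required identity reduces to its $\Blg$-analogue, already established as Lemma~\ref{BK:lem:CrossingDADD} of~\cite{BorderedKnots}, since the matching plays no role on such inputs. For inputs of the form $a_t=C_p$ with $p\cap\{i,i+1\}=\emptyset$, the contraction through $\Ynew$ is central, and the resulting operation matches the $C$-equivariant extension of $\Pos^i$ defined by~\eqref{eq:CEquivariance}.

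The main obstacle is the case $a_t=C_{\{\alpha,i\}}$ or $a_t=C_{\{i+1,\beta\}}$ with $\{i,i+1\}\notin\Partition$, where the crossing permutes the indices of $p$, the action on $\Pos^i$ is no longer central, and the extension is governed by the supplementary rules~\eqref{ExtendingCs}. For each such $p$ and each generator type $X\in\{\North,\West,\East,\South\}$, I would unfold the operations $\delta^1_{j+1}(X,C_p,a'_1,\ldots,a'_j)$ on $\Pos_i\DT\Ynew$ by iterating the tensor-product formula, and then match the resulting outputs term by term against~\eqref{ExtendingCs}. This is a finite, bookkeeping-heavy verification paralleling the analysis in~\cite[Lemma~\ref{BK:lem:CrossingDADD}]{BorderedKnots}, with the new subtlety being the Koszul-dual twist between $C_p$ in $\Alg_1$ and $\llbracket E_\alpha,E_\beta\rrbracket$ in $\DuAlg_1$.
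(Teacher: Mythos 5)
Your reduction to $\lsup{\Alg_2}\Pos^i_{\Alg_1}\simeq \lsup{\Alg_2,\DuAlg_1}\Pos_i\DT \Ynew_{\DuAlg_1,\Alg_1}$ is logically sound, but the way you then close the argument has a genuine gap. Lemma~\ref{lem:DeterminedByStandard} applies only to \emph{standard} type $DA$ bimodules, and standardness presupposes the finite-dimensionality condition~\ref{Adapted:FD} of Definition~\ref{def:AdaptedDA}; the bimodule $\Pos_i\DT\Ynew$ is built from the full Mor-complex $\Ynew$, which is infinite-dimensional over $\Field$, so the lemma does not apply to it as written. You would first have to replace $\Pos_i\DT\Ynew$ by a finite small model via homological perturbation, and at that point its ``values on standard sequences'' are no longer canonical --- they depend on the chosen retraction of $\Ynew$ onto its homology. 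Since Lemma~\ref{lem:DeterminedByStandard} requires the operations to agree \emph{on the nose} (homotopic values are not enough), your final step --- ``unfold the operations on $\Pos_i\DT\Ynew$ and match term by term against~\eqref{ExtendingCs}'' --- silently requires an explicit computation of induced $\Ainfty$ operations through $H(\Ynew)$. That computation is strictly harder than the one you were trying to avoid, and nothing in Theorem~\ref{thm:InvertibleDD} (which only identifies $H(\Ynew)$ as a module, via the projections $\Pi^{\{i,j\}}$) hands it to you.

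The paper goes the other way around, which is the tractable direction: it computes $\lsup{\Alg_2}\Pos^i_{\Alg_1}\DT\lsup{\Alg_1,\DuAlg_1}\CanonDD$ directly. This is a finite computation because $\CanonDD$ has a single generator per pair of complementary idempotents and $\delta^1_k(\Pos^i)=0$ for $k>3$; the result (Equation~\eqref{eq:TensorProdCross}) differs from $\Pos_i$ only near the $\South$ generator, and the two are identified by the explicit involutive isomorphism $h^1$ fixing $\North,\West,\East$ and sending $\South\mapsto \South+(L_2\otimes E_1)\cdot\East+(R_1\otimes E_2)\cdot\West$. If you want to keep your framework, note that this is exactly what the proof of Lemma~\ref{lem:DeterminedByStandard} itself does: it compares standard bimodules by tensoring with $\CanonDD$, not with its inverse. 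So the efficient repair of your argument is to drop the detour through $\Ynew$ entirely and compare $\Pos^i\DT\CanonDD$ with $\Pos_i$ directly, handling the two cases $\{i,i+1\}\in\Matching$ and $\{i,i+1\}\notin\Matching$ separately as the extension rules~\eqref{ExtendingCs} suggest.
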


\begin{proof}
  There are two cases, according to whether or not $i$ and $i+1$ are
  matched in $\Matching$.  

  Consider the case where
  $\Matching$ matches $i$ with $\alpha\neq i+1$ and $i+1$ with $\beta\neq i$.

  For notational simplicity, let $i=1$.  A straightforward computation
  shows that $\Pos^1\DT \CanonDD$ is given by
  \begin{equation}
    \label{eq:TensorProdCross}
    \begin{tikzpicture}[scale=2]
    \node at (0,4) (N) {${\mathbf N}$} ;
    \node at (-2,2.5) (W) {${\mathbf W}$} ;
    \node at (2,2.5) (E) {${\mathbf E}$} ;
    \node at (0,0) (S) {${\mathbf S}$} ;
    \draw[->] (S) [bend left=10] to node[below,sloped] {\tiny{$U_\beta R_1 \otimes [E_2,E_\beta]+
        R_1 U_2 \otimes E_2 E_1+L_{2}\otimes R_{2}R_{1}$}}  (W)  ;
    \draw[->] (W) [bend left=10] to node[above,sloped] {\tiny{$L_{1}\otimes 1$}}  (S)  ;
    \draw[->] (E) [bend right=10] to node[above,sloped] {\tiny{$R_{2}\otimes 1$}}  (S)  ;
    \draw[->] (S)[bend right=10] to node[below,sloped] {\tiny{$R_{1} \otimes L_{1} L_{2} + L_2 U_1\otimes E_1 E_2
        + U_{\alpha} L_2 \otimes [E_1,E_\alpha]$}} (E) ;
    \draw[->] (W)[bend right=10]to node[below,sloped] {\tiny{$1\otimes L_{1}$}} (N) ;
    \draw[->] (N)[bend right=10] to node[above,sloped] {\tiny{$U_{2}\otimes R_{1} + R_{2} R_{1} \otimes L_{2}$}} (W) ;
    \draw[->] (E)[bend left=10]to node[below,sloped]{\tiny{$1\otimes R_{2}$}} (N) ;
    \draw[->] (N)[bend left=10] to node[above,sloped]{\tiny{$U_{1}\otimes L_{2} + L_{1} L_{2}\otimes R_{1}$}} (E) ;
    \draw[->] (N) [loop above] to node[above]{\tiny{$U_1\otimes E_2 + U_2\otimes E_1$}} (N);
    \draw[->] (W) [loop left] to node[above,sloped]{\tiny{$U_2\otimes E_{1}$}} (W);
    \draw[->] (E) [loop right] to node[above,sloped]{\tiny{$U_1\otimes E_{2}$}} (E);
    \draw[->] (E) [bend right=5] to node[above,pos=.3] {\tiny{$R_2 R_1 \otimes E_2$}} (W) ;
    \draw[->] (W) [bend right=5] to node[below,pos=.3] {\tiny{$L_1 L_2\otimes E_1$}} (E) ;
    \draw[->] (S) to node[below,sloped,pos=.3] {\tiny{$L_2\otimes E_1 R_2 + R_1\otimes L_1 E_2$}} (N) ;
    \end{tikzpicture}
  \end{equation}
  along with all the usual self-arrows $L_t\otimes R_t$, $R_t\otimes L_t$, $U_t\otimes E_t$
  for $t\neq 1,2$; 
  and $C_{\{m,\ell\}}\otimes [E_{\tau(m)},E_{\tau(\ell)}]$ .

    Consider the 
    map $h^1\colon \Pos^1\DT \CanonDD \to \Pos_1$
    \[ h^1(X) = \left\{\begin{array}{ll}
        \South+ (L_2\otimes E_1)\cdot \East + (R_1\otimes E_2)\cdot 
        \West & {\text{if $X=\South$}} \\
        X &{\text{otherwise.}}
      \end{array}
    \right.\]
    Let $g^1\colon \Pos_1\to \Pos^1\DT \CanonDD$ be given by the same formula.
    It is easy to verify that $h^1$ and $g^1$ are homomorphisms of type $DD$ structures, 
    $h^1\circ g^1 = \Id$, and $g^1\circ h^1=\Id$. 

    When $\{i,i+1\}\in\Matching$, $\Pos^1\DT \CanonDD$ is as shown in Equation~\eqref{eq:TensorProdCross},
    except that in this case, the two terms involving $E_\alpha$ and $E_\beta$ are to be deleted.
    With that modification, the above computations hold.
\end{proof}

\begin{defn}
  \label{def:NegCrossing}
  With $n$, $k$, $\Partition$, and $\tau_i$ as before, take the
  opposite module of  $\lsup{\Alg(n,k,\tau_i(\Partition)}\Pos^i_{\Alg(n,k,\Partition)}$,
  to get a module
  \[ \lsub{\Alg(n,k,\tau_i(\Partition))}({\overline
    \Pos}^i)^{\Alg(n,k,\Partition)} \cong
  \lsup{\Alg(n,k,\Partition)^{\op}}{\overline
    \Pos}^i_{\Alg(n,k,\tau_i(\Partition))^{\op}}; \] and use the
  identifications $\Opposite \colon\Alg(n,k,\Partition')\to
  \Alg(n,k,\Partition')^{\op}$ for both $\Partition'=\Partition$ and
  $\Partition'=\tau_i(\Partition)$ to define
  $\lsup{\Alg(n,k,\tau_i(\Partition))}\Neg^i_{\Alg(n,k,\Partition)}$, the
  {\em bimodule associated to a negative crossing}.
\end{defn}

Since the opposite module of a standard module is clearly standard,
and $\Pos^i$ is standard, it follows that $\Neg^i$ is standard, too.
Also, the analogue of Proposition~\ref{prop:DualCross} holds for 
$\Neg^i$, as well.

\newcommand\NxN{\North\North}
\newcommand\NxW{\North\West}
\newcommand\NxE{\North\East}
\newcommand\ExS{\East\South}
\newcommand\SxS{\South\South}
\newcommand\WxS{\West\South}
\newcommand\ExW{\East\West}
\section{The braid relations}
\label{sec:BraidRelations}

The bimodule $\Neg^i$ inverts $\Pos^i$; and all these bimodules satisfy
the braid relations, according to the following:

\begin{thm}
  \label{thm:BraidRelation}
  Fix integers $n$, $k$ with $0\leq k\leq 2n+1$,
  an integer
  $i$ with $1\leq i\leq 2n-1$,
  and a matching $\Partition$ on $\{1,\dots,2n\}$.
  Let
  $\Alg_1=\Alg(n,k,\Partition)$ and $\Alg_2=\Alg(n,k,\tau_i(\Partition))$.
  Then,
  \begin{equation}
    \lsup{\Alg_1}\Pos^i_{\Alg_2}\DT~^{\Alg_2}\Neg^i_{\Alg_1}\simeq~^{\Alg_1}\Id_{\Alg_1}\!\simeq~
    \lsup{\Alg_1}\Neg^i_{\Alg_2}\DT~^{\Alg_2}\Pos^i_{\Alg_1}.
    \label{eq:InvertPos} 
  \end{equation}
  Given $j\neq i$ with $1\leq j\leq 2n-1$, let
  \[
\Alg_3=\Alg(n,k,\tau_j \tau_i(\Partition))~\qquad\text{and}\qquad
\Alg_4=\Alg(n,k,\tau_j(\Partition)).\]
  If $|i-j|>1$
  \begin{equation}
    \lsup{\Alg_3}\Pos^j_{\Alg_2}\DT~^{\Alg_2}\Pos^i_{\Alg_1}
    \simeq~
    \lsup{\Alg_3}\Pos^i_{\Alg_4}\DT~^{\Alg_4}\Pos^j_{\Alg_1}
    \label{eq:FarBraids}.
    \end{equation}
    while if $j=i+1$,
    let 
    \[\Alg_5=\Alg(n,k,\tau_i\tau_{i+1}\tau_i(\Partition))
    \qquad{\text{and}}\qquad
    \Alg_6=\Alg(n,k,\tau_{i}\tau_{i+1}(\Partition));\]
    then,    
    \begin{equation}
      \lsup{\Alg_5}\Pos^i_{\Alg_3}\DT
      \lsup{\Alg_3}\Pos^{i+1}_{\Alg_2}\DT
      \lsup{\Alg_2}\Pos^i_{\Alg_1}
      \simeq~
      \lsup{\Alg_5}\Pos^{i+1}_{\Alg_6}\DT~^{\Alg_6}\Pos^{i}_{\Alg_4}
      \DT~^{\Alg_4}\Pos^{i+1}_{\Alg _1}.
    \label{eq:NearBraids}
    \end{equation}
\end{thm}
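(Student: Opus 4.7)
The plan is to reduce each of the three equivalences to corresponding statements about type $DD$ bimodules, where the analogues have been established in~\cite{BorderedKnots}.

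First I would note that all of the $DA$ bimodules that appear in Equations~\eqref{eq:InvertPos}, \eqref{eq:FarBraids}, and \eqref{eq:NearBraids} are standard in the sense of Definition~\ref{def:Standard}: for $\Pos^i$ this is Proposition~\ref{prop:PosExt}, and for $\Neg^i$ it follows because the opposite of a standard bimodule is standard (Definition~\ref{def:NegCrossing}). Then, by Lemma~\ref{lem:StandardTimesStandard}, the iterated tensor products on both sides of each of the three equations are themselves standard. Hence, by Lemma~\ref{lem:DeterminedByStandard}, to prove the equivalences it suffices to verify that the two sides agree as bimodules over the idempotent rings (clear from the generator descriptions) and to verify agreement of their actions on standard sequences.

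To make those checks combinatorial, I would tensor each equation on the right with the canonical type $DD$ bimodule $\lsup{\Alg_1,\DuAlg_1}\CanonDD$. By Proposition~\ref{prop:DualCross} and its analog for $\Neg^i$, this operation converts the rightmost $\Pos^i$ or $\Neg^i$ factor into the type $DD$ bimodule $\Pos_i$ or $\Neg_i$. Since $\CanonDD$ is invertible (Theorem~\ref{thm:InvertibleDD}), establishing the resulting $DD$-level equivalences is equivalent to establishing the original $DA$-level equivalences. The advantage is that the target $DD$ bimodules are finite dimensional over $\Field$ with four generators per crossing, so the identities reduce to finite checks on differentials.

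Equations~\eqref{eq:InvertPos} and~\eqref{eq:FarBraids} should then be relatively routine. The invertibility relation~\eqref{eq:InvertPos} follows from the fact that $\Neg^i$ is defined as the opposite of $\Pos^i$ in Definition~\ref{def:NegCrossing}, which at the $DD$ level produces a manifest cancellation. The far-commutation~\eqref{eq:FarBraids} is immediate because, when $|i-j|>1$, the generators and differentials of $\Pos_i$ and $\Pos^j$ involve disjoint subsets of $\{1,\dots,2n\}$, yielding a manifest term-by-term identification independent of the ordering of the tensor product.

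The main obstacle is the third Reidemeister move, Equation~\eqref{eq:NearBraids}. My plan here is to invoke Theorem~\ref{BK:thm:BraidRelation} from~\cite{BorderedKnots}, which establishes precisely this identity for the bimodules $\Pos_i$ built over the unmatched algebras $\Blg(2n,k)$. The only genuinely new content is the interaction with the central elements $C_{\{a,b\}}$ associated to the matching $\Partition$; since each $C_{\{a,b\}}$ is central and closed, and acts diagonally on the four generator types of $\Pos_i$, the chain equivalence constructed in~\cite{BorderedKnots} extends verbatim once one requires $C_{\{a,b\}}$-equivariance (which is forced by the standardness condition together with Lemma~\ref{lem:DeterminedByStandard}). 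Combining this extended $DD$-level equivalence with invertibility of $\CanonDD$ then yields the required $DA$ bimodule equivalence~\eqref{eq:NearBraids}.
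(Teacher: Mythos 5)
Your overall strategy---reduce each identity to a statement about type $DD$ bimodules by tensoring with $\CanonDD$, invoke Theorem~\ref{thm:InvertibleDD}, and then import the corresponding identities from~\cite{BorderedKnots}---is the same high-level route the paper takes (the paper explicitly prefers this formal derivation over redoing the direct computation). However, there is a genuine gap at the crucial step, and a smaller misstep at the start.

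The gap: you assert that the chain equivalences of~\cite[Theorem~\ref{BK:thm:BraidRelation}]{BorderedKnots} ``extend verbatim'' to the matched algebras because the $C_{\{a,b\}}$ are central and closed and the extension is $C$-equivariant. This is false precisely in the case that carries the new content, namely when $i$ and $i+1$ are \emph{not} matched in $\Partition$. There the extension of $\Pos^i$ to $\Alg(n,k,\Partition)$ is not obtained by $C_p$-equivariance alone: one must add the terms of Equation~\eqref{ExtendingCs}, which carry extra factors of $U_\alpha$ and $U_\beta$ coming from the matched partners of $i$ and $i+1$, and correspondingly the $DD$ bimodule $\Pos_i$ acquires the terms $C_{\{\alpha,\beta\}}\otimes[E_{\tau(\alpha)},E_{\tau(\beta)}]$ and $U_j\otimes E_{\tau(j)}$ in place of the old $U_j\otimes C_{\tau(j)}$/$C_j\otimes U_{\tau(j)}$ terms. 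The paper's actual proof isolates exactly this issue: it defines \emph{special} type $DD$ bimodules and a functor $F(X,f)$ converting old-algebra bimodules to matched-algebra ones (Lemma~\ref{lem:Fexamples}), and then proves the nontrivial Lemma~\ref{lem:PosPreservesF}, which shows that $\DT\,\Pos^i$ commutes with $F$ up to a homotopy equivalence $h^1$ that is \emph{not} the identity on $\South$-type generators and whose verification occupies several pages of cancellation diagrams. Your proposal silently assumes the content of that lemma. Without it, ``combining the $DD$-level equivalence with invertibility of $\CanonDD$'' does not go through, because you have not shown that the new $DD$ bimodules $\Pos^i\DT\Neg_i$, $\Pos^j\DT\Pos_i$, etc.\ are the images under $F$ of the corresponding old-algebra tensor products.

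A secondary point: your first reduction via Lemma~\ref{lem:DeterminedByStandard} is misapplied. That lemma shows two standard bimodules with \emph{literally identical} actions on standard sequences are homotopy equivalent; the two sides of Equations~\eqref{eq:InvertPos}--\eqref{eq:NearBraids} do not have identical actions (they are only homotopy equivalent), so standardness buys you nothing here beyond the well-definedness of the tensor products. The tensor-with-$\CanonDD$ reduction is the one that does the work, and it is the one you should keep; the missing ingredient is the compatibility statement of Lemma~\ref{lem:PosPreservesF}.
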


Analogous identities for the bimodules from~\cite{BorderedKnots} were
proved in~\cite[Theorem~\ref{BK:thm:BraidRelation}]{BorderedKnots}, by
direct computation.  A similar direct computation can be used to
verify the above theorem; we prefer instead to show that the
identities from the earlier paper formally imply the identities here.

To this end, recall that in~\cite{BorderedKnots}, we constructed the
Koszul dual algebra of $\Blg(2n,k)=\Blg(2n,k,\emptyset)$, which is an
algebra $\Blg(2n,2n+1-k,\{1,\dots,2n\})$.  This is an algebra that
contains new commuting elements $C_1,\dots,C_{2n}$ with $d C_i = U_i$.
(This algebra is similar to $\DuAlg(n,2n+1-k)$, except that in
$\DuAlg$, some of the new elements $E_i$ do not commute with one
another.)

Let $\Gamma$ be a disjoint union of $2n$ intervals (called ``strands''), equipped with a partition of its boundary
$\partial \Gamma=Y_1 \cup Y_2$, where $Y_1\cong Y_2\cong\{1,\dots,2n\}$. 
Suppose moreover that each interval in $\Gamma$ connects a point in $Y_1$ with a point in $Y_2$,
There is an induced one-to-one correspondence 
$f\colon \{1,\dots,2n\}\to\{1,\dots,2n\}$ between
the incoming strands and the out-going ones.

\begin{defn} 
  \label{def:Special}
  Choose any $0\leq k\leq 2n+1$.
  Let 
  \[ \Blg_1=\Blg(2n,2n+1-k) \qquad \Blg_2'=\Blg(2n,k,\{1,\dots,2n\})
  \qquad \Blg_2=\Blg(2n,k,\emptyset)\subset \Blg_2',\] Fix also
  $\Gamma$ as above, and let $X=\lsup{\Blg_2',\Blg_1}X$ be a type $DD$
  bimodule adapted to $\Gamma$ as in Definition~\ref{def:AdaptedDA}.
  We say $X$ is {\em special} if
  its differential
  \[\delta^1\colon X \to (\Blg_2'\otimes \Blg_1)\otimes X \]
  has  a decomposition $\delta^1=\epsilon + d_0$, where
  \[d_0\colon X \to (\Blg_2\otimes \Blg_1)\otimes X \]  
  and 
  \[ \epsilon(\x)=\left(\sum_{i} C_{f(i)}\otimes U_i\right)\otimes  \x.\]
\end{defn}

Suppose $X$ is special. Consider the map
\[ (\mu^{\Blg_2\otimes\Blg_1}_2\otimes \Id_X)\circ 
(\Id_{\Blg_2\otimes\Blg_1}\otimes d_0)\circ d_0 + 
(\mu^{\Blg_2\otimes\Blg_1}_1\otimes \Id_X)\circ d_0 \colon X \to (\Blg_2\otimes\Blg_1)\otimes X.\]
(Note in fact that $\mu^{\Blg_2\otimes\Blg_1}_1=0$.)
The type $DD$ bimodule structure relation and this hypothesis ensures that this map is multiplication by
$\sum_{i=1}^{2n}U_{f(i)}\otimes U_{i}$.
This allows us to make the following definition:

\begin{defn}
 Choose $\Blg_1$ and $\Blg_2'$ as in Definition~\ref{def:Special} and 
 a special type $DD$ bimodule
 $\lsup{\Blg_2',\Blg_1}X$.
 Let 
 \[ \DuAlg_1=\Alg(n,2n+1-k,\Matching)\qquad\text{and}\qquad 
 \Alg_2=\Alg(n,k,f\circ \Matching).\]
  There is an associated type $DD$ bimodule
  $F(X,f)=\lsup{\Alg_2,\DuAlg_1}F(X,f)$, which is defined by the differential
  \[ \delta^1(\x)=d_0(\x) + \left(\sum_{i} U_{f(i)}\otimes E_{i} + \sum_{i} C_{\{f(i),f(j)\}}\otimes 
    [E_{i},E_{j}]\right)\otimes \x.\]
\end{defn}

It follows immediately from the above remarks that $\lsup{\Alg_2,\DuAlg_1}F(X,f)$
is indeed a type $DD$ bimodule.

Examples of special bimodules are abundant.
Let 
\[\begin{array}{ll}
  \Blg=\Blg(2n,k,\emptyset)  &
  \Blg'=\Blg(2n,2n+1-k,\{1,\dots,2n\})  \\
  \Alg=\Alg(n,k,\Matching) 
  &
  \DuAlg=\DuAlg(n,2n+1-k,\Matching)
\end{array}\]
In~\cite[Section~\ref{BK:subsec:CanonDD}]{BorderedKnots},
we defined a canonical type $DD$ bimodule
$\lsup{\Blg',\Blg}\CanonDD$. 
We also defined bimodules associated to crossings, including
$\lsup{\Blg',\Blg}\Pos_i$ and
$\lsup{\Blg',\Blg}\Neg_i$. All of these are special. 

\begin{lemma}
  \label{lem:Fexamples}
  Choose any matching $\Matching$ on $\{1,\dots,2n\}$, and $i=1,\dots,2n-1$.
  The type $DD$ bimodules 
  $\lsup{\Blg',\Blg}\CanonDD$,
  $\lsup{\Blg',\Blg}\Pos_i$, and
  $\lsup{\Blg',\Blg}\Neg_i$ are special, and
  \[\begin{array}{lll}
    F(\lsup{\Blg',\Blg}\CanonDD,\Id)
    =     \lsup{\Alg,\DuAlg}\CanonDD &
    F(\lsup{\Blg',\Blg}\Pos_i,\tau_i)
    =     \lsup{\Alg,\DuAlg}\Pos_i &
    F(\lsup{\Blg',\Blg}\Neg_i,\tau_i)
    =     \lsup{\Alg,\DuAlg}\Neg_i;
    \end{array}\]
    where here $\Id$ denotes the identity map from $\{1,\dots,2n\}$ to itself.
\end{lemma}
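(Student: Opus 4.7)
The proof amounts to a direct unwinding of definitions. First I would verify specialness of each of the three $\Blg'$-$\Blg$ bimodules by reading off the decomposition $\delta^1 = \epsilon + d_0$ from~\cite{BorderedKnots}. The canonical bimodule $\lsup{\Blg',\Blg}\CanonDD$ has differential specified by $\sum_i (R_i\otimes L_i + L_i\otimes R_i) + \sum_i C_i\otimes U_i$; the first sum lies in $\Blg_2\otimes\Blg_1$ and serves as $d_0$, while the second is precisely $\epsilon$ with $f=\Id$. For $\lsup{\Blg',\Blg}\Pos_i$ and $\lsup{\Blg',\Blg}\Neg_i$, inspection of the formulas from~\cite{BorderedKnots} shows that every differential arrow involving some $C_j \in \Blg'$ is of the form $C_{\tau_i(j)}\otimes U_j$; in particular, the crossing-interior arrows (the analogs of Type~\ref{type:InsideP}) and the outside $L$-$R$ arrows (analogs of Type~\ref{type:OutsideLRP}) are entirely $C$-free. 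These together form $d_0$, and $\epsilon(\x) = \bigl(\sum_j C_{\tau_i(j)}\otimes U_j\bigr)\otimes\x$, so $\Pos_i$ and $\Neg_i$ are special with $f=\tau_i$.

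Second, I apply the functor $F$ termwise. By construction, $F$ preserves $d_0$ verbatim and replaces $\epsilon$ by
\[
  \Bigl(\sum_i U_{f(i)}\otimes E_i \;+\; \sum_{\{i,j\}\in\Matching} C_{\{f(i),f(j)\}}\otimes \llbracket E_i,E_j\rrbracket\Bigr)\otimes\x.
\]
For the canonical case with $f=\Id$, this new sum is exactly $\sum_i U_i\otimes E_i + \sum_{\{i,j\}\in\Matching} C_{\{i,j\}}\otimes \llbracket E_i,E_j\rrbracket$; together with $d_0 = \sum_i (R_i\otimes L_i + L_i\otimes R_i)$, this is precisely the algebra element $A$ defining $\lsup{\Alg,\DuAlg}\CanonDD$ in Section~\ref{sec:DefCanonDD}. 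For $\Pos_i$ with $f=\tau_i$, reindexing via $j=\tau_i(i)$ gives $\sum_i U_{\tau_i(i)}\otimes E_i = \sum_j U_j\otimes E_{\tau_i(j)}$, matching Type~\ref{type:UCP}; and $\sum_{\{i,j\}\in\Matching} C_{\{\tau_i(i),\tau_i(j)\}}\otimes \llbracket E_i,E_j\rrbracket$ becomes, after substituting $\alpha=\tau_i(i)$ and $\beta=\tau_i(j)$, the sum $\sum_{\{\alpha,\beta\}\in\tau_i(\Matching)} C_{\{\alpha,\beta\}}\otimes \llbracket E_{\tau_i(\alpha)},E_{\tau_i(\beta)}\rrbracket$, matching Type~\ref{type:UCCP}. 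Together with the $d_0$ arrows (Types~\ref{type:OutsideLRP} and~\ref{type:InsideP}, already inherited unchanged from the $\Blg'$-$\Blg$ bimodule), this is exactly the defining differential of $\lsup{\Alg,\DuAlg}\Pos_i$ in Section~\ref{subsec:DDcross}. The case of $\Neg_i$ proceeds identically, using diagram~\eqref{eq:NegativeCrossing} in place of the positive-crossing diagram.

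The only substantive obstacle is bookkeeping the permutation $f=\tau_i$ and the resulting matching $f\circ \Matching = \tau_i(\Matching)$ produced by $F$ on the $\Alg$-side, and reconciling this with the matching conventions used in Section~\ref{subsec:DDcross} (where the matching above the crossing is related to the one below by $\tau_i$). Apart from this index-chasing, no nontrivial calculations arise: the identification in each case is a term-by-term equality of algebra elements.
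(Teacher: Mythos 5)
Your proposal is correct and takes the same route as the paper: the paper's entire proof is the one-line assertion that all three identities are immediate from the definitions of the bimodules, and you have simply carried out that unwinding explicitly (specialness via the $\epsilon + d_0$ split, then term-by-term matching of $F$'s output against the differentials in Sections~\ref{sec:DefCanonDD} and~\ref{subsec:DDcross}). Your closing remark about reconciling the matchings $f\circ\Matching=\tau_i(\Matching)$ with the conventions of Section~\ref{subsec:DDcross} is apt — that index bookkeeping is the only real content here, and you handle it correctly.
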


\begin{proof}
  All of these statements are immediate from the definitions of the various bimodules
  (see~\cite[Sections~\ref{BK:subsec:CanonDD} and~\ref{BK:sec:CrossingDD}]{BorderedKnots}
  for the canonical $DD$ bimodule and the crossing bimodules respectively).
\end{proof}
 
\begin{lemma}
  \label{lem:PosPreservesF}
  Choose any $0\leq k \leq 2n+1$, and a braid $\Gamma$ with $2n$ strands inducing the correspondence $f$.
  Choose further any $i=1,\dots,2n-1$, and a matching $\Matching$ on $\{1,\dots,2n\}$. Let
  \[ 
  \begin{array}{ll}\Blg_1=\Blg(2n,2n+1-k),& \Blg_2'=\Blg_3'=\Blg(2n,k,\{1,\dots,2n\}) \\
    \DuAlg_1=\DuAlg(n,2n+1-k,\Matching)),&
    \Alg_2=\Alg(n,k,f(\Matching)) \\
    \Alg_3=\Alg(n,k,\tau_i\circ f(\Matching)).
  \end{array}\]
  Suppose that $\lsup{\Blg_2',\Blg_1}X$ be an special type $DD$ 
  adapted to  $\Gamma$. 
  Then, for the bimodule $\lsup{\Blg_3'}\Pos^i_{\Blg_2'}$ from~\cite{BorderedKnots}, we have that
  $\lsup{\Blg_3'}\Pos^i_{\Blg_2'}\DT \lsup{\Blg_2',\Blg_1}X$ is also special; and  there
  is a homotopy equivalence
  \[\lsup{\Alg_3}\Pos^i_{\Alg_2}\otimes \lsup{\Alg_2,\Alg_1'}F(X,f)
  \simeq 
  F(\lsup{\Blg_3'}\Pos^i_{\Blg_2'} \DT \lsup{\Blg_2',\Blg_1}X,\tau_i\circ f).\]
\end{lemma}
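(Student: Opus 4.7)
The plan is to verify this by a direct comparison: both sides are DD bimodules over $\Alg_3\otimes\DuAlg_1$ with the same underlying $\Field$-vector space $\Pos^i\otimes_\IdempRing X$, so I will define the equivalence as the identity map on underlying vector spaces and check that it is a chain isomorphism.

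First, I would verify that $\Pos^i\DT X$ is indeed special with respect to the new matching $\tau_i\circ f$.  The tensor-product differential applied to $p\otimes\x$ equals $\sum_{k\geq 0}\delta^1_{k+1}(p,a_1,\dots,a_k)\otimes(b_1\cdots b_k)\otimes\x'$, where the $a_t\otimes b_t$ come from iterating $\delta^X=d_0+\epsilon$.  The terms in which every $a_t\in\Blg_2$ combine to give the new $d_0'$, landing in $\Blg_3\otimes\Blg_1$.  Terms where exactly one $a_t$ equals $C_{f(j)}$ produce, via the extension of $\Pos^i$ over $C$-inputs from~\cite[Proposition~\ref{BK:prop:CrossingGeneral}]{BorderedKnots}, a leading contribution $C_{\tau_i(f(j))}\otimes U_j\otimes(p\otimes\x)$ (giving $\epsilon'$), together with corrections in $\Blg_3\otimes\Blg_1$ absorbed into $d_0'$; multi-$C$ terms contribute only to $d_0'$ as well.

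Next, I would compare the two differentials summand-by-summand.  On the LHS, $\delta^{F(X,f)}$ has three pieces ($d_0$, $\sum_j U_{f(j)}\otimes E_j$, $\sum C_{\{f(i),f(j)\}}\otimes[E_i,E_j]$), each of which is processed by $\delta^{\Pos^i_{\Alg_2}}$.  The $d_0$-piece gives the $d_0'$-part of $\delta^{\mathrm{LHS}}$, matching the $d_0'$-part of the RHS.  The $U_{f(j)}\otimes E_j$-piece becomes $E_j\otimes\delta^1_2(p,U_{f(j)})\otimes\x$; its leading term $E_j\otimes U_{\tau_i f(j)}\otimes(p\otimes\x)$ matches the $\sum U\otimes E$-summand of $\delta^{\mathrm{RHS}}=\delta^{F(\Pos^i\DT X,\tau_i\circ f)}$.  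The $C_{\{f(i),f(j)\}}\otimes[E_i,E_j]$-piece becomes $[E_i,E_j]\otimes\delta^1_2(p,C_{\{f(i),f(j)\}})\otimes\x$, whose leading term $[E_i,E_j]\otimes C_{\{\tau_if(i),\tau_if(j)\}}\otimes(p\otimes\x)$ matches the $\sum C\otimes[E,E]$-summand of $\delta^{\mathrm{RHS}}$.

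The main obstacle is matching the \emph{sub-leading} terms.  On the LHS these arise from: (a) $\Blg$-valued corrections in $\delta^1_2(p,U_{f(j)})$; (b) the extra terms of Equation~\eqref{ExtendingCs}, appearing when $\{i,i+1\}\not\in\tau_i\circ f(\Matching)$, contributing to $\delta^1_2(p,C_{\{f(i),f(j)\}})$; and (c) higher $\delta^1_{\ell+1}$-actions with multiple $C$ or $U$ inputs.  On the RHS, the sub-leading contributions are precisely the terms absorbed into $d_0'$ during the speciality analysis.  The verification splits naturally into the two cases $\{i,i+1\}\in\tau_i\circ f(\Matching)$ and $\{i,i+1\}\not\in\tau_i\circ f(\Matching)$.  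In the unmatched case, one must check that the specific outputs in Equation~\eqref{ExtendingCs}, such as $U_\beta R_1\otimes\West$ and $U_\alpha L_2\otimes\East$, are precisely those required to compensate for the effect of $\Pos^i_{\Blg_2'}$ acting on the $C_{f(i)}\otimes U_i + C_{f(j)}\otimes U_j$ part of $\epsilon$ through two successive DD-outputs.  Once this case-analysis closes, the identity is a strict chain isomorphism and the lemma follows.
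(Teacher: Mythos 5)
There is a genuine gap: the identity map on $\Pos^i\otimes X$ is \emph{not} a chain map between the two sides, so your plan to "close the case-analysis" and conclude a strict chain isomorphism cannot succeed as stated. The failure is localized at the generator $\South$. On the left-hand side, the $\delta^1_2$- and $\delta^1_3$-actions of $\lsup{\Alg_3}\Pos^i_{\Alg_2}$ on the inputs $U_{f(j)}\otimes E_j$ coming from $F(X,f)$ produce type-changing outputs out of $\South$ (for $i=1$, terms such as $L_2\otimes E_{g(1)}R_2+R_1\otimes L_1E_{g(2)}$ from $\South$ to $\North$, and $R_1U_2\otimes E_2E_1$, $L_2U_1\otimes E_1E_2$ from $\South$ to $\West,\East$ --- compare the analogous computation in Equation~\eqref{eq:TensorProdCross} in the proof of Proposition~\ref{prop:DualCross}). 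These have no counterpart in $F(\lsup{\Blg_3'}\Pos^i_{\Blg_2'}\DT X,\tau_i\circ f)$, whose extra terms are by definition only the diagonal ones $\sum U_{\tau_if(j)}\otimes E_j+\sum C\otimes[E,E]$. So the two differentials genuinely disagree, and no amount of matching of "sub-leading terms absorbed into $d_0'$" fixes this, because the discrepancy changes the generator type.

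The paper's proof instead takes the homotopy equivalence to be $h^1(T\otimes x)=T\otimes x$ for $T\in\{\North,\West,\East\}$ but
\[ h^1(\South\otimes x)=\South\otimes x+(R_1\otimes E_{g(2)})\otimes\West\otimes x+(L_2\otimes E_{g(1)})\otimes\East\otimes x,\qquad g=f^{-1},\]
and the bulk of the argument is the diagrammatic verification that this $h^1$ (and its inverse, given by the same formula) satisfies the $DD$-homomorphism equation; the verification is organized by how many factors of $E_{g(1)},E_{g(2)}$ appear in the $\DuAlg_1$-output, with a separate treatment of the case $\{i,i+1\}\in\tau_i\circ f(\Matching)$. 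Your first step --- that $\Pos^i_{\Blg_2'}\DT X$ is special with correspondence $\tau_i\circd f$ because $\delta^1_2(\cdot,C_j)-C_{\tau_i(j)}\otimes(\cdot)$ lands in $\Blg_3\otimes\Pos^i$ --- is essentially the paper's argument and is fine; it is the identification of the two $DD$ structures that needs the off-diagonal correction on $\South$.

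Note: in the displayed formula above I wrote $\tau_i\circd f$ by mistake; it should read $\tau_i\circ f$.
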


\begin{proof}
  For notational simplicity we assume $i=1$; the general case follows
  with minor notational modifications. There are two cases, according
  to whether or not $1$ and $2$ are matched in $f(\Matching)$. Assume first
  that $1$ and $2$ are not matched.

  The fact that $\lsup{\Blg_3'}\Pos^1_{\Blg_2'}\DT
  \lsup{\Blg_2',\Blg_1}X$ is special (with correspondence $\tau_1\circ f$)
  is a straightforward consequence
  of the bimodule $Q=\lsup{\Blg_3'}\Pos^1_{\Blg_2'}$, which has the property that
  \[\delta_2(X,C_j)-C_{\tau_1(j)}\otimes X \in \Blg_3\otimes Q\subset \Blg_3'\otimes Q.\]

  We will abbreviate $\Pos'=\lsup{\Blg_3'}\Pos^1_{\Blg_2'}$,
  $\Pos=\lsup{\Alg_3}\Pos^1_{\Alg_2}$.
  Consider the linear map
  \[ h^1\colon 
  F(\Pos' \DT \lsup{\Blg_2',\Blg_1}X,\tau_1\circ f)
  \to 
  \Pos\otimes \lsup{\Alg_2,\Alg_1'}F(X,f).\]
  with $h^1(T\otimes X)=T\otimes X$ for $T\in \{\North,\West,\East\}$ and
  \[ h^1(\South\otimes X)=
      \South\otimes X
      +  (R_1\otimes E_{g(2)})\otimes \West\otimes X
      + (L_2\otimes E_{g(1)})\otimes \East\otimes X,
\]
  where $g=f^{-1}$.

  We claim that $h^1$ is a type  $DD$ bimodule homomorphism; i.e.
  \[ (\mu_2\otimes \Id)\circ (\Id\otimes \delta^1)\circ h^1 
  + (\mu_2\otimes \Id)\circ (\Id\otimes h^1)\circ \delta^1 
  + (\mu_1 \otimes \Id_X) \circ h^1=0,\]
  where $\mu_1$ and $\mu_2$ are computed in the algebra $\Alg_3\otimes\Alg_1'$.
  There are various components of this map, decomposed according to how many 
  $E_{g(1)}$ and $E_{g(2)}$ appear in the $\Alg_1'$ factor.
  If there are none, then we have cancellation of terms
  \[ 0=
  \mathcenter{
    \begin{tikzpicture}[scale=.8]
    \node at (0,.5) (Xin) {$X$}; 
    \node at (0,-1) (d1) {$\delta^1$}; 
    \node at (2,-5) (Bout) {};
    \node at (-1,.5) (Pin) {$\Pos'$};
    \node at (-1,-5) (Pout) {};
    \node at (-1,-3) (muP) {$\delta^1_2$}; 
    \node at (-2,-5) (Aout) {}; 
    \node at (0,-4) (h1R) {$h^1$};
    \node at (-1,-4) (h1L) {$h^1$};
    \node at (0,-5) (Xout) {};
    \draw[algarrow] (d1) to node[above,sloped]{\tiny{$U_{g(1)}$}}(Bout) ;
    \draw[modarrow] (Pin) to node[right]{\tiny{$\South$}}(muP) ;
    \draw[modarrow] (muP) to node[right]{\tiny{$\East$}}(h1L) ;
    \draw[modarrow] (h1L) to node[right]{\tiny{$\East$}}(Pout) ;
    \draw[modarrow] (h1R) to node[right]{\tiny{$x$}}(Xout) ;
    \draw[modarrow] (Xin) to node[right]{\tiny{$x$}}(d1); 
    \draw[algarrow] (d1) to node[above,sloped] {\tiny{$C_1$}}(muP); 
    \draw[modarrow] (d1) to node[right]{\tiny{$x$}} (h1R) ;
    \draw[algarrow] (muP) to node[above,sloped]{\tiny{$L_2$}} (Aout);
  \end{tikzpicture}}
  +
  \mathcenter{
    \begin{tikzpicture}[scale=.8]
    \node at (0,.5) (Xin) {$X$}; 
    \node at (0,-1) (h1R) {$h^1$}; 
    \node at (-1,-1) (h1L) {$h^1$}; 
    \node at (1,-2.75) (Bmult) {$\mu_1$};
    \node at (2,-4) (Bout) {};
    \node at (-1,.5) (Pin) {$\Pos'$};
    \node at (-1,-4) (Pout) {};
    \node at (-2,-4) (Aout) {}; 
    \node at (0,-4) (Xout) {};
    \draw[algarrow] (h1R) to node[above,sloped]{\tiny{$E_{g(1)}$}}(Bmult) ;
    \draw[algarrow] (Bmult) to node[above,sloped]{\tiny{$U_{g(1)}$}} (Bout) ;
    \draw[modarrow] (Pin) to node[left]{\tiny{$\South$}}(h1L) ;
    \draw[modarrow] (h1L) to node[left]{\tiny{$\East$}}(Pout) ;
    \draw[modarrow] (Xin) to node[right]{\tiny{$x$}}(h1R); 
    \draw[modarrow] (h1R) to node[right]{\tiny{$x$}} (Xout) ;
    \draw[algarrow] (h1L) to node[above,sloped]{\tiny{$L_2$}} (Aout);
  \end{tikzpicture}}.
\]  
And a similar cancellation for the term involving $U_{g(2)}$.

For terms of the form $a\otimes b\cdot E_{g(1)}$ with $b\in\Blg_1$, we have the following cancellations.
First, for $T\in\{\North,\West\}$, we have:
\[
0=  \!\!\!\! \!\!\!\!
  \mathcenter{
    \begin{tikzpicture}[scale=.8]
   \node at (0,2) (Xin) {$X$}; 
   \node at (0,.5) (h1R) {$h^1$}; 
    \node at (0,-1) (d1) {$\delta^1$}; 
    \node at (2,-5) (Bout) {};
    \node at (-1,2) (Pin) {$\Pos'$};
    \node at (-1,.5) (h1L) {$h^1$};
    \node at (-1,-5) (Pout) {};
    \node at (-1,-3) (muP) {$\delta^1_2$}; 
    \node at (-2,-5) (Aout) {}; 
    \node at (0,-5) (Xout) {};
    \draw[algarrow] (d1) to node[above,sloped]{\tiny{$E_{g(1)}$}}(Bout) ;
    \draw[modarrow] (h1L) to node[right]{\tiny{$T$}}(muP) ;
    \draw[modarrow] (muP) to node[right]{\tiny{$T$}}(Pout) ;
    \draw[modarrow] (Xin) to node[right]{\tiny{$x$}} (h1R);
    \draw[modarrow] (Pin) to node[left]{\tiny{$T$}} (h1L);
    \draw[modarrow] (h1R) to node[right]{\tiny{$x$}}(d1); 
    \draw[algarrow] (d1) to node[above,sloped] {\tiny{$U_1$}}(muP); 
    \draw[modarrow] (d1) to node[right]{\tiny{$x$}} (Xout) ;
    \draw[algarrow] (muP) to node[above,sloped]{\tiny{$U_2$}} (Aout);
  \end{tikzpicture}}
  + 
  {  \mathcenter{
    \begin{tikzpicture}[scale=.8]
    \node at (0,.5) (Xin) {$X$}; 
    \node at (-1,-3)(d1L) {$\delta^1$};
    \node at (0,-3)(d1R) {$\delta^1$};
    \node at (2,-6) (Bout) {};
    \node at (-1,.5) (Pin) {$\Pos'$};
    \node at (-1,-6) (Pout) {};
    \node at (-3,-6) (Aout) {}; 
    \node at (0,-6) (Xout) {};
    \node at (-1,-5) (h1L) {$h^1$}; 
    \node at (0,-5) (h1R) {$h^1$}; 
    \draw[algarrow] (d1R) to node[above,sloped]{\tiny{$E_{g(1)}$}}(Bout) ;
    \draw[algarrow](d1L) to node[below,sloped]{\tiny{$U_2$}}(Aout);
    \draw[modarrow] (Pin) to node[right]{\tiny{$T$}}(d1L) ;
    \draw[modarrow] (d1L) to node[right]{\tiny{$T$}}(h1L) ;
    \draw[modarrow] (Xin) to node[right]{\tiny{$x$}}(d1R) ;
    \draw[modarrow] (d1R) to node[right]{\tiny{$x$}}(h1R) ;
    \draw[modarrow] (h1L) to node[right]{\tiny{$T$}}(Pout) ;
    \draw[modarrow] (h1R) to node[right]{\tiny{$x$}}(Xout) ;
  \end{tikzpicture}}}.
\]
Further cancellations involving $\delta^1_\ell$ with $\ell\leq 2$ in $\Pos'$ are:
\begin{align*}
0&=
  \mathcenter{
    \begin{tikzpicture}[scale=.8]
    \node at (0,.5) (Xin) {$X$}; 
    \node at (-1,-3)(d1L) {$\delta^1$};
    \node at (2,-5) (Bout) {};
    \node at (-1,.5) (Pin) {$\Pos'$};
    \node at (-1,-5) (Pout) {};
    \node at (-1,-1) (z1L) {$h^1$}; 
    \node at (0,-1) (z1R) {$h^1$}; 
    \node at (-3,-5) (Aout) {}; 
    \node at (-2.5,-4) (Amult) {$\mu_2$};
    \node at (0,-5) (Xout) {};
    \draw[algarrow](d1L) to node[below,sloped]{\tiny{$R_2$}}(Amult);
    \draw[algarrow] (z1R) to node[above,sloped]{\tiny{$E_{g(1)}$}}(Bout) ;
    \draw[modarrow] (Pin) to node[right]{\tiny{$\South$}}(z1L) ;
    \draw[modarrow] (z1L) to node[right]{\tiny{$\East$}}(d1L) ;
    \draw[modarrow] (d1L) to node[right]{\tiny{$\South$}}(Pout) ;
    \draw[modarrow] (z1R) to node[right]{\tiny{$x$}}(Xout) ;
    \draw[modarrow] (Xin) to node[right]{\tiny{$x$}}(z1R); 
    \draw[algarrow] (z1L) to node[above,sloped]{\tiny{$L_2$}} (Amult);
    \draw[algarrow] (Amult) to node[above,sloped]{\tiny{$U_2$}}(Aout);
  \end{tikzpicture}}
  + 
  {  \mathcenter{
    \begin{tikzpicture}[scale=.8]
    \node at (0,0) (Xin) {$X$}; 
    \node at (-1,-1)(d1L) {$\delta^1$};
    \node at (0,-1)(d1R) {$\delta^1$};
    \node at (2,-5) (Bout) {};
    \node at (-1,0) (Pin) {$\Pos'$};
    \node at (-1,-5) (Pout) {};
    \node at (-1,-3) (h1L) {$h^1$}; 
    \node at (0,-3) (h1R) {$h^1$}; 
    \node at (-3,-5) (Aout) {}; 
    \node at (0,-5) (Xout) {};
    \draw[algarrow] (d1R) to node[above,sloped]{\tiny{$E_{g(1)}$}}(Bout) ;
    \draw[algarrow](d1L) to node[below,sloped]{\tiny{$U_2$}}(Aout);
    \draw[modarrow] (Pin) to node[right]{\tiny{$\South$}}(d1L) ;
    \draw[modarrow] (d1L) to node[right]{\tiny{$\South$}}(h1L) ;
    \draw[modarrow] (Xin) to node[right]{\tiny{$x$}}(d1R) ;
    \draw[modarrow] (d1R) to node[right]{\tiny{$x$}}(h1R) ;
    \draw[modarrow] (h1L) to node[right]{\tiny{$\South$}}(Pout) ;
    \draw[modarrow] (h1R) to node[right]{\tiny{$x$}}(Xout) ;
  \end{tikzpicture}}}
 \\
0&=
  \mathcenter{
    \begin{tikzpicture}[scale=.8]
   \node at (0,2) (Xin) {$X$}; 
   \node at (0,.5) (h1R) {$h^1$}; 
    \node at (0,-1) (d1) {$\delta^1$}; 
    \node at (2,-5) (Bout) {};
    \node at (-1,2) (Pin) {$\Pos'$};
    \node at (-1,.5) (h1L) {$h^1$};
    \node at (-1,-5) (Pout) {};
    \node at (-1,-3) (muP) {$\delta^1_2$}; 
    \node at (-2,-5) (Aout) {}; 
    \node at (0,-5) (Xout) {};
    \draw[algarrow] (d1) to node[above,sloped]{\tiny{$E_{g(1)}$}}(Bout) ;
    \draw[modarrow] (h1L) to node[right]{\tiny{$\West$}}(muP) ;
    \draw[modarrow] (muP) to node[right]{\tiny{$\East$}}(Pout) ;
    \draw[modarrow] (Xin) to node[right]{\tiny{$x$}} (h1R);
    \draw[modarrow] (Pin) to node[left]{\tiny{$\West$}} (h1L);
    \draw[modarrow] (h1R) to node[right]{\tiny{$x$}}(d1); 
    \draw[algarrow] (d1) to node[above,sloped] {\tiny{$U_1$}}(muP); 
    \draw[modarrow] (d1) to node[right]{\tiny{$x$}} (Xout) ;
    \draw[algarrow] (muP) to node[above,sloped]{\tiny{$L_1 L_2$}} (Aout);
  \end{tikzpicture}}
  +
  \mathcenter{
    \begin{tikzpicture}[scale=.8]
    \node at (0,.5) (Xin) {$X$}; 
    \node at (-1,-1)(d1L) {$\delta^1$};
    \node at (2,-5) (Bout) {};
    \node at (-1,.5) (Pin) {$\Pos'$};
    \node at (-1,-5) (Pout) {};
    \node at (-1,-3) (z1L) {$h^1$}; 
    \node at (0,-3) (z1R) {$h^1$}; 
    \node at (-3,-5) (Aout) {}; 
    \node at (-2.5,-4) (Amult) {$\mu_2$};
    \node at (0,-5) (Xout) {};
    \draw[algarrow](d1L) to node[above,sloped]{\tiny{$L_1$}}(Amult);
    \draw[algarrow] (z1R) to node[above,sloped]{\tiny{$E_{g(1)}$}}(Bout) ;
    \draw[modarrow] (Pin) to node[right]{\tiny{$\West$}}(d1L) ;
    \draw[modarrow] (d1L) to node[right]{\tiny{$\South$}}(z1L) ;
    \draw[modarrow] (z1L) to node[right]{\tiny{$\East$}}(Pout) ;
    \draw[modarrow] (z1R) to node[right]{\tiny{$x$}}(Xout) ;
    \draw[modarrow] (Xin) to node[right]{\tiny{$x$}}(z1R); 
    \draw[algarrow] (z1L) to node[below,sloped]{\tiny{$L_2$}} (Amult);
    \draw[algarrow] (Amult) to node[above,sloped]{\tiny{$L_1 L_2$}}(Aout);
  \end{tikzpicture}} 
\end{align*}
Cancellations using $\delta^1_3$ in $\Pos'$ involve some term
$(a\otimes b)\otimes X$ appearing in $\delta^1(X)$, 
where $a\in\Blg_2'$ will have its type (as in Equation~\eqref{eq:DefType}) specified 
below (it will be $U_2^t$ or $R_1 U_2^{t}$ or $L_2 U_2^{t}$); and $b\in\Blg_1$.
Terms that land in $\East\otimes X$ cancel as below:
\begin{align*}
  0&=
  \mathcenter{
    \begin{tikzpicture}[scale=.8]
    \node at (.5,.5) (Xin) {$X$}; 
    \node at (.5,-1.5) (d1) {$\delta^1$}; 
    \node at (2.5,-5) (Bout) {};
    \node at (2,-3.5) (Bmult) {$\mu_2$};
    \node at (.5,-2.5) (d2) {$\delta^1$};
    \node at (-1,.5) (Pin) {$\Pos'$};
    \node at (-1,-5) (Pout) {};
    \node at (.5,-.5) (h1R) {$h^1$};
    \node at (-1,-.5) (h1L) {$h^1$};
    \node at (-1,-3.5) (muP) {$\delta^1_3$}; 
    \node at (-2,-5) (Aout) {}; 
    \node at (.5,-5) (Xout) {};
    \draw[algarrow] (d2) to node[below,sloped]{\tiny{$b$}}(Bmult) ;
    \draw[algarrow] (d2) to node[below,sloped]{\tiny{$U_2^t$}} (muP);
    \draw[algarrow] (d1) to node[above,sloped]{\tiny{$E_{g(1)}$}}(Bmult) ;
    \draw[modarrow] (Pin) to node[right]{\tiny{$\South$}}(h1L) ;
    \draw[modarrow] (Xin) to node[right]{\tiny{$x$}}(h1R) ;
    \draw[modarrow] (h1L) to node[right]{\tiny{$\South$}}(muP) ;
    \draw[modarrow] (muP) to node[right]{\tiny{$\East$}}(Pout) ;
    \draw[modarrow] (h1R) to node[right]{\tiny{$x$}}(d1); 
    \draw[algarrow] (d1) to node[above,sloped] {\tiny{$U_1$}}(muP); 
    \draw[modarrow,pos=.7] (d1) to node[right]{\tiny{$x$}} (d2) ;
    \draw[modarrow,pos=.7] (d2) to node[right]{\tiny{$y$}} (Xout) ;
    \draw[algarrow] (muP) to node[above,sloped]{\tiny{$L_2 U_1^t$}} (Aout);
    \draw[algarrow] (Bmult) to node[above,sloped]{\tiny{$E_{g(1)}\cdot b$}} (Bout);
  \end{tikzpicture}}
  +
  \mathcenter{
    \begin{tikzpicture}[scale=.8]
    \node at (.5,.5) (Xin) {$X$}; 
    \node at (.5,-.7) (h1R) {$h^1$}; 
    \node at (-1,-.7)(h1L) {$h^1$};
    \node at (2,-5) (Bout) {};
    \node at (1.5,-3) (Bmult) {$\mu_2$};
    \node at (.5,-2) (d2) {$\delta^1$};
    \node at (-1,.5) (Pin) {$\Pos'$};
    \node at (-1,-5) (Pout) {};
    \node at (-1,-3) (muP) {$\delta^1_2$}; 
    \node at (-3,-5) (Aout) {}; 
    \node at (-2,-4) (Amult) {$\mu_2$};
    \node at (.5,-5) (Xout) {};
    \draw[algarrow](h1L) to node[above,sloped]{\tiny{$L_2$}}(Amult);
    \draw[algarrow] (d2) to node[below,sloped]{\tiny{$b$}}(Bmult) ;
    \draw[algarrow] (d2) to node[below,sloped]{\tiny{$U_2^t$}} (muP);
    \draw[algarrow] (h1R) to node[above,sloped]{\tiny{$E_{g(1)}$}}(Bmult) ;
    \draw[modarrow] (Pin) to node[right]{\tiny{$\South$}}(h1L) ;
    \draw[modarrow] (h1L) to node[right]{\tiny{$\East$}}(muP) ;
    \draw[modarrow] (muP) to node[right]{\tiny{$\East$}}(Pout) ;
    \draw[modarrow] (h1R) to node[left]{\tiny{$x$}}(d2) ;
    \draw[modarrow] (Xin) to node[right]{\tiny{$x$}}(h1R); 
    \draw[modarrow,pos=.7] (d2) to node[right]{\tiny{$y$}} (Xout);
    \draw[algarrow] (muP) to node[below,sloped]{\tiny{$U_1^t$}} (Amult);
    \draw[algarrow] (Bmult) to node[above,sloped]{\tiny{$E_{g(1)}\cdot b$}} (Bout);
    \draw[algarrow] (Amult) to (Aout);
  \end{tikzpicture}}
\end{align*}
For terms that land in $\West$, we have similar cancellations.
For terms that land in $\North$, for $t>0$ we have cancellations
\begin{align*}
  0&=
  \mathcenter{
    \begin{tikzpicture}[scale=.8]
    \node at (.5,.5) (Xin) {$X$}; 
    \node at (.5,-1.5) (d1) {$\delta^1$}; 
    \node at (2.5,-5) (Bout) {};
    \node at (2,-3.5) (Bmult) {$\mu_2$};
    \node at (.5,-2.5) (d2) {$\delta^1$};
    \node at (-1,.5) (Pin) {$\Pos'$};
    \node at (-1,-5) (Pout) {};
    \node at (.5,-.5) (h1R) {$h^1$};
    \node at (-1,-.5) (h1L) {$h^1$};
    \node at (-1,-3.5) (muP) {$\delta^1_3$}; 
    \node at (-2,-5) (Aout) {}; 
    \node at (.5,-5) (Xout) {};
    \draw[algarrow] (d2) to node[below,sloped]{\tiny{$b$}}(Bmult) ;
    \draw[algarrow] (d2) to node[below,sloped]{\tiny{$L_2 U_2^t$}} (muP);
    \draw[algarrow] (d1) to node[above,sloped]{\tiny{$E_{g(1)}$}}(Bmult) ;
    \draw[modarrow] (Pin) to node[right]{\tiny{$\South$}}(h1L) ;
    \draw[modarrow] (Xin) to node[right]{\tiny{$x$}}(h1R) ;
    \draw[modarrow] (h1L) to node[right]{\tiny{$\South$}}(muP) ;
    \draw[modarrow] (muP) to node[right]{\tiny{$\North$}}(Pout) ;
    \draw[modarrow] (h1R) to node[right]{\tiny{$x$}}(d1); 
    \draw[algarrow] (d1) to node[above,sloped] {\tiny{$U_1$}}(muP); 
    \draw[modarrow,pos=.7] (d1) to node[right]{\tiny{$x$}} (d2) ;
    \draw[modarrow,pos=.7] (d2) to node[right]{\tiny{$y$}} (Xout) ;
    \draw[algarrow] (muP) to node[above,sloped]{\tiny{$L_2 U_1^t$}} (Aout);
    \draw[algarrow] (Bmult) to node[above,sloped]{\tiny{$E_{g(1)}\cdot b$}} (Bout);
  \end{tikzpicture}}
  +
  \mathcenter{
    \begin{tikzpicture}[scale=.8]
    \node at (.5,.5) (Xin) {$X$}; 
    \node at (.5,-.7) (h1R) {$h^1$}; 
    \node at (-1,-.7)(h1L) {$h^1$};
    \node at (2,-5) (Bout) {};
    \node at (1.5,-3) (Bmult) {$\mu_2$};
    \node at (.5,-2) (d2) {$\delta^1$};
    \node at (-1,.5) (Pin) {$\Pos'$};
    \node at (-1,-5) (Pout) {};
    \node at (-1,-3) (muP) {$\delta^1_2$}; 
    \node at (-3,-5) (Aout) {}; 
    \node at (-2,-4) (Amult) {$\mu_2$};
    \node at (.5,-5) (Xout) {};
    \draw[algarrow](h1L) to node[above,sloped]{\tiny{$L_2$}}(Amult);
    \draw[algarrow] (d2) to node[below,sloped]{\tiny{$b$}}(Bmult) ;
    \draw[algarrow] (d2) to node[below,sloped]{\tiny{$L_2 U_2^t$}} (muP);
    \draw[algarrow] (h1R) to node[above,sloped]{\tiny{$E_{g(1)}$}}(Bmult) ;
    \draw[modarrow] (Pin) to node[right]{\tiny{$\South$}}(h1L) ;
    \draw[modarrow] (h1L) to node[right]{\tiny{$\East$}}(muP) ;
    \draw[modarrow] (muP) to node[right]{\tiny{$\North$}}(Pout) ;
    \draw[modarrow] (h1R) to node[left]{\tiny{$x$}}(d2) ;
    \draw[modarrow] (Xin) to node[right]{\tiny{$x$}}(h1R); 
    \draw[modarrow,pos=.7] (d2) to node[right]{\tiny{$y$}} (Xout);
    \draw[algarrow] (muP) to node[below,sloped]{\tiny{$U_1^t$}} (Amult);
    \draw[algarrow] (Bmult) to node[above,sloped]{\tiny{$E_{g(1)}\cdot b$}} (Bout);
    \draw[algarrow] (Amult) to (Aout);
  \end{tikzpicture}}
\end{align*}
The analogous cancellation when $t=0$ has the form:
\begin{align*}
  0&=
  \mathcenter{
    \begin{tikzpicture}[scale=.8]
    \node at (.5,.5) (Xin) {$X$}; 
    \node at (.5,-1.5) (d1) {$\delta^1$}; 
    \node at (2.5,-5) (Bout) {};
    \node at (2,-3.5) (Bmult) {$\mu_2$};
    \node at (.5,-2.5) (d2) {$\delta^1$};
    \node at (-1,.5) (Pin) {$\Pos'$};
    \node at (-1,-5) (Pout) {};
    \node at (.5,-.5) (h1R) {$h^1$};
    \node at (-1,-.5) (h1L) {$h^1$};
    \node at (-1,-3.5) (muP) {$\delta^1_3$}; 
    \node at (-2,-5) (Aout) {}; 
    \node at (.5,-5) (Xout) {};
    \draw[algarrow] (d2) to node[below,sloped]{\tiny{$E_{g(1)}$}}(Bmult) ;
    \draw[algarrow] (d2) to node[below,sloped]{\tiny{$U_1$}} (muP);
    \draw[algarrow] (d1) to node[above,sloped]{\tiny{$b$}}(Bmult) ;
    \draw[modarrow] (Pin) to node[right]{\tiny{$\South$}}(h1L) ;
    \draw[modarrow] (Xin) to node[right]{\tiny{$x$}}(h1R) ;
    \draw[modarrow] (h1L) to node[right]{\tiny{$\South$}}(muP) ;
    \draw[modarrow] (muP) to node[right]{\tiny{$\North$}}(Pout) ;
    \draw[modarrow] (h1R) to node[right]{\tiny{$x$}}(d1); 
    \draw[algarrow] (d1) to node[above,sloped] {\tiny{$L_2$}}(muP); 
    \draw[modarrow,pos=.7] (d1) to node[right]{\tiny{$y$}} (d2) ;
    \draw[modarrow,pos=.7] (d2) to node[right]{\tiny{$y$}} (Xout) ;
    \draw[algarrow] (muP) to node[above,sloped]{\tiny{$L_2$}} (Aout);
    \draw[algarrow] (Bmult) to node[above,sloped]{\tiny{$E_{g(1)}\cdot b$}} (Bout);
  \end{tikzpicture}}
  +
  \mathcenter{
    \begin{tikzpicture}[scale=.8]
    \node at (.5,.5) (Xin) {$X$}; 
    \node at (.5,-.7) (h1R) {$h^1$}; 
    \node at (-1,-.7)(h1L) {$h^1$};
    \node at (2,-5) (Bout) {};
    \node at (1.5,-3) (Bmult) {$\mu_2$};
    \node at (.5,-2) (d2) {$\delta^1$};
    \node at (-1,.5) (Pin) {$\Pos'$};
    \node at (-1,-5) (Pout) {};
    \node at (-1,-3) (muP) {$\delta^1_2$}; 
    \node at (-3,-5) (Aout) {}; 
    \node at (-2,-4) (Amult) {$\mu_2$};
    \node at (.5,-5) (Xout) {};
    \draw[algarrow](h1L) to node[above,sloped]{\tiny{$L_2$}}(Amult);
    \draw[algarrow] (d2) to node[below,sloped]{\tiny{$b$}}(Bmult) ;
    \draw[algarrow] (d2) to node[below,sloped]{\tiny{$L_2$}} (muP);
    \draw[algarrow] (h1R) to node[above,sloped]{\tiny{$E_{g(1)}$}}(Bmult) ;
    \draw[modarrow] (Pin) to node[right]{\tiny{$\South$}}(h1L) ;
    \draw[modarrow] (h1L) to node[right]{\tiny{$\East$}}(muP) ;
    \draw[modarrow] (muP) to node[right]{\tiny{$\North$}}(Pout) ;
    \draw[modarrow] (h1R) to node[left]{\tiny{$x$}}(d2) ;
    \draw[modarrow] (Xin) to node[right]{\tiny{$x$}}(h1R); 
    \draw[modarrow,pos=.7] (d2) to node[right]{\tiny{$y$}} (Xout);
    \draw[algarrow] (muP) to node[below,sloped]{\tiny{$1$}} (Amult);
    \draw[algarrow] (Bmult) to node[above,sloped]{\tiny{$E_{g(1)}\cdot b$}} (Bout);
    \draw[algarrow] (Amult) to (Aout);
  \end{tikzpicture}}
\end{align*}

There are similar cancellations when the output is $(R_1 U_1^t\otimes
E_{g(1)}\cdot b)\otimes \North\otimes y$. There are also analogous cancellations for
terms of the form $a\otimes b \cdot E_{g(2)}$.

Cancellation of terms involving pairs $E_i$ and $E_j$ (with
$\{i,j\}\cap\{g(1),g(2)\}=\emptyset$ are straightforward.  
For terms involving $E_{g(1)}$ and $E_{g(\alpha)}$ landing in $\East$, we have cancellation:
\begin{align}0&=
  \mathcenter{
    \begin{tikzpicture}[scale=.7]
      \node at (0.5,.5) (Xin) {$X$}; 
    \node at (0.5,-1.5) (d1) {$\delta^1$}; 
    \node at (2.5,-5) (Bout) {};
    \node at (-1,.5) (Pin) {$\Pos'$};
    \node at (-1,-5) (Pout) {};
    \node at (-1,-3) (muP) {$\delta^1_2$}; 
    \node at (-2,-5) (Aout) {}; 
    \node at (0.5,-.5) (z1R) {$h^1$};
    \node at (-1,-.5) (z1L) {$h^1$};
    \node at (0.5,-5) (Xout) {};
    \draw[algarrow] (d1) to node[above,sloped]{\tiny{$[E_{g(1)},E_{g(\alpha)}]$}}(Bout) ;
    \draw[modarrow] (Pin) to node[right]{\tiny{$\South$}}(z1L) ;
    \draw[modarrow] (z1L) to node[left]{\tiny{$\South$}}(muP) ;
    \draw[modarrow] (Xin) to node[right]{\tiny{$x$}}(z1R) ;
    \draw[modarrow] (muP) to node[right]{\tiny{$\East$}}(Pout) ;
    \draw[modarrow] (z1R) to node[right]{\tiny{$x$}}(d1); 
    \draw[algarrow] (d1) to node[above,sloped] {\tiny{$C_{1,\alpha}$}}(muP); 
    \draw[modarrow] (d1) to node[right]{\tiny{$x$}} (Xout) ;
    \draw[algarrow] (muP) to node[above,sloped]{\tiny{$L_2 U_{\alpha}$}} (Aout);
  \end{tikzpicture}}
  \!\!\!\! +  \!\!\!\!
  \mathcenter{
    \begin{tikzpicture}[scale=.7]
    \node at (0,.5) (Xin) {$X$}; 
    \node at (-1,-1)(d1L) {$\delta^1$};
    \node at (0,-1)(d1R) {$\delta^1$};
    \node at (1.5,-3.1)(Bmult) {$\mu_2$};
    \node at (2,-5) (Bout) {};
    \node at (-1,.5) (Pin) {$\Pos'$};
    \node at (-1,-5) (Pout) {};
    \node at (-1,-2.5) (z1L) {$h^1$}; 
    \node at (0,-2.5) (z1R) {$h^1$}; 
    \node at (-3,-5) (Aout) {}; 
    \node at (-2.5,-3.1) (Amult) {$\mu_2$};
    \node at (0,-5) (Xout) {};
    \draw[algarrow](d1L) to node[above,sloped]{\tiny{$U_{\alpha}$}}(Amult);
    \draw[algarrow] (z1R) to node[below,sloped]{\tiny{$E_{g(1)}$}}(Bmult) ;
    \draw[algarrow] (d1R) to node[above,sloped]{\tiny{$E_{g(\alpha)}$}}(Bmult) ;
    \draw[algarrow] (Bmult) to node[above,sloped]{\tiny{$E_{g(\alpha)} E_{g(1)}$}}(Bout) ;
    \draw[modarrow] (Pin) to node[right]{\tiny{$\South$}}(d1L) ;
    \draw[modarrow] (d1L) to node[right]{\tiny{$\South$}}(z1L) ;
    \draw[modarrow] (z1L) to node[right]{\tiny{$\East$}}(Pout) ;
    \draw[modarrow] (z1R) to node[right]{\tiny{$x$}}(Xout) ;
    \draw[modarrow] (Xin) to node[right]{\tiny{$x$}}(d1R); 
    \draw[modarrow] (d1R) to node[right]{\tiny{$x$}}(z1R); 
    \draw[algarrow] (z1L) to node[below,sloped]{\tiny{$L_2$}} (Amult);
    \draw[algarrow] (Amult) to node[above,sloped]{\tiny{$U_{\alpha} L_2$}}(Aout);
  \end{tikzpicture}} 
  \!\!\!\!  +  \!\!\!\!
  \mathcenter{
    \begin{tikzpicture}[scale=.7]
    \node at (.5,.5) (Xin) {$X$}; 
    \node at (-1,-1)(d1L) {$h^1$};
    \node at (.5,-1)(d1R) {$h^1$};
    \node at (2,-3.1)(Bmult) {$\mu_2$};
    \node at (2.5,-5) (Bout) {};
    \node at (-1,.5) (Pin) {$\Pos'$};
    \node at (-1,-5) (Pout) {};
    \node at (-1,-3) (z1L) {$\delta^1_2$}; 
    \node at (.5,-2) (z1R) {$\delta^1$}; 
    \node at (-3,-5) (Aout) {}; 
    \node at (-2.5,-4) (Amult) {$\mu_2$};
    \node at (.5,-5) (Xout) {};
    \draw[algarrow](d1L) to node[above,sloped]{\tiny{$L_2$}}(Amult);
    \draw[algarrow](z1R) to node[below,sloped]{\tiny{$U_{\alpha}$}}(z1L);
    \draw[algarrow] (z1R) to node[below,sloped]{\tiny{$E_{g(\alpha)}$}}(Bmult) ;
    \draw[algarrow] (d1R) to node[above,sloped]{\tiny{$E_{g(1)}$}}(Bmult) ;
    \draw[algarrow] (Bmult) to node[above,sloped]{\tiny{$E_{g(1)} E_{g(\alpha)}$}}(Bout) ;
    \draw[modarrow] (Pin) to node[right]{\tiny{$\South$}}(d1L) ;
    \draw[modarrow] (d1L) to node[right]{\tiny{$\East$}}(z1L) ;
    \draw[modarrow] (z1L) to node[right]{\tiny{$\East$}}(Pout) ;
    \draw[modarrow] (z1R) to node[right]{\tiny{$x$}}(Xout) ;
    \draw[modarrow] (Xin) to node[right]{\tiny{$x$}}(d1R); 
    \draw[modarrow] (d1R) to node[left]{\tiny{$x$}}(z1R); 
    \draw[algarrow] (z1L) to node[below,sloped]{\tiny{$U_{\alpha}$}} (Amult);
    \draw[algarrow] (Amult) to node[above,sloped]{\tiny{$U_{\alpha} L_2$}}(Aout);
  \end{tikzpicture}} 
\label{eq:TermsWithC}
\end{align}
For terms landing in $\South$, the cancellation is easier to see.
There is an analogous cancellation involving $E_{g(2)}$ and $E_{g(\beta)}$.

Finally, 
for terms
involving $E_{g(1)}$ and $E_{g(2)}$, we have the following:
\begin{align}
  0&=
  \mathcenter{
    \begin{tikzpicture}[scale=.8]
    \node at (.5,1) (Xin) {$X$}; 
    \node at (.5,-1) (d1) {$\delta^1$}; 
    \node at (2.5,-5) (Bout) {};
    \node at (1.5,-3) (Bmult) {$\mu_2$};
    \node at (0.5,-2) (d2) {$\delta^1$};
    \node at (-1,1) (Pin) {$\Pos'$};
    \node at (-1,-5) (Pout) {};
    \node at (-1,-3) (muP) {$\delta^1_3$}; 
    \node at (-2,-5) (Aout) {}; 
    \node at (0.5,0) (h1R) {$h^1$};
    \node at (-1,0) (h1L) {$h^1$};
    \node at (0.5,-5) (Xout) {};
    \draw[modarrow] (Pin) to node[right]{\tiny{$\South$}} (h1L);
    \draw[modarrow] (Xin) to node[right]{\tiny{$x$}} (h1R);
    \draw[modarrow] (h1R) to node[right]{\tiny{$x$}} (d1);
    \draw[algarrow] (d2) to node[below,sloped]{\tiny{$E_{g(1)}$}}(Bmult) ;
    \draw[algarrow] (d2) to node[below,sloped]{\tiny{$U_1$}} (muP);
    \draw[algarrow] (d1) to node[above,sloped]{\tiny{$E_{g(2)}$}}(Bmult) ;
    \draw[modarrow] (h1L) to node[right]{\tiny{$\South$}}(muP) ;
    \draw[modarrow] (muP) to node[right]{\tiny{$\West$}}(Pout) ;
    \draw[algarrow] (d1) to node[above,sloped] {\tiny{$U_2$}}(muP); 
    \draw[modarrow] (d1) to node[left]{\tiny{$x$}} (d2) ;
    \draw[modarrow] (d2) to node[left]{\tiny{$x$}} (Xout) ;
    \draw[algarrow] (muP) to node[above,sloped]{\tiny{$R_1 U_2$}} (Aout);
    \draw[algarrow] (Bmult) to node[above,sloped]{\tiny{$E_{g(2)}\cdot E_{g(1)}$}} (Bout);
  \end{tikzpicture}}
  +
  \mathcenter{
    \begin{tikzpicture}[scale=.8]
    \node at (0.5,.5) (Xin) {$X$}; 
    \node at (0.5,-1) (h1R) {$h^1$}; 
    \node at (-1,-1)(h1L) {$h^1$};
    \node at (2.5,-5) (Bout) {};
    \node at (2,-3) (Bmult) {$\mu_2$};
    \node at (0.5,-2) (d2) {$\delta^1$};
    \node at (-1,.5) (Pin) {$\Pos'$};
    \node at (-1,-5) (Pout) {};
    \node at (-1,-3) (muP) {$\delta^1_2$}; 
    \node at (-3,-5) (Aout) {}; 
    \node at (-2,-4) (Amult) {$\mu_2$};
    \node at (0.5,-5) (Xout) {};
    \draw[algarrow](h1L) to node[above,sloped]{\tiny{$R_1$}}(Amult);
    \draw[algarrow] (d2) to node[below,sloped]{\tiny{$E_{g(1)}$}}(Bmult) ;
    \draw[algarrow] (d2) to node[below,sloped]{\tiny{$U_1$}} (muP);
    \draw[algarrow] (d1) to node[above,sloped]{\tiny{$E_{g(2)}$}}(Bmult) ;
    \draw[modarrow] (Pin) to node[right]{\tiny{$\South$}}(h1L) ;
    \draw[modarrow] (h1L) to node[right]{\tiny{$\West$}}(muP) ;
    \draw[modarrow] (muP) to node[right]{\tiny{$\West$}}(Pout) ;
    \draw[modarrow] (h1R) to node[left]{\tiny{$x$}}(d2) ;
    \draw[modarrow] (Xin) to node[right]{\tiny{$x$}}(h1R); 
    \draw[modarrow,pos=.7] (d2) to node[left]{\tiny{$x$}} (Xout);
    \draw[algarrow] (muP) to node[below,sloped]{\tiny{$U_2$}} (Amult);
    \draw[algarrow] (Bmult) to node[above,sloped]{\tiny{$E_{g(2)}\cdot E_{g(1)}$}} (Bout);
    \draw[algarrow] (Amult) to node[above,sloped]{\tiny{$R_1 U_2$}} (Aout);
  \end{tikzpicture}}  \nonumber \\
&+
  \mathcenter{
    \begin{tikzpicture}[scale=.7]
    \node at (0,.5) (Xin) {$X$}; 
    \node at (-1,-1)(d1L) {$\delta^1$};
    \node at (0,-1)(d1R) {$\delta^1$};
    \node at (1.5,-3.1)(Bmult) {$\mu_2$};
    \node at (2,-5) (Bout) {};
    \node at (-1,.5) (Pin) {$\Pos'$};
    \node at (-1,-5) (Pout) {};
    \node at (-1,-2.5) (z1L) {$h^1$}; 
    \node at (0,-2.5) (z1R) {$h^1$}; 
    \node at (-3,-5) (Aout) {}; 
    \node at (-2.5,-3.1) (Amult) {$\mu_2$};
    \node at (0,-5) (Xout) {};
    \draw[algarrow](d1L) to node[above,sloped]{\tiny{$U_{2}$}}(Amult);
    \draw[algarrow] (z1R) to node[below,sloped]{\tiny{$E_{g(2)}$}}(Bmult) ;
    \draw[algarrow] (d1R) to node[above,sloped]{\tiny{$E_{g(1)}$}}(Bmult) ;
    \draw[algarrow] (Bmult) to node[above,sloped]{\tiny{$E_{g(1)} E_{g(2)}$}}(Bout) ;
    \draw[modarrow] (Pin) to node[right]{\tiny{$\South$}}(d1L) ;
    \draw[modarrow] (d1L) to node[right]{\tiny{$\South$}}(z1L) ;
    \draw[modarrow] (z1L) to node[right]{\tiny{$\West$}}(Pout) ;
    \draw[modarrow] (z1R) to node[right]{\tiny{$x$}}(Xout) ;
    \draw[modarrow] (Xin) to node[right]{\tiny{$x$}}(d1R); 
    \draw[modarrow] (d1R) to node[right]{\tiny{$x$}}(z1R); 
    \draw[algarrow] (z1L) to node[below,sloped]{\tiny{$R_1$}} (Amult);
    \draw[algarrow] (Amult) to node[above,sloped]{\tiny{$R_1 U_2$}}(Aout);
  \end{tikzpicture}} 
  \!\!\!\!  +  \!\!\!\!
  \mathcenter{
    \begin{tikzpicture}[scale=.8]
    \node at (0.5,.5) (Xin) {$X$}; 
    \node at (0.5,-1) (h1R) {$h^1$}; 
    \node at (-1,-1)(h1L) {$h^1$};
    \node at (2.5,-5) (Bout) {};
    \node at (2,-3) (Bmult) {$\mu_2$};
    \node at (0.5,-2) (d2) {$\delta^1$};
    \node at (-1,.5) (Pin) {$\Pos'$};
    \node at (-1,-5) (Pout) {};
    \node at (-1,-3) (muP) {$\delta^1_2$}; 
    \node at (-3,-5) (Aout) {}; 
    \node at (-2,-4) (Amult) {$\mu_2$};
    \node at (0.5,-5) (Xout) {};
    \draw[algarrow](h1L) to node[above,sloped]{\tiny{$L_2$}}(Amult);
    \draw[algarrow] (d2) to node[below,sloped]{\tiny{$E_{g(2)}$}}(Bmult) ;
    \draw[algarrow] (d2) to node[below,sloped]{\tiny{$U_1$}} (muP);
    \draw[algarrow] (d1) to node[above,sloped]{\tiny{$E_{g(1)}$}}(Bmult) ;
    \draw[modarrow] (Pin) to node[right]{\tiny{$\South$}}(h1L) ;
    \draw[modarrow] (h1L) to node[right]{\tiny{$\East$}}(muP) ;
    \draw[modarrow] (muP) to node[right]{\tiny{$\West$}}(Pout) ;
    \draw[modarrow] (h1R) to node[left]{\tiny{$x$}}(d2) ;
    \draw[modarrow] (Xin) to node[right]{\tiny{$x$}}(h1R); 
    \draw[modarrow,pos=.7] (d2) to node[left]{\tiny{$x$}} (Xout);
    \draw[algarrow] (muP) to node[below,sloped]{\tiny{$R_2 R_1$}} (Amult);
    \draw[algarrow] (Bmult) to node[above,sloped]{\tiny{$E_{g(1)}\cdot E_{g(2)}$}} (Bout);
    \draw[algarrow] (Amult) to node[above,sloped]{\tiny{$R_1 U_2$}} (Aout);
  \end{tikzpicture}}  
\label{eq:FinalCancel}
\end{align}
There is a similar cancellation for terms involving $E_{g(1)}\cdot E_{g(2)}$
with output in $\East$.

This completes the verification when $1$ and $2$ are not matched.
When they are matched, the cancellations involving products of
$E_{g(1)}$ and $E_{g(2)}$ work a little differently.  Instead of the
cancellation from Equation~\eqref{eq:TermsWithC}, we have the
cancellations from Equation~\eqref{eq:FinalCancel}, noting that in
this case there are two canceling terms outputting $E_{g(1)}\cdot
E_{g(2)}$ and two outputting $E_{g(2)}\cdot E_{g(1)}$. The other
cancellations are as above.

\end{proof}

\begin{lemma}
  \label{lem:Fnatural}
  Let $\lsup{\Blg_2',\Blg_1}Y$ and 
  $\lsup{\Blg_2',\Blg_1}Z$
  be two type $DD$ bimodules adapted to the same braid,
  and let $\phi^1\in\Mor\lsup{\Blg_2',\Blg_1}(Y,Z)$ be a homomorphism with 
  \[ \phi^1\colon Y \to (\Blg_2\otimes\Blg_1)\otimes Z
  \subset (\Blg_2'\otimes\Blg_1)\otimes Z,\]
  then $\phi^1$ induces a morphism
  \[ F(\phi^1)\colon F(Y,f)\to F(Z,f).\]
\end{lemma}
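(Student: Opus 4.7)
The plan is to define $F(\phi^1)$ to simply be $\phi^1$, reinterpreted through the natural algebra inclusions $\Blg_2\subset\Alg_2$ and $\Blg_1\subset\DuAlg_1$. Since by hypothesis $\phi^1$ takes values in $(\Blg_2\otimes\Blg_1)\otimes Z$, this reinterpretation makes sense as a map $F(Y,f)\to (\Alg_2\otimes\DuAlg_1)\otimes F(Z,f)$ (observing that the underlying $\Field$-vector spaces and $\IdempRing$-bimodule structures of $F(Y,f)$ and $F(Z,f)$ coincide with those of $Y$ and $Z$).

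To verify that $F(\phi^1)$ satisfies the $DD$-bimodule morphism relation, I would write $\delta^1_{F(X,f)} = d_0^X + \epsilon_F^X$ for $X\in\{Y,Z\}$, where
\[ \epsilon_F^X(x) = \Bigl(\sum_i U_{f(i)}\otimes E_i + \sum_{\{i,j\}\in\Matching} C_{\{f(i),f(j)\}}\otimes\llbracket E_i,E_j\rrbracket\Bigr)\otimes x, \]
and compare this to the analogous expansion $\delta^1_X = d_0^X+\epsilon^X$, with $\epsilon^X(x)=\sum_i C_{f(i)}\otimes U_i\otimes x$, in the special $DD$-structures over $\Blg_2'\otimes\Blg_1$. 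Applied to the hypothesized morphism relation for $\phi^1$, the $\epsilon$-terms contribute
\[ \bigl(aC_{f(i)}\otimes bU_i - C_{f(i)}a\otimes U_ib\bigr)\otimes z \]
(summed over $i$ and over $\phi^1(y)=(a\otimes b)\otimes z$), which vanishes because $C_{f(i)}$ is central in $\Blg_2'$ and $U_i$ is central in $\Blg_1$; and $\mu_1\phi^1=0$ because the image of $\phi^1$ lies in $\Blg_2\otimes\Blg_1$, on which the differential is trivial. Hence the special morphism relation reduces to the pure $d_0$-identity $\mu_2(\Id\otimes d_0^Z)\phi^1 + \mu_2(\Id\otimes\phi^1)d_0^Y=0$.

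Next I would check that the $d_0$-identity is precisely what the $F$-morphism relation demands of its $d_0$-component, and that the remaining $\epsilon_F$-contributions cancel in pairs. For the latter, on $\phi^1(y)=(a\otimes b)\otimes z$ with $a\in\Blg_2,\ b\in\Blg_1$, the term $\mu_2(\Id\otimes\epsilon_F^Z)F(\phi^1)$ produces
\[ \bigl(aU_{f(i)}\otimes bE_i\bigr)\otimes z + \bigl(aC_{\{f(i),f(j)\}}\otimes b\llbracket E_i,E_j\rrbracket\bigr)\otimes z, \]
while $\mu_2(\Id\otimes F(\phi^1))\epsilon_F^Y$ produces the same expressions with the order of each factor reversed. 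The cancellation then uses that $U_{f(i)}$ and $C_{\{f(i),f(j)\}}$ are central in $\Alg_2$, while $E_i$ (and hence $\llbracket E_i,E_j\rrbracket$) commutes with every $b\in\Blg(2n,2n+1-k)\subset\DuAlg_1$ by construction of $\DuAlg_1$. Finally $\mu_1 F(\phi^1)=0$ again because the image lives in $\Blg_2\otimes\Blg_1$, where no $C_{\{i,j\}}$ or $E_i$ factor appears for the differential to act upon.

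There is no real obstacle here beyond bookkeeping: the whole argument is powered by the hypothesis that $\phi^1$ already lands in the smaller subalgebra $\Blg_2\otimes\Blg_1$ (which kills both $\mu_1\phi^1$ and the additional differential terms present in $\Alg_2\otimes\DuAlg_1$), together with the centrality of $U_i$ and $C_{\{i,j\}}$ in $\Alg_2$ and the commutation of the $E_i$ with $\Blg(2n,2n+1-k)$ in $\DuAlg_1$. The most delicate part is checking that these commutations hold \emph{as stated} for the bracket $\llbracket E_i,E_j\rrbracket$ and across the full tensor structure; once that is confirmed, the verification is essentially formal.
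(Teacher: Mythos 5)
Your proposal is correct and takes the same approach as the paper: the paper defines $F(\phi^1)$ by the same formula as $\phi^1$ and asserts that the structure equation is "obviously still satisfied," which is exactly the verification you carry out in detail (cancellation of the $\epsilon$-terms via centrality of $U_{f(i)}$ and $C_{\{f(i),f(j)\}}$ in $\Alg_2$ and commutation of the $E_i$ with $\Blg(2n,2n+1-k)\subset\DuAlg_1$, plus vanishing of $\mu_1$ on $\Blg_2\otimes\Blg_1$). Your write-up simply makes explicit what the paper leaves implicit.
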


\begin{proof}
  The formula for $F(\phi^1)$ is the same as the formula for $\phi^1$. 
  The structure equation is obviously still satisfied.
\end{proof}

With these pieces in place, the proof of Theorem~\ref{thm:BraidRelation} reduces quickly to its analogue,
from~\cite[Theorem~\ref{BK:thm:BraidRelation}]{BorderedKnots}:

\begin{proof}[Proof of Theorem~\ref{thm:BraidRelation}]
  The theorem is now an easy consequence of the braid relations for the modules from~\cite{BorderedKnots}.

  Since $(\Pos^i\DT\Neg^i)\DT\CanonDD \simeq 
  \Pos^i \DT(\Neg^i\DT \CanonDD) \simeq
  \Pos^i \DT\Neg_i$
  (by associativity of $\DT$ and Proposition~\ref{prop:DualCross}),
  the verification that
  $\Pos^i\DT \Neg^i\simeq \Id_{\Alg_1}$, will follow from the identity
  \begin{equation}
    \label{eq:EasyNegPosInv}
    \Pos^i\DT\Neg_i\simeq  \CanonDD,
  \end{equation}
  
  In more detail, we wish to show that
  \[\lsup{\Alg_1}\Pos^i_{\Alg_2}\DT \lsup{\Alg_2,\DuAlg_1}\Neg_i\simeq \lsup{\Alg_1,\DuAlg_2}\CanonDD.\]
  In~\cite[Section~\ref{BK:sec:Braid}]{BorderedKnots}, we verified that
  \begin{equation}
    \lsup{\Blg'}\Pos^i_{\Blg'}\DT\lsup{\Blg',\Blg}\Neg_i\simeq  \lsup{\Blg',\Blg}\CanonDD,
  \end{equation}
  (i.e. Equation~\eqref{eq:EasyNegPosInv} for the previous algebras)
  using a homotopy equivalence $\phi^1$ satisfying the hypothesis of 
  Lemma~\ref{lem:Fnatural}. (In fact, in this case, 
  \[ \phi^1\colon     \lsup{\Blg'}\Pos^i_{\Blg'}\DT\lsup{\Blg',\Blg}\Neg_i\to
  \lsup{\Blg',\Blg}\CanonDD \]  is the identity map.)
  Combining Lemma~\ref{lem:Fexamples},~\ref{lem:PosPreservesF}, with the the above result,
  we see that
  \begin{align*}
    \lsup{\Alg_1}\Pos^i_{\Alg_2}\DT \lsup{\Alg_2,\DuAlg_1}\Neg_i
  &= \lsup{\Alg_1}\Pos^i_{\Alg_2}\DT F(\lsup{\Blg_2',\Blg_1}\Neg_i)\\
    &\simeq 
  F(\lsup{\Blg_1'}\Pos_{\Blg_2'}\DT \lsup{\Blg_2',\Blg_1}\Neg_i) \\
  &\simeq 
  F(\lsup{\Blg_1',\Blg_1}\CanonDD) \\
  &=
  \lsup{\Alg_1,\DuAlg_1}\CanonDD.
  \end{align*}
  
  Equations~\eqref{eq:FarBraids} and~\eqref{eq:NearBraids} follow from the same logic.
\end{proof}

\section{Bimodule associated to a maximum}
\label{sec:Max}

Fix $\Partition$ a matching on $\{1,\dots,2n\}$, $k$ an integer with
$0\leq k\leq 2n+1$, and $c$ an integer with $1\leq c\leq 2n+1$.

Let $\phi_c\colon \{1,\dots,2n\}\to \{1,\dots,2n+2\}$ be the map
\begin{equation}
\label{eq:DefInsert}
\phi_c(j)=\left\{\begin{array}{ll}
j &{\text{if $j< c$}} \\
j+2 &{\text{if $j\geq c$.}}
\end{array}\right.
\end{equation}
Let 
\begin{equation}
  \label{eq:MaxAlgebras}
  \Alg_1=\Alg(n,k,\Partition)
\qquad{\text{and}}\qquad
\Alg_2=\Alg(n+1,k+1,\phi_c(\Partition)\cup\{c,c+1\})
\end{equation}
We will define the bimodule associated to the partial knot diagram
containing a single local maximum connecting the strands $c$ and $c+1$
(in the output), denoted $\lsup{\Alg_2}\Max_{\Alg_1}$.
Before doing this, we describe its dual type $DD$ bimodule.

\subsection{The type $DD$ bimodule}
\label{subsec:MaxDD}

Let 
\begin{equation}
  \label{eq:DuAlg1}
\DuAlg_1=\DuAlg(n,2n+1-k,\Matching)
\end{equation}
We define  a type $DD$ bimodule $\lsup{\Alg_2,\DuAlg_1}\Max_c$, as follows.

To describe the underlying vector space for $\Max_c$, we proceed as follows.
We call an idempotent state $\y$ for $\Alg_2$ an {\em allowed idempotent state} if
\begin{equation}
  \label{eq:AllowedMaxState}
c\in\y \qquad{\text{and}}\qquad
|\y\cap \{c-1,c+1\}|\leq 1.
\end{equation}
Consider the map $\psi'$ from
allowed idempotent states $\y$ for $\Alg_2$ to idempotent states  for $\DuAlg_1$,
where $\x=\psi'(\y)\subset \{0,\dots,2n\}$ is characterized by the property that
\begin{equation}
  \label{eq:SpecifyPsiPrime}
  |\y\cap \{c-1,c,c+1\}| + 
  |\x\cap \{c-1\}| =2~\qquad{\text{and}}~\qquad \phi_c(\x)\cap \y=\emptyset.
\end{equation}

As a vector space, $\Max_c$ is spanned by vectors that are in
one-to-one correspondence with allowed idempotent states for $\Alg_2$. 
The module structure over the ring of idempotents $\IdempRing(\Alg_2)\otimes\IdempRing(\DuAlg_1)$,
is specified as follows.
If ${\mathbf P}={\mathbf P}_{\y}$ is the generator associated to 
the idempotent state $\y$, then 
\[ (\Idemp{\y}\otimes\Idemp{\psi'(\y)})\cdot {\mathbf P}_{\y}=
{\mathbf P}_{\y}.\]

To specify the differential, 
consider the element 
$A\in \Alg_2\otimes\DuAlg_1$ 
\begin{align*}
A&=(L_{c} L_{c+1}\otimes 1) + 
(R_{c+1} R_{c}\otimes 1) 
 + \sum_{i=1}^{2n} L_{\phi(i)}\otimes R_i + R_{\phi(i)}\otimes L_i \\
& +  C_{\{c,c+1\}}\otimes 1
+ \sum_{i=1}^{2n} U_{\phi(i)}\otimes E_i 
+ \sum_{\{i,j\}\in\Matching} C_{\{\phi(i),\phi(j)\}} \otimes [E_i,E_j]
  \end{align*}
where we have dropped the subscript $c$ from $\phi_c=\phi$.
Let 
\[ \delta^1({\mathbf P}_{\y}) = (\Idemp{\y}\otimes \Idemp{\psi'(\y)})\cdot A \otimes
\sum_{\z} {\mathbf P}_{\z},\]
where the latter sum is taken over all allowed idempotent states $\z$ for $\Alg_2$.

\begin{lemma}
  The space $\lsup{\Alg_2,\DuAlg_1}\Max_c$ defined above, and equipped with the map
  \[ \delta^1\colon \Max_c \to (\Alg_2\otimes \DuAlg_1)\otimes \Max_c,\]
  specified above, is a type $DD$ bimodule over $\Alg_2$ and $\DuAlg_1$.
\end{lemma}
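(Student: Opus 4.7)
The plan is to reduce the type $DD$ structure relation, just as in Lemma~\ref{lem:CanonicalIsDD}, to the identity
\[
(\Idemp{\y}\otimes\Idemp{\psi'(\y)})\,(dA + A\cdot A)\,(\Idemp{\z}\otimes\Idemp{\psi'(\z)}) = 0
\]
for all pairs of allowed idempotent states $\y, \z$ for $\Alg_2$, where the product is computed in $\Alg_2 \otimes \DuAlg_1$. Since $\delta^1$ is given by left multiplication by $A$ followed by summing over allowed targets, this is exactly what is needed, provided we also verify that products $A \cdot A$ whose internal idempotent is non-allowed automatically vanish in $\Alg_2 \otimes \DuAlg_1$ (so that passing to the restricted sum over allowed states introduces no discrepancy).

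First, the strand-by-strand cancellations go through verbatim from Lemma~\ref{lem:CanonicalIsDD}. For each pair of distinct indices $i,j \in \{1,\ldots,2n\}$ the cross products among the terms $L_{\phi(i)} \otimes R_i$ and $R_{\phi(i)} \otimes L_i$ cancel in symmetric pairs; the diagonal terms
\[
(L_{\phi(i)}\otimes R_i)(R_{\phi(i)}\otimes L_i) + (R_{\phi(i)}\otimes L_i)(L_{\phi(i)}\otimes R_i) = U_{\phi(i)} \otimes U_i
\]
cancel $d(U_{\phi(i)} \otimes E_i)$; and for each $\{i,j\} \in \Matching$, the differential $d(C_{\{\phi(i),\phi(j)\}} \otimes \llbracket E_i, E_j \rrbracket) = U_{\phi(i)} U_{\phi(j)} \otimes \llbracket E_i, E_j \rrbracket$ cancels the sum $(U_{\phi(i)} \otimes E_i)(U_{\phi(j)} \otimes E_j) + (U_{\phi(j)} \otimes E_j)(U_{\phi(i)} \otimes E_i)$.

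The new cancellations all involve the cap. Using the $\Blg(2n+2, k+1)$ relations $L_{c+1} R_{c+1} = U_{c+1}$ and $R_c L_c = U_c$, which hold precisely in idempotents containing $c$ (as all allowed idempotents do), one computes
\[
(L_c L_{c+1} \otimes 1)(R_{c+1} R_c \otimes 1) + (R_{c+1} R_c \otimes 1)(L_c L_{c+1} \otimes 1) = U_c U_{c+1} \otimes 1,
\]
which cancels $d(C_{\{c,c+1\}} \otimes 1)$. Cross terms between $L_c L_{c+1} \otimes 1$ (resp.\ $R_{c+1} R_c \otimes 1$) and a strand term $L_{\phi(i)} \otimes R_i$: when $\phi(i) \notin \{c-1, c+2\}$ the two $\Alg_2$-factors commute and the contributions cancel in symmetric pairs; when $\phi(i) \in \{c-1, c+2\}$ they are killed by $L_{i+1} L_i = 0$ or $R_i R_{i+1} = 0$. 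Terms involving $C_{\{c,c+1\}} \otimes 1$ in the quadratic are handled by its centrality and closedness.

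The principal obstacle is the bookkeeping around the allowed-idempotent restriction. In particular, when multiplying out $A \cdot A$ we must verify that any product whose intermediate idempotent fails one of the conditions in \eqref{eq:AllowedMaxState} either produces a zero algebra element (by $\Idemp{\x} \cdot U_j = 0$ when $\{j-1,j\} \cap \x = \emptyset$, or by the adjacency relations $L_{i+1} L_i = 0$ and $R_i R_{i+1} = 0$) or cancels against another such term. The matching constraint $|\y \cap \{c-1, c+1\}| \leq 1$ is exactly what forces the ``bubble'' $L_c L_{c+1} \cdot R_{c+1} R_c$ to simplify to $U_c U_{c+1}$ without leaving additional residue, and once this combinatorial check is done, the remaining verification is a mechanical local computation entirely parallel to the canonical type $DD$ case.
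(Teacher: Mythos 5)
Your approach is the same as the paper's: the paper disposes of this lemma by declaring it a straightforward adaptation of Lemma~\ref{lem:CanonicalIsDD}, and your reduction to $dA+A\cdot A=0$ in the restricted idempotents, together with the list of strand-by-strand cancellations, is exactly that adaptation carried out. The cancellations you identify are correct as far as they go.

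One case of your main new cancellation is not covered as written. You cancel $d(C_{\{c,c+1\}}\otimes 1)=U_cU_{c+1}\otimes 1$ against the bubble $(L_cL_{c+1}\otimes 1)(R_{c+1}R_c\otimes 1)+(R_{c+1}R_c\otimes 1)(L_cL_{c+1}\otimes 1)$, but on a generator of type $\ZZ$ (where $\y\cap\{c-1,c,c+1\}=\{c\}$) both $\Idemp{\y}\cdot L_cL_{c+1}$ and $\Idemp{\y}\cdot R_{c+1}R_c$ vanish, so there is nothing for $U_cU_{c+1}\otimes 1$ to cancel against; the relation holds there only because $\Idemp{\y}\cdot U_cU_{c+1}=\Idemp{\y}\cdot L_cR_c\,R_{c+1}L_{c+1}=0$ via $R_cR_{c+1}=0$, which must be stated separately. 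Relatedly, when you check that no terms escape to non-allowed intermediate idempotents, note that the vanishing is not always supplied by relations in $\Alg_2$: for instance $R_{c-1}\otimes L_{c-1}$ applied to a type $\YY$ generator with $c-2\in\y$ has nonzero $\Alg_2$-factor landing in an idempotent containing all of $c-1,c,c+1$, and it is the $\DuAlg_1$-factor that kills it, since $c-1\notin\psi'(\y)$. So the second tensor factor must be carried along in the combinatorial bookkeeping. With these two points supplied, your verification is complete and matches the paper's intended argument.
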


\begin{proof}
  The proof is a straightforward adaptation of Lemma~\ref{lem:CanonicalIsDD}.
\end{proof}

It is helpful to understand $\Max_c$ a little more explicitly.
To this end, we classify the allowed idempotents for $\Alg_2$ into three types,  labeled $\XX$, $\YY$, and $\ZZ$:
\begin{itemize}
  \item $\y$ is of type $\XX$ if $\y\cap \{c-1,c,c+1\}=\{c-1,c\}$,
  \item $\y$ is of type $\YY$ if $\y\cap \{c-1,c,c+1\}=\{c,c+1\}$,
  \item $\y$ is of type $\ZZ$ if $\y\cap \{c-1,c,c+1\}=\{c\}$. 
\end{itemize}
There is a corresponding classification of the generators ${\mathbf P}_{\y}$ into $\XX$, $\YY$, and $\ZZ$, according to the type of $\y$;
see Figure~\ref{fig:DDcap}.

\begin{figure}[ht]
\input{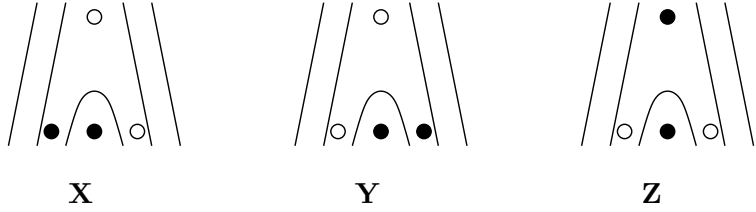}
\caption{\label{fig:DDcap} {\bf{$DD$ bimodule of a maximum.}}  
Three generator types are illustrated.}
\end{figure}

With respect to this decomposition, 
terms in  the differential are of the following types:
\begin{enumerate}[label=($\Omega$-\arabic*),ref=($\Omega$-\arabic*)]
\item 
  \label{type:MOutsideLR}
  $R_{\phi(j)}\otimes L_j$
  and $L_{\phi(j)}\otimes R_{j}$ for all 
  $j\in \{1,\dots,2n\}\setminus \{c-1,c\}$ (connecting
  generators of the same type). 
\item
  \label{type:MUC}
  $U_{\phi(i)}\otimes E_i$ for $i=1,\dots,2n$ (connecting
  generators of the same type). 
\item 
  \label{type:MUC2}
  $C_{\{\phi(i),\phi(j)\}}\otimes [E_i,E_j]$
  for all $\{i,j\}\in\Partition$  (connecting
  generators of the same type). 
\item
  $C_{\{c,c+1\}}\otimes 1$  (connecting
  generators of the same type). 
\item 
  \label{type:MInsideCup}
  Terms in the diagram below connect  generators
  of different types.
\begin{equation}
  \label{eq:CritDiag}
  \begin{tikzpicture}[scale=1.5]
    \node at (-1.5,0) (X) {$\XX$} ;
    \node at (1.5,0) (Y) {$\YY$} ;
    \node at (0,-2) (Z) {$\ZZ$} ;
    \draw[->] (X) [bend right=7] to node[below,sloped] {\tiny{$R_{c+1} R_{c}\otimes 1$}}  (Y)  ;
    \draw[->] (Y) [bend right=7] to node[above,sloped] {\tiny{$L_{c} L_{c+1}\otimes 1$}}  (X)  ;
    \draw[->] (X) [bend right=7] to node[below,sloped] {\tiny{$L_{c-1}\otimes
R_{c-1}$}}  (Z)  ;
    \draw[->] (Z) [bend right=7] to node[above,sloped] {\tiny{$R_{c-1}\otimes L_{c-1}$}}  (X)  ;
    \draw[->] (Z) [bend right=7] to node[below,sloped] {\tiny{$L_{c+2}\otimes R_{c}$}}  (Y)  ;
    \draw[->] (Y) [bend right=7] to node[above,sloped] {\tiny{$R_{c+2}\otimes L_{c}$}}  (Z)  ;
  \end{tikzpicture}
\end{equation}
\end{enumerate}

With the understanding that
if $c=1$, then the terms containing $L_{c-1}$ or $R_{c-1}$ are missing; 
similarly, if $c=2n+1$, the terms containing $R_{c+2}$ and $L_{c+2}$ are missing.

\subsection{The $DA$ bimodule of a maximum}

We will describe now the type $DA$ bimodule
$\lsup{\Alg_2}\Max^c_{\Alg_1}$ promised in the beginning of the
section (where $\Alg_1$ and $\Alg_2$ are specified in
Equation~\eqref{eq:MaxAlgebras}). This is the bimodule associated to a
region in the knot diagram where there are no crossings and a single
local maximum, which connects the $c^{th}$ and the $(c+1)^{st}$
outgoing strand; see Figure~\ref{fig:Maximum}.
\begin{figure}[ht]
\input{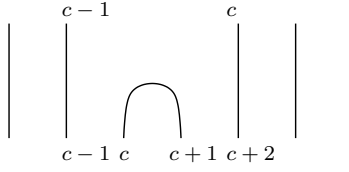}
\caption{\label{fig:Maximum} {\bf{Picture of maximum.}}}
\end{figure}

Recall the definition of allowed idempotent states for $\Alg_2$, 
specified in Equation~\eqref{eq:AllowedMaxState}.
There is a map $\psi$ from allowed idempotent states for
$\Alg_2$ to idempotent states for $\Alg_1$,
given by 
\[ \psi(\x)=\left\{\begin{array}{ll}
\phi^{-1}(\y) & {\text{if $c+1\not\in \y$}} \\
\phi^{-1}(\y)\cup\{c-1\} & {\text{if $c+1\in \y$}}
\end{array}
\right.\]
Observe that $\psi(\y)=\{0,\dots,2n\}\setminus \psi'(\y)$,
where $\psi'$ is the map specified in  Equation~\eqref{eq:SpecifyPsi}.

A basis for the underlying vector space of
$\lsup{\Alg_2}\Max^c_{\Alg_1}$ is specified by the allowed idempotent
states for $\Alg_2$.  The bimodule structure, over the rings of
idempotents $\IdempRing(\Alg_1)$ and $\IdempRing(\Alg_2)$, is
specified as follows.  If ${\mathbf Q}_{\y}$ is the
generator associated to the allowed idempotent state $\y$, then 
\[ \Idemp{\y}\cdot {\mathbf Q}_{\y}\cdot \Idemp{\psi(\y)}={\mathbf Q}_{\y}.\]

The map
\[ \delta^1_1\colon \lsup{\Alg_2}\Max^c_{\Alg_1}\to \Alg_2\otimes_{\IdempRing(\Alg_2)} {\lsup{\Alg_2}\Max^c_{\Alg_1}}\]
is given specified by 
\[ \delta^1({\mathbf Q}_{\y})= \Idemp{\y}\cdot \left(R_{c+1} R_{c} + L_{c} L_{c+1} + 
C_{\{c,c+1\}}\right)\otimes \sum_{\z} {\mathbf Q}_{\z}.\]
where the sum is taken over all allowed idempotents $\z$ for $\Alg_2$.

Split the bimodule
\[ \lsup{\Alg_2}\Max^c_{\Alg_1}\cong \XX\oplus\YY\oplus \ZZ \]
according to the types of the corresponding idempotents as defined in 
Section~\ref{subsec:MaxDD};
i.e.
\begin{align*}
  \XX &= \bigoplus_{\{\y\big|\y\cap\{c-1,c,c+1\}=\{c-1,c\}\}} {\mathbf Q}_\y \\
  \YY &= \bigoplus_{\{\y\big|\y\cap\{c-1,c,c+1\}=\{c,c+1\}\}} {\mathbf Q}_\y \\
  \ZZ &= \bigoplus_{\{\y\big|\y\cap\{c-1,c,c+1\}=\{c\}\}} {\mathbf Q}_\y
\end{align*}

With respect to this splitting, $\delta^1_1$ can be expressed as
\begin{align}
\delta^1_1(\XX)=& C_{\{c,c+1\}}\otimes \XX + R_{c+1}R_{c}\otimes \YY \nonumber \\
\delta^1_1(\YY)=& C_{\{c,c+1\}}\otimes \YY + L_{c}L_{c+1}\otimes \XX \label{eq:DeltaOneOnePos} \\
\delta^1_1(\ZZ)= &  C_{\{c,c+1\}}\otimes \ZZ; \nonumber 
\end{align}

To define $\delta^1_2$, it is helpful to recall the following lemma, which is a
straightforward adaptation
of~\cite[Lemma~\ref{BK:lem:ConstructDeltaTwo}]{BorderedKnots}.

\begin{lemma}
  \label{lem:ConstructDeltaTwo}
  If $\x$ is an allowed idempotent state (for $\Alg_2$) and 
  $\y$ is an idempotent state for $\Alg_1$ so that $\psi(\x)$ and $\y$
  are close enough, then there is an allowed idempotent state $\z$ with
  $\psi(\z)=\y$ so that there is a map
  \[ \Phi_{\x}\colon 
  \Idemp{\psi(\x)}\cdot \Alg_1\cdot \Idemp{\y}\to
  \Idemp{\x}\cdot \Alg_2\cdot \Idemp{\z}\]
  with the following properties:
  \begin{itemize}
    \item $\Phi_\x$ maps the portion of $\Idemp{\psi(\x)}\cdot \Blg_1\cdot \Idemp{\y}$ with weights
      $(v_1,\dots,v_{2n})$ surjectively onto the portion of 
      $\Idemp{\x}\cdot \Blg_2\cdot\Idemp{\z}$ with 
      $w_{\phi_c(i)}=v_i$ and $w_{c}=w_{c+1}=0$, 
    \item $\Phi_{\x}$ further satisfies the relations
      \[
      \Phi_{\x}(U_i\cdot a)=U_{\phi_c(i)} \cdot \Phi_{\x}(a) 
      \qquad{\text{and}}\qquad
      \Phi_{\x}(C_{p}\cdot a)=C_{\phi_c(p)}\cdot \Phi_{\x}(a)\]
      for any $i\in 1,\dots,2n$, $p\in\Matching$, and $a\in \Idemp{\psi(\x)}\cdot \Alg_1\cdot \Idemp{\y}$.
    \end{itemize}
    Moreover, the state $\z$ is uniquely characterized by the existence of 
    such a map $\Phi_\x$.
\end{lemma}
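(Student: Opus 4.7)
The plan is to adapt the proof of~\cite[Lemma~\ref{BK:lem:ConstructDeltaTwo}]{BorderedKnots}, with the one new ingredient being the need to track the central elements $C_p$.

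First I would construct $\z$ explicitly.  Write $\phi=\phi_c$.  For $i\in\{0,\dots,2n\}\setminus\{c-1\}$, declare $\phi(i)\in \z$ if and only if $i\in\y$.  The remaining data in $\z$, namely whether $c-1$ and $c+1$ belong to it, is forced: the required weight profile $w_c(\Phi_\x(a))=w_{c+1}(\Phi_\x(a))=0$ combined with the weight formula from Equation~\eqref{eq:WeightFunction} demands $v^{\z}_c = v^{\x}_c$ and $v^{\z}_{c+1} = v^{\x}_{c+1}$, and these equations uniquely determine those two bits.  A short case analysis on the three allowed types ($\XX$, $\YY$, $\ZZ$) of $\x$, cross-referenced with the possible behavior of $\y$ near $c-1$, then verifies that the resulting $\z$ is an allowed idempotent state for $\Alg_2$ satisfying $\psi(\z)=\y$, and that it is the only such state supporting a non-zero algebra element with the prescribed weight profile; this yields the uniqueness clause of the lemma.

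Next I would define $\Phi_\x$ on a pure algebra element
\[
  a \;=\; \phi^{\psi(\x),\y}\bigl(U_1^{t_1}\cdots U_{2n}^{t_{2n}}\bigr) \cdot C_{p_1}\cdots C_{p_r}
\]
in $\Idemp{\psi(\x)}\cdot \Alg_1\cdot \Idemp{\y}$ by the formula
\[
  \Phi_\x(a) \;=\; \phi^{\x,\z}\bigl(U_{\phi(1)}^{t_1}\cdots U_{\phi(2n)}^{t_{2n}}\bigr) \cdot C_{\phi(p_1)}\cdots C_{\phi(p_r)},
\]
and extend linearly. The weight formula for pure elements in $\Blg$, together with the fact that each $C_{\{i,j\}}$ contributes weight $e_i + e_j$, directly gives the required weight profile of $\Phi_\x(a)$. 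Equivariance under multiplication by $U_i$ and by $C_p$ is then immediate from the formula. Surjectivity onto the indicated graded piece of $\Idemp{\x}\cdot \Blg_2\cdot \Idemp{\z}$ follows from the fact, recalled in Section~\ref{sec:Algebras}, that a pure algebra element of $\Blg(2n+2,k+1)$ is determined by its endpoint idempotents together with its weight vector.

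The step that I expect will require the most care is the existence-and-uniqueness case analysis for $\z$. The delicate case is when $\x$ is of type $\YY$, because $c+1\in\x$ whereas $\psi(\x)$ has $c-1$ in place of $c+1$; one must then verify that the two constraints $v^{\z}_c = v^{\x}_c$ and $v^{\z}_{c+1}=v^{\x}_{c+1}$ remain consistent with the definition of $\z$ at the other positions and produce a genuinely allowed state (rather than, say, one violating $|\z\cap\{c-1,c+1\}|\leq 1$). Once this bookkeeping is in place, the rest of the argument is purely formal and mirrors the matching-free situation of~\cite{BorderedKnots}.
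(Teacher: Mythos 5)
Your construction is correct and is exactly the ``straightforward adaptation'' the paper intends (the paper offers no argument beyond citing the matching-free case, and your explicit recipe for $\z$ and for $\Phi_\x$ on pure elements, with the $C_p$ factors carried along by $\phi_c$, is the natural one). One small imprecision: the weight equations $v^{\z}_c=v^{\x}_c$ and $v^{\z}_{c+1}=v^{\x}_{c+1}$ pin down only the $c$ and $c+1$ bits of $\z$ (the former already forced by allowedness), while the $c-1$ bit is forced instead by allowedness together with $\psi(\z)=\y$ --- which your closing case analysis does account for, so the argument closes up.
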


If $\x$ is an allowed idempotent state for $\Alg_2$
and $a=\Idemp{\psi(\x)}\cdot a\cdot \Idemp{\y}\in\Alg_1$ is a non-zero algebra element,
let
\[ \delta^1_2({\mathbf Q}_{\x},a)=\Phi_{\x}(a)\otimes 
{\mathbf Q}_\z;\] 
where  $\z$ is as in Lemma~\ref{lem:ConstructDeltaTwo}.

\begin{thm}
  \label{thm:MaxDA}
  The above specified actions $\delta^1_1$ and $\delta^1_2$ (and
  $\delta^1_\ell=0$ for all $\ell>2$) give
  $\lsup{\Alg_2}\Max^c_{\Alg_1}$ the structure of a $DA$ bimodule.
  Moreover, $\lsup{\Alg_2}\Max^c_{\Alg_1}$ is standard, in the sense of Definition~\ref{def:Standard}.
\end{thm}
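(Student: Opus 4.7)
The plan is to verify the type $DA$ structure equations directly, then check standardness. Since $\delta^1_\ell = 0$ for $\ell \geq 3$, the $DA$ relations reduce to three conditions: (a) $\delta^1_1$ is a type $D$ differential; (b) the mixed compatibility $(d \otimes \Id)\delta^1_2 + (\mu \otimes \Id)(\Id \otimes \delta^1_1)\delta^1_2 + (\mu \otimes \Id)(\Id \otimes \delta^1_2)\delta^1_1 + \delta^1_2(\cdot, d(\cdot)) = 0$; and (c) associativity $\delta^1_2(\delta^1_2(x,a_1),a_2) = \delta^1_2(x, a_1 \cdot a_2)$. Condition (a) amounts to a short computation on the decomposition \eqref{eq:DeltaOneOnePos}: the products $R_{c+1}R_c \cdot L_c L_{c+1} = U_c U_{c+1} = L_c L_{c+1} \cdot R_{c+1}R_c$ cancel against $dC_{\{c,c+1\}} = U_c U_{c+1}$ in characteristic two, while $C_{\{c,c+1\}}^2 = 0$ and centrality of $C_{\{c,c+1\}}$ make the remaining self-loop contributions vanish. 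Condition (c) follows from the multiplicativity of $\Phi_\x$ on composable inputs, which is built into Lemma~\ref{lem:ConstructDeltaTwo} via the intertwining relations with $U_i$ and $C_p$.

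The main obstacle is (b). I would split the verification according to which piece of $\delta^1_1$ is involved. The centrality of $C_{\{c,c+1\}}$ makes its self-loop commute past $\delta^1_2$, so the corresponding contributions combine with $(d \otimes \Id)\delta^1_2$ and $\delta^1_2(\cdot, d(\cdot))$ to balance via the intertwiner $\Phi_\x(C_p \cdot a) = C_{\phi_c(p)} \cdot \Phi_\x(a)$, which upgrades to the identity $d_{\Alg_2} \Phi_\x(a) = \Phi_\x(d_{\Alg_1} a)$. The off-diagonal transitions $R_{c+1}R_c \otimes \YY$ (out of $\XX$) and $L_c L_{c+1} \otimes \XX$ (out of $\YY$) are balanced by observing that these factors are supported only at positions $c,c+1$, while $\Phi_\x(a)$ has weight zero there, so they commute past $\Phi_\x(a)$ freely; this yields the required compatibility between the $\XX$- and $\YY$-sectors. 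The whole argument parallels the type $DD$ structure verification for $\Max_c$ carried out in Section~\ref{subsec:MaxDD}.

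For the standardness assertion, conditions (DA-1) and (DA-2) of Definition~\ref{def:Standard} hold by taking $W$ to be the one-manifold consisting of an arc connecting output strands $c$ and $c+1$ together with straight strands elsewhere; this $W$ is compatible with the matching $\Partition$ since every point of $Y_1$ is joined to a point in $Y_2$ by one of the straight strands. Property~\ref{prop:LandsInB} is automatic: it is vacuous for $\ell \geq 3$ (as $\delta^1_\ell = 0$), and for $\ell = 2$ with $a_1 \in \Blg_1$, Lemma~\ref{lem:ConstructDeltaTwo} ensures $\Phi_\x(a_1) \in \Blg_2$. For Property~\ref{prop:CStandard}, the decomposition $C^2 = C_{\{c,c+1\}} + \sum_{p \in \Partition} C_{\phi_c(p)}$, combined with $\delta^1_1(x) \equiv C_{\{c,c+1\}} \otimes x$ and $\delta^1_2(x, C^1) = \sum_{p \in \Partition} C_{\phi_c(p)} \otimes x$, each modulo $\Blg_2 \otimes \Max^c$, gives $C^2 \otimes x + \delta^1_1(x) + \delta^1_2(x, C^1) \in \Blg_2 \otimes \Max^c$ in characteristic two.
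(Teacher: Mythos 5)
Your proposal is correct and is exactly the "straightforward verification" the paper invokes (its proof is a one-line reference to the analogous Theorem in~\cite{BorderedKnots}): you reduce the $DA$ relations to the $\ell=1$ self-consistency of $\delta^1_1$, the mixed $\ell=2$ compatibility, and associativity of $\delta^1_2$, handle each via centrality of $C_{\{c,c+1\}}$, the cancellation $R_{c+1}R_c\cdot L_cL_{c+1}=U_cU_{c+1}=dC_{\{c,c+1\}}$, and the intertwining properties of $\Phi_\x$ from Lemma~\ref{lem:ConstructDeltaTwo}, and your check of Properties~\ref{prop:LandsInB} and~\ref{prop:CStandard} matches the computation $C^2\otimes\x+\delta^1_1(\x)+\delta^1_2(\x,C^1)\in\Blg_2\otimes X$. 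The only detail left implicit (as it also is in the paper) is that the weight-matching identities such as $\Phi_\x(a)\cdot R_{c+1}R_c=R_{c+1}R_c\cdot\Phi_{\x'}(a)$ hold including simultaneous vanishing in the quotient algebra $\Blg(2n+2,k+1)$.
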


\begin{proof}
  This is straightforward; compare~\cite[Theorem~\ref{BK:thm:MaxDA}]{BorderedKnots}.
\end{proof}

The type $DA$ bimodule associated to a local maximum is dual to the
type $DD$ bimodule of a local maximum, in the following sense.

\begin{prop}
  \label{prop:MaxDual}
  Fix $\Alg_1$, $\Alg_2$, and $\DuAlg_1$ as in Equations~\eqref{eq:MaxAlgebras} and~\eqref{eq:DuAlg1},
  and $c$ with $1\leq c \leq 2n+1$. There is an identification
  \[ \lsup{\Alg_2}\Max^c_{\Alg_1}\DT \lsup{\Alg_1,\DuAlg_1}\CanonDD
  \simeq \lsup{\Alg_2,\DuAlg_1}\Max_c.\]
\end{prop}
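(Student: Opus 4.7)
The plan is to imitate the proof of Proposition~\ref{prop:DualCross}: compute the tensor product $\lsup{\Alg_2}\Max^c_{\Alg_1}\DT \lsup{\Alg_1,\DuAlg_1}\CanonDD$ generator-by-generator and match it to $\lsup{\Alg_2,\DuAlg_1}\Max_c$. Because $\Max^c$ has only $\delta^1_1$ and $\delta^1_2$ (higher operations vanish by Theorem~\ref{thm:MaxDA}), and because Proposition~\ref{prop:AdaptedTensorDD} guarantees the tensor product is a well-defined adapted type $DD$ bimodule, the computation terminates in finitely many terms and should in fact yield an isomorphism---no auxiliary homotopy comparable to the $h^1$ of Proposition~\ref{prop:DualCross} will be needed.

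First I would match the underlying $\IdempRing(\Alg_2)\otimes\IdempRing(\DuAlg_1)$-bimodules. Generators of $\Max^c\DT\CanonDD$ are pairs $(\mathbf{Q}_\y,\gen_{\psi(\y)})$ indexed by allowed idempotent states $\y$ for $\Alg_2$: the right idempotent $\psi(\y)$ of $\mathbf{Q}_\y$ determines the unique surviving canonical generator, which carries left idempotent $\psi'(\y)$ on the $\DuAlg_1$ side by definition of $\psi'$ in Equation~\eqref{eq:SpecifyPsiPrime}. This is precisely the idempotent bigrading of $\mathbf{P}_\y\in\Max_c$, so the assignment $(\mathbf{Q}_\y,\gen_{\psi(\y)})\mapsto \mathbf{P}_\y$ is a canonical bimodule isomorphism on the idempotent level.

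Next I would verify that this identification respects differentials. The $\delta^1_1$ action of $\Max^c$ contributes the terms $(C_{\{c,c+1\}}\otimes 1)$, $(R_{c+1}R_c\otimes 1)\colon\XX\to\YY$, and $(L_cL_{c+1}\otimes 1)\colon\YY\to\XX$ from Equation~\eqref{eq:DeltaOneOnePos}, matching the $C_{\{c,c+1\}}\otimes 1$ term in $\Max_c$ and the two horizontal edges of diagram~\eqref{eq:CritDiag}. The remaining terms come from pairing $\delta^1_2$ of $\Max^c$ with the algebra element $A$ defining $\delta^1$ on $\CanonDD$; by Lemma~\ref{lem:ConstructDeltaTwo}, the map $\Phi_\x$ sends $L_i\mapsto L_{\phi(i)}$, $R_i\mapsto R_{\phi(i)}$, $U_i\mapsto U_{\phi(i)}$, and $C_{\{i,j\}}\mapsto C_{\{\phi(i),\phi(j)\}}$, so these pairings reproduce exactly the terms of types~\ref{type:MOutsideLR}, \ref{type:MUC}, and~\ref{type:MUC2} in the differential on $\Max_c$.

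The main obstacle will be bookkeeping around the maximum: one must check that the ``diagonal'' edges $\XX\leftrightarrow\ZZ$ and $\YY\leftrightarrow\ZZ$ in~\eqref{eq:CritDiag} are correctly produced by pairing $\delta^1_2$ with the inputs $L_{c-1}\otimes R_{c-1}$, $R_{c-1}\otimes L_{c-1}$, $L_c\otimes R_c$ and $R_c\otimes L_c$ coming from $A$. In each case the unique target state $\z$ promised by Lemma~\ref{lem:ConstructDeltaTwo} switches the allowed-idempotent type appropriately (e.g., $R_{c-1}$ applied to the $\Alg_1$-side of a generator of type $\XX$ forces the $\Alg_2$-side output $L_{c-1}$ and identifies the new generator as type $\ZZ$). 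After confirming these four type-changing cases, the bijection of Step~1 is seen to intertwine the two differentials, yielding a genuine isomorphism---and in particular a homotopy equivalence---of type $DD$ bimodules, completing the proof.
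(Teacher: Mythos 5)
Your proposal is correct and is exactly the ``straightforward computation'' that the paper's proof of Proposition~\ref{prop:MaxDual} declines to write out: match generators via allowed idempotent states (using $\psi(\y)=\{0,\dots,2n\}\setminus\psi'(\y)$), and observe that since $\Max^c$ has no $\delta^1_{\ell}$ for $\ell>2$, the tensor-product differential is just $\delta^1_1\otimes 1$ plus single pairings of $\delta^1_2$ with the element $A$ defining $\CanonDD$, which Lemma~\ref{lem:ConstructDeltaTwo} identifies term-by-term with the differential of $\Max_c$. In particular your observation that, unlike Proposition~\ref{prop:DualCross}, no correcting homotopy $h^1$ is needed is right, precisely because the higher actions vanish.
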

\begin{proof}
  This is a straightforward computation.
\end{proof}

\subsection{The trident relation}
\label{subsec:MaxTrident}

As in~\cite{BorderedKnots}, an important ingredient in the invariance
proof is the following relation between bimodules for critical points and crossings.

\begin{prop}
  \label{prop:BasicTrident}
  Fix integers $0\leq k\leq 2n+1$, and a matching
  $\Matching_1$ on $\{1,\dots,2n\}$. Let
  \[\begin{array}{ll}
    \Matching_2=\phi_{c+1}(\Matching_1)\cup\{c+1,c+2\} & \Matching_3=\tau_c(\Matching_2) \\
    \Matching_4=\tau_{c+1}(\Matching_3) \\
    \DuAlg_1=\Alg(n,2n+1-k,\Matching_1), &\Alg_2=\Alg(n+1,k+1,\Matching_2) \\
    \Alg_3=\Alg(n+1,k+1,\Matching_3) & \Alg_4=\Alg(n+1,k+1,\Matching_4)
    \end{array}\]
  There is homotopy equivalence of graded bimodules:
  \begin{equation}
    \label{eq:BasicTrident}
    \lsup{\Alg_3}\Neg^{c}_{\Alg_2}\DT~^{\Alg_2,\DuAlg_1}\Max_{c+1}
    \simeq
    \lsup{\Alg_3}\Pos^{c+1}_{\Alg_4}\DT~^{\Alg_4,\DuAlg_1}\Max_{c}
  \end{equation}
\end{prop}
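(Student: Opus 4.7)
The plan is to mimic the strategy of Theorem~\ref{thm:BraidRelation}, reducing the identity to its analogue for the bimodules over the algebras $\Blg$ from~\cite{BorderedKnots} and then transporting it across. First, by Propositions~\ref{prop:DualCross} and~\ref{prop:MaxDual} together with associativity of $\DT$, both sides of~\eqref{eq:BasicTrident} may be rewritten in the form $(\cdots)\DT \lsup{\Alg_1,\DuAlg_1}\CanonDD$. Since $\CanonDD$ is invertible (Theorem~\ref{thm:InvertibleDD}), the desired $DD$ equivalence is equivalent to the type $DA$ equivalence
\[
\lsup{\Alg_3}\Neg^c_{\Alg_2}\DT~\lsup{\Alg_2}\Max^{c+1}_{\Alg_1}\simeq \lsup{\Alg_3}\Pos^{c+1}_{\Alg_4}\DT~\lsup{\Alg_4}\Max^{c}_{\Alg_1}.
\]

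Next, by Lemma~\ref{lem:StandardTimesStandard}, together with the standardness of the crossing bimodules (Proposition~\ref{prop:PosExt} and Definition~\ref{def:NegCrossing}) and of $\Max^c$ (Theorem~\ref{thm:MaxDA}), both tensor products above are standard type $DA$ bimodules, adapted to the same one-manifold (concatenation of a cap with an adjacent crossing). By Lemma~\ref{lem:DeterminedByStandard}, it is therefore enough to exhibit an isomorphism of the underlying idempotent-ring bimodules and then verify that the structure maps agree on every standard sequence. On standard sequences whose entries all lie in $\Blg$, the computation reduces to the trident relation for the corresponding bimodules from~\cite{BorderedKnots}; see in particular~\cite[Lemma~\ref{BK:lem:PairCreation}]{BorderedKnots} and the accompanying analysis in Section~\ref{BK:sec:InvarianceProof} of that reference. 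For standard sequences containing a $C_p$ entry, Property~\ref{prop:CStandard} of standardness plus the $C$-equivariance relation~\eqref{eq:CEquivariance} expresses the corresponding outputs in terms of the $\Blg$-portion and the $\delta^1_1$-terms of the cap recorded in~\eqref{eq:DeltaOneOnePos}; these auxiliary contributions can be matched on both sides by a direct check whose formal structure parallels the string of cancellations appearing in the proof of Lemma~\ref{lem:PosPreservesF}.

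The main obstacle is the bookkeeping around the newly matched pair $\{c+1,c+2\}\in\Matching_2$ contributed by the maximum (and, symmetrically, $\{c,c+1\}\in\Matching_4$ on the other side), as well as any pair in $\Matching_1$ one of whose endpoints is adjacent to $c$ or $c+1$. For these, the cap and the crossing each produce auxiliary outputs that must be matched not strictly but via an explicit chain homotopy, exactly as in the delicate cancellations written out in Equations~\eqref{eq:TermsWithC}--\eqref{eq:FinalCancel}. Constructing this homotopy (or equivalently checking that the two sides give the same morphism after tensoring with $\CanonDD$ and descending to cohomology) is where the bulk of the work lies; the rest of the argument is then formal once the standard-sequence computation is in place.
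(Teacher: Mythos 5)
Your framework---tensoring with $\CanonDD$, invoking Theorem~\ref{thm:InvertibleDD} to pass between the $DD$ and $DA$ formulations, and then exploiting standardness---is sound in outline, but it defers exactly the step that constitutes the proof. The paper proves Proposition~\ref{prop:BasicTrident} by a direct computation: for $c=1$ it writes out $\lsup{\Alg_3}\Pos^{2}_{\Alg_4}\DT\lsup{\Alg_4,\DuAlg_1}\Max_{1}$ explicitly as a four-generator type $DD$ bimodule (Equation~\eqref{eq:TensorProdTrident}), performs the change of basis $\South'=\South+(R_2\otimes E_1)\otimes\West$ to reach a symmetric model (Equation~\eqref{eq:SymmetricTrident}), and observes that the symmetric computation for the other side lands on the same bimodule. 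You acknowledge that ``constructing this homotopy \dots is where the bulk of the work lies'' but do not construct it; since that computation is essentially the entire content of the proposition, this is a genuine gap rather than a stylistic difference. Note also that the $DD$-level statement proved here is the input used later (via Proposition~\ref{prop:MaxDual}) to deduce the $DA$-level trident move in the invariance proof, so reducing the $DD$ identity to the $DA$ identity reverses the intended flow and still leaves you needing an independent argument for the $DA$ identity.

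Two of your intermediate reductions also need repair. First, Lemma~\ref{lem:DeterminedByStandard} yields a homotopy equivalence only when the two standard bimodules have literally the same structure maps on standard sequences after an identification of the underlying idempotent-ring bimodules; in the trident relation the two sides agree only after a change of basis with nontrivial algebra coefficients (the $\South'$ substitution above), which is not an isomorphism of idempotent-ring bimodules, so the lemma cannot be applied until that change of basis has been exhibited---again, the actual work. Second, your appeal to the earlier paper for the $\Blg$-valued standard sequences cites the pair-creation lemma rather than a trident relation, and in any case a homotopy equivalence of the earlier bimodules does not hand you equality of operations on standard sequences; the genuinely new contributions here are precisely the terms involving $C_{\{c,c+1\}}\otimes 1$ from the cap and the $E_\alpha$, $E_\beta$ terms from the crossing, which must be computed, not inherited.
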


\begin{proof}
  For notational simplicity, suppose that $c=1$.
  A straightforward computation shows that
  $\lsup{\Alg_3}\Pos^{2}_{\Alg_4}\DT~^{\Alg_4,\DuAlg_1}\Max_{1}$ is the bimodule
  generated by $\North$, $\West$, $\South$, $\East$, and with arrows:
  \begin{equation}
    \label{eq:TensorProdTrident}
    \begin{tikzpicture}[scale=1.5]
    \node at (0,4) (N) {${\mathbf N}$} ;
    \node at (-2,2.5) (W) {${\mathbf W}$} ;
    \node at (2,2.5) (E) {${\mathbf E}$} ;
    \node at (0,0) (S) {${\mathbf S}$} ;
    \draw[->] (S) [bend left=10] to node[below,sloped] {\tiny{$U_\alpha R_2 \otimes [E_\alpha,E_1]
        +L_1 L_3 \otimes R_1$}}  (W)  ;
    \draw[->] (W) [bend left=10] to node[above,sloped] {\tiny{$L_{2}\otimes 1$}}  (S)  ;
    \draw[->] (E) [bend right=10] to node[above,sloped] {\tiny{$R_{3}\otimes 1$}}  (S)  ;
    \draw[->] (S)[bend right=10] to node[below,sloped] {\tiny{$U_1 L_3 \otimes 1+R_2 R_1\otimes L_1$}} (E) ;
    \draw[->] (W)[bend right=10]to node[below,sloped] {\tiny{$R_1\otimes 1$}} (N) ;
    \draw[->] (N)[bend right=10] to node[above,sloped] {\tiny{$L_{1} U_{3}\otimes 1 + R_{3} R_{2} \otimes L_1$}} (W) ;
    \draw[->] (E)[bend left=10]to node[below,sloped]{\tiny{$1\otimes R_1$}} (N) ;
    \draw[->] (N)[bend left=10] to node[above,sloped]{\tiny{$U_{2}\otimes L_1 + L_{1} L_{2} L_{3}\otimes 1$}} (E) ;
    \draw[->] (N) [loop above] to node[above]{\tiny{$U_2\otimes E_1$}} (N);
    \draw[->] (E) [loop right] to node[above,sloped]{\tiny{$U_2\otimes E_1$}} (E);
    \draw[->] (E)to node[above,pos=.3] {\tiny{$R_3 R_2 \otimes E_1$}} (W) ;
    \draw[->] (S) to node[below,sloped,pos=.3] {\tiny{$R_2 R_1\otimes E_1$}} (N) ;
    \end{tikzpicture}
  \end{equation}
  along with further self-arrows of the following form:
  $C_{\{1,3\}}\otimes 1$;
  $L_{j+2}\otimes R_j$, $R_{j+2}\otimes L_j$, and $U_{j+2}\otimes E_j$ for
  $j=2,\dots,2n$;  $C_{\{m+2,\ell+2\}}\otimes [E_{m},E_{\ell}]$
  for those $\{m,\ell\}\in\Matching_1$ with $1\not\in\{m,\ell\}$;
  and $C_{\{2,\alpha+2\}}\otimes [E_{1},E_{\alpha}]$ with $\{1,\alpha\}\in\Matching_1$.

  After making the substitution $\South'=\South+ (R_2 \otimes E_1)\otimes \West$,
  we arrive at the more symmetric version:
  \begin{equation}
    \label{eq:SymmetricTrident}
    \begin{tikzpicture}[scale=1.5]
    \node at (0,4) (N) {${\mathbf N}$} ;
    \node at (-2,2) (W) {${\mathbf W}$} ;
    \node at (2,2) (E) {${\mathbf E}$} ;
    \node at (0,0) (S) {${\mathbf S}'$} ;
    \draw[->] (S) [bend left=10] to node[below,sloped] {\tiny{$R_2\otimes U_1
        +L_1 L_3 \otimes R_1$}}  (W)  ;
    \draw[->] (W) [bend left=10] to node[above,sloped] {\tiny{$L_{2}\otimes 1$}}  (S)  ;
    \draw[->] (E) [bend right=10] to node[above,sloped] {\tiny{$R_{3}\otimes 1$}}  (S)  ;
    \draw[->] (S)[bend right=10] to node[below,sloped] {\tiny{$U_1 L_3 \otimes 1+R_2 R_1\otimes L_1$}} (E) ;
    \draw[->] (W)[bend right=10]to node[below,sloped] {\tiny{$R_1\otimes 1$}} (N) ;
    \draw[->] (N)[bend right=10] to node[above,sloped] {\tiny{$L_{1} U_{3} \otimes 1 + R_{3} R_{2} \otimes L_1$}} (W) ;
    \draw[->] (E)[bend left=10]to node[below,sloped]{\tiny{$1\otimes R_1$}} (N) ;
    \draw[->] (N)[bend left=10] to node[above,sloped]{\tiny{$U_{2}\otimes L_1 + L_{1} L_{2} L_{3}\otimes 1$}} (E) ;
    \end{tikzpicture}
  \end{equation}
  along with the same self-arrows as before, and the further
  self-arrow of the form $U_2\otimes E_1$ (now on all four generator types
  rather than on only two, as before).

  A symmetric computation reduces
  $\lsup{\Alg_3}\Neg^{c}_{\Alg_2}\DT~^{\Alg_2,\DuAlg_1}\Max_{c+1}$ to
  the same bimodule.
\end{proof}

\newcommand\nothing{}
\section{Working in the dual algebra}
\label{sec:DuAlg}

For some purposes (especially in Section~\ref{sec:Min}), it will be
convenient to work in the algebra $\DuAlg$ and with bimodules defined
over this algebra.  Many of our previous constructions have a
straightforward adaption to this case.

\subsection{Bimodules associated to positive crossings}
\label{subsec:DuAlgCross}
We construct the type $DA$ bimodule of a positive crossing
$\lsup{\DuAlg(n,k,\tau(\Matching))}\Pos^i_{\DuAlg(n,k,\Matching)}$.

The construction starts from $\lsup{\Blg(2n,k)}\Pos^i_{\Blg(2n,k)}$
from~\cite{BorderedKnots} and recalled in Section~\ref{subsec:DAcross}. 

In Section~\ref{subsec:DAcross}, we explained how to extend the
bimodules from $\Blg(2n,k)$ to $\Alg(n,k)$. The extension to
$\DuAlg(n,k)$ is done similarly, as follows. We spell out the
case where $i=1$; the general case follows with straightforward
notational changes. There are two subcases, according to whether or not
$1$ and $2$ are matched.

First, we extend all the actions coming from
$\Blg_1=\Blg(2n,k)$ so that they commute
with the action of the $E_j$ with $j=1,\dots,2n$, so that the following identities hold:
\begin{align}
  \delta^1_2(X,E_j\cdot a_1)&=E_{\tau(j)}\cdot 
    \delta^1_2(X,a_1) 
  \label{eq:EEquivariance} \\
  \delta^1_3(X,E_j\cdot a_1,a_2)&=
  E_{\tau(j)}\cdot   \delta^1_3(X,a_1,a_2) \nonumber \\
  \delta^1_3(X,a_1\cdot E_j,a_2)&=
  \delta^1_3(X,a_1,E_j\cdot a_2) \nonumber \\
  \delta^1_3(X,a_1,a_2\cdot E_j)&=
  \delta^1_3(X,a_1,a_2)   \cdot E_{\tau(j)},
  \label{eq:LastAction}
\end{align}
where $a_1,a_2\in \DuAlg_1$ are arbitrary algebra elements.
Note that these extension rules are slightly more complicated than the
corresponding rules for the bimodules over $\Alg$ because the new
variables $E_j$ here are not commutative. In particular, the last
identity should be interpreted as follows: multiplication by
$E_{\tau(j)}$ takes place on the right of the algebra output of
$\delta^1_3(X,a_1,a_2)$.

Note that any algebra element in $\DuAlg$ can be written as the
product of an element of $\Blg_1$ with a word in the various
$E_j$. Thus, the first three relations can be regarded as definitions
of the actions, inductively defined in terms of the length of the
words in the $E_j$.  Taking the first three relations as the
definition, Equation~\eqref{eq:LastAction} follows easily.

There are two subcases, according to whether $\{1,2\}\in \Matching$.
Consider first the case where $\{1,2\}\not\in\Matching$.
In this case, we must add more terms in the definition, as follows.
Fix $\alpha$ and $\beta$ so that
$\{1,\alpha\},\{2,\beta\}\in\Matching$. We add further terms in
$\delta^1_2$ from $\South$ to $\{\North,\West,\East\}$, as follows:
\begin{equation}
  \label{eq:ExtendingEs}
  \begin{array}{lll}
  \nothing(\South,E_{2})\rightarrow 
  R_1\otimes \West & \\
  \nothing(\South,E_{1})\rightarrow 
  L_2\otimes \East &   (\South,E_{1} E_{2})\to 
  E_{2} R_1\otimes \West +
  E_{1} L_2\otimes \East \\
  (\South,U_1E_{2})
  \rightarrow 
  U_1 L_2 \otimes \East &   (\South,U_{1}E_{1} E_{2})\to U_1 E_{2} L_2 \otimes \East\\
  \nothing(\South,E_{1} U_2)\rightarrow R_1  U_2 \otimes \West &   (\South,E_{1} E_{2} U_2)\to E_{1} R_1  U_2 \otimes \West \\
  \nothing(\South, R_1 E_{2})\rightarrow R_1\otimes \North &   (\South, R_1 E_{1} E_{2})\to R_1 E_{2}\otimes \North \\
  \nothing(\South, L_2 E_{1})\rightarrow L_2\otimes \North
  &   (\South, E_{1} L_2 E_{2})\to E_{1} L_2\otimes \North \\
  \nothing(\South, R_1 E_{1} U_2)\rightarrow R_1 U_2\otimes \North 
  &   (\South, R_1 E_{1} U_2 E_{2})\to R_1 E_{1} U_2\otimes \North \\
  \nothing(\South, U_1 L_2 E_{2})\rightarrow  L_2 U_1\otimes \North & 
  (\South, U_1 E_{1} L_2 E_{2})\to U_1 L_2  E_{2}\otimes \North
\end{array}
\end{equation}
These further terms are extended as follows. Suppose we have a term $(X,a)\rightarrow b\otimes Y$
as above. Then
for any words $c_1$ and $c_2$ in the $E_k$ with $k\not\in\{1,2\}$ so that $c_1 a c_2\neq 0$, we have a further
term $(X,c_1 a c_2)\to c_1 b c_2\otimes Y$. 

For example,
\begin{align*}
  \delta^1_{2}(\South,E_1 E_\alpha)&=E_2 E_{\alpha}\otimes \South + L_2 E_\alpha\otimes \East \\
  \delta^1_{2}(\South,E_\alpha E_1)&=E_\alpha E_2\otimes \South + L_2 E_\alpha\otimes \East.
\end{align*}

New terms are added so that the following relations hold:
\begin{align*}
  \delta^1_2(X,\llbracket E_1,E_{\alpha}\rrbracket\cdot a)&=\llbracket E_2,E_{\alpha}\rrbracket\cdot \delta^1_2(X,a) \\
  \delta^1_2(X,\llbracket E_2,E_{\beta}\rrbracket\cdot a)&=\llbracket E_1,E_{\beta}\rrbracket\cdot \delta^1_2(X,a) \\
\end{align*}
for all $a\in\DuAlg$.
These new terms in $\delta^1_2$  are further extended to commute with multiplication by $U_1 U_2$; and then they 
are extended to commute with multiplication by algebra
elements with $w_1=w_2=0$.

For example, since $E_1 \cdot  E_\alpha \cdot E_1 = \llbracket E_1,E_\alpha\rrbracket\cdot E_1$, it follows that
\[ \delta^1_2(\South, E_1\cdot E_{\alpha}\cdot E_1)=
\llbracket E_2,E_\alpha\rrbracket\cdot L_2 \otimes \East + 
E_2\cdot E_{\alpha}\cdot E_2 \otimes \South.\]

Note also that
\begin{align*}
  \delta^1_3(X,\llbracket E_1,E_{\alpha}\rrbracket\cdot a_1,a_2)
  &= 
  \delta^1_3(X,a_1,\llbracket E_1,E_{\alpha}\rrbracket\cdot a_2)
  = 
  \llbracket E_2,E_{\alpha}\rrbracket\cdot \delta^1_3(X, a_1,a_2) \\
  \delta^1_3(X,\llbracket E_2,E_{\beta}\rrbracket\cdot a_1,a_2)
  &= 
  \delta^1_3(X,a_1,\llbracket E_2,E_{\beta}\rrbracket\cdot a_2)
  = 
  \llbracket E_1,E_{\beta}\rrbracket\cdot \delta^1_3(X, a_1,a_2).
\end{align*}

When $\{1,2\}\in \Matching$, we add terms as in
Equation~\eqref{eq:ExtendingEs}, further adding terms in the second
column, as well, with the order of $E_1$ and $E_2$ exchanged. For
example, we add the term
\[  (\South,E_{2} E_{1})\to E_{2} R_1\otimes \West +  E_{1} L_2\otimes \East.\]
These are extended over monomials in $E_k\not\in\{1,2\}$ as before.
Finally, we extend all actions so that 
\[ \delta^1_2(X,\llbracket E_1,E_2\rrbracket\cdot a)=\llbracket E_1,E_2\rrbracket\cdot \delta^1_2(X,a)\]
for all $a\in\DuAlg$.

The resulting bimodule has a straightforward generalizations when $i\neq 1$.

\begin{prop}
  \label{prop:PosDuAlg}
  The above operations give 
  $\lsup{\DuAlg(n,k,\tau(\Matching))}\Pos^i_{\DuAlg(n,k,\Matching)}$
  the structure of a type $DA$ bimodule over the specified algebras.
\end{prop}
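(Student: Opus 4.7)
My plan is to follow the same strategy as the proof of Proposition~\ref{prop:PosExt}, the analogous statement for $\Alg$. Starting from the known type $DA$ bimodule $\lsup{\Blg(2n,k)}\Pos^i_{\Blg(2n,k)}$ of~\cite{BorderedKnots}, I extend the operations $\delta^1_\ell$ to $\DuAlg(n,k,\Matching)$ via the equivariance relations~\eqref{eq:EEquivariance}--\eqref{eq:LastAction}, supplemented by the explicit actions listed in~\eqref{eq:ExtendingEs} when $\{i,i+1\}\notin\Matching$. The $DA$ structure equation is then checked by splitting into two cases.

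When $\{i,i+1\}\in\Matching$: the structure equation over $\Blg(2n,k)$ already incorporates the relation $U_iU_{i+1}=dC_{\{i,i+1\}}$. Extending all actions $E_j$-equivariantly introduces no new obstructions: the identity $dE_j=U_j$ is accounted for by the $U_j$-equivariance of the underlying $\Blg$-level bimodule, while the central, closed element $\llbracket E_i,E_{i+1}\rrbracket$ imposes no new constraint. When $\{i,i+1\}\notin\Matching$, writing $\{i,\alpha\},\{i+1,\beta\}\in\Matching$, the extra terms~\eqref{eq:ExtendingEs} are formally identical to the ones appearing in~\cite[Proposition~\ref{BK:prop:CrossingGeneral}]{BorderedKnots}, which extends crossing bimodules across an algebra containing auxiliary closed elements $C_j$ with $dC_j=U_j$; the only difference is that outputs in $\delta^1_2$ carry extra factors of $U_\alpha$ or $U_\beta$, reflecting $dC_{\{i,\beta\}}=U_iU_\beta$ and $dC_{\{\alpha,i+1\}}=U_\alpha U_{i+1}$ in the target algebra. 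The $DA$ bimodule relation then follows from the same term-by-term cancellations as in that earlier computation, modified only by these bookkeeping factors.

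The main obstacle I expect is well-definedness of the extension. Because the $E_j$ do not all commute, a single element of $\DuAlg$ admits several expressions as a product of $\Blg$-elements and $E_j$'s, so I must check that the equivariance rules together with~\eqref{eq:ExtendingEs} respect the defining relations of $\DuAlg$: namely $E_jE_k+E_kE_j=0$ for $\{j,k\}\notin\Matching$, and the centrality of each $\llbracket E_j,E_k\rrbracket$ for $\{j,k\}\in\Matching$. This is a finite combinatorial verification. The paired columns in~\eqref{eq:ExtendingEs} are precisely calibrated so that the required identities
\[\delta^1_2(X,\llbracket E_j,E_k\rrbracket\cdot a)=\llbracket E_{\tau(j)},E_{\tau(k)}\rrbracket\cdot\delta^1_2(X,a)\]
hold (and similarly for $\delta^1_3$), and anticommutation across the crossing is respected. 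Once this consistency check is complete, the two-case analysis above closes the argument; the verification that $\Pos^i$ over $\DuAlg$ is standard, parallel to the corresponding assertion in Proposition~\ref{prop:PosExt}, is then immediate from the form of~\eqref{eq:ExtendingEs}.
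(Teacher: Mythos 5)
Your proposal is correct and follows the same route as the paper, which simply observes that the argument of Proposition~\ref{prop:PosExt} carries over (extending the $\Blg(2n,k)$-level bimodule equivariantly, adding the terms of~\eqref{eq:ExtendingEs}, and checking consistency with the relations among the $E_j$ by the inductive word-length argument). Your closing remark about standardness is extraneous — the proposition does not assert it, and Definition~\ref{def:Standard} is formulated for bimodules over the algebras $\Alg$ rather than $\DuAlg$ — but this does not affect the argument.
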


\begin{proof}
  The proof is similar to the proof of Proposition~\ref{prop:PosExt}.
\end{proof}

We have the following analogue of Proposition~\ref{prop:DualCross}.
For the statement below, the order of two algebras appearing in the type
$DD$ bimodules $\CanonDD$ and $\Pos_i$ are opposite to what they were
in Sections~\ref{sec:DefCanonDD} and~\ref{subsec:DDcross}.  Moreover,
the tensor product in the statement can be formed because
$\lsup{\DuAlg_2}\Pos^i_{\DuAlg_1}$ is suitably bounded; indeed, by the
definition of the bimodule given above, it is clear that
$\delta^1_k=0$ for all $k>3$.

\begin{lemma}
  \label{lem:DuAlgDualCross}
  Let 
  \[ \DuAlg_1=\DuAlg(n,k,\Matching), \qquad
  \DuAlg_2=\DuAlg(n,k,\tau_i(\Matching)),\qquad 
  \Alg_1=\DuAlg(n,2n+1-k,\Matching).\]
  $\Pos^i$ is dual to $\Pos_i$, in the sense that
  \[ \lsup{\DuAlg_2}\Pos^i_{\DuAlg_1}\DT~
    \lsup{\DuAlg_1,\Alg_1}\CanonDD \simeq 
    ~\lsup{\DuAlg_2,\Alg_1}\Pos_i.\]
\end{lemma}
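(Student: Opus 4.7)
The plan is to mimic the strategy used in the proof of Proposition~\ref{prop:DualCross}: we will compute the tensor product $\lsup{\DuAlg_2}\Pos^i_{\DuAlg_1}\DT\lsup{\DuAlg_1,\Alg_1}\CanonDD$ explicitly and write down an explicit homotopy equivalence to $\lsup{\DuAlg_2,\Alg_1}\Pos_i$. For notational simplicity I will take $i=1$; the general case follows by relabeling. The boundedness hypothesis needed to form the tensor product (cf.~Remark~\ref{rem:OtherBoundedness}) is immediate, since the DA bimodule $\Pos^1$ over $\DuAlg_1$ was constructed so that $\delta^1_k=0$ for $k>3$, and the inputs $a_1,\dots,a_{k-1}$ in its higher actions are individually bounded in $\DuAlg_1$-weight.

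First I would treat the case $\{1,2\}\not\in \Matching$. Let $\alpha,\beta$ be the elements with $\{1,\alpha\},\{2,\beta\}\in\Matching$. Using the definition of $\CanonDD$ (with the algebra factors opposite to those in Section~\ref{sec:DefCanonDD}, as flagged in the statement) together with the $DA$ actions of $\Pos^1$ over $\DuAlg_1$, particularly the extra terms listed in Equation~\eqref{eq:ExtendingEs}, one computes that $\lsup{\DuAlg_2}\Pos^1_{\DuAlg_1}\DT\lsup{\DuAlg_1,\Alg_1}\CanonDD$ is spanned by the four generator types $\{\North,\South,\West,\East\}$ and has a differential of exactly the same shape as in the diagram appearing in the proof of Proposition~\ref{prop:DualCross} (Equation~\eqref{eq:TensorProdCross}); the only change is that the commutators $\llbracket E_i,E_j\rrbracket$ in the algebra $\Alg_1$ take the role played by $[E_i,E_j]$ there. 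After this bookkeeping, I would introduce the candidate isomorphism
\[
h^1(X) \;=\; \begin{cases} \South + (L_2\otimes E_1)\cdot \East + (R_1\otimes E_2)\cdot \West & X=\South\\ X & \text{otherwise,}\end{cases}
\]
together with its inverse $g^1$ defined by the identical formula, and verify that $h^1$ and $g^1$ are mutually inverse type $DD$ bimodule homomorphisms. This is essentially the same verification as in Proposition~\ref{prop:DualCross}; the key identities one needs are that in $\Alg_1$ the actions involving $\llbracket E_i,E_j\rrbracket$ on $\CanonDD$ line up with the extended $\delta^1_2$-actions of $\Pos^1$ over $\DuAlg_1$ coming from Equation~\eqref{eq:ExtendingEs}, which is how those extensions were designed.

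The case $\{1,2\}\in\Matching$ is handled in parallel: the only difference is that the two terms in the tensor product involving $E_\alpha$ or $E_\beta$ disappear (because there is no $C_{\{1,\alpha\}}$ or $C_{\{2,\beta\}}$ to tensor against), but there is an additional central closed element $\llbracket E_1,E_2\rrbracket$ which yields a matching central loop in both the tensor product and in $\Pos_1$. After recording these changes the same map $h^1$ provides the required homotopy equivalence.

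The main obstacle I anticipate is purely organizational rather than conceptual: the $E_i$'s in $\DuAlg_1$ do not commute, so the extended actions in Equation~\eqref{eq:ExtendingEs} must be combined with the DD-differential of $\CanonDD$ in a precise order. In particular, when one checks cancellation of terms of the form $(a\otimes b\cdot E_j)$ for $j\in\{1,2,\alpha,\beta\}$, one must keep track of whether the extra $E_j$ gets inserted on the left or right of the algebra output. The structural reason why all cancellations succeed is that the extensions of $\Pos^1$ to $\DuAlg_1$ were defined to commute (in the appropriate left/right sense) with $\llbracket E_1,E_\alpha\rrbracket$ and $\llbracket E_2,E_\beta\rrbracket$, exactly matching how the elements $\llbracket E_i,E_j\rrbracket$ appear in the differential of $\CanonDD$. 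Once this bookkeeping is in place, the verification that $h^1\circ g^1=\Id$, $g^1\circ h^1=\Id$, and that both are chain maps is routine, analogous to the argument following Equation~\eqref{eq:TensorProdCross} in the proof of Proposition~\ref{prop:DualCross}.
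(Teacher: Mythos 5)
Your overall strategy --- compute the box tensor product explicitly and compare it with $\Pos_i$, in the spirit of Proposition~\ref{prop:DualCross} --- is the right one and is the paper's. But you have transported the \emph{intermediate answer} of that earlier computation without redoing it here, and that answer is wrong in the $\DuAlg$ setting. The paper's proof makes exactly the opposite observation: over $\DuAlg$ the computation is \emph{easier} than in Proposition~\ref{prop:DualCross}, because the tensor product $\lsup{\DuAlg_2}\Pos^i_{\DuAlg_1}\DT\lsup{\DuAlg_1,\Alg_1}\CanonDD$ is already isomorphic on the nose (via the identity on generators) to $\lsup{\DuAlg_2,\Alg_1}\Pos_i$; no correcting change of basis $h^1$ is needed. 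The structural reason: in Proposition~\ref{prop:DualCross} the mismatched arrows of Equation~\eqref{eq:TensorProdCross} (e.g.\ $U_\beta R_1\otimes[E_2,E_\beta]+R_1U_2\otimes E_2E_1$ where $\Pos_1$ has $R_1\otimes U_2$) arise because the $\Alg$-extension of $\Pos^i$ is indexed by the matched elements $C_{\{1,\alpha\}},C_{\{2,\beta\}}$ and its outputs carry extra factors of $U_\alpha,U_\beta$, while $\CanonDD$ feeds in $C_{\{2,\beta\}}\otimes[E_2,E_\beta]$ and $U_2\otimes E_2$ as separate terms. Over $\DuAlg$ the extension of Equation~\eqref{eq:ExtendingEs} is indexed by the individual $E_j$ with no extra $U$-factors, and $\CanonDD$ supplies exactly $E_j\otimes U_j$; so, for instance, the action $(\South,E_2)\to R_1\otimes\West$ tensors directly to the arrow $R_1\otimes U_2$ of $\Pos_i$, the terms coming from $\llbracket E_i,E_j\rrbracket\otimes C_{\{i,j\}}$ reproduce the Type~\ref{type:UCCP} arrows verbatim (the asymmetric outputs of $E_iE_j$ and $E_jE_i$ cancel in pairs mod $2$), and there are no leftover arrows to remove.

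A concrete symptom of the transported computation is that your map $h^1$ does not typecheck: its correction terms $(L_2\otimes E_1)$ and $(R_1\otimes E_2)$ place $E_1,E_2$ in the second tensor factor, which here is the matched algebra paired with $\DuAlg_1$ by $\CanonDD$ --- an algebra of type $\Alg$, containing $C$'s and $U$'s but no $E_j$'s. (In Proposition~\ref{prop:DualCross} the second factor is a dual algebra, which is why that formula made sense there.) If you carry out the tensor product honestly you will find that the identity on generators already matches every arrow of Equation~\eqref{eq:PositiveCrossing} together with the Type~\ref{type:OutsideLRP}--\ref{type:UCCP} self-arrows, and the proof terminates there. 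Your remarks about boundedness of the tensor product and about the two cases $\{i,i+1\}\in\Matching$ versus $\{i,i+1\}\notin\Matching$ are fine.
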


\begin{proof}
  This is a straightforward computation in the spirit of
  Proposition~\ref{prop:DualCross}. Indeed, the computation is
  slightly easier in that there is no need to introduce a homotopy
  equivalence $h^1$ in fact, the computation verifies that
  $\lsup{\DuAlg_2}\Pos^i_{\DuAlg_1}\DT~ \lsup{\DuAlg_1,\Alg_1}\CanonDD
  \cong ~\lsup{\DuAlg_2,\Alg_1}\Pos_i$. (This is analogous to 
  the case of 
  Lemma~\cite[Lemma~\ref{BK:lem:CrossingDADD}]{BorderedKnots}
  where the input algebra has both $C_i$ and $C_{i+1}$ in it.)
\end{proof}

We also have the following:
\begin{prop}
  \label{prop:DAcrosses}
  Let 
  \[ 
  \begin{array}{ll}
    \Alg_1=\Alg(n,k,\Matching), &
    \Alg_2=\Alg(n,k,\tau_i(\Matching)) \\
    \DuAlg_1=\Alg(n,2n+1-k,\Matching), &
    \DuAlg_2=\Alg(n,2n+1-k,\tau_i(\Matching)).
    \end{array}\]
    The $DA$ bimodules of a crossing from Section~\ref{subsec:DAcross} are related to those here by the homotopy equivalence:
    \[ \lsup{\Alg_2}\Pos^i_{\Alg_1}\DT \lsup{\Alg_1,\DuAlg_1}\CanonDD
    \simeq 
    \lsup{\DuAlg_1}\Pos^i_{\DuAlg_2}\DT \lsup{\DuAlg_2,\Alg_2}\CanonDD\]
\end{prop}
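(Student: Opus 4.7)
The plan is to recognize both sides of the claimed equivalence as the same type $DD$ bimodule $\Pos_i$ from Section~\ref{subsec:DDcross}, by applying the two duality results that have already been established.

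First I would apply Proposition~\ref{prop:DualCross} directly to the left-hand side, obtaining
\[ \lsup{\Alg_2}\Pos^i_{\Alg_1}\DT \lsup{\Alg_1,\DuAlg_1}\CanonDD \simeq \lsup{\Alg_2,\DuAlg_1}\Pos_i. \]
Next I would apply Lemma~\ref{lem:DuAlgDualCross} to the right-hand side, with a careful index substitution: in the lemma, replace its parameter $k$ by $2n+1-k$ and its matching $\Matching$ by our $\tau_i(\Matching)$. Under this substitution the lemma's $\DuAlg_1$, $\DuAlg_2$, and $\Alg_1$ become respectively our $\DuAlg_2=\DuAlg(n,2n+1-k,\tau_i(\Matching))$, our $\DuAlg_1=\DuAlg(n,2n+1-k,\Matching)$, and our $\Alg_2=\Alg(n,k,\tau_i(\Matching))$. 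The conclusion of the lemma then reads
\[ \lsup{\DuAlg_1}\Pos^i_{\DuAlg_2}\DT \lsup{\DuAlg_2,\Alg_2}\CanonDD \simeq \lsup{\DuAlg_1,\Alg_2}\Pos_i. \]

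To conclude, I would observe that $\lsup{\Alg_2,\DuAlg_1}\Pos_i$ and $\lsup{\DuAlg_1,\Alg_2}\Pos_i$ are the same $DD$ bimodule: the underlying $\IdempRing(\Alg_2)\otimes\IdempRing(\DuAlg_1)$-module is given by the same generator set from Section~\ref{subsec:DDcross}, and the structure element $A\in\Alg_2\otimes\DuAlg_1$ defined there maps to the corresponding element of $\DuAlg_1\otimes\Alg_2$ under the canonical swap of tensor factors. Since the differentials on $\Alg_2$ and $\DuAlg_1$ commute with each other (and signs are suppressed, as we are working over $\Zmod 2$), this swap is a strict isomorphism of $DD$ structures. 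The hard work has already been done in Proposition~\ref{prop:DualCross} and Lemma~\ref{lem:DuAlgDualCross}; the only genuine obstacle here is bookkeeping---verifying that the relabeling in Lemma~\ref{lem:DuAlgDualCross} produces precisely the tensor product appearing on the right-hand side, with the matching $\Matching$ attached to $\DuAlg_1$ and the matching $\tau_i(\Matching)$ attached to both $\DuAlg_2$ and $\Alg_2$, so that the resulting $\Pos_i$ on each side refers to the same bimodule.
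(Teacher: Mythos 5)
Your proof is correct and follows essentially the same route as the paper, whose proof simply cites Proposition~\ref{prop:DualCross}, Lemma~\ref{lem:DuAlgDualCross}, and Theorem~\ref{thm:InvertibleDD} and calls the result immediate; you have supplied the index bookkeeping (the substitution $k\mapsto 2n+1-k$, $\Matching\mapsto\tau_i(\Matching)$) that the paper omits. Your observation that both sides are identified with the same type $DD$ bimodule $\Pos_i$ (up to the purely notational swap of tensor factors) even renders the cited invertibility of $\CanonDD$ unnecessary, a minor simplification.
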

\begin{proof}
  This is an immediate consequence of the invertability of $\CanonDD$ (Theorem~\ref{thm:InvertibleDD}),
  Proposition~\ref{prop:DualCross}, and Lemma~\ref{lem:DuAlgDualCross}.
\end{proof}

\subsection{Bimodules associated to negative crossings}
\label{subsec:DuAlgNegCross}

The bimodules associated to negative crossings can be derived from
the positive case. 

With $n$, $k$, $\Partition$, and $\tau_i$ as before, take the opposite
module of
$\lsup{\DuAlg(n,k,\tau_i(\Partition)}\Pos^i_{\DuAlg(n,k,\Partition)}$, to
get a module
  \[ \lsub{\DuAlg(n,k,\tau_i(\Partition))}({\overline
    \Pos}^i)^{\DuAlg(n,k,\Partition)} \cong
  \lsup{\DuAlg(n,k,\Partition)^{\op}}{\overline
    \Pos}^i_{\DuAlg(n,k,\tau_i(\Partition))^{\op}}; \] and use natural
  identifications $\Opposite \colon\DuAlg(n,k,\Partition')\to
  \DuAlg(n,k,\Partition')^{\op}$ for both $\Partition'=\Partition$ and
  $\Partition'=\tau_i(\Partition)$ to define
  $\lsup{\DuAlg(n,k,\tau_i(\Partition))}\Neg^i_{\DuAlg(n,k,\Partition)}$, the
  {\em bimodule over $\DuAlg$associated to a negative crossing}.

\begin{lemma}
  \label{lem:PNinvDual}
  For
  \[\DuAlg_1=\DuAlg(n,k,\Matching) \qquad\DuAlg_2=\DuAlg(n,k,\tau_i(\Matching)),\]
  we have that
  \[ \lsup{\DuAlg_1}\Id_{\DuAlg_1}\simeq~\lsup{\DuAlg_1}\Pos^i_{\DuAlg_2}\DT \lsup{\DuAlg_2}\Neg^i_{\DuAlg_1}
  \simeq~\lsup{\DuAlg_1}\Neg^i_{\DuAlg_2}\DT \lsup{\DuAlg_2}\Pos^i_{\DuAlg_1}.\]
\end{lemma}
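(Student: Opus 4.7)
The strategy is to reduce the claim to its $\Alg$-side analogue (Theorem~\ref{thm:BraidRelation}) by tensoring with the canonical type $DD$ bimodule, which is invertible by Theorem~\ref{thm:InvertibleDD}. Write $\widetilde\Alg_1 = \Alg(n,2n+1-k,\Matching)$ and $\widetilde\Alg_2 = \Alg(n,2n+1-k,\tau_i(\Matching))$, and consider $\lsup{\DuAlg_1,\widetilde\Alg_1}\CanonDD$. Since this $DD$ bimodule is invertible, the first equivalence in the lemma is equivalent to the $DD$-bimodule equivalence
\[ \lsup{\DuAlg_1}\Pos^i_{\DuAlg_2} \DT \lsup{\DuAlg_2}\Neg^i_{\DuAlg_1} \DT \lsup{\DuAlg_1,\widetilde\Alg_1}\CanonDD \simeq \lsup{\DuAlg_1,\widetilde\Alg_1}\CanonDD, \]
and this is what I will establish.

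First, apply the $\Neg^i$-analogue of Proposition~\ref{prop:DAcrosses} (which follows either by the same argument as the positive case, or by passing to opposite modules) to the two rightmost factors, obtaining
\[ \lsup{\DuAlg_2}\Neg^i_{\DuAlg_1} \DT \lsup{\DuAlg_1,\widetilde\Alg_1}\CanonDD \simeq \lsup{\widetilde\Alg_1}\Neg^i_{\widetilde\Alg_2} \DT \lsup{\widetilde\Alg_2,\DuAlg_2}\CanonDD. \]
Now the $DA$ actions of $\Pos^i_\DuAlg$ (on the $\DuAlg$ factor of a $DD$ bimodule) and of $\Neg^i_\Alg$ (on the $\Alg$ factor) commute, since they operate on independent algebra factors. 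Reordering and then applying Lemma~\ref{lem:DuAlgDualCross} to collapse $\Pos^i_\DuAlg \DT \CanonDD$ to $\Pos_i$, we reduce to
\[ \lsup{\widetilde\Alg_1}\Neg^i_{\widetilde\Alg_2} \DT \lsup{\widetilde\Alg_2,\DuAlg_1}\Pos_i. \]

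To identify this with $\lsup{\widetilde\Alg_1,\DuAlg_1}\CanonDD$, use Proposition~\ref{prop:DualCross} to rewrite $\lsup{\widetilde\Alg_2,\DuAlg_1}\Pos_i \simeq \lsup{\widetilde\Alg_2}\Pos^i_{\widetilde\Alg_1} \DT \lsup{\widetilde\Alg_1,\DuAlg_1}\CanonDD$, and then invoke Theorem~\ref{thm:BraidRelation} in the form $\lsup{\widetilde\Alg_1}\Neg^i_{\widetilde\Alg_2} \DT \lsup{\widetilde\Alg_2}\Pos^i_{\widetilde\Alg_1} \simeq \lsup{\widetilde\Alg_1}\Id_{\widetilde\Alg_1}$. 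This yields the desired equivalence, completing the proof of the first identity. The second identity is obtained by the same argument with the roles of $\Pos$ and $\Neg$ interchanged throughout.

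The main obstacle I anticipate is less conceptual than bookkeeping: verifying the $\Neg^i$-version of Proposition~\ref{prop:DAcrosses} (by retracing its proof or invoking Opposite-module symmetry), and spelling out the commutativity of the two independent $DA$ actions on a $DD$ bimodule. Both points are essentially routine in the present framework, but the notational tracking of which algebra factor each bimodule acts on must be done with care.
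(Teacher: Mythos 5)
Your proposal is correct and follows essentially the same route as the paper: tensor with the invertible canonical $DD$ bimodule, use Proposition~\ref{prop:DualCross} and Lemma~\ref{lem:DuAlgDualCross} (equivalently Proposition~\ref{prop:DAcrosses} and its opposite-module analogue for $\Neg^i$) to transport both crossing bimodules to the $\Alg$ side, apply Equation~\eqref{eq:InvertPos}, and conclude by Theorem~\ref{thm:InvertibleDD}. The paper packages the two transport steps as a pair of dualities and then commutes the two one-sided actions on $\CanonDD$, exactly as you do, so the argument is the same up to the order of the intermediate rewrites.
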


\begin{proof}
  From Lemma~\ref{lem:DuAlgDualCross} and Proposition~\ref{prop:DualCross}, it follows
  that
  \[ \lsup{\DuAlg_2}\Pos^i_{\DuAlg_1}\DT \lsup{\DuAlg_1,\Alg_1}\CanonDD
  \simeq~
   \lsup{\Alg_1}\Pos^i_{\Alg_2}\DT \lsup{\Alg_2,\DuAlg_2}\CanonDD.\]
   Taking opposites we also find that
  \[ \lsup{\DuAlg_1}\Neg^i_{\DuAlg_2}\DT \lsup{\DuAlg_2,\Alg_2}\CanonDD
  \simeq~
   \lsup{\Alg_2}\Neg^i_{\Alg_1}\DT \lsup{\Alg_1,\DuAlg_1}\CanonDD.\]
   Combining with Equation~\eqref{eq:InvertPos}, we conclude that
   \begin{align*}
     \lsup{\DuAlg_1}\Neg^i_{\DuAlg_2}\DT
   \lsup{\DuAlg_2}\Pos^i_{\DuAlg_1}\DT \lsup{\DuAlg_1,\Alg_1}\CanonDD
   &~\simeq 
   \lsup{\Alg_1}\Pos^i_{\Alg_2}\DT~
   \lsup{\Alg_2}\Neg^i_{\Alg_1}\DT~\lsup{\DuAlg_1,\Alg_1}\CanonDD \\
   &~\simeq \lsup{\DuAlg_1,\Alg_1}\CanonDD
   \end{align*}
   The desired relation
   \[\lsup{\DuAlg_1}\Id_{\DuAlg_1}\simeq 
   \lsup{\DuAlg_1}\Neg^i_{\DuAlg_2}\DT~\lsup{\DuAlg_2}\Pos^i_{\DuAlg_1}\]
   now follows from  Theorem~\ref{thm:InvertibleDD}.
   The other equivalence (reversing the order of $\Pos^i$ and $\Neg^i$) follows similarly.
\end{proof}

\section{Bimodules associated to a minimum}
\label{sec:Min}

Consider a partial knot diagram that contains exactly one local minimum in it.
Our aim here is to associate a type $DA$ bimodule to this object. Before doing that, 
we start with the more easily defined type $DD$ bimodule.

\subsection{The type $DD$ bimodule of a minimum}
\label{subsec:DDmin}

Fix $1\leq c \leq 2n+1$.
Let 
\[ \phi_c\colon \{1,\dots,2n\}\to\{1,\dots,2n+2\}\] be the function
defined in Equation~\eqref{eq:DefInsert}.  Fix an integer $k$
with $0\leq k\leq 2n+1$. Fix a matching $\Matching_1$ on
$\{1,\dots,2n+2\}$ that does not match $c$ and $c+1$, so we have
$\alpha$, $\beta \in\{1,\dots,2n\}$ so that $\{c,\phi_c(\alpha)\},
\{c+1,\phi_c(\beta)\}\in\Matching_1$.  There is an induced matching
$\Matching_2$ on $\{1,\dots,2n\}$ consisting of $\{\alpha,\beta\}$,
and all pairs $\{i,j\}$ with $\{i,j\}\cap \{\alpha,\beta\}=\emptyset$
so that $\phi_c(i)$ and $\phi_c(j)$ are matched in $\Matching_1$.  Let
\[
  \Alg_1'=\Alg'(n+1,2n+2-k,\Matching_1)
\qquad \text{and}\qquad 
\Alg_2=\Alg(n,k,\Matching_2).
\]

Let $\DDmin_c=\lsup{\Alg_2,\Alg_1'}\DDmin_c$ be the bimodule defined as follows.

We call an idempotent state $\y$ for $\Alg_1'$ an {\em allowed idempotent state for $\Alg_1'$} if
\begin{equation}
  \label{eq:AllowedIdempotents}
  |\y\cap\{c-1,c,c+1\}|\leq 2\qquad\text{and}\qquad c\in\y.
\end{equation}
There is a map $\psi'$ from
allowed idempotent states $\y$ for $\Alg_1'$ to idempotent states  for $\Alg_2$,
where $\x=\psi'(\y)\subset \{0,\dots,2n\}$ is characterized by
\begin{equation}
  \label{eq:SpecifyPsi}
  |\y\cap \{c-1,c,c+1\}| + 
  |\x\cap \{c-1\}| =2~\qquad{\text{and}}~\qquad \phi_c(\x)\cap \y=\emptyset.
\end{equation}

As a vector space, $\DDmin_c$ is spanned by vectors ${\mathbf P}_\y$ that are in
one-to-one correspondence with allowed idempotent states $\y$ for $\Alg_1'$. 
The bimodule structure, over the rings of idempotents $\IdempRing(\Alg_2)\otimes\IdempRing(\Alg_1')$,
is specified by
\[ \left(\Idemp{\psi'(\y)}\otimes\Idemp{\y}\right)\cdot {\mathbf P}_{\y}={\mathbf P}_{\y}.\]

To specify the differential, 
consider the element 
$A\in \Alg_2\otimes \Alg_1'$ 
\begin{align}
A&=(1\otimes L_{c} L_{c+1}) + 
(1\otimes R_{c+1} R_{c}) 
 + \sum_{j=1}^{2n} R_{j} \otimes L_{\phi(j)} + L_{j} \otimes R_{\phi(j)} + U_{j}\otimes E_{\phi(j)}
 \label{eq:defDDmin}\\
&
  + 1\otimes E_{c} U_{c+1}
  + U_{\alpha}\otimes [E_{\phi(\alpha)},E_{c}]E_{c+1} 
  + C_{\{\alpha,\beta\}}\otimes [E_{\phi(\alpha)},E_{c}][E_{c+1},E_{\phi(\beta)}], \nonumber \\
& + \sum_{\{i,j\} \in \Matching_2\setminus \{\alpha,\beta\}} C_{\{i,j\}}\otimes [E_{\phi(i)},E_{\phi(j)}]
\nonumber 
  \end{align}
where we have dropped the subscript $c$ from $\phi_c=\phi$.
Let 
\[ \delta^1({\mathbf P}_{\y}) = (\Idemp{\psi'(\y)}\otimes \Idemp{\y})\cdot A \otimes
\sum_{\z} {\mathbf P}_{\z},\]
where the latter sum is taken over all allowed idempotent states $\z$ for $\Alg_1'$.

\begin{lemma}
  The space $\lsup{\Alg_2,\Alg_1'}\DDmin_c$ defined above,  equipped with the map
  \[ \delta^1\colon \DDmin_c \to (\Alg_2\otimes \Alg_1')\otimes \DDmin_c,\]
  specified above, is a type $DD$ bimodule over $\Alg_2$ and $\Alg_1'$.
\end{lemma}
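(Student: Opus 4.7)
The plan is to follow the pattern of Lemma~\ref{lem:CanonicalIsDD}: the type $DD$ structure relation is equivalent to the identity
\[ (\Idemp{\psi'(\y)}\otimes\Idemp{\y})\cdot(dA + A\cdot A) = 0 \]
for every allowed idempotent state $\y$, together with the observation that the action of $A$ preserves the subspace spanned by generators $\mathbf P_\z$ indexed by allowed $\z$. The latter is a straightforward case check: the strand and general matching terms preserve both $c\in\y$ and the constraint $|\y\cap\{c-1,c+1\}|\leq 1$; the cap terms $1\otimes L_c L_{c+1}$ and $1\otimes R_{c+1}R_c$ interchange the two allowed configurations $\y\cap\{c-1,c,c+1\}=\{c-1,c\}\leftrightarrow\{c,c+1\}$; and the terms involving $\alpha$, $\beta$ preserve admissibility by construction.

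To verify the structure equation, I would decompose $A = A_{\mathrm{str}} + A_{\mathrm{cap}} + A_{\mathrm{match}}$, where
\[ A_{\mathrm{str}} = \sum_{j=1}^{2n}\bigl(R_j\otimes L_{\phi(j)} + L_j\otimes R_{\phi(j)} + U_j\otimes E_{\phi(j)}\bigr), \]
$A_{\mathrm{cap}} = 1\otimes(L_c L_{c+1} + R_{c+1}R_c + E_c U_{c+1})$, and $A_{\mathrm{match}}$ comprises the remaining $U_\alpha$- and $C_{\{i,j\}}$-summands. Then $dA_{\mathrm{str}} + A_{\mathrm{str}}\cdot A_{\mathrm{str}}$ reproduces, through the reindexing $\phi_c$, the cancellation of Lemma~\ref{lem:CanonicalIsDD} on parallel strands. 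The cross-terms $A_{\mathrm{str}}\cdot A_{\mathrm{cap}} + A_{\mathrm{cap}}\cdot A_{\mathrm{str}}$ vanish because the two factors act at disjoint positions. On allowed idempotents the cap-squared contribution $(L_c L_{c+1})(R_{c+1}R_c) + (R_{c+1}R_c)(L_c L_{c+1}) = U_c U_{c+1}$ precisely cancels $d(1\otimes E_c U_{c+1}) = 1\otimes U_c U_{c+1}$. For the matching summands $C_{\{i,j\}}\otimes[E_{\phi(i)},E_{\phi(j)}]$ with $\{i,j\}\cap\{\alpha,\beta\}=\emptyset$, the cancellation mirrors the canonical $DD$ case: $dC_{\{i,j\}} = U_i U_j$ balances the product $(U_i\otimes E_{\phi(i)})(U_j\otimes E_{\phi(j)}) + (U_j\otimes E_{\phi(j)})(U_i\otimes E_{\phi(i)}) = U_iU_j\otimes[E_{\phi(i)},E_{\phi(j)}]$, using centrality and closedness of $[E_{\phi(i)},E_{\phi(j)}]$ in $\Alg_1'$.

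The subtlest part, which I expect to be the main obstacle, is the cancellation of all contributions involving $\alpha$ or $\beta$. The boundaries
\[ d\bigl(U_\alpha\otimes [E_{\phi(\alpha)},E_c]E_{c+1}\bigr) = U_\alpha\otimes [E_{\phi(\alpha)},E_c]U_{c+1} \]
and
\[ d\bigl(C_{\{\alpha,\beta\}}\otimes [E_{\phi(\alpha)},E_c][E_{c+1},E_{\phi(\beta)}]\bigr) = U_\alpha U_\beta\otimes [E_{\phi(\alpha)},E_c][E_{c+1},E_{\phi(\beta)}] \]
must be matched against products coming from $A_{\mathrm{str}}\cdot A_{\mathrm{cap}}$ and $A_{\mathrm{str}}\cdot A_{\mathrm{match}}$, most importantly
\[ (U_\alpha\otimes E_{\phi(\alpha)})(1\otimes E_c U_{c+1}) + (1\otimes E_c U_{c+1})(U_\alpha\otimes E_{\phi(\alpha)}) = U_\alpha\otimes [E_{\phi(\alpha)},E_c]U_{c+1}, \]
which is nonzero precisely because $\{c,\phi(\alpha)\}\in\Matching_1$, and an analogous telescoping on the $\beta$-side using $\{c+1,\phi(\beta)\}\in\Matching_1$. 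The $d$ of the $C_{\{\alpha,\beta\}}$-term is then absorbed by $(U_\alpha\otimes[E_{\phi(\alpha)},E_c]E_{c+1})\cdot(U_\beta\otimes E_{\phi(\beta)})$ and its reverse, again exploiting that the anticommutators realize the central, closed elements $[E_{\phi(\alpha)},E_c]$ and $[E_{c+1},E_{\phi(\beta)}]$ exactly when the relevant pairs lie in $\Matching_1$. The technical difficulty is the bookkeeping of these non-commuting products in $\Alg_1'$: each cancellation relies on carefully reassembling terms using the identities $E_aE_b+E_bE_a = [E_a,E_b]$ (nonzero iff $\{a,b\}\in\Matching_1$) and the Leibniz rule for $d$ in $\Alg_2\otimes\Alg_1'$, and this is precisely where the structural choices made in the definition of $A$ (the placements of $E_{c+1}$, the insertion of $U_\alpha$, the specific form of the $C_{\{\alpha,\beta\}}$-coefficient) are forced.
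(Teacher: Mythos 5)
Your proposal takes exactly the route the paper does: the paper's entire proof is that this is ``a straightforward adaptation of Lemma~\ref{lem:CanonicalIsDD}'', i.e.\ of the verification that $dA + A\cdot A = 0$ on the relevant idempotents, and your decomposition of $A$ together with the cancellations you identify (the cap products against $d(1\otimes E_cU_{c+1})$, the $U_\alpha\otimes[E_{\phi(\alpha)},E_c]E_{c+1}$ term against the $\alpha$-strand/cap cross-term, and $d$ of the $C_{\{\alpha,\beta\}}$ term against the $\beta$-strand product) is precisely the content of that verification. The one blemish is the blanket claim that $A_{\mathrm{str}}\cdot A_{\mathrm{cap}}+A_{\mathrm{cap}}\cdot A_{\mathrm{str}}$ vanishes by position-disjointness: your own next paragraph exhibits the nonvanishing $j=\alpha$ cross-term with $1\otimes E_cU_{c+1}$, and the $j=c-1$ and $j=c$ cross-terms with $1\otimes L_cL_{c+1}$ and $1\otimes R_{c+1}R_c$ die only via the relations $L_{i+1}L_i=R_iR_{i+1}=0$ combined with the restriction to allowed idempotent states, so a careful write-up should phrase that step accordingly.
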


\begin{proof}
  The proof is a straightforward adaptation of
  Lemma~\ref{lem:CanonicalIsDD}.
\end{proof}

There is a symmetric version of this bimodule, exchanging the roles of
$c$ and $c+1$ in $\delta^1$.  The map $\phi^1(\x)=\x+ (1\otimes E_{c} E_{c+1})\x$ is
easily seen to give an isomorphism between these two bimodules.

\subsection{The type $DA$ bimodule of a minimum when $c=1$}
\label{subsec:DAminCOne}

We will construct a type $DA$ bimodule over the algebra
that is dual to the bimodule of a minimum described
above. Specifically, we continue with the notation at the beginning of
Section~\ref{subsec:DDmin}, further letting
$\Alg_1=\Alg(n+1,k+1,\Matching_1)$.  We start by describing the case
where $c=1$, returning to the general case in
Section~\ref{subsec:GenMin}.

A {\em preferred idempotent state} for $\Alg_1=\Alg(n+1,k+1,\Matching_1)$ is
an idempotent state $\x$ with 
\[ \x\cap\{0,1,2\}\in \{\{0\}, \{2\}, \{0,2\}\}.\]

We define a map $\psi$ from preferred idempotent states of $\Alg_1$ to idempotent states of $\Alg_2$, as follows.
Given preferred idempotent state $\x$ for $\Alg_1$, order the components
$\x=\{x_1,\dots,x_{k+1}\}$ so that
$x_1<\dots<x_{k+1}$. Define
\[
\psi(\x) = \left\{
\begin{array}{ll}
\{0,x_3-2,\dots,x_{k+1}-2\} &{\text{if $|\x\cap\{0,1,2\}|=2$}} \\
\{x_2-2,\dots,x_{k+1}-2\} &{\text{if $|\x\cap\{0,1,2\}|=1$}} \\
\end{array}\right.
\]

Generators of the $DA$ bimodule
$\GenMin^1=\lsup{\Alg_2}\GenMin^1_{\Alg_1}$ correspond to preferred
idempotent states; and its bimodule structure over the idempotent
algebras is specified by the property that if $\x$ is such a preferred
idempotent state, then the corresponding generator $\MinGen_{\x}$
satisfies
\[ \Idemp{\psi(\x)}\cdot \MinGen_\x \cdot \Idemp{\x}=\MinGen_\x.\]

Operations are described in terms of the following graph
$\Gamma_0$ with four vertices, $\XX L_1$, $\YY R_2$,
$\XX$, and $\YY$; and the following families of edges (each indexed by integers $m\geq 0$):
\begin{equation}
  \label{eq:MinActions}
  \begin{tikzpicture}
    \node at (-3,0) (XL1) {$\XX L_1$} ;
    \node at (3,0) (YR2) {$\YY R_2$} ;
    \node at (0,2) (Y) {$\YY$} ;
    \node at (0,-2) (X) {$\XX$} ;
    \draw[->] (XL1) [bend left=15] to node[above,sloped] {\tiny{$U_1^m R_1
\otimes C_2^{\otimes m}$}} (Y) ;
    \draw[->] (Y) [bend left=30] to node[above,sloped] {\tiny{$U_2^{m+1}\otimes C_1^{\otimes m}$}} (X) ;
    \draw[->] (X) [bend left=15] to node[below,sloped] {\tiny{$U_1^{m} L_1\otimes C_2^{\otimes m}$}} (XL1) ;
    \draw[->] (XL1) [loop left] to node[above,sloped] {\tiny{$U_1^{m}\otimes C_2^{\otimes m}$}} (XL1);
    \draw[->] (YR2) [bend left=15] to node[below,sloped] {\tiny{$U_2^m L_2\otimes C_1^{\otimes m}$}} (X) ;
    \draw[->] (X) [bend left=30] to node[above,sloped] {\tiny{$U_1^{m+1}\otimes C_2^{\otimes m}$}} (Y) ;
    \draw[->] (Y) [bend left=15] to node[above,sloped] {\tiny{$U_2^{m} R_2\otimes C_1^{\otimes m}$}} (YR2) ;
    \draw[->] (YR2) [loop right] to node[above,sloped] {\tiny{$U_2^{m}\otimes C_1^{\otimes m}$}} (YR2);
    \end{tikzpicture}
\end{equation}
The labels on the edges consist of some element in
$\Blg_1=\Blg(2n+2,k+1)$ tensored with some tensor powers of
$C_1=C_{\{\alpha+2,1\}}$ and $C_2=C_{\{2,\beta+2\}}$. We call the element of
$\Blg_1$ the {\em $\Blg$-label}.

\begin{defn}
  Recall that a standard sequence $(a_1,\dots,a_m)$ for $\Alg_1$
  has a subsequnce $a_{k_1},\dots,a_{k_\ell}$
  in $\Blg_1$, whose complement consists elements of the form $C_{p}$.
  A standard sequence 
  is a {\em preferred sequence}
  if it satisfies the following properties:
  \begin{enumerate}[label=({$\GenMin$}-\arabic*),ref=({$\GenMin$}-\arabic*)]
  \item there are idempotent states $\x_1,\dots,\x_{m+1}$ so that
    $\Idemp{\x_{i}}\cdot a_i\cdot \Idemp{\x_{i+1}}=a_i$ for $i=1,\dots,m$.
  \item The idempotent states $\x_1$ and $\x_{m+1}$ are  preferred idempotent states.
  \item There is an oriented path $e_1,\dots,e_{\ell}$ in $\Gamma$
    beginning at $\XX L_1$ if $0\in\x_1$ and at $\YY R_2$ if
    $0\not\in\x_1$,  with the following properties:
    \begin{itemize}
      \item The $\Blg$-label $b_i$ on 
        the edge $e_i$ has $w_1(a_{k_i})=w_1(b_i)$ and
        $w_2(a_{k_i})=w_2(b_i)$.
      \item  The sequence $a_{k_i+1},\dots,a_{k_{i+1}-1}$
        (or $a_{k_\ell+1},\dots,a_{m}$ if $i=\ell$)
        consists of the sequence of $C_1$ or $C_2$ elements labeling the
        edge.
      \item The terminal vertex is at $\XX L_1$ or $\YY R_2$.
      \item None of the internal vertices are at $\XX L_1$ or $\YY R_2$.
        \end{itemize}
  \end{enumerate}
\end{defn}

\begin{defn}
  \label{def:AssociatedElement}
For a preferred sequence, there is at most one pure non-zero algebra element $b\in\Blg(2n,k)\subset \Alg_2$,
characterized by the following properties:
\begin{enumerate}[label=(PS-\arabic*),ref=(PS-\arabic*)]
  \item $b=\Idemp{\psi(\x_1)}\cdot b$
  \item 
      \label{ps:GradeMinimum}
    For all $i$ with $1\leq i\leq 2n$,
    $ w_{i}(b)=\sum_{j=1}^{m} w_{i+2}(a_j).$
\end{enumerate}
\end{defn}

\begin{thm}
  \label{thm:MinDA}
  There is a type $DA$ bimodule $\GenMin^1$ uniquely characterized up to homotopy by the following properties:
  If $\x$ is a preferred idempotent state, then 
  \begin{align*}
    \delta^1_{2}(\MinGen_{\x},C_{\{i+2,j+2\}})&=C_{\{i,j\}}\otimes\MinGen_{\x}  
    \qquad \text{for all $\{i,j\}\in\Matching_2\setminus\{\{\alpha,\beta\}\}$}\\
    \delta^1_{3}(\MinGen_{\x},C_{\{\alpha+2,1\}},C_{\{2,\beta+2\}})&=
    C_{\{\alpha,\beta\}}\otimes \MinGen_{\x} 
    \qquad \text{if $0\in\x$} \\
    \delta^1_3(\MinGen_\x,C_{\{2,\beta+2\}},C_{\{\alpha+2,1\}})&=
    C_{\{\alpha,\beta\}}\otimes \MinGen_{\x} 
    \qquad \text{if $0\not\in\x$}
  \end{align*}
  Furthermore, for any preferred sequence $a_1,\dots,a_m$, with
  $a_1=\Idemp{\x}\cdot a_1$ and $a_m= a_m\cdot \Idemp{\y}$, the value of 
  $\delta^1_{m+1}(\MinGen_\x,a_1,\dots,a_m)$ is $b\otimes \MinGen_{\y}$, where
  $b$ is the associated algebra element, defined as above;
  and for all other standard sequences consisting of pure algebra elements, the output is $0$.
\end{thm}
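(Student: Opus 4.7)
My plan is to establish the theorem in two stages: first construct the bimodule by directly specifying its operations via the graph $\Gamma_0$, then verify via Koszul duality that the construction is correct and unique. For the existence portion, I will take the recipe given in the theorem as a definition: $\delta^1_{m+1}(\MinGen_\x,a_1,\dots,a_m) = b\otimes \MinGen_\y$ for each preferred sequence, with $b$ the associated algebra element from Definition~\ref{def:AssociatedElement}, and zero on all other standard sequences of pure algebra elements. The values on non-pure inputs are extended linearly. To make the bimodule satisfy Property~\ref{prop:CStandard}, one uses the additional $C_{\{i+2,j+2\}}\otimes\MinGen_\x$ and $C_{\{\alpha,\beta\}}$ outputs specified in the statement. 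By construction, the resulting bimodule structure over idempotents is as claimed, and satisfies Property~\ref{prop:LandsInB} and Property~\ref{prop:CStandard} of Definition~\ref{def:Standard}.

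The central technical step is to check that these operations satisfy the type $DA$ structure relations. Rather than do this by brute force, I would use Koszul duality. Form the tensor product $\GenMin^1\DT \lsup{\Alg_1,\Alg_1'}\CanonDD$ (which exists because $\GenMin^1$ will be adapted to the compatible one-manifold associated to a single minimum, so Proposition~\ref{prop:AdaptedTensorDD} applies), and check directly that it is homotopy equivalent to the type $DD$ bimodule $\lsup{\Alg_2,\Alg_1'}\DDmin_1$ from Section~\ref{subsec:DDmin}. This computation is analogous to (and cleaner than) the one in Proposition~\ref{prop:DualCross}: starting from a generator $\MinGen_\x\otimes\gen_{\y}$, the paths along $\Gamma_0$ that terminate at $\XX L_1$ or $\YY R_2$ are exactly the ones that are ``saturated'' in the sense that further application of $\delta^1$ on the $\CanonDD$ side feeds back $C$-elements that match the graph. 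The relations $dC_{\{\alpha+2,1\}}=U_{\alpha+2}U_1$ and $dC_{\{2,\beta+2\}}=U_{2}U_{\beta+2}$ in $\Alg_1$, together with the dual relations involving $[E_{\phi(\alpha)},E_c]E_{c+1}$ in the definition~\eqref{eq:defDDmin} of $\DDmin_1$, are exactly what make the graph structure correct. Once this equivalence is established, the $DA$ structure relation for $\GenMin^1$ follows formally from the $DD$ relation for $\DDmin_1$, together with the invertibility of $\CanonDD$ (Theorem~\ref{thm:InvertibleDD}).

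Uniqueness then follows immediately from Lemma~\ref{lem:DeterminedByStandard}: any two standard type $DA$ bimodules with the same values on standard sequences (and agreeing on idempotents) are homotopy equivalent, since their tensor products with $\CanonDD$ agree. The hypotheses listed in the theorem specify $\delta^1_{m+1}$ on all preferred sequences (the only standard sequences that yield non-zero outputs, by the shape of $\Gamma_0$ and the $C_p$-operations listed), which is enough input data to invoke the lemma. The main obstacle is the identification $\GenMin^1\DT\CanonDD\simeq\DDmin_1$: the graph $\Gamma_0$ encodes four types of paths of arbitrary length (through $\XX$ and $\YY$ picking up alternating $C_1,C_2$'s), and one must check that each infinite family of paths contributes exactly the differentials in $\DDmin_1$, up to an explicit homotopy that cancels the internal vertices $\XX$ and $\YY$ against $\XX L_1$ and $\YY R_2$.
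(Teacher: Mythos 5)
There is a genuine gap in the existence half of your argument. You propose to \emph{define} $\GenMin^1$ by taking the formulas in the theorem statement on preferred sequences and declaring the output zero on all other sequences. That object is not a type $DA$ bimodule: the $\Ainfty$ relations fail on non-standard input sequences. The paper points this out explicitly right after its proof — the genuine bimodule carries further operations such as $\delta^1_3(\XX L_1,R_1,C_2L_1)=U_{\beta}\otimes \XX L_1$, which are forced by structure relations whose inputs mix $\Blg_1$-elements with products like $C_2L_1$, and which your definition sets to zero. Relatedly, your claim that the $DA$ relation for $\GenMin^1$ ``follows formally'' from the $DD$ relation for $\GenMin^1\DT\CanonDD\simeq\DDmin_1$ runs the implication in the wrong direction: because $\CanonDD$ only feeds standard sequences into the $\Alg_1$-input of $\GenMin^1$, the tensor product is blind to the operations (and to the failure of relations) on non-standard sequences. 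Invertibility of $\CanonDD$ gives an equivalence of homotopy categories of honest bimodules; it does not let you certify that a candidate set of operations satisfies the structure equations. The computation you describe is, in the paper, the content of Lemma~\ref{lem:MinDual}, which is proved \emph{after} existence is in hand.

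What the paper actually does (Section~\ref{subsec:AltConstr}) is construct existence first by exhibiting a larger, manifestly well-defined bimodule $\BigMin$ — built from the quotient right module ${\mathbf I}\cdot\Blg/L_1L_2\Blg\oplus{\mathbf I}\cdot\Blg/L_1L_2\Blg$ with the explicit $C_1,C_2$ actions of Equation~\eqref{eq:ModuleVersionMin} — together with a deformation retraction onto the span of $\XX L_1$ and $\YY R_2$ (Lemma~\ref{lem:SubComplex}). Homological perturbation then produces \emph{all} the induced $\Ainfty$ operations, including the non-standard ones, as sums over zig-zag paths; one reads off that the operations on preferred sequences are exactly those encoded by the graph $\Gamma_0$. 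Your uniqueness argument via Lemma~\ref{lem:DeterminedByStandard} is correct and is the same as the paper's. To repair your existence argument you would need either to supply the missing operations and verify the relations directly, or to adopt some version of the perturbation construction.
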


We will prove the above theorem in Subsection~\ref{subsec:AltConstr}.

To illustrate the theorem, observe that there is a closed path from
$\XX L_1$ to itself with three edges, labeled $R_1$, $U_2$, and $L_1$.
According to the theorem, there is a corresponding action on the bimodule
\[ \delta^1_{4}(\x,R_1,U_2,L_1)=\x.\]
Another closed path from $\XX L_1$ to itself with three edges gives:
\[ \delta^1_{7}(\x,U_1^2 R_1, C_2, C_2, U_2^2,C_1,L_1)= U_{\alpha} U_{\beta}^2\otimes \x.\]

The relationship between the two bimodules associated to a minimum is
given in the following:

\begin{lemma}
  \label{lem:MinDual}
  $\GenMin^1$ is dual to $\DDmin_1$, in the following sense.  Given
  $\Alg_1$, $\Alg_1'$, and $\Alg_2$, as above, the following identity
  holds:
  \[ \lsup{\Alg_2} \GenMin^1_{\Alg_1} \DT~\lsup{\Alg_1,\DuAlg_1}\CanonDD 
  \simeq ~\lsup{\Alg_2,\DuAlg_1}\DDmin_1. \]
\end{lemma}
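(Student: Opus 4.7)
The plan is to compute $\lsup{\Alg_2} \GenMin^1_{\Alg_1} \DT \lsup{\Alg_1, \DuAlg_1} \CanonDD$ directly via the characterization of $\GenMin^1$ in Theorem~\ref{thm:MinDA} and the explicit differential of $\CanonDD$ from Section~\ref{sec:DefCanonDD}, verifying term-by-term that the result matches the formula for the differential of $\DDmin_1$ given in Equation~\eqref{eq:defDDmin}. This follows the pattern of the analogous duality results Propositions~\ref{prop:DualCross} and~\ref{prop:MaxDual}.

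First, I match the underlying bimodules. Generators of $\GenMin^1 \DT \CanonDD$ over $\IdempRing(\Alg_2) \otimes \IdempRing(\DuAlg_1)$ are of the form $\MinGen_\x \otimes \gen_{\x^c}$, where $\x$ is a preferred idempotent state of $\Alg_1$ and $\x^c$ is its complement in $\{0,\dots,2n+2\}$. Complementation gives a bijection between preferred idempotent states of $\Alg_1$ and allowed idempotent states for $\Alg_1' = \DuAlg_1$ (as in Equation~\eqref{eq:AllowedIdempotents}), matching the generators of $\DDmin_1$; one checks directly from the definitions that $\psi(\x) = \psi'(\x^c)$, so the $\Alg_2$-idempotent assignments agree.

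Next, I compute the differential of the tensor product. Iterating $\CanonDD$'s $DD$-differential produces sequences of individual elements $L_j, R_j, U_j, C_p \in \Alg_1$ paired with their duals in $\DuAlg_1$; feeding these sequences into $\delta^{1+k}_{\GenMin^1}$ and invoking Theorem~\ref{thm:MinDA} singles out the preferred sequences with elementary $\Blg$-labels, which correspond to paths in the graph $\Gamma_0$ described in Equation~\eqref{eq:MinActions}. I organize the contributions by path type and match each with a term of Equation~\eqref{eq:defDDmin}: length-one sequences (single factors $L_{\phi(j)}, R_{\phi(j)}, U_{\phi(j)}$, or $C_{\{\phi(i),\phi(j)\}}$ for $\{i,j\} \neq \{\alpha,\beta\}$) give the bulk terms $L_j \otimes R_{\phi(j)}$, $R_j \otimes L_{\phi(j)}$, $U_j \otimes E_{\phi(j)}$, and $C_{\{i,j\}} \otimes [E_{\phi(i)}, E_{\phi(j)}]$; the two-edge paths $\XX L_1 \to \YY \to \YY R_2$ and $\YY R_2 \to \XX \to \XX L_1$ produce the $1 \otimes L_c L_{c+1}$ and $1 \otimes R_{c+1} R_c$ terms; the $m=1$ self-loops at $\XX L_1$ and $\YY R_2$, combined with the appropriate three-edge paths, give the $U_\alpha \otimes [E_{\phi(\alpha)}, E_c] E_{c+1}$ and $1 \otimes E_c U_{c+1}$ terms; and the $\delta^1_3$-action of $\GenMin^1$ on $(C_{\{\alpha+2,1\}}, C_{\{2,\beta+2\}})$ from Theorem~\ref{thm:MinDA} yields the $C_{\{\alpha,\beta\}} \otimes [E_{\phi(\alpha)}, E_c][E_{c+1}, E_{\phi(\beta)}]$ term.

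The main technical obstacle is carefully tracking the multiplication order on the $\DuAlg_1$ side dictated by the $DD$-bimodule convention for iteration, and then applying relations in $\DuAlg_1$---in particular the centrality and closedness of the brackets $[E_i, E_j]$ and simplifications such as $L_j R_j = U_j$ in the appropriate idempotent---to reduce each contribution to the canonical form of Equation~\eqref{eq:defDDmin}. A secondary point is ruling out contributions from preferred sequences with non-elementary $\Blg$-labels (e.g.\ $U_1^m R_1$ for $m \geq 1$): these cannot arise as sequences of individual $\CanonDD$-outputs, and by Theorem~\ref{thm:MinDA} the $\delta^{1+k}_{\GenMin^1}$-actions on their elementary decompositions vanish. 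Once these identifications are complete, one obtains an isomorphism of type $DD$ bimodules $\GenMin^1 \DT \CanonDD \cong \DDmin_1$, proving the desired homotopy equivalence.
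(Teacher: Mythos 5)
Your overall strategy---computing $\lsup{\Alg_2}\GenMin^1_{\Alg_1}\DT\lsup{\Alg_1,\DuAlg_1}\CanonDD$ directly from the actions in Equation~\eqref{eq:MinActions} and the differential of $\CanonDD$, then comparing with Equation~\eqref{eq:defDDmin}---is the same as the paper's, and the preliminary points (complementary idempotents matching allowed idempotent states, and the fact that only elementary $\Blg$-labels can be fed by outputs of $\CanonDD$) are fine.

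The gap is in your final step. You claim that, after applying relations in $\DuAlg_1$, each contribution reduces to a term of Equation~\eqref{eq:defDDmin}, so that the two differentials coincide on the nose. They do not. Carrying out the computation, the tensor product has (in the notation of the splitting into $\XX$- and $\YY$-type generators) the arrow $\XX\to\YY$ labelled $1\otimes L_1L_2+1\otimes L_1L_2E_1E_2$, the arrow $\YY\to\XX$ labelled $1\otimes R_2R_1+1\otimes R_2R_1E_1E_2$, a self-arrow at $\XX$ containing $1\otimes U_1E_2+U_\beta\otimes[E_2,E_{\beta+2}]E_1$, and a self-arrow at $\YY$ containing $1\otimes U_2E_1+U_\alpha\otimes[E_1,E_{\alpha+2}]E_2$, in addition to the expected bulk terms. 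The terms quadratic in $E_1,E_2$ and the asymmetry between the two self-loops are genuinely present ($E_1E_2\neq 0$ since $1$ and $2$ are not matched in $\Matching_1$), and no relation in $\DuAlg_1$ removes them, so the differential is not that of $\DDmin_1$. The missing ingredient is an explicit non-identity isomorphism of type $DD$ bimodules, namely $\XX\mapsto\XX+(1\otimes E_1E_2)\otimes\XX$ and $\YY\mapsto\YY$, which carries the computed bimodule to $\DDmin_1$; this is the same correction that relates the two symmetric models of $\DDmin_c$ noted at the end of Section~\ref{subsec:DDmin}. Without identifying this change of basis (or an equivalent one), the term-by-term matching you describe cannot be completed.
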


\begin{proof}
  We compute $\lsup{\Alg_2} \GenMin_{\Alg_1} \DT~\lsup{\Alg_1,\DuAlg_1}\CanonDD$
  from  $\CanonDD$ and the actions
  from Equation~\eqref{eq:MinActions}. We obtain a bimodule whose generators
  correspond to allowed idempotent states $\y$ as in Equation~\eqref{eq:AllowedIdempotents}.
  The differential does not have exactly the form as it did for $\DDmin_1$;
  rather, it is described as follows:
  \[
    \begin{tikzpicture}
      \node at (-3,0) (XX) {$\XX$} ;
      \node at (3,0) (YY) {$\YY$} ;
      \draw[->] (XX) [bend left=5] to node[above,sloped] {\tiny{$1\otimes L_1 L_2 + 1\otimes L_1 L_2 E_1 E_2$}} (YY) ; 
      \draw[->] (YY) [bend left=5] to node[below,sloped] {\tiny{$1\otimes R_2 R_1 + 1\otimes R_2 R_1 E_1 E_2$}} (XX) ; 
      \draw[->] (XX) [loop above] to node[above] {\tiny{$1\otimes U_1 E_2+ U_{\beta}\otimes
          [E_2,E_{\beta+2}]E_1+C_{\alpha,\beta} \otimes [E_1,E_{\alpha+2}][E_2,E_{\beta+2}]$}} (XX);
      \draw[->] (YY) [loop below] to node[below] {\tiny{$1\otimes U_2 E_1+ U_{\alpha}\otimes
          [E_1,E_{\alpha+2}]E_2+C_{\alpha,\beta} \otimes [E_1,E_{\alpha+2}][E_2,E_{\beta+2}]$}} (YY);
    \end{tikzpicture}
    \]
        in addition to the usual terms \[\Big(\sum_{i=1}^{2n}
    L_i\otimes R_{i+2}+R_i\otimes L_{i+2}  
    + U_i\otimes E_{i+2}\Big)+ \sum_{\{i,j\}\in\Matching_2-\{\alpha,\beta\}}C_{\{i,j\}}\otimes [E_{i+2},E_{j+2}]\]
    that connect $\XX$ to itself and $\YY$ to itself.
    (Note that $E_1$ and $E_2$ commute with each other.)
    The map sending $\XX$ to $\XX+(1\otimes E_1 E_2)\XX$ 
    and $\YY$ to $\YY$
    identifies this bimodule with $\DDmin_1$.
\end{proof}

To verify Theorem~\ref{thm:MinDA}, we give an alternative construction of the type $DA$ bimodule
in the next section.

\subsection{An alternative construction}
\label{subsec:AltConstr}

Continuing notation on $\Alg_1$, $\Alg_2$, $\Matching_1$, $\Matching_2$
from the previous section, we give an alternative construction of the
bimodule promised in Theorem~\ref{thm:MinDA}, which makes the proof transparent.

Consider the idempotent in $\Alg_1$
\[{\mathbf I}=\sum_{\{\x\big|x_1=1\}}\Idemp{\x}.\] Let $\Blg$ be the
subalgebra of $\Alg(n+1,k+1,\Matching_1)$ 
with $C_{\{\alpha+2,1\}}$ and
$C_{\{2,\beta+2\}}$ (and their multiples) removed from it.

Consider the right $\Blg$-module
\[ M = \frac{{\mathbf I}\cdot \Blg}{L_1 L_2 \Blg}  \oplus 
\frac{{\mathbf I}\cdot \Blg}{L_1 L_2 \Blg}.\]
$M$ can also be viewed as   a left
module over the subalgebra of ${\mathbf I}\cdot \Blg\cdot {\mathbf I}$ 
consisting of elements $w_1=w_2=0$,
which in turn can be
identified with the subalgebra $\Blg_2$ of $\Alg(n,\Partition_2)$ with
$C_{\{\alpha,\beta\}}$ removed. Denote this identification
\[ \phi\colon (\Blg_2\subset \Alg(n,\Matching_2))\to \Idemp{}\cdot \Blg\cdot \Idemp{}.\]
Let $\XX$ and $\YY$ be the generators of the two summands of $M$. 
Let
\[ m_{1|1|0}(b_2,\XX\cdot b_1)=\XX \cdot \phi(b_2)\cdot b_1 \qquad 
m_{1|1|0}(b_2,\YY\cdot b_1)=\YY \cdot \phi(b_2)\cdot b_1,\]
where
$b_2\in\Blg_2$ and $b_1\in\frac{ {\Idemp{}}\cdot \Blg}{L_1 L_2 \cdot \Blg}$.
Equip $M$ with the differential
\[ m_{0|1|0}(\XX)= \YY\cdot U_2 \qquad m_{0|1|0}(\YY)= \XX\cdot U_1.\] 
Think of the right $\Blg$-action as inducing further operations 
\[ m_{0|1|1}(\XX\cdot b_1,b_1')=\XX\cdot (b_1\cdot b_1') \qquad
m_{0|1|1}(\YY\cdot b_1,b_1')=\YY\cdot (b_1\cdot b_1').\] All the
operations described above give $M$ the structure of a $\Blg_2-\Blg$
bimodule, written $\lsub{\Blg_2}M_{\Blg}$.

\begin{lemma}
  \label{lem:BigDAGens}
  There is a type $DA$ bimodule $\lsup{\Blg_2}{\BigMin}_{\Blg}$,
  generated as a right $\Blg$-module by two generators $\XX$ and $\YY$ so that
  $\lsub{\Blg_2}M_{\Blg}=_{\Blg_2}{(\Blg_2)}_{\Blg_2}\otimes
  \lsup{\Blg_2}{\BigMin}_{\Blg}$.
  As a type $D$ structure, $\lsup{\Blg_2}\Theta_{\Blg}$ is generated
  by elements of the form
  $\XX\cdot a$ or $\YY\cdot a$, where $a$ is chosen from:
  \begin{equation}
    \label{eq:DefGamma}
  \Gamma=
    \begin{array}{llllll}
    \{ R_2 U_2^t,& U_2^n, &
    & L_1 U_1^t,& U_1^t\}_{t\geq 0,n>0}.
    \end{array}
    \end{equation}
\end{lemma}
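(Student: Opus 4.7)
My plan is to realize $\Theta$ by choosing a convenient $\Blg_2$-basis for $\lsub{\Blg_2}M_{\Blg}$, then translating the strict bimodule structure on $M$ into a type $DA$ structure under this basis. The candidate basis, indexed by $\Gamma$, should give a complete set of coset representatives for $(\mathbf{I}\cdot\Blg)/(L_1 L_2\cdot\Blg)$ modulo the left action of $\phi(\Blg_2)$; then the natural map $\Blg_2\otimes\Theta\to M$ sending $b\otimes(\XX\cdot a)\mapsto\phi(b)\cdot\XX\cdot a$ (and similarly for $\YY$) will be an isomorphism of left $\Blg_2$-modules, realizing the desired identity $\lsub{\Blg_2}(\Blg_2)_{\Blg_2}\DT\Theta\cong\lsub{\Blg_2}M_{\Blg}$.

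\textbf{Steps.} First I would verify that every pure element of $\mathbf{I}\cdot\Blg$ not lying in $L_1 L_2\cdot\Blg$ admits a unique factorization $\phi(b_0)\cdot a$ with $b_0$ a pure element of $\Blg_2$ and $a\in\Gamma$. This is a direct analysis using the defining relations of $\Blg(2n+2,k+1)$: any pure product can be rearranged by commuting letters supported at positions $\geq 3$ leftward into $\phi(\Blg_2)$, and the elements $C_{\{i+2,j+2\}}$ inside $\Blg$ coincide with $\phi(C_{\{i,j\}})$ for $\{i,j\}\in\Matching_2\setminus\{\alpha,\beta\}$ and can likewise be absorbed; the surviving rightmost factor supported at positions $\{1,2\}$ reduces, modulo the imposed relation $L_1L_2=0$, to exactly one of the elements of $\Gamma$. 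Second, with $\Theta$ defined as the vector space on $\{\XX\cdot a,\YY\cdot a:a\in\Gamma\}$ with the evident idempotent actions, I would transport the bimodule operations of $M$: the differential $\delta^1_1$ is induced from $m_{0|1|0}(\XX\cdot a)=\YY\cdot U_2\cdot a$ (and analogously for $\YY\cdot a$) by expanding the result in the $\Gamma$-basis; the action $\delta^1_2(\XX\cdot a,c)$ for $c\in\Blg$ is obtained by computing the product $\XX\cdot a\cdot c\in M$ and reading off its coefficients in the $\Gamma$-basis.

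\textbf{Main obstacle.} Because $M$ is a strict bimodule, all higher operations $\delta^1_k$ for $k\geq 3$ vanish, and the type $DA$ structure equation reduces to associativity of the right $\Blg$-action on $M$ together with its compatibility with the $\phi(\Blg_2)$-action; the desired tensor-product identification then follows by construction. The principal difficulty is the first step: establishing that $\Gamma$ is a complete, non-redundant system of representatives requires a careful case analysis ruling out overlaps between the four families $\{R_2U_2^t\}$, $\{U_2^n\}_{n>0}$, $\{L_1U_1^t\}$, $\{U_1^t\}$, and confirming that every pure non-zero coset is represented. Once this basis is in hand, the remainder of the construction is essentially mechanical.
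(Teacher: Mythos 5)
Your proposal is correct and follows essentially the same route as the paper: the entire content of the lemma is the unique-factorization/freeness statement (every pure element of $\mathbf{I}\cdot\Blg$ outside $L_1L_2\cdot\Blg$ is uniquely $\phi(b_0)\cdot a$ with $a\in\Gamma$, and each $\Blg_2\cdot a$ is free of rank one), after which the existence of $\Theta$ with $\Blg_2\otimes\Theta\cong M$ is formal, exactly as you argue. The only difference is that the paper cites both splitting statements from the earlier work (Lemma~\ref{BK:lem:BimoduleSplitting} of~\cite{BorderedKnots}) rather than re-deriving them by the direct case analysis you outline; your analysis is a reasonable substitute for that citation.
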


\begin{proof}
  As a left module over ${\mathbf I}\cdot \Blg\cdot
  {\mathbf I}\cong \Blg_2$, $M$ is generated by the above generating
  set.
  Moreover,  each of the $\Blg_2$-modules $\Blg_2\cdot a$, where is $a$ is chosen from the above generating set,
  is isomorphic to $\Blg_2\cdot \Idemp{\x}$, where $a=\Idemp{\x}\cdot a$.
  Both statements are proved in~\cite[Lemma~\ref{BK:lem:BimoduleSplitting}]{BorderedKnots}.
  The existence of $\lsup{\Blg_2}\Theta_{\Blg}$ with the stated generating set is 
  a formal consequence of these facts. The fact that it has the given two generators $\XX$ and $\YY$
  as a right module is clear from the construction of $M$.
\end{proof}

Let abbreviate $C_{\{\alpha,\beta\}}\in \Alg_2$ by $C$;
and $C_{\{\alpha+2,1\}}$ and $C_{\{2,\beta+2\}}$ by $C_1$ and $C_2$ respectively.
To promote the right action on $\lsup{\Blg_2}{\BigMin}_\Blg$ to an action by
$\Alg_1$, we introduce the following actions:
\[\begin{array}{rl}
  \delta^1_2(\XX,C_1)=U_{\alpha}\otimes \YY &
  \delta^1_2(\YY,C_2)=U_{\beta}\otimes \XX\\
  \delta^1_2(\XX,C_1 \cdot C_2)= C \otimes \YY \cdot U_2 &
  \delta^1_2(\YY,C_1 \cdot C_2)= C \otimes \XX \cdot  U_1\\
  \delta^1_3(\XX,C_1,C_2)= C\otimes \XX &
  \delta^1_3(\YY,C_2,C_1)= C\otimes \YY.
  \end{array}
\]
These actions are illustrated in the following picture:
\begin{equation}
  \label{eq:ModuleVersionMin}
    \mathcenter{\begin{tikzpicture}[scale=1.5]
    \node at (0,0) (X) {$\XX$} ;
    \node at (6,0) (Y) {$\YY$} ;
    \draw[->] (X) [bend left=15] to node[above,sloped] {\tiny{$U_2 + U_{\alpha} \otimes 
C_{1} + C \cdot U_2\otimes C_{1}\cdot C_{2}$}}  (Y)  ;
    \draw[->] (Y) [bend left=15]to node[below,sloped]  {\tiny{$U_1+ U_{\beta} \otimes 
C_{2} + C \cdot U_1\otimes C_{1}\cdot C_{2}$}}  (X);
    \draw[->] (X) [loop] to node[above,sloped]{\tiny{$C\otimes
(C_{1},C_{2})$}} (X);
    \draw[->] (Y) [loop] to node[above,sloped]{\tiny{$C\otimes
(C_{2},C_{1})$}} (Y);
\end{tikzpicture}} 
\end{equation}
(Here the arrow labels with $U_2$ and $U_1$
alone represent $\delta^1_1$, actions where the outgoing algebra element
is $1$.)

We denote the result by $\lsup{\Alg_2}{\BigMin}_{\Alg_1}$.

\begin{lemma}
  \label{lem:BigDAGens2}
  The operations described above make $\lsup{\Alg_2}{\BigMin}_{\Alg_1}$ into a type $DA$ bimodule,
  with the generating set described in Lemma~\ref{lem:BigDAGens}.
\end{lemma}

\begin{proof}
  This is straightforward.
\end{proof}

Consider the map $h^1\colon \BigMin \to \BigMin$
determined on the generating set $\XX\cdot a$ and $\YY\cdot a$ (for $a\in \Gamma$) by 
\begin{align*}
h^1(\XX a)&=\left\{\begin{array}{ll}
      \YY a' &{\text{if there is a  $a'\in \Gamma$ with $a=U_1 a'$}} \\
      0 &{\text{otherwise}}
      \end{array}\right. \\
    h^1(\YY a)&=\left\{\begin{array}{ll}
      \XX a' &{\text{if there is a pure $a'\in \Gamma$ with $a=U_2 a'$}} \\
      0 &{\text{otherwise}}
      \end{array}\right. 
\end{align*}

Consider $Q\subset \BigMin$, the two-dimensional vector space spanned by
$\XX  L_1$ and $\YY  R_2$. Observe that
$\delta^1_1$ on $Q$ is identically $0$; so we have an inclusion of type $D$ structures
$i^1\colon Q \to \BigMin\subset \Alg_2\otimes \BigMin$, and a projection
$\pi^1\colon \BigMin \to Q\subset \Alg_2\otimes Q$.

\begin{lemma}
  \label{lem:SubComplex}
  For the above operators, we have the identities:
\[  \begin{array}{lllll}
    (\pi^1\circ i^1) =\Id_{Q}, & i^1\circ \pi^1=\Id_{\BigMin}+ dh^1, &
    h^1\circ h^1=0, & h^1\circ i^1=0, & \pi^1\circ h^1 = 0.
    \end{array}\]
\end{lemma}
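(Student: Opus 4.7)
The plan is to verify each of the five identities by direct computation on the basis of $\BigMin$ given by $\{\XX\cdot a,\YY\cdot a\mid a\in\Gamma\}$. Four of the identities are essentially combinatorial consequences of the structure of $\Gamma$ and the explicit definition of $h^1$. Specifically, $\pi^1\circ i^1=\Id_Q$ is immediate, since $i^1$ includes the basis $\{\XX L_1,\YY R_2\}$ of $Q$ into $\BigMin$ and $\pi^1$ projects back. For $h^1\circ i^1=0$, one checks that neither $L_1$ nor $R_2$ admits a factorization $U_1\cdot a'$ or $U_2\cdot a'$ with $a'\in\Gamma$ (an easy weight-grading argument: $w_1(U_1)=1>\tfrac12=w_1(L_1)$), so $h^1$ annihilates $Q$. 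For $\pi^1\circ h^1=0$, every output of $h^1$ is of the form $\YY\cdot a'$ or $\XX\cdot a'$ where $a'\in\Gamma$ arises as the cofactor in a factorization $a=U_1\cdot a'$ or $a=U_2\cdot a'$; inspection of $\Gamma$ shows such a cofactor is never $R_2$ or $L_1$. For $h^1\circ h^1=0$, a double application would force an element $a\in\Gamma$ to admit a factorization $a=U_1\cdot U_2\cdot a''$, contradicting the fact that each element of $\Gamma$ is supported entirely on either the $\{U_1,L_1\}$-side or the $\{U_2,R_2\}$-side.

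The content of the lemma is the homotopy identity $i^1\circ\pi^1=\Id_{\BigMin}+dh^1$, where $dh^1=\delta^1\circ h^1+(\mu_2\otimes\Id)\circ(\Id\otimes h^1)\circ\delta^1$. On $Q=\langle\XX L_1,\YY R_2\rangle$ both sides reduce to the identity, since $h^1|_Q=0$ by the previous paragraph and $\delta^1|_Q=0$ as noted in the text preceding the lemma. On any other basis element, $i^1\pi^1=0$, so the task reduces to checking $dh^1=\Id$ (working over $\Field=\Zmod 2$). To carry this out, I compute $\delta^1$ on each such basis element by propagating the values $\delta^1_1(\XX)$ and $\delta^1_1(\YY)$ displayed in Equation~\eqref{eq:ModuleVersionMin} through the $DA$-structure relations together with the right $\Alg_1$-action. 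The four sub-families of $\Gamma$ yield four parallel cases, corresponding to the splitting of $\BigMin\setminus Q$ into contractible ``$U_1$-chains'' --- pairs $(\XX U_1^n,\YY U_1^{n-1})$ and $(\XX L_1 U_1^t,\YY L_1 U_1^{t-1})$ --- and ``$U_2$-chains'' --- pairs $(\YY U_2^n,\XX U_2^{n-1})$ and $(\YY R_2 U_2^t,\XX R_2 U_2^{t-1})$. Along each such chain, $h^1$ shifts one step and the two summands of $dh^1$ combine to produce exactly the identity contribution.

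The main obstacle is purely bookkeeping: arranging the case analysis so that the generators on which $h^1$ vanishes require only the $(\Id\otimes h^1)\circ\delta^1$ summand of $dh^1$ to produce $\Id$, while those on which $h^1$ is nonzero produce the needed cancellation from both summands. The underlying chain-splitting is implicit in the decomposition into summands $\Blg_2\cdot a$ given by~\cite[Lemma~\ref{BK:lem:BimoduleSplitting}]{BorderedKnots} and recalled in the proof of Lemma~\ref{lem:BigDAGens}. Once this structure is identified, the verification is a routine term-by-term match; no further input is required.
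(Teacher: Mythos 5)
Your proof is correct and takes the same route the paper intends: the paper's own proof is literally ``This is a straightforward verification,'' and your case analysis — the four easy identities via weight considerations on $\Gamma$, plus the pairing of the generators of $\BigMin$ outside $Q$ into $h^1$-matched pairs $(\XX U_1^n,\YY U_1^{n-1})$, $(\XX L_1U_1^t,\YY L_1U_1^{t-1})$, $(\YY U_2^n,\XX U_2^{n-1})$, $(\YY R_2U_2^t,\XX R_2U_2^{t-1})$ — is exactly that verification. The only detail worth making explicit is why the ``overflow'' terms of $\delta^1_1$ vanish (e.g.\ $\YY U_2U_1^n=0$ for $n\geq 1$): no element of $\Gamma$ has positive weight at both positions $1$ and $2$, so by the splitting $M\cong\bigoplus_{a\in\Gamma}\Blg_2\cdot a$ these classes are zero, which is implicit in your appeal to~\cite[Lemma~\ref{BK:lem:BimoduleSplitting}]{BorderedKnots}.
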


\begin{proof}
  This is a straightforward verification.
\end{proof}

\begin{proof}[Proof of Theorem~\ref{thm:MinDA}.]
  Apply homological perturbation theory and Lemma~\ref{lem:SubComplex}
  to give $\XX L_1\oplus \YY R_2$ the structure of a type $DA$
  bimodule.  It is straightforward to verify that the induced structure is
  the one stated in Theorem~\ref{thm:MinDA}.
  To this end, it is helpful to draw the following diagram, where we have written $C_1$, $C_2$, and $C$
  for $C_{\{1,\alpha+2\}}$, $C_{\{2,\beta+2\}}$ and $C_{\{\alpha,\beta\}}$ respectively:
\[
    \mathcenter{\begin{tikzpicture}[scale=1.8]
    \node at (0,.25) (X10) {$\XX L_1$} ;
    \node at (0,-.25) (Y01) {$\YY R_2$} ;
    \node at (1.5,1.2) (Y00) {$\YY$} ;
    \node at (1.5,-1.2) (X00) {$\XX$} ;
    \node at (-1.5,-1.2) (Y02) {$\YY U_2$} ;
    \node at (-3,-2.4) (Y03) {$\YY R_2 U_2$};
    \node at (-4.5,-3.6) (Y04) {$\YY U_2^2$} ;
    \node at (0,2.4)  (Y10) {$\YY L_1$};
    \node at (-1.5,1.2) (X20) {$\XX U_1$};
    \node at (-1.5,3.6)  (Y20) {$\YY U_1$};
    \node at (-3,2.4) (X30) {$\XX U_1 L_1$};
    \node at (-4.5,3.6) (X40) {$\XX U_1^2$};
    \node at (0,-2.4) (X01) {$\XX R_2$};
    \node at (-1.5,-3.6) (X02) {$\XX U_2$};
    \draw[->] (X00) [bend left=15,pos=.45] to node[below,sloped] {\tiny{$U_{\alpha}\otimes C_1+C U_2\otimes C_1 C_2$}} (Y00);
    \draw[->] (Y00) [bend left=15] to node[above,sloped] {\tiny{$U_{\beta}\otimes C_2+C U_1\otimes C_1 C_2
$}} (X00);
    \draw[->] (X01) [bend left=15,pos=.45] to node[below,sloped] {\tiny{$U_{\alpha}\otimes C_1$}} (Y01);
    \draw[->] (X02) [bend left=15,pos=.45] to node[below,sloped] {\tiny{$U_{\alpha}\otimes C_1$}} (Y02);
    \draw[->] (Y10) [bend left=15,pos=.3] to node[above,sloped] {\tiny{$U_{\beta}\otimes C_2$}} (X10);
    \draw[->] (Y20) [bend left=15,pos=.3] to node[above,sloped] {\tiny{$U_{\beta}\otimes C_2$}} (X20);
    \draw[->] (X20) [bend left=15,pos=.3] to (Y00);
    \draw[->] (X30) [bend left=15,pos=.3] to (Y10);
    \draw[->] (X40) [bend left=15,pos=.3] to (Y20);
    \draw[->] (Y02) [bend left=15,pos=.3] to (X00);
    \draw[->] (Y03) [bend left=15,pos=.3] to (X01);
    \draw[->] (Y04) [bend left=15,pos=.3] to (X02);
    \draw[->] (X10) [loop left] to node[above,sloped] {\tiny{$C\otimes (C_1,C_2)$}} (X10);
    \draw[->] (Y01) [loop right] to node[above,sloped] {\tiny{$C\otimes (C_2,C_1)$}} (Y01);
  \end{tikzpicture}} \] The horizontal arrows here indicate the map
$h^1$, and other arrows indicate the actions on
$\lsup{\Alg_2}\Theta_{\Alg_1}$.  To keep the picture clean, we have
suppressed some of the further arrows that follow from the shown ones
by right translation in $\Alg_1$. For example, the label on the arrow
$\YY\to \XX$ by $U_\beta\otimes C_2$ also implies an algebra action
$\YY$ to $\XX L_1$ which would be labeled $U_{\beta} \otimes
C_2 L_1$, whose arrow we have suppressed. Similarly, 
we have suppressed the from $\YY$ to $\YY R_2$ labeled $1\otimes R_2$.  

By homological perturbation theory, the $\Ainfty$ operations induced
on the $\XX L_1\oplus \YY R_2$ are specified by closed paths starting
and ending at $\XX L_1$ and $\YY R_2$, composed of arrows that
alternate between algebra operations and the homotopy operator.
\end{proof}

In the statement of Theorem~\ref{thm:MinDA}, we described only the
$\Ainfty$ operations formed by standard sequences; but there are
others, such as $\delta^1_3(\XX L_1,R_1,C_2L_1)=U_{\beta} \otimes \XX
L_1)$.  The above homological perturbation theory in fact gives all
$\Ainfty$ operations, though the statement is a little complicated,
and unnecessary for our present purposes; we leave it to the
interested reader to work out.

\subsection{Another trident relation}
\label{subsec:MinTrident}

We will give a trident relation analogous to the one from Section~\ref{subsec:MaxTrident};
compare Proposition~\ref{prop:BasicTrident}.
Whereas this relation involves the type $DD$ bimodule of the minimum from Section~\ref{subsec:DDmin}
and the crossing bimodules defined over the dual matched algebras from Section~\ref{sec:DuAlg}, it plays an
important role in the construction of the general $DA$ bimodule of a minimum,
Section~\ref{subsec:GenMin}.

\begin{lemma}
  \label{lem:DuBasicTrident}
  Fix integers $0\leq k\leq 2n+1$, and a matching
  $\Matching_1$ on $\{1,\dots,2n\}$. Let
  \[\begin{array}{ll}
    \Matching_2=\phi_{c+1}(\Matching_1)\cup\{c+1,c+2\} & \Matching_3=\tau_c(\Matching_2) \\
    \Matching_4=\tau_{c+1}(\Matching_3) \\
    \Alg_1=\Alg(n,k,\Matching_1), &\DuAlg_2=\DuAlg(n+1,2n+2-k,\Matching_2) \\
    \DuAlg_3=\DuAlg(n+1,2n+2-k,\Matching_3) & \DuAlg_4=\DuAlg(n+1,2n+2-k,\Matching_4)
    \end{array}\]
  There is homotopy equivalence of graded bimodules:
  \begin{equation}
    \label{eq:BasicTridentMin}
    \lsup{\DuAlg_3}\Neg^{c}_{\DuAlg_2}\DT~^{\DuAlg_2,\Alg_1}\Min_{c+1}
    \simeq
    \lsup{\DuAlg_3}\Pos^{c+1}_{\DuAlg_4}\DT~^{\DuAlg_4,\Alg_1}\Min_{c}
  \end{equation}
\end{lemma}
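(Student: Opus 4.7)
The strategy is to mirror the proof of Proposition~\ref{prop:BasicTrident} (the trident relation for the maximum bimodule), performing a direct computation of both tensor products in Equation~\eqref{eq:BasicTridentMin} and then identifying them via an explicit change of basis. The novelty is that the crossing bimodules here are $\Neg^c$ and $\Pos^{c+1}$ over the dual algebra $\DuAlg$ (constructed in Section~\ref{subsec:DuAlgCross}), rather than over $\Alg$, and the minimum bimodule $\Min_c$ (the $DD$-bimodule of Section~\ref{subsec:DDmin}) plays the role that $\Max_c$ played in the earlier proof. These are the natural inputs, given that $\Min_c$ is a $DD$-bimodule whose left algebra is a $\DuAlg$.

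For notational convenience, I would specialize to $c=1$; the general case requires only re-indexing. The first step is to compute $\lsup{\DuAlg_3}\Neg^1_{\DuAlg_2}\DT~^{\DuAlg_2,\Alg_1}\Min_2$ explicitly. Enumerating the generators of $\Min_2$ alongside the generator types $\North, \West, \East, \South$ from $\Neg^1$, and applying the definitions of the differentials, one obtains a $DD$-bimodule with four distinguished generators, whose arrows closely parallel the bimodule displayed in Equation~\eqref{eq:TensorProdTrident}, but with contributions involving $E_j$ and $\llbracket E_i,E_j\rrbracket$ replacing the $C_{\{i,j\}}$ terms as appropriate for $\DuAlg$. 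The second step is the symmetric computation for $\lsup{\DuAlg_3}\Pos^2_{\DuAlg_4}\DT~^{\DuAlg_4,\Alg_1}\Min_1$.

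The third step is to compare the two resulting bimodules. Following the pattern of the earlier proof, I expect a change of basis analogous to $\South' = \South + (R_2 \otimes E_1)\otimes \West$ to produce a manifestly symmetric form (compare Equation~\eqref{eq:SymmetricTrident}) in which both sides of Equation~\eqref{eq:BasicTridentMin} coincide. At that point the homotopy equivalence is immediate.

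The main obstacle will be the bookkeeping of the extra terms introduced by the non-commuting generators $E_i$ of $\DuAlg$. Specifically, the differential of $\Min_{c+1}$ contains central closed commutators $\llbracket E_i,E_j\rrbracket$ and mixed products such as $[E_{\phi(\alpha)},E_c]E_{c+1}$ (see Equation~\eqref{eq:defDDmin}); and the extended actions $\delta^1_2,\delta^1_3$ of $\Pos^i, \Neg^i$ over $\DuAlg$ given in Section~\ref{subsec:DuAlgCross} involve additional terms (e.g.\ those from Equation~\eqref{eq:ExtendingEs}) beyond their $\Alg$-counterparts. Verifying that all these contributions combine appropriately under the change of basis is the most technical portion of the argument, but the pattern is parallel to the one already handled in the maximum case, and no new qualitative difficulty is expected.
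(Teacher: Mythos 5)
Your proposal is correct and follows essentially the same route as the paper: specialize to $c=1$, compute both tensor products directly, and identify each with the symmetric model of Equation~\eqref{eq:SymmetricTrident} (with the appropriate $E_j$ and $\llbracket E_i,E_j\rrbracket$ self-arrows). The only cosmetic difference is that the paper absorbs the basis change up front, by replacing the $E_c U_{c+1}\otimes 1$ term of $\Min_c$ with the homotopy-equivalent $U_c E_{c+1}\otimes 1$ model, rather than performing a substitution such as $\South'=\South+(R_2\otimes E_1)\otimes\West$ after tensoring.
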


\begin{proof}
  For notational simplicity, suppose that $c=1$. 
  To ease computation,  we use the homotopy
  equivalent model for $\lsup{\DuAlg_4,\Alg_1}\Min_1$ containing the
  term $U_1 E_2\otimes 1$ rather than $E_1 U_2\otimes 1$ suggested by Equation~\eqref{eq:defDDmin}.
  (Note also that we have reversed the two tensor factors from Equation~\eqref{eq:defDDmin}.)
  
  A straightforward computation shows that
  $\lsup{\DuAlg_3}\Pos^{2}_{\DuAlg_4}\DT~^{\DuAlg_4,\Alg_1}\Min_1$ is
  the bimodule whose arrows are as given in
  Equation~\ref{eq:SymmetricTrident}, along with self-arrows of the
  $U_1 E_3\otimes 1$, $E_2\otimes U_1$, $L_{j+2}\otimes R_{j}$ for
  $j=1,\dots,2n$ $R_{j+2}\otimes L_{j}$ for $j=1\dots,2n$,
  $E_{j+2}\otimes U_j$ for $j=1,\dots,2n$,
  $\llbracket E_{m+2},E_{\ell+2}\rrbracket\otimes C_{\{m,\ell\}}$ for all
  $\{m,\ell\}\in\Matching_1$; and additional self-arrows of the form
  $\llbracket E_{\beta+2},E_{3}\rrbracket\cdot E_1 \otimes U_{\beta}$
  and $\llbracket E_1,E_{\alpha+2}\rrbracket\cdot\llbracket
  E_3,E_{\beta+2}\rrbracket\otimes C_{\alpha,\beta}$.

  A symmetric computation reduces
  $\lsup{\DuAlg_3}\Neg^{c}_{\DuAlg_2}\DT~^{\DuAlg_2,\Alg_1}\Min_{c+1}$ to
  the same bimodule.
\end{proof}

\subsection{The general case}
\label{subsec:GenMin}

We have so far defined $\GenMin^c$ for $c=1$. We can define $\GenMin^c$ in general by the following inductive procedure.
We begin by specifying the algebras. 
Fix integers $n$ and $k$
with $0\leq k\leq 2n+1$. As before, fix a matching $\Matching_1$ on
$\{1,\dots,2n+2\}$ that does not match $c$ and $c+1$, so we have
$\alpha$, $\beta \in\{1,\dots,2n\}$ so that $\{c,\phi_c(\alpha)\},
\{c+1,\phi_c(\beta)\}\in\Matching_1$.  There is an induced matching
$\Matching_2$ on $\{1,\dots,2n\}$ consisting of $\{\alpha,\beta\}$,
and all pairs $\{i,j\}$ with $\{i,j\}\cap \{\alpha,\beta\}=\emptyset$
so that $\phi_c(i)$ and $\phi_c(j)$ are matched in $\Matching_1$.  Let
\[
\begin{array}{lll}
\Alg_2=\Alg(n,k,\Matching_2) &
\Alg_3=\Alg(n+1,k+1,\tau_{c-1}(\Matching_1)&
\Alg_4=\Alg(n+1,k+1,\tau_c\circ\tau_{c-1}(\Matching_1),
\end{array}
\]
as indicated in Figure~\ref{fig:GenMinAlg}.
\begin{figure}[ht]
\input{GenMinAlg.pstex_t}
\caption{\label{fig:GenMinAlg} Algebras used in the construction
  of $\GenMin^{c}$ from $\GenMin^{c-1}$.}
\end{figure}

With the algebras in place, the bimodule is defined inductively by the formula:
\[
\lsup{\Alg_2}\GenMin^{c}_{\Alg_1}=\lsup{\Alg_2}\GenMin^{c-1}_{\Alg_4}\DT
\lsup{\Alg_4}\Pos^{c}_{\Alg_3}\DT~^{\Alg_3}\Pos^{c-1}_{\Alg_1};
\]

The $DA$ bimodule of a minimum defined as above corresponds to the $DD$ bimodule
of a minimum defined in Section~\ref{subsec:DDmin}, according to the following result.
Note that both the statement and proof follow  analogously to~\cite[Proposition~\ref{BK:prop:MinDual}]{BorderedKnots}).

\begin{thm}
  \label{thm:MinDual}
  Fix integers $n$ and $k$ with $0\leq k\leq 2n+1$, and $c\in
  1,\dots,2n-1$, and let $\Alg_1=\Alg(n+1,k+1,\Matching_1)$,
  $\Alg_2=\Alg(n,k,\Matching_2)$, where $\Matching_1$ does not match
  $c$ and $c+1$, and $\Matching_2$ is the induced matching, as in the
  beginning of Section~\ref{subsec:DDmin}.  The above defined type
  $DA$ bimodule $\GenMin^c$ is standard, and it is is dual to
  $\Min_c$, in the sense that
  \begin{equation}
    \label{eq:MinDuality}
    \lsup{\Alg_2} \GenMin^c_{\Alg_1} \DT~\lsup{\Alg_1,\DuAlg_1}\CanonDD 
    \simeq ~\lsup{\Alg_2,\DuAlg_1}\Min_c. 
  \end{equation}
\end{thm}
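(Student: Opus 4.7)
The plan is to proceed by induction on $c$, with the base case $c=1$ given by Lemma~\ref{lem:MinDual}. For the inductive step, the idea is to combine the trident relation for minima (Lemma~\ref{lem:DuBasicTrident}) with the transfer of crossings across $\CanonDD$ afforded by Proposition~\ref{prop:DAcrosses}.

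Assume by induction that $\GenMin^{c-1} \DT \CanonDD \simeq \Min_{c-1}$ for the relevant algebras. Applying the trident relation from Lemma~\ref{lem:DuBasicTrident} (with $c$ replaced by $c-1$), together with invertibility of the positive and negative crossings over the dual algebra (Lemma~\ref{lem:PNinvDual}), yields
\[ \Min_c \simeq \Pos^{c-1}_{\DuAlg} \DT \Pos^c_{\DuAlg} \DT \Min_{c-1}. \]
Substituting the inductive hypothesis and observing that the $\DuAlg$-crossings act only on the $\DuAlg$ side of $\CanonDD$ and therefore commute past $\GenMin^{c-1}$ (which has no $\DuAlg$ side), we obtain
\[ \Min_c \simeq \GenMin^{c-1} \DT \Pos^{c-1}_{\DuAlg} \DT \Pos^c_{\DuAlg} \DT \CanonDD. \]

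Next, I would apply Proposition~\ref{prop:DAcrosses} twice, first converting $\Pos^c_{\DuAlg} \DT \CanonDD$ into $\Pos^c_{\Alg} \DT \CanonDD'$ with suitably transformed matchings, then commuting the remaining $\Pos^{c-1}_{\DuAlg}$ past $\Pos^c_{\Alg}$ (they act on disjoint algebra sides) and applying Proposition~\ref{prop:DAcrosses} once more to produce $\Pos^{c-1}_{\Alg} \DT \CanonDD''$. The result is
\[ \Min_c \simeq \GenMin^{c-1} \DT \Pos^c_{\Alg} \DT \Pos^{c-1}_{\Alg} \DT \CanonDD'' \;=\; \GenMin^c \DT \CanonDD'', \]
the final equality being the inductive definition of $\GenMin^c$ from Section~\ref{subsec:GenMin}. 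Standardness of $\GenMin^c$ is proved in parallel: $\GenMin^1$ is standard by Theorem~\ref{thm:MinDA}, the crossing bimodules $\Pos^c$ and $\Pos^{c-1}$ are standard by Proposition~\ref{prop:PosExt}, and Lemma~\ref{lem:StandardTimesStandard} propagates standardness under tensor products.

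The main obstacle will be the careful bookkeeping of algebras and matchings as they transform under the successive applications of Proposition~\ref{prop:DAcrosses}. Each passage of a crossing through $\CanonDD$ twists the matching on the opposite side by the corresponding transposition $\tau_c$ or $\tau_{c-1}$, and one must check that the composition of these twists exactly matches the chain $\Alg_1 \to \Alg_3 \to \Alg_4$ used in defining $\GenMin^c = \GenMin^{c-1} \DT \Pos^c \DT \Pos^{c-1}$, so that $\CanonDD''$ is indeed the canonical type $DD$ bimodule for $\Alg_1$ and $\DuAlg_1$ as in the statement.
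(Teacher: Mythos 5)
Your proposal is correct and follows essentially the same route as the paper: induction on $c$ with Lemma~\ref{lem:MinDual} as the base case, the inductive definition $\GenMin^c=\GenMin^{c-1}\DT\Pos^c\DT\Pos^{c-1}$, repeated use of Proposition~\ref{prop:DAcrosses} to shuttle crossings across $\CanonDD$, the trident relation of Lemma~\ref{lem:DuBasicTrident} combined with Lemma~\ref{lem:PNinvDual}, and Lemma~\ref{lem:StandardTimesStandard} for standardness. The only difference is that you run the chain of homotopy equivalences starting from $\Min_c$ rather than from $\GenMin^c\DT\CanonDD$, which is an immaterial reversal of direction.
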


\begin{proof}
  The verification of Equation~\eqref{eq:MinDuality} proceeds by
  induction on $c$, and the basic case $c=1$ is
  Lemma~\ref{lem:MinDual}.  For the inductive step, with algebras chosen
  as in Figure~\ref{fig:GenMinAlg} (choosing $\DuAlg_i$ to be the algebra
  connected to $\Alg_i$ via the canonical $DD$ bimodule), we compute:
  \begin{align*}
    ^{\Alg_2}\GenMin^c_{\Alg_1}\DT~^{\Alg_1,\DuAlg_1}\CanonDD &\simeq
    ~^{\Alg_2}\GenMin^{c-1}_{\Alg_4} \DT 
    ~^{\Alg_4}\Pos^c_{\Alg_3}\DT
    \Big({}^{\Alg_3}\Pos^{c-1}_{\Alg_1}\DT
    ~^{\Alg_1,\DuAlg_1}\CanonDD\Big) \\
    &\simeq 
    ~^{\Alg_2}\GenMin^{c-1}_{\Alg_4} \DT 
    ~^{\Alg_4}\Pos^c_{\Alg_3}\DT
    \Big({}^{\DuAlg_1}\Pos^{c-1}_{\DuAlg_3}\DT
    ~^{\DuAlg_3,\Alg_3}\CanonDD\Big) \\
    &\simeq 
    ~^{\Alg_2}\GenMin^{c-1}_{\Alg_4} \DT 
    ~^{\DuAlg_1}\Pos^{c-1}_{\DuAlg_3}\DT
    ~\Big({}^{\Alg_4}\Pos^c_{\Alg_3}\DT
    ~^{\Alg_3,\DuAlg_3}\CanonDD\Big) \\
    &\simeq 
    ~^{\Alg_2}\GenMin^{c-1}_{\Alg_4} \DT 
    ~^{\DuAlg_1}\Pos^{c-1}_{\DuAlg_3}\DT
    ~\Big({}^{\DuAlg_3}\Pos^c_{\DuAlg_4}\DT
    ~^{\DuAlg_4,\Alg_4}\CanonDD\Big) \\
    &\simeq 
    ~^{\DuAlg_1}\Pos^{c-1}_{\DuAlg_3}\DT
   ~ ^{\DuAlg_3}\Pos^c_{\DuAlg_4}\DT~\Big({}^{\Alg_2}\GenMin^{c-1}_{\Alg_4} \DT
    ~^{\Alg_4,\DuAlg_4}\CanonDD\Big) \\
    &\simeq 
    ~^{\DuAlg_1}\Pos^{c-1}_{\DuAlg_3}\DT
   ~ ^{\DuAlg_3}\Pos^c_{\DuAlg_4}\DT~^{\DuAlg_4,\Alg_2}\Min_{c-1} \\
    &\simeq 
    ~^{\DuAlg_1}\Pos^{c-1}_{\DuAlg_3}\DT
   ~ ^{\DuAlg_3}\Neg^{c-1}_{\DuAlg_1}\DT~^{\DuAlg_1,\Alg_2}\Min_{c} \\
   &\simeq ~^{\DuAlg_1,\Alg_2}\Min_{c},
  \end{align*}
  using associativity of $\DT$ (bearing in mind that the bimodules
  associated to a crossing are always bounded), Proposition~\ref{prop:DAcrosses},
  the trident relation (Lemma~\ref{lem:DuBasicTrident}), the inductive hypothesis,
  and the fact that $\Pos$ and $\Neg$ are inverses (Lemma~\ref{lem:PNinvDual}); see Figure~\ref{fig:TridentEquations}
\begin{figure}[ht]
\input{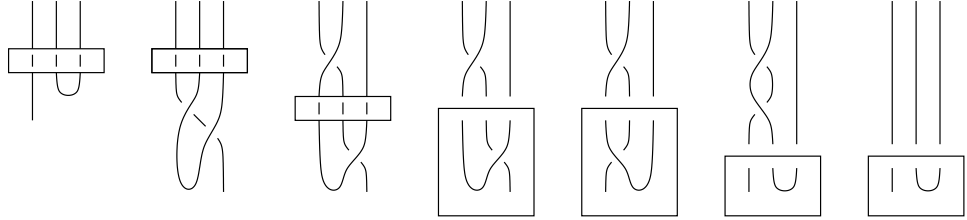}
\caption{\label{fig:TridentEquations} 
  {\bf Verifying the inductive step in Equation~\eqref{eq:MinDuality}.}
  Boxed components correspond to type $DD$ bimodules, and unboxed ones correspond to type $DA$ bimodules;
  algebras above the box are of the form $\DuAlg$ while those below are of the form $\Alg$.
  Since the order of tensor products is not indicated in these pictures, we have skipped those
  steps that correspond to associating in different order from the picture.}
\end{figure}

It is clear from its description in Theorem~\ref{thm:MinDA} that
$\Min^1$ is a standard type $DA$ bimodule.  Since the bimodules for
crossings are also standard, it follows from the inductive definition
of $\Min^c$ and the fact that this property is preserved by tensor
products (Lemma~\ref{lem:StandardTimesStandard}) that $\Min^c$ is also
standard.
\end{proof}

\newcommand\OurRingBig{\mathcal{S}}
\section{The knot invariant}
\label{sec:Construction}
We give now the construction of the knot invariant, and verify its
invariance properties. The basic logic follows as
in~\cite[Section~\ref{BK:sec:ConstructionAndInvariance}]{BorderedKnots}.

\subsection{A symmetry}
Before proceeding to the invariance proof, we establish 
symmetries in the bimodules constructed earlier, which will be useful in the invariance proof.

\begin{lemma}
\label{lem:Rotate}
Fix integers $c$, $k$, and $n$,
with $1\leq c \leq 2n+1$ and $0\leq k\leq 2n+1$; and fix
a matching $\Matching_1$ on $\{1,\dots,2n\}$, and let
$\Matching_2=\phi_c(\Matching_1)\cup\{c,c+1\}$ (as in Definition~\ref{eq:DefInsert}).
Let 
\[
\begin{array}{ll}
\Alg_1=\Alg(n,k,\Matching_1), & \Alg_2=\Alg(n,k,\rho'_n(\Matching_1)) \\
\Alg_3=\Alg(n+1,k+1,\Matching_2), &
\Alg_4=\Alg(n+1,k+1,\rho'_{n+1}(\Matching_2)),
\end{array}
\]
where $\rho'$ (and $\VRot$ below) is as in Section~\ref{sec:AlgSymm}.
The following identities hold:
\begin{align}
  \lsup{\Alg_4}[\VRot]_{\Alg_3}\DT \lsup{\Alg_3}\Max^c_{\Alg_1} 
  &\simeq \lsup{\Alg_4}\Max^{2n+2-c}_{\Alg_2}\DT  \lsup{\Alg_2}[\VRot]_{\Alg_1} 
  \label{eq:ReflectMax}\\
  \lsup{\Alg_2}[\VRot]_{\Alg_1}\DT \lsup{\Alg_1}\Min^c_{\Alg_3} &\simeq 
  \lsup{\Alg_2}\Min^{2n+2-c}_{\Alg_4}\DT \lsup{\Alg_4}[\VRot]_{\Alg_3} 
  \label{eq:ReflectMin}
\end{align}
Also, for any $i=1,\dots,2n-1$, 
let 
$\Alg_5=\Alg(n,k,\rho'_n(\tau_i(\Matching_1)))$
and $\Alg_6=\Alg(n,k,\tau_i(\Matching_1))$.
We have identities
\begin{align}
  \lsup{\Alg_5}[\VRot]_{\Alg_6}\DT \lsup{\Alg_6}\Pos^i_{\Alg_1} &\simeq 
  \lsup{\Alg_5}\Pos^{2n-i}_{\Alg_2}\DT \lsup{\Alg_2}[\VRot]_{\Alg_1} \label{eq:ReflectPos}\\
  \lsup{\Alg_5}[\VRot]_{\Alg_6}\DT \lsup{\Alg_6}\Neg^i_{\Alg_1} &\simeq 
  \lsup{\Alg_5}\Neg^{2n-i}_{\Alg_2}\DT \lsup{\Alg_2}[\VRot]_{\Alg_1} \label{eq:ReflectNeg} 
\end{align}
\end{lemma}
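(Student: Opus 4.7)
The plan is to verify each of the four identities by leveraging the fact that all the bimodules involved (crossings, maxima, minima) have been shown to be standard in the sense of Definition~\ref{def:Standard}, so that by Lemma~\ref{lem:DeterminedByStandard} we may compare the two sides on standard sequences only. The identity bimodules $[\VRot]$ implement the algebra isomorphism $\VRot$ of Equation~\eqref{eq:DefVRot}, which reindexes idempotent states by $\rho_n$ and exchanges $L_i\leftrightarrow R_{\rho'(i)}$, fixes $U_i\leftrightarrow U_{\rho'(i)}$, and relabels $C_{\{p,q\}}\leftrightarrow C_{\{\rho'(p),\rho'(q)\}}$ (and analogously for $\DuAlg$). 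The geometric content of the lemma is that reflecting a partial knot diagram horizontally reflects the corresponding bimodule; each identity is the algebraic manifestation of this for the elementary pieces.

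For the crossing identities~\eqref{eq:ReflectPos} and~\eqref{eq:ReflectNeg}, I would first treat $\Pos^i$. The generators of $\Pos^i$ come in four types $\{\North,\South,\West,\East\}$; horizontal reflection sends $\North\leftrightarrow\North$, $\South\leftrightarrow\South$, and $\West\leftrightarrow\East$, and matches the generators of $\Pos^i$ (at position $i$) with those of $\Pos^{2n-i}$ (at position $2n-i$) after applying $\rho$ to their idempotents. Direct inspection of the $\delta^1_1$ and $\delta^1_2$ actions in Equation~\eqref{eq:PositiveCrossingDA}, of the translation $t$ from Equation~\eqref{eq:DefType}, and of the additional actions~\eqref{ExtendingCs} in the unmatched case, shows that the two sides of~\eqref{eq:ReflectPos} have the same outputs on standard sequences. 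Equation~\eqref{eq:ReflectNeg} then follows by applying $\Opposite$ and appealing to Definition~\ref{def:NegCrossing}.

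For the maximum identity~\eqref{eq:ReflectMax}, the three generator types $\XX,\YY,\ZZ$ at position $c$ for $\Alg_3$ correspond, under $\rho'_{n+1}$, to types $\YY,\XX,\ZZ$ at position $2n+2-c$ for $\Alg_4$ (since $\{c-1,c,c+1\}$ reflects to $\{2n+1-c,2n+2-c,2n+3-c\}$, with $c-1\leftrightarrow c+2$ under $\rho'$). The differential $\delta^1_1$ in Equation~\eqref{eq:DeltaOneOnePos} involves only $L_cL_{c+1}$, $R_{c+1}R_c$, and $C_{\{c,c+1\}}$, which $\VRot$ carries to the corresponding elements at position $2n+2-c$. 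The $\delta^1_2$ actions are characterized uniquely by Lemma~\ref{lem:ConstructDeltaTwo}, whose characterization is manifestly equivariant under $\VRot$; this uses the key combinatorial identity $\rho'_{n+1}\circ\phi_c = \phi_{2n+2-c}\circ\rho'_n$, which is a direct check on cases $j<c$ versus $j\geq c$.

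For the minimum identity~\eqref{eq:ReflectMin}, the cleanest route is to dualize via Theorem~\ref{thm:MinDual}: tensoring each side on the right with $\lsup{\Alg_3,\DuAlg_3}\CanonDD$ reduces the claim to the corresponding $DD$ statement, where $\Min_c$ is given by the explicit formula~\eqref{eq:defDDmin}. Since the canonical $DD$ bimodule is invertible (Theorem~\ref{thm:InvertibleDD}) and is itself equivariant under $\VRot$, it suffices to check the rotational symmetry of the single element $A$ in Equation~\eqref{eq:defDDmin}, which again reduces to the identity $\rho'_{n+1}\circ\phi_c = \phi_{2n+2-c}\circ\rho'_n$ together with the fact that $\rho'$ interchanges $\alpha\leftrightarrow\rho'_n(\alpha)$ and $\beta\leftrightarrow\rho'_n(\beta)$ inside $\Matching_2$. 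The main obstacle will be the bookkeeping in this last case: tracking how the designated elements $\alpha,\beta$, the matching $\Matching_2$, and the non-commuting $E$-elements get reindexed by $\rho'$ in the symmetric term involving $[E_{\phi(\alpha)},E_c]E_{c+1}$, which is not manifestly symmetric in $c$ and $c+1$; the needed asymmetry is precisely absorbed by the $c\leftrightarrow c+1$ swap induced by the reflection. The homotopy equivalence of the symmetric and asymmetric models noted at the end of Section~\ref{subsec:DDmin} handles the remaining discrepancy.
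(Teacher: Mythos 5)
Your proposal is correct, and for the minimum case~\eqref{eq:ReflectMin} it coincides with the paper's argument: both dualize to the type $DD$ bimodule $\Min_c$ via Theorem~\ref{thm:MinDual} and invertibility of $\CanonDD$, and both absorb the $c\leftrightarrow c+1$ asymmetry of Equation~\eqref{eq:defDDmin} with the correction map $x\mapsto (1+E_{2n+2-c}E_{2n+3-c}\otimes 1)\otimes x$. Where you diverge is in the crossing and maximum cases: you propose a direct $DA$-level comparison on standard sequences via Lemma~\ref{lem:DeterminedByStandard}, inspecting Equation~\eqref{eq:PositiveCrossingDA}, the translation map $t$, the extensions~\eqref{ExtendingCs}, and Lemma~\ref{lem:ConstructDeltaTwo}. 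The paper instead treats all four identities uniformly: it tensors each side with $\CanonDD$, uses the duality statements (Propositions~\ref{prop:DualCross} and~\ref{prop:MaxDual}) to replace the $DA$ bimodules by their $DD$ counterparts, observes that the symmetry $(\VRot\otimes\VRot)\DT\Pos_i=\Pos_{2n-i}$ (and likewise for $\Max_c$) is immediate from the defining structure constants, and concludes by Theorem~\ref{thm:InvertibleDD}. The paper's route buys a much shorter verification, since the $DD$ bimodules are defined by a single algebra element whose $\VRot$-equivariance is visible at a glance, whereas your route requires wading through the full case lists defining $\delta^1_3$ for $\Pos^i$; on the other hand, your approach is self-contained at the $DA$ level and makes the geometric content (reflection of the partial diagram) more explicit. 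Both hinge on the same combinatorial identity $\rho'_{n+1}\circ\phi_c=\phi_{2n+2-c}\circ\rho'_n$, which you correctly verify, and your implicit appeal to Lemma~\ref{lem:DeterminedByStandard} still routes through $\CanonDD$, so the two arguments are ultimately close cousins.
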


\begin{proof}
  It is straightforward to see that
  \[\lsup{\Alg}[\VRot]_{\Alg}\DT\lsup{\Alg,\Alg'}\CanonDD\cong
  \lsup{\DuAlg}[\VRot]_{\DuAlg}\DT \lsup{\Alg,\Alg'}\CanonDD.\]
  
  We now verify the identities from the lemma by converting $DD$ bimodules, and appealing
  to the fact that $\CanonDD$ is invertible (Theorem~\ref{thm:InvertibleDD}).

  For example, if we let
  \[ 
  \DuAlg_1=\DuAlg(n,2n+1-k,\Matching_1)\qquad
  \DuAlg_5=\DuAlg(n,2n+1-k,\rho'_n(\tau_i(\Matching_1)))),\]
  then it is evident from the description of the type $DD$ bimodule of a positive crossing
  that it has the symmetry
  \[ \left(\lsup{\Alg_5}[\VRot]_{\Alg_6}\otimes
  \lsup{\DuAlg_2}[\VRot]_{\DuAlg_1}\right)\DT \lsup{\Alg_6,\DuAlg_1}\Pos_c =
  \lsup{\Alg_5,\DuAlg_2}\Pos_{2n+2-c}. \] Equation~\eqref{eq:ReflectPos}
  now follows from Proposition~\ref{prop:DualCross} and
  Theorem~\ref{thm:InvertibleDD}. Equation~\eqref{eq:ReflectNeg} follows similarly.

  Similarly, there is an obvious symmetry of
  the type $DD$ bimodule of a maximum
  \[ \left(\lsup{\Alg_4}[\VRot]_{\Alg_3} \otimes
    \lsup{\DuAlg_2}[\VRot]_{\DuAlg_1}\right)\DT
  \lsup{\Alg_3,\DuAlg_1}\Max_c
  =
  \lsup{\Alg_4,\DuAlg_2}\Max_{2n+2-c}.\]
  Equation~\eqref{eq:ReflectMax} follows from this symmetry, Proposition~\ref{prop:MaxDual}, and
  Theorem~\ref{thm:InvertibleDD}.
  
  The symmetry of type $DD$ bimodules that gives Equation~\eqref{eq:ReflectMin},
  \[ \left(\lsup{\DuAlg_4}[\VRot]_{\DuAlg_3}
    \otimes
    \lsup{\Alg_2}[\VRot]_{\Alg_1}\right)
  \DT\lsup{\DuAlg_3,\Alg_1}\Min_c 
  \simeq
  \lsup{\DuAlg_4,\Alg_2}\Min_{2n+2-c}\]
  is supplied by the map $x\to (1+ (E_{2n+2-c} E_{2n+3-c}\otimes 1))\otimes x$.
\end{proof}

\subsection{Invariants associated to partial knot diagrams}
\label{subsec:ConstructInvariant}

Let ${\mathcal D}$ be a planar knot diagram, thought of as lying in the $(x,y)$ plane. 
Let ${\mathcal D}_t$ denote the 
$y=t$ slice of the diagram. Similarly, given $t_1<t_2$, let ${\mathcal D}_{[t_1,t_2]}$ denote
the portion of ${\mathcal D}$ with $t_1\leq y\leq t_2$.

Consider a knot diagram in the plane with a distinguished point.
Recall that the diagram is said to be in {\em bridge position} if for
the projection to the $y$ axis, all critical points are
non-degenerate, and all minima, maxima, and crossings project to
distinct points on the $y$ axis, and the global minimum is the marked
point.

Let ${\mathcal D}$ be a planar knot diagram in bridge position.
Fix some generic $t\in\R$. Then ${\mathcal D}_t$ consists of $2n$ points,
which we think of as the points $\{1,\dots,2n\}$.
There is a naturally associated matching $\Matching$ on these points by the rule that $i$ and $j$ are matched
if they are connected by an arc in ${\mathcal D}\cap (y\geq t)$.
Thus, there is a natural algebra associated to the $t$-slice,
$\Alg(n,n,\Matching)$, which we denote $\Alg({\mathcal D}_t)$.

Choose a knot planar knot diagram ${\mathcal D}$ in bridge position, 
and 
slice it up  into pieces $t_1<\dots<t_k$ so that the following conditions hold:
\begin{itemize}
\item for $i=1,\dots,k-1$, the interval $[t_i,t_{i+1}]$ contains the
  projection onto the $y$ axis of exactly one crossing or critical
  point
\item for $i=1,\dots,k$, $t_i$ is not the projection of any crossing
  or critical point 
\item there are no crossings or critical points whose $y$ value is
  greater than $t_k$ (and so $[t_{k-1},t_k]$ contains the global
  maximum)
\item there are no crossings below $t_1$, 
  and the only critical point whose $y$ value is smaller than $t_1$
  is the global minium.
\end{itemize}

Thus, each piece ${\mathcal D}\cap (y\in[t_i,t_{i+1}])$ is either a maximum, a minimum, or a crossing;
and hence in Sections~\ref{sec:Max},~\ref{sec:Min}, and~\ref{sec:Cross}
we explained how to associate to it a type $DA$ bimodule, 
with incoming algebra associated to the
$t_{i+1}$-slice of the diagram and outgoing algebra associated to the $t_i$-slice of the diagram,
for $i\geq 1$. We denote all of these bimodules by 
$\PartInv({\mathcal D}_{[t_i,t_{i+1}]})$.
More generally, for any $1\leq j\leq m$, define the invariant associate to the partial
knot diagram ${\mathcal D}_{[j,m]}$ to 
be the tensor product of the bimodules associated to the various basic pieces, 
$\PartInv({\mathcal D}_{[i,i+1]})$
with $j\leq i< i+1\leq m$. Up to homotopy, this type $DA$ bimodule is independent of the order
in which the tensor product is taken.

We can think of ${\mathcal D}_{[1,k]}$ as a partial knot diagram
with the global minimum removed. Thus, $\PartInv({\mathcal D}_{[1,k]})$ 
is a type $D$ structure over 
$\Alg(1,1,\{1,2\})$, which gets a grading from  the orientation on the knot.

Let $\OurRingBig=\Field[u,v]/uv=0$. 
The closed knot invariant is constructed via a type $DA$ bimodule
$\lsup{\OurRingBig}\TerMin_{\Alg(1,1,\{1,2\})}$,  as follows.  
The bimodule $\TerMin$  has three generators $\XX$,
$\YY$, and $\ZZ$, with
\[ \XX \cdot \Idemp{\{0\}}=\XX, \qquad 
\YY \cdot \Idemp{\{1\}}=\YY, \qquad 
\ZZ \cdot \Idemp{\{2\}}=\ZZ.\]
When $1$ is oriented upwards, $\TerMin$ is the $DA$ bimodule with $\delta^1_k=0$ for $k\neq 2$, and
all $\delta^1_2$ are determined by
\[ 
\begin{array}{lll}
\delta^1_2(\YY,L_1)= u\otimes \XX, & \delta^1_2(\XX,R_1)= u\otimes \YY,  \\
\delta^1_2(\YY,R_2)= v\otimes \ZZ, & \delta^1_2(\ZZ,L_2)= v\otimes \YY, \\
\delta^1_2(\XX,C_{\{1,2\}})=
\delta^1_2(\YY,C_{\{1,2\}})=
\delta^1_2(\ZZ,C_{\{1,2\}})=0.
\end{array}
\]
When $1$ is oriented downwards, we define the actions as above, exchanging the roles of $u$ and $v$.

\begin{prop}
  \label{prop:RestrictIdempotent}
  The idempotent of the type $D$ invariant of knot diagram with the minimum removed is restricted by
  \[\PartInv({\mathcal D}_{[1,k]})=\Idemp{\{1\}}\cdot \PartInv({\mathcal D}_{[1,k]}).\]
\end{prop}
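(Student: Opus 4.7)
The plan is to argue by induction from the top of the diagram $\mathcal{D}$ downward, tracking at each generic height $t$ the set of idempotent states of $\Alg({\mathcal D}_t)$ supporting the type $D$ structure $\PartInv({\mathcal D}_{[t,t_k]})$. The base case is immediate: above $t_k$, the algebra $\Alg(0,0,\emptyset)$ has only the idempotent $\emptyset$. Just below the global maximum (which, since no strands are present above, sits at position $c=1$ in a two-strand slice), the bimodule $\Max^1$ of Section~\ref{sec:Max} outputs generators only in allowed idempotent states $\y$ with $1 \in \y$, $|\y \cap \{0,2\}| \leq 1$, and $\psi(\y)=\emptyset$. Since $|\y|=1$ is forced, the unique solution is $\y = \{1\}$.

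For the inductive step, I would pass each basic piece one at a time. The cases of maxima and minima are explicit from their bimodule definitions: a maximum at position $c$ must insert $c$ into the outgoing idempotent (via the allowed-state condition of Section~\ref{subsec:MaxDD}), while a minimum at position $c$ restricts the incoming idempotents to the ``preferred'' states of Section~\ref{subsec:DAminCOne}. The delicate case is a crossing at position $i$: a single incoming idempotent $\y$ can yield outgoing idempotents $\y$ (via $\North$ or $\South$), $\{i\}\cup\y\setminus\{i-1\}$ (via $\West$ when $i-1 \in \y$, $i\not\in\y$), and $\{i\}\cup\y\setminus\{i+1\}$ (via $\East$ when $i+1 \in \y$, $i\not\in\y$), so crossings introduce genuine branching of idempotents at intermediate slices.

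To handle the branching, I would identify the idempotent supports at each slice with partial Kauffman-state decorations of the portion of $\mathcal{D}$ above that slice: the generator types $\{\North,\South,\West,\East\}$ at a crossing bimodule match precisely the four corner choices of a Kauffman state at the crossing, while allowed/preferred conditions at maxima and minima correspond to the Kauffman-state constraints at critical points. By induction, generators of $\PartInv({\mathcal D}_{[t,t_k]})$ are in bijection with partial Kauffman decorations of ${\mathcal D} \cap \{y \geq t\}$. At the bottom slice $t_1$, the Kauffman-state rule that the two regions adjacent to the marked edge (the global minimum) are excluded forces the positions $0$ and $2$ --- which merge into the unbounded region once the global minimum is re-attached below --- to be absent from any idempotent. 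The remaining allowed element is position $1$, giving the single idempotent $\{1\}$.

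The main obstacle is making the Kauffman-state correspondence precise enough to exclude idempotents introduced by $\West/\East$ generators at crossings, in a way compatible with the tensor products defining $\PartInv$. An alternative approach is to avoid Kauffman states and argue directly by double induction on the number of pieces in $\mathcal{D}_{[1,k]}$, showing that the preferred-idempotent conditions of minima and allowed-idempotent conditions of maxima below each crossing combine with the trident relations (Proposition~\ref{prop:BasicTrident} and Lemma~\ref{lem:DuBasicTrident}) to eliminate all extraneous idempotents introduced by crossing branchings, leaving only $\{1\}$ at the bottom.
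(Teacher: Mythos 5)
There is a gap here, and you have in fact flagged it yourself: the argument hinges on ``making the Kauffman-state correspondence precise,'' but that correspondence is never established, and it is a substantially harder statement than the proposition requires. You are trying to compute the \emph{exact} set of idempotents supporting $\PartInv({\mathcal D}_{[t,t_k]})$ at every slice, which forces you to track all the branching that $\West$/$\East$ generators introduce at crossings and to match it against partial Kauffman decorations. Your fallback --- using the trident relations to ``eliminate extraneous idempotents'' --- does not work either: the trident relations (Proposition~\ref{prop:BasicTrident}, Lemma~\ref{lem:DuBasicTrident}) are homotopy equivalences between compositions of bimodules, whereas the proposition asserts an equality of the actual module $\PartInv({\mathcal D}_{[1,k]})$ with its image under $\Idemp{\{1\}}$; a homotopy equivalence gives no direct control over the idempotent support of the specific tensor-product model.

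The paper avoids all of this by propagating a much weaker invariant: for each generic slice ${\mathcal D}_t$ with $2n$ points, consider the subalgebra $\AlgLoc({\mathcal D}_t)$ cut out by the idempotents $\Idemp{\x}$ with $\x\cap\{0,2n\}=\emptyset$, i.e.\ the two \emph{extreme} positions are unoccupied. One checks directly from the definitions that each elementary bimodule (crossing, maximum, minimum) maps $\AlgLoc$-restricted inputs to $\AlgLoc$-restricted outputs --- for a crossing at position $i$ with $1\leq i\leq 2n-1$ the incoming and outgoing idempotents differ only inside $\{i-1,i,i+1\}$ and the new element is $i\notin\{0,2n\}$, so no branching analysis is needed --- and this property is manifestly closed under $\DT$. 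At the bottom slice the idempotent states are $1$-element subsets of $\{0,1,2\}$ avoiding $\{0,2\}$, which forces $\Idemp{\{1\}}$. Your first two paragraphs (the base case at the global maximum and the observation that only positions adjacent to the local feature change) are exactly the right raw material; you just need to replace the full Kauffman-state bookkeeping by this single closed condition on the extreme positions.
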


\begin{proof}
  Given any generic slice $t$ of a knot diagram, 
  and consider the subalgebra 
  \[ \AlgLoc({\mathcal D}_t)=\left(\sum_{\{\x\big|
      \{0,2n\}\cap\x=\emptyset\}}\Idemp{\x}\right)\cdot \Alg({\mathcal
    D}_{t})\cdot \left(\sum_{\{\x\big|
      \{0,2n\}\cap\x=\emptyset\}}\Idemp{\x}\right).\] We claim that if
  we restrict the input algebra of $\PartInv({\mathcal
    D}_{[t_1,t_2]})$ to $\AlgLoc({\mathcal D}_{t_2})$, then the output
  algebra is contained in $\AlgLoc({\mathcal D}_{t_1})\subset
  \Alg({\mathcal D}_{t_1})$.  This is a straightforward verification
  for the bimodules of a crossing, a maximum, and a minimum defined in
  the earlier sections; and it clearly, is a property that is
  preserved under tensor product.  Specializing to the case where the
  partial knot diagram is missing only its global minimum, we arrive
  at the statement of the proposition.
\end{proof}

For a knot diagram, consider $\TerMin\DT\PartInv({\mathcal
  D}_{[1,k]}))$.  Restricting the input algebra of $\TerMin$ to the subalgebra
$\AlgLoc=\AlgLoc\subset \Alg(1,1,\{1,2\})=\Alg$,
we find that the output algebra is contained in the subalgebra
$\OurRing\subset \OurRingBig$ generated by $U=u^2$ and $V=v^2$. 
Thus, in view of Proposition~\ref{prop:RestrictIdempotent}, we can consider the chain complex over
$\OurRing=\Field[U,V]/UV=0$ defined by 
\begin{equation}
  \label{eq:DefCwz}
  \Cwz({\mathcal D})=\lsub{\OurRing}\OurRing_{\OurRing}\DT \lsup{\OurRing}
\TerMin_{\AlgLoc}\DT\lsup{\AlgLoc}\PartInv({\mathcal
  D}_{[1,k]}.
\end{equation} Let$\Hwz({\mathcal D})$ denote its homology, which is also  a
module over $\OurRing$.  We can now prove that $\Hwz({\mathcal D})$
is an invariant of the underlying oriented knot $\orK$:

\begin{proof}[Proof of Theorem~\ref{thm:Invariance}]
  To check that $\Hwz$ is a knot invariant, we must check that it is invariant under bridge moves and Reidemeister moves.
  To recall, bridge moves can be classified into the following:
  \begin{enumerate}
  \item Commutations of distant crossings
  \item Trident moves
  \item Critical points commute with distant crossings
  \item Commuting distant critical points
  \item Pair creation and annihilation
  \end{enumerate}
  Invariance under these various moves is verified as it was
  in~\cite{BorderedKnots}, by comparing type $DD$ bimodules for partial knot
  diagrams, and appealing to the invertibility of $\CanonDD$
  (Theorem~\ref{thm:InvertibleDD}). 

  In fact, commutations of distant
  crossings was already verified in the braid relations.  (See
  especially Equation~\eqref{eq:FarBraids}.)

  Trident moves correspond to passing a local minimum or maximum
  through a crossing. For the maximum, this follows from the trident
  relation for its type $DD$ bimodule
  (Proposition~\ref{prop:BasicTrident}) and the fact that the type
  $DA$ bimodule of a maximum is dual to its type $DD$ bimodule
  (Proposition~\ref{prop:MaxDual}).  For the minimum, we follow a
  similar logic; we spell out the details presently. First note that
  \begin{align*}
    \lsup{\Alg_3}\Min^c_{\Alg_2}\DT 
    \lsup{\Alg_2}\Pos^{c+1}_{\Alg_1}
    \DT \lsup{\Alg_1,\DuAlg_1}\CanonDD
    &\simeq 
        \lsup{\Alg_3}\Min^c_{\Alg_2}\DT \lsup{\Alg_2,\DuAlg_1}\Pos_{c+1} \\
      &\simeq 
        \lsup{\Alg_3}\Min^c_{\Alg_2}\DT 
        \left(\lsup{\DuAlg_1}\Pos^{c+1}_{\DuAlg_2}\DT\lsup{\DuAlg_2,\Alg_2}\CanonDD\right) \\
      &\simeq 
      \lsup{\DuAlg_1}\Pos^{c+1}_{\DuAlg_2}\DT \left(\lsup{\Alg_3}\Min^c_{\Alg_2}\DT \lsup{\Alg_2,\DuAlg_2}\CanonDD \right)\\
      &\simeq 
      \lsup{\DuAlg_1}\Pos^{c+1}_{\DuAlg_2}\DT \lsup{\DuAlg_2,\Alg_3}\Min_{c},
  \end{align*}
  Similarly, 
  \[\lsup{\Alg_3}\Min^{c+1}_{\Alg_4}\DT \lsup{\Alg_4}\Neg^c_{\Alg_1}\DT\lsup{\Alg_1,\DuAlg_1}\CanonDD
  \simeq \lsup{\DuAlg_1}\Neg^c_{\DuAlg_4}\DT \lsup{\DuAlg_4,\Alg_3}\Min_{c+1}.\]
  Thus, the desired trident relation
  \[ 
  \lsup{\Alg_3}\Min^c_{\Alg_2}\DT \lsup{\Alg_2}\Pos^{c+1}_{\Alg_1}\DT \lsup{\Alg_1,\DuAlg_1}\CanonDD
  \simeq 
  \lsup{\Alg_3}\Min^{c+1}_{\Alg_4}\DT \lsup{\Alg_4}\Neg^c_{\Alg_1}\DT\lsup{\Alg_1,\DuAlg_1}\CanonDD
  \]
  follows from Lemma~\ref{lem:DuBasicTrident}.
  
  Commutations between distant crossings and critical points are also straightforward; the identity
  \begin{align*}
    \Pos^{\phi_c(i)}\DT \Max^c\simeq \Max^c\DT \Pos^{i}  \\
  \end{align*}
  is easy to verify on the type $DD$ level (i.e. after tensoring with $\CanonDD$); as 
  is the identity
  \[ 
      \Pos^{i}\DT \Min^{c} \simeq \Min^c\DT \Pos^{\phi_c(i)}.
      \]

  Commutations between distant critical points is also mostly straightforward: 
  for  $i<j$, we claim that
  \begin{align*}
    \Max^{i}\DT\Max^{j-1}&\simeq \Max^{j+1}\DT \Max^{i} \\
    \Min^{i}\DT\Min^{j+1}&\simeq \Min^{j-1}\DT \Min^{i} \\
    \Max^{j-1}\DT\Min^{i}&\simeq \Min^{i}\DT \Max^{j+1} \\
    \Min^{j+1}\DT\Max^{i}&\simeq \Max^{i}\DT \Min^{j-1}.
  \end{align*}
  The first two are clear by tensoring with $\CanonDD$.  The third
  identity can be established similarly when $i=1$, using the explicit
  form of $\Min^{1}$ from Section~\ref{subsec:DAminCOne}. For general
  $i$, it follows from the definition of $\Min^{i}$, and commutation
  of local maxima with crossings. The fourth follows from the third using the symmetry
  Lemma~\ref{lem:Rotate}.
  
  Arbitrary pair creation and annihilations can be reduced to the case that
  \begin{equation}
    \label{eq:PairAnn}
    \Min^1\DT \Max^2\simeq \Id,
  \end{equation}
  using braidlike Reidemeister $2$ moves (Equation~\eqref{eq:InvertPos}), and 
  possible reflections (Lemma~\ref{lem:Rotate}). The verification of Equation~\eqref{eq:PairAnn}
  follows from
  \[ \Min^1\DT \Max_2 \simeq \CanonDD,\]
  which is  a straightforward computation. (Compare~\ref{BK:lem:PairCreation}.)

  Correspondence between the generators of $\Cwz(\Diag)$ and Kauffman states follow from local considerations:
  the bimodules are all associated to partial Kauffman states, and the tensor over the idempotent ring corresponds
  to extending partial Kauffman states. This is exactly as in~\cite{BorderedKnots}.
\end{proof}

Before concluding this section, we note the following result, which is both convenient for computations,
and will also be conceptually useful. (See the proof of Proposition~\ref{prop:ConnSum}.)

\begin{prop}
  \label{prop:InvariantIsStandard}
  For any partial knot diagram ${\mathcal D}$,
  the associated type $DA$ bimodule (or type $D$ structure, in cases where the top of the diagram is empty)
  $\PartInv({\mathcal D})$ is a standard type $DA$ bimodule.
\end{prop}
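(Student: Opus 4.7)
The plan is to decompose $\mathcal{D}$ into its elementary pieces and apply the standardness of each piece together with the fact that the tensor product of standard bimodules is standard.

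First, I would slice $\mathcal{D}$ horizontally by generic lines $y=t_1<\dots<t_m$, as in Section~\ref{subsec:ConstructInvariant}, so that each strip ${\mathcal{D}}_{[t_i,t_{i+1}]}$ contains exactly one crossing, one local maximum, or one local minimum. By construction,
\[
\PartInv(\mathcal{D})\;\simeq\;\PartInv({\mathcal{D}}_{[t_{m-1},t_m]})\DT\cdots\DT\PartInv({\mathcal{D}}_{[t_1,t_2]}),
\]
so it suffices to show each elementary factor is standard and then apply Lemma~\ref{lem:StandardTimesStandard} inductively.

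For the elementary factors, standardness has already been verified: positive crossings by Proposition~\ref{prop:PosExt}, negative crossings by the remark following Definition~\ref{def:NegCrossing}, local maxima by Theorem~\ref{thm:MaxDA}, and local minima by Theorem~\ref{thm:MinDual}. In each case, the bimodule is adapted to the one-manifold $W_i$ consisting of the strands in the strip ${\mathcal{D}}_{[t_i,t_{i+1}]}$ (two transverse arcs for a crossing, a cap plus vertical strands for a maximum, a cup plus vertical strands for a minimum), and this manifold is compatible with the matching induced by the part of $\mathcal{D}$ above $t_{i+1}$.

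To carry out the induction, I would set $W^{(k)} = W_{m-1}\cup W_{m-2}\cup \dots \cup W_k$ and argue that at each stage, the hypothesis of Lemma~\ref{lem:StandardTimesStandard} is satisfied: $W^{(k)}\cup W_{k-1}$ has no closed components. This is automatic because $W^{(k)}\cup W_{k-1}$ is precisely the underlying one-manifold of the partial knot diagram ${\mathcal{D}}_{[t_{k-1},t_m]}$, which by construction has no closed components (the knot becomes closed only after gluing in the terminal minimum via $\TerMin$, which is the final step in~\eqref{eq:DefCwz} and is not part of $\PartInv(\mathcal{D})$). With this observation in hand, the inductive step is immediate from Lemma~\ref{lem:StandardTimesStandard}, and the only genuine work would be the (routine) verification of compatibility of $W^{(k)}\cup W_{k-1}$ with the matching at the bottom slice, which is transparent from the planar picture. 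The main conceptual point is simply that ``no closed components'' propagates through each tensor product, so the real content of the proposition is already contained in the piecewise standardness lemmas proved earlier.
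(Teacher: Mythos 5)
Your proposal is correct and follows essentially the same route as the paper: the elementary bimodules for crossings, maxima, and minima are standard (Proposition~\ref{prop:PosExt}, Theorem~\ref{thm:MaxDA}, Theorem~\ref{thm:MinDual}), and Lemma~\ref{lem:StandardTimesStandard} propagates standardness through the tensor products. Your extra remark that the relevant one-manifolds have no closed components because the diagram is only closed up by the terminal minimum is a point the paper leaves implicit, but it is the same argument.
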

\begin{proof}
  The $DA$ bimodules for crossings, maxima, and minima are all
  standard (Proposition~\ref{prop:PosExt}, Theorem~\ref{thm:MaxDA},
  and Theorem~\ref{thm:MinDA}). Since tensor products of standard type
  $DA$ bimodules are standard (Lemma~\ref{lem:StandardTimesStandard}), the proposition follows. 
\end{proof}

\newcommand\Tor{\mathrm{Tor}}
\section{Comparison with an earlier invariant}
\label{sec:Compare}

Our aim here is to relate the knot invariant constructed here with the
ones from~\cite{BorderedKnots}, to verify the following more detailed
version of Proposition~\ref{intro:Compare}:

\begin{prop}
  \label{prop:Compare}
  The complex $\Cwz(\orK)/V$ is bigraded homotopy equivalent to the
  complex $C^-(-\orK)$
  from~\cite[Section~\ref{BK:sec:InvarianceProof}]{BorderedKnots},
  thought of as a module over $\Field[U]$.
\end{prop}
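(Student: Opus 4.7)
The plan is to construct a bigraded chain homotopy equivalence by analyzing the construction of $\Cwz(\orK)$ piece-by-piece, showing that after setting $V=0$ each local $DA$-bimodule over $\Alg$ reduces to the corresponding local bimodule over $\Blg$ from \cite{BorderedKnots} (with the orientation convention corresponding to $-\orK$), and then assembling these local equivalences.

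First, I would examine the terminal bimodule $\lsup{\OurRingBig}\TerMin_{\Alg(1,1,\{1,2\})}$. With the orientation of the distinguished edge fixed, the $u$-transitions between $\XX$ and $\YY$ and the $v$-transitions between $\YY$ and $\ZZ$ split the module. Modding out by $V=v^2$ (equivalently, tensoring with $\OurRing/(V)=\Field[U]$) and applying a standard cancellation along the acyclic $\YY\leftrightarrow \ZZ$ pair carrying the $v$-action produces a reduced terminal bimodule with two generators $\XX,\YY$ and $u$-action between them. Because $\TerMin$ uses $u$ for the upward-oriented edge while the terminal bimodule of \cite{BorderedKnots} associates $U$ to the downward-oriented edge, this reduced piece agrees with the terminal bimodule of the older construction applied to the \emph{reversed} orientation $-\orK$. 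This accounts for the $-\orK$ appearing in the statement.

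Second, I would compare each of the crossing, maximum, and minimum $DA$-bimodules. The key observation is that in each case the $\Alg$-bimodule here was built from the $\Blg$-bimodule of \cite{BorderedKnots} by extending the actions to be $C_{\{i,j\}}$-equivariant and adding the further terms recorded in Equation~\eqref{ExtendingCs} (for crossings) and the analogous lists for maxima and minima. Since $C_{\{i,j\}}$ is a nilpotent with $dC_{\{i,j\}}=U_iU_j$, the image of the $C$-terms in the total complex lies in the $V$-nilpotent piece once one tracks Alexander gradings through Proposition~\ref{prop:InvariantIsStandard}; after passing to $\Cwz/V$ they give a subcomplex quasi-isomorphic to the bimodule one would build only from the $\Blg$-piece. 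In other words, on each standard $DA$-bimodule $X$ one produces an explicit homotopy equivalence $X\DT (\Field[U]\,\text{-factor})\simeq X^{\mathrm{old}}$, where $X^{\mathrm{old}}$ denotes the corresponding bimodule from \cite{BorderedKnots} with the $\Upwards$-set determined by the orientation that survives the $V=0$ specialization.

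Third, I would assemble these. Because every local bimodule is standard (Proposition~\ref{prop:InvariantIsStandard}) and standard bimodules are closed under $\DT$ (Lemma~\ref{lem:StandardTimesStandard}), the local homotopy equivalences tensor to give a global bigraded homotopy equivalence between $\Cwz(\orK)/V$ and the tensor product of the old local bimodules that defines $\KCm(-\orK)$. The bigrading check is straightforward: the $\Delta$-grading matches via the identity $\Delta=M-A$ recorded in the introduction, while on $\Cwz/V$ the Alexander grading $A$ becomes single-valued and corresponds (with the required sign flip) to the Alexander grading of $-\orK$ in the old invariant.

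The main obstacle will be the minimum bimodule $\GenMin^c$, which has the most intricate structure: in the base case $c=1$ it involves $\delta^1_3$ operations mediated by $C_1$ and $C_2$ (Theorem~\ref{thm:MinDA}), and in the general case it is defined inductively using crossing bimodules (Section~\ref{subsec:GenMin}). Checking that the specialization $V=0$ really eliminates all the new $C$-driven higher operations up to homotopy and leaves exactly the $\Blg$-minimum bimodule of \cite{BorderedKnots}(for the reverse orientation) will require a careful homological-perturbation argument analogous to the one used in Subsection~\ref{subsec:AltConstr}, compatible with the trident relations (Proposition~\ref{prop:BasicTrident} and Lemma~\ref{lem:DuBasicTrident}) used in the inductive definition of $\GenMin^c$.
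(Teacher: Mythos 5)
There is a genuine gap in the central mechanism of your argument. You propose that setting $V=0$ can be localized to each elementary bimodule and that it kills the new $C$-driven operations, leaving ``the bimodule one would build only from the $\Blg$-piece.'' But $V$ does not live in any of the local algebras $\Alg(n,k,\Matching)$ --- it only appears in the output ring $\OurRing$ at the very last tensor factor --- so ``setting $V=0$ on a local bimodule'' is not an operation you can perform. More importantly, the target bimodules $\KC(\Diag)$ from the earlier paper are \emph{not} defined over $\Blg(2n,k)=\Blg(2n,k,\emptyset)$; they are defined over $\Blg(2n,k,\Upwards)$, which still carries exterior variables $C_j$ with $dC_j=U_j$. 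The new central elements $C_{\{i,j\}}$ (with $dC_{\{i,j\}}=U_iU_j$) must therefore be \emph{matched} with the elements $U_i\cdot C_j$ of the old algebra, not discarded: for instance $\delta^1_1$ of the maximum outputs $C_{\{c,c+1\}}\otimes\XX$, which corresponds to $C_cU_{c+1}\otimes\XX$ in $\Crit_c$ and does not vanish in $\KCm/\ ( V=0)$. An argument that deletes the $C$-terms would produce the wrong answer already for a single local maximum.

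The missing idea is an intertwining object rather than a local specialization: one needs the algebra homomorphism $\phi\colon\Alg(n,k,\Partition)\to\Blg(2n,k,\Upwards)$ with $\phi(C_{\{i,j\}})=U_i\cdot C_j$ (and its companion $\psi\colon\Blg(2n,k,\Upwards)\to\Alg'$ with $\psi(C_j)=E_j$), the induced $DA$ bimodule $\lsup{\Blg}[\phi]_{\Alg}$, and the commutation relation
\[
\lsup{\Blg_2}[\phi]_{\Alg_2}\DT\lsup{\Alg_2}\PartInv(\Diag)_{\Alg_1}\simeq\lsup{\Blg_2}\KC(\Diag)_{\Blg_1}\DT\lsup{\Blg_1}[\phi]_{\Alg_1},
\]
verified for each elementary piece by dualizing to the type $DD$ bimodules, comparing them after tensoring with $\CanonDD$ (the explicit isomorphisms add terms of the form $\sum_{j\in\Upwards}C_j\otimes E_j$ to the identity), and invoking the invertibility of $\CanonDD$; in particular the minimum is handled by the same $DD$-dualization, not by a separate homological-perturbation argument. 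The specialization $V=0$ then enters exactly once, at the end: $\Cwz(\orK)/V$ is $\lsub{\Field[U]}\Field[U]_{\AlgLoc}\DT\PartInv(\Diag)$, and $\Field[U]_{\AlgLoc}$ factors through $[\phi]$, after which the commutation relation slides $[\phi]$ all the way across the diagram to recover $\KCm$. Your observation about where the orientation reversal comes from is correct, and the overall ``compare piece by piece, then assemble'' shape is right, but without the bimodule $[\phi]$ there is no well-defined map to assemble.
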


The proof compares the two constructions one bimodule at a time.

In~\cite{BorderedKnots}, we defined an algebra $\Blg(m,k,\Upwards)$, where $\Upwards\subset \{1,\dots,m\}$.
There is a corresponding knot invariant, defined in $\Blg(2n,n,\Upwards)$, where $|\Upwards|=n$.
Let $\Upwards$ be a section of $\Partition$; i.e. 
each element of $p\in \Partition$, $p\cap \Upwards$ consists of one element.
There is a map
\[ \phi\colon \Alg(n,k,\Partition)\to \Blg(2n,k,\Upwards),\] with
\[ \phi(C_{\{i,j\}})=U_i\cdot C_j,\]
if $j\in\Upwards$.
There is an associated type $DA$ bimodule $\lsup{\Blg}[\phi]_{\Alg}$.

There is also an algebra map
\[ \psi\colon \Blg(2n,k,\Upwards)\to \Alg'(n,k,\Matching), \] that
induces the identity map on the subalgebra $\Blg(2n,k)$ of both
$\Blg(2n,k,\Upwards)$ and $\Alg'(n,k,\Matching)$, and that has
$\psi(C_j)=E_j$ for all $j\in\Upwards$. This gives a bimodule
$\lsup{\Alg'}[\psi]_{\Blg}$.

The bimodules $[\phi]$ and $[\psi]$ can be used to express identities
between the various type $DD$ bimodules from~\cite{BorderedKnots} with
their analogues in the present work. We start with the simplest case,
the canonical type $DD$ bimodules:

\begin{lemma}
  \label{lem:CompareMorphisms}
  Let $\Blg=\Blg(2n,k,\Upwards)$, 
  $\Blg'=\Blg(2n,2n+1-k,\{1,\dots,2n\}\setminus \Upwards)$,
  $\Alg=\Alg(n,k,\Matching)$, and $\Alg'=\Alg'(n,2n+1-k,\Matching)$.
  There is an isomorphism of type $DD$ bimodules:
  \[ \lsup{\Alg'}[\psi]_{\Blg'}\DT\lsup{\Blg',\Blg}\CanonDD
  \cong \lsup{\Blg}[\phi]_{\Alg}\DT\lsup{\Alg,\Alg'}\CanonDD.\]
\end{lemma}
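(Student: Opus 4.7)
The plan is a direct computation: describe both $DD$ bimodules explicitly and exhibit an explicit change-of-basis isomorphism between their differentials. Both sides share the same underlying generating set of pairs of complementary idempotent states $\gen_\x$, since $[\phi]$ and $[\psi]$ are the $DA$ bimodules attached to DGA maps that restrict to the identity on the idempotent rings, and $\DT$-tensoring with such bimodules does not change the generators of canonical $DD$ bimodules.

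Applying $\psi$ to the first tensor factor of the differential of $\lsup{\Blg',\Blg}\CanonDD$ (whose form in $\Blg'\otimes\Blg$ is the natural generalization of the case $\Upwards=\emptyset$ from~\cite{BorderedKnots}), I obtain that the LHS has differential
\[
\sum_i(L_i\otimes R_i + R_i\otimes L_i) + \sum_{j\in\Upwards}U_j\otimes C_j + \sum_{i\notin\Upwards}E_i\otimes U_i \in \Alg'\otimes\Blg.
\]
Similarly, applying $\phi$ to the first factor of the differential of $\lsup{\Alg,\Alg'}\CanonDD$ from Section~\ref{sec:DefCanonDD}, I obtain the RHS differential
\[
\sum_i(L_i\otimes R_i + R_i\otimes L_i) + \sum_i U_i\otimes E_i + \sum_{\{i,j\}\in\Matching,\,j\in\Upwards}U_iC_j\otimes\llbracket E_i,E_j\rrbracket \in \Blg\otimes\Alg'.
\]

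After reordering tensor factors so that both sides live in $\Blg\otimes\Alg'$, the isomorphism $f$ takes the form $f(\gen_\x)=(1+H+\cdots)\cdot\gen_\x$ with leading correction $H=\sum_{j\in\Upwards}C_j\otimes E_j$. The crucial identity is
\[
dH = \sum_{j\in\Upwards}(U_j\otimes E_j + C_j\otimes U_j),
\]
which in characteristic~$2$ exactly interchanges the simple $\Upwards$-parts of the two differentials. The matching-dependent corrections $U_iC_j\otimes\llbracket E_i,E_j\rrbracket$ in $A_{\mathrm{RHS}}$ then arise from the cross term $HA_{\mathrm{RHS}} + A_{\mathrm{LHS}}H$: for $k\in\Upwards$ and $i\notin\Upwards$,
\[
(C_k\otimes E_k)(U_i\otimes E_i) + (U_i\otimes E_i)(C_k\otimes E_k) = U_iC_k\otimes(E_kE_i + E_iE_k),
\]
which vanishes for $\{i,k\}\notin\Matching$ and equals $U_iC_k\otimes\llbracket E_i,E_k\rrbracket$ for $\{i,k\}\in\Matching$ by the defining relations of $\Alg'$.

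The main obstacle will be dealing with $|\Upwards|\geq 2$. In that case, extra ``double-$C$'' terms such as $(U_{j_1}C_{j_2}+U_{j_2}C_{j_1})\otimes E_{j_1}E_{j_2}$ and $C_{j_1}C_{j_2}\otimes(U_{j_1}E_{j_2}+U_{j_2}E_{j_1})$ appear in the cross expression; fortunately, their sum is precisely $d(C_{j_1}C_{j_2}\otimes E_{j_1}E_{j_2})$. This pattern continues, so I will iteratively augment $f$ with higher-order corrections $H^{(k)} = \sum C_{j_1}\cdots C_{j_k}\otimes E_{j_1}\cdots E_{j_k}$ summed over $k$-element subsets of $\Upwards$. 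The procedure terminates since $\Upwards$ is finite and $C_j^2 = 0$; closure at each order uses the centrality of the $C_j$'s in $\Blg$, the anticommutativity of distinct $E_j$'s in $\Alg'$ for $\{j,j'\}\notin\Matching$, and the centrality of $\llbracket E_i,E_j\rrbracket$ in $\Alg'$ for $\{i,j\}\in\Matching$.
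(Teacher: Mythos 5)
Your proposal follows the same route as the paper's proof: identify the generators of the two sides, write out both differentials explicitly (your formulas for them agree with the paper's, up to the ordering of tensor factors), and conjugate by an explicit algebra element built from $\sum_{j\in\Upwards}C_j\otimes E_j$, using $dC_j=U_j$, $dE_j=U_j$, and the relation $E_iE_j+E_jE_i=\llbracket E_i,E_j\rrbracket$ (zero unless $\{i,j\}\in\Matching$) to match the two differentials.

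The one substantive difference is that the paper's proof records only the first-order correction, taking $\Phi(\x)=\x+\bigl(\sum_{j\in\Upwards}C_j\otimes E_j\bigr)\otimes\x$, and asserts that this is already a $DD$-bimodule homomorphism. Your observation that this cannot be the whole story when $|\Upwards|\geq 2$ appears to be correct: for distinct $j_1,j_2\in\Upwards$ the cross terms of $H=\sum_{j\in\Upwards}C_j\otimes E_j$ with the summands $U_{j}\otimes E_{j}$ ($j\in\Upwards$) of one differential and $C_j\otimes U_j$ of the other leave exactly the residue $(U_{j_1}C_{j_2}+U_{j_2}C_{j_1})\otimes E_{j_1}E_{j_2}+C_{j_1}C_{j_2}\otimes(U_{j_1}E_{j_2}+U_{j_2}E_{j_1})=d(C_{j_1}C_{j_2}\otimes E_{j_1}E_{j_2})$, which does not vanish and must be absorbed by a second-order correction, and similarly at higher orders (including the interaction with the $U_iC_j\otimes\llbracket E_i,E_j\rrbracket$ terms). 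So your iterative scheme, i.e.\ conjugating by $\prod_{j\in\Upwards}(1+C_j\otimes E_j)=\sum_k H^{(k)}$, is the correct completion; note also that this element is its own inverse (each factor squares to $1$ since $C_j^2=0$, and the factors commute in characteristic $2$ because distinct $j,j'\in\Upwards$ are never matched), which recovers the paper's claim that $\Phi\circ\Psi=\Psi\circ\Phi=\Id$. In short: same approach, but your version supplies corrections that the paper's ``it is easy to see'' actually requires.
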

\begin{proof}
  The generators of
  $\lsup{\Blg,\Alg'}X=~\lsup{\Alg'}[\psi]_{\Blg'}\DT\lsup{\Blg',\Blg}\CanonDD$
  and $\lsup{\Blg,\Alg'}Y=~\lsup{\Blg}[\phi]_{\Alg}\DT\lsup{\Alg,\Alg'}\CanonDD$
  are identified, so there are maps in both directions.
  These identifications are not chain maps, though.
  The differential in 
  $X$ contains the usual terms $L_i\otimes R_i$ and $R_i\otimes L_i$, and 
  \[\sum_{j\in\Upwards'} U_j\otimes E_j+\sum_{j\in\Upwards} C_j\otimes U_j;\] 
  and the differential in $Y$ contains terms of the form
  \[\sum_{j=1}^{2n} U_j\otimes E_j + \sum_{j\in\Upwards, \{i,j\}\in\Matching} U_i C_{j} \otimes [E_i,E_j].\]
  
  Consider maps
  \[ 
  \Phi\colon\lsup{\Blg,\Alg'} X
  \to \lsup{\Blg,\Alg'}Y
  \qquad
  {\text{and}}\qquad
  \Psi\colon\lsup{\Blg,\Alg'} Y
  \to  \lsup{\Blg,\Alg'} X
  \]
  given by the formulas
  \begin{align*}
    \Phi(\x)&=\x + (\sum_{j\in\Upwards} C_j \otimes E_j)\otimes \x \\
    \Psi(\x)&=\x + (\sum_{j\in\Upwards} C_j \otimes E_j)\otimes \x 
  \end{align*}
  It is easy to see that both $\Phi$ and $\Psi$ are $DD$-bimodule homomorphisms; and that
  $\Phi\circ \Psi$ and $\Psi\circ \Phi$ are the identity maps.
\end{proof}

Let ${\mathcal D}$ be a partial knot diagram with two boundaries
$\partial_1{\mathcal D}$ and $\partial_2{\mathcal D}$.  Let
$\lsup{\Blg_2}\KC({\mathcal D})_{\Blg_1}$ be the associated type $DA$
bimodule from~\cite{BorderedKnots}; and let
$\lsup{\Alg_2}\PartInv({\mathcal D})_{\Alg_1}$ denote the bimodules
constructed in the present work, as in Section~\ref{sec:Construction}.

\begin{prop}
  \label{prop:ComparePieces}
  For any partial knot diagram, we have that
  \begin{equation}
    \label{eq:CompareBimodules}
    \lsup{\Blg_2}[\phi]_{\Alg_2}\DT \lsup{\Alg_2}\PartInv({\mathcal D})_{\Alg_1}
  \simeq~
   \lsup{\Blg_2}\KC({\mathcal D})_{\Blg_1}\DT
   \lsup{\Blg_1}[\phi]_{\Alg_1}\end{equation}
\end{prop}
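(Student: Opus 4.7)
The plan is to reduce Equation~\eqref{eq:CompareBimodules} to the basic pieces from which partial knot diagrams are built—positive crossings, negative crossings, local maxima, and local minima—and verify the identity on each. The reduction is formal: writing $\mathcal D = \mathcal D_2\cup\mathcal D_1$ as a stack of two partial diagrams, both $\PartInv$ and $\KC$ distribute as tensor products (using Proposition~\ref{prop:AdaptedTensorProd} and its analogue in~\cite{BorderedKnots}), so if the identity holds for $\mathcal D_1$ and $\mathcal D_2$ then we can slide $\lsup{\Blg}[\phi]_{\Alg}$ across the stack one piece at a time. The appropriate ``transport'' between the pieces is essentially the homotopy produced in Lemma~\ref{lem:CompareMorphisms}.

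To verify the identity on a single basic piece $\mathcal D_0$, I would pass to the type $DD$ side by tensoring both sides of Equation~\eqref{eq:CompareBimodules} on the right with $\lsup{\Alg_1,\Alg_1'}\CanonDD$. By Theorem~\ref{thm:InvertibleDD}, $\CanonDD$ is invertible, so this operation loses no information. On the left hand side, Propositions~\ref{prop:DualCross} and~\ref{prop:MaxDual} and Theorem~\ref{thm:MinDual} identify $\PartInv(\mathcal D_0)\DT\CanonDD$ with one of the explicit type $DD$ bimodules of the present paper ($\Pos_i$, $\Neg_i$, $\Max_c$, or $\Min_c$), and Lemma~\ref{lem:CompareMorphisms} rewrites the further tensor with $\lsup{\Blg}[\phi]_{\Alg}$ in terms of $\lsup{\Alg'}[\psi]_{\Blg'}$ applied to the corresponding $DD$ bimodule over $(\Blg',\Blg)$. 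On the right hand side, the analogous duality statements from~\cite{BorderedKnots} identify $\KC(\mathcal D_0)\DT\CanonDD^{\mathrm{old}}$ with the corresponding type $DD$ bimodule of the earlier paper. Thus Equation~\eqref{eq:CompareBimodules} is reduced to a direct comparison, via the algebra maps $\phi$ and $\psi$, between the type $DD$ bimodule of each basic piece here and the type $DD$ bimodule of the same piece in~\cite{BorderedKnots}.

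This comparison is now an explicit computation in the style of Lemma~\ref{lem:CompareMorphisms}. In every case the two $DD$ bimodules have generators indexed by the same set of idempotent pairs, and their differentials agree on every term not involving the new ``critical'' algebra elements; the discrepancy consists of terms of the form $U_j\otimes C_j$ (in the old $DD$ bimodule) versus $U_j\otimes E_j$ together with $C_{\{i,j\}}\otimes \llbracket E_i, E_j\rrbracket$ (in the new one). The explicit chain homotopy
\[
\Phi(\x)=\x + \Bigl(\sum_{j\in\Upwards} C_j\otimes E_j\Bigr)\otimes\x
\]
from Lemma~\ref{lem:CompareMorphisms} absorbs exactly this discrepancy; the verification that $\Phi$ is a $DD$ bimodule homomorphism relies only on $d\llbracket E_i,E_j\rrbracket=0$, $dC_{\{i,j\}}=U_iU_j$, $dC_j=U_j$, and the matching condition. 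Because the local $DA$ bimodule $\Min^c$ is built inductively from $\Min^1$ and crossings (Section~\ref{subsec:GenMin}), and both sides of Equation~\eqref{eq:CompareBimodules} respect tensor products, it suffices to verify the minimum case for $c=1$, where $\Min^1$ is given explicitly in Section~\ref{subsec:DAminCOne}.

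The hardest step will be handling the cases in which the matching $\Matching$ pairs two boundary points of the same piece (i.e.\ $\{i,i+1\}\in\Matching$ at a crossing, or $c$ matched across a maximum/minimum), since then the new $DD$ bimodule carries extra ``internal'' arrows (e.g.\ the terms in Equations~\eqref{ExtendingCs} and the $C_{\{c,c+1\}}\otimes 1$ arrow at a maximum) that have no counterpart in~\cite{BorderedKnots}. I expect these to cancel out once $\Phi$ is applied, but writing down the cancellation cleanly for the minimum, whose bimodule already has the most complicated internal structure, is where the bulk of the work will lie.
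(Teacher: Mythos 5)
Your proposal is correct and follows essentially the same route as the paper: reduce to the elementary pieces, tensor with the invertible $\CanonDD$ to pass to type $DD$ bimodules, and compare the resulting $\phi$-twisted new $DD$ bimodules with the $\psi$-twisted old ones via the isomorphism $\Id + \sum_{j\in\Upwards} C_{\tau(j)}\otimes E_j$, exactly as in Lemma~\ref{lem:CompareMorphisms}. The only (immaterial) divergence is at the minimum, where the paper compares $\Min_c$ with $\Crit_c$ directly on the $DD$ side for all $c$ rather than inducting down to $c=1$.
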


\begin{proof}
  By associativity of tensor products,
  it suffices to check Equation~\ref{eq:CompareBimodules} for the 
  elementary bimodules associated to a  crossing, a maximum, and a minimum.

  We start with a positive crossing.  Recall that
  in~\cite{BorderedKnots}, we defined the bimodule
  $\lsup{\Blg_1,\Blg_2'}\Pos_i$ of a positive crossing, where
  $\Blg_1=\Blg(2n,k,\Upwards)$ and
  $\Blg_2=\Blg(2n,2n+1-k,\{1,\dots,2n\}\setminus \tau(\Upwards))$.
  The generators over the idempotents are the same as for the
  $DD$-bimodule of a positive $\Pos_i$ defined over the matched
  algebras from Section~\ref{subsec:DDcross}.  In fact, the terms of
  Types~\ref{type:OutsideLRP} and~\ref{type:InsideP} are the same as
  those for the $DD$ in Section~\ref{subsec:DDcross}; there are no
  terms of Type~\ref{type:UCCP}, and instead of terms of
  Type~\ref{type:UCP}, we have terms for each $j=1,\dots,2n$ of the
  form $C_j\otimes U_{\tau(j)}$ if $j\in\Upwards$ and $U_j\otimes
  C_{\tau(j)}$ if $j\not\in\Upwards$. 

  To verify 
 \begin{equation}
   \label{eq:WhatWeWantPos}
   \lsup{\Blg_2}[\phi]_{\Alg_2}\DT \lsup{\Alg_2}\Pos^i_{\Alg_1}
  \simeq~ \lsup{\Blg_2}\Pos^i_{\Blg_1}\DT
  \lsup{\Blg_1}[\phi]_{\Alg_1},
  \end{equation}
  we tensor both sides with the canonical type $DD$ bimodule
  $\lsup{\Alg_1,\Alg_1'}\CanonDD$, where $\Alg'_1=\Alg'(n,
  2n+1-k,\Matching)$, and appeal to Theorem~\ref{thm:InvertibleDD}.
  By Proposition~\ref{prop:DualCross},
  the left side gives
\[     \lsup{\Blg_2}[\phi]_{\Alg_2}\DT ~\lsup{\Alg_2,\Alg_1'}\Pos_i
\]
Using Lemma~\ref{lem:CompareMorphisms} and
 the fact that $\Pos^i$ is also dual to $\Pos_i$ for the previous algebras (\cite[Lemma~\ref{BK:lem:CrossingDADD}]{BorderedKnots}), we can identify 
the right hand side with
\begin{align*}
    \lsup{\Blg_2}\Pos^i_{\Blg_1}\DT \lsup{\Blg_1}[\phi]_{\Alg_1}\DT \lsup{\Alg_1,\Alg_1'}\CanonDD  
    & \simeq \lsup{\Blg_2}\Pos^i_{\Blg_1}\DT (\lsup{\Alg'_1}[\psi]_{\Blg'_1}\DT \lsup{\Blg'_1,\Blg_1}\CanonDD) \\
    & \simeq\lsup{\Alg'_1}[\psi]_{\Blg'_1}\DT ( \lsup{\Blg_2}\Pos^i_{\Blg_1}\DT \lsup{\Blg'_1,\Blg_1}\CanonDD) \\
    &\simeq \lsup{\Alg'_1}[\psi]_{\Blg'_1}\DT ~\lsup{\Blg_1',\Blg_2}\Pos_i
    \end{align*}

    Thus, Equation~\eqref{eq:WhatWeWantPos} is reduced to the identification
\[  \lsup{\Blg_2}[\phi]_{\Alg_2}\DT ~\lsup{\Alg_2,\Alg_1'}\Pos_i
\simeq \lsup{\Alg'_1}[\psi]_{\Blg'_1}\DT ~\lsup{\Blg_1',\Blg_2}\Pos_i.\]
To verify that identification, note that the generators of $\lsup{\Blg_2}[\phi]_{\Alg_2}\DT ~\lsup{\Alg_2,\Alg_1'}\Pos_i$
  correspond to those of $\Pos_i$, with terms in the differential of Types~\ref{type:OutsideLRP},~\ref{type:UCP},
  and~\ref{type:InsideP} as before; the terms of Type~\ref{type:UCCP} 
  are replaced by terms of the form
  $U_{\tau(\alpha)} C_{\tau(\beta)}\otimes [E_{\alpha},E_{\beta}]$ for all $\{\alpha,\beta\}\in M$, with 
  $\beta\in\Upwards$.
  Similarly, the generators for $\lsup{\Alg'_1}[\psi]_{\Blg'_1}\DT ~\lsup{\Blg_1',\Blg_2}\Pos_i$
  are the same, with differentials of Types~\ref{type:OutsideLRP} and~\ref{type:InsideP} as before;
  the only terms
  $U_{\tau(j)}\otimes E_j$ 
  of Type~\ref{type:UCP} appearing now are those for which $j\not\in\Upwards$.
  Also, there are no terms of Type~\ref{type:UCCP}.
  As in  the proof of Lemma~\ref{lem:CompareMorphisms},
  there is an isomorphism
  $X\to Y$ (and back) obtained by adding to the identity map
  the terms $\sum_{j\in \Upwards} C_{\tau(j)}\otimes E_j$.
  
  The result for $\Neg^i$ follows formally, since $\Neg^i$ is the inverse of $\Pos^i$.

  Consider next Equation~\eqref{eq:CompareBimodules} in the case where ${\mathcal D}$ consists of 
  a single local maximum, i.e.
  where
  \[\KC({\mathcal D})= ^{\Blg_2}\Max^c_{\Blg_1}
  \qquad{\text{and}}\qquad\PartInv({\mathcal D})=\lsup{\Alg_2}\Max^c_{\Alg_1},\] 
  where 
  \[ 
  \Blg_1=\Blg(2n,k,\Upwards)\qquad{\text{and}}\qquad
  \Blg_2=\Blg(2n+2,k+1,\Upwards_2),\]
  for 
  \[\Upwards_2=\Upwards_1\cup\{c\}\qquad{\text{or}}\qquad
  \Upwards_2=\Upwards_1\cup\{c+1\},\] 
  where $\Upwards_1=\phi(\Upwards)$. Let
  $\Blg_1'=\Blg(2n,2n+1-k,\{1,\dots,2n\}\setminus\Upwards)$.  Recall that in~\cite[Section~\ref{BK:sec:Crit}]{BorderedKnots}, we defined 
  a bimodule $\lsup{\Blg_2,\Blg_1'}\Crit_c$ with the property that
  \[ \lsup{\Blg_2}\Max^c_{\Blg_1}\DT \lsup{\Blg_1,\Blg_1'}\CanonDD 
  \simeq \lsup{\Blg_2,\Blg_1'}\Crit^c.\]
  The generators of $\Crit_c$ are the same as those for $\Max_c$; and the terms in the differential
  of $\Crit_c$ are the terms of Type~\ref{type:MOutsideLR} and~\ref{type:MInsideCup};
  instead of terms of Type~\ref{type:MUC}, we have terms for $j=1,\dots,2n$ of type
  $C_{\phi_c(j)}\otimes U_j$ if $j\in\Upwards_2$ or
  $U_{\phi_c(j)}\otimes C_j$ if $j\not\in\Upwards_2$; 
  and instead of the term of Type~\ref{type:MUC2}, we have
  $C_c U_{c+1}\otimes 1$ if $c\in\Upwards_2$
  and $U_c U_{c+1}\otimes 1$ if $c\in\Upwards_2$.
  Equation~\eqref{eq:CompareBimodules} for $\Max^c$ reduces to verifying
  \[ \lsup{\Blg_2}[\phi]_{\Alg_2}\DT\lsup{\Alg_2,\Alg_1'}\Max_c
  \simeq \lsup{\Alg_1'}[\psi]_{\Blg_1'}\DT \lsup{\Blg_1',\Blg_2}\Crit_c.\]
  This is shown by the usual identification of generators, added to
  $C_{\phi_c}(j)\otimes E_j$ for all $j\in\Upwards_1$.

  Similarly, to verify Equation~\eqref{eq:CompareBimodules} for $C({\mathcal D})=\Min^c$,
  we reduce to the identity
  \[ \lsup{\Blg_2}[\phi]_{\Alg_2}\DT\lsup{\Alg_2,\Alg_1'}\Min_c
  \simeq \lsup{\Alg_1'}[\psi]_{\Blg_1'}\DT \lsup{\Blg_1',\Blg_2}\Crit_c\]
  (where now $\Blg_2=\Blg(2n,k,\Upwards_1)$
  and $\Blg_1=\Blg(2n+2,k+1,\Upwards_1\cup \{c\})$
  or $\Blg(2n+2,k+1,\Upwards_1\cup \{c+1\})$),
  which is verified in the same way.
\end{proof}

\begin{proof}[Proof of Proposition~\ref{prop:Compare}]
  Let $K$ be a knot and let ${\mathcal D}$ be the partial knot diagram
  with the global minimum removed.  When the strand $2$ is oriented
  upwards, $\KC({\mathcal D})$ is a type $D$ structure over the
  algebra
  \[ \Blg=\Idemp{\{1\}}\cdot \Blg(2,1,\{2\})\cdot \Idemp{\{1\}} \cong
  \frac{\Field[U_1,U_2,C_2]}{(C_2^2, U_1 U_2)} \] with $d C_2=U_2$. We
  can think of $\Field[U]$ as a bimodule
  $\lsub{\Field[U]}\Field[U]_{\Blg}$, where the action by $U_2$ and
  $C_2$ are $0$, and $U_1$ acts on the right as as multiplication by $U$.  By
  construction,
  \[ \KC(\orK)=\lsub{\Field[U]}\Field[U]_{\Blg}\DT \lsup{\Blg}\KC({\mathcal D}).\]

  Similarly, by Proposition~\ref{prop:RestrictIdempotent},
  $\PartInv({\mathcal D})$ is a type $D$ structure over the algebra
  \[ \AlgLoc=\Idemp{\{1\}}\cdot \Alg(2,1)\cdot \Idemp{\{1\}}\cong
  \frac{\Field[U,V,C]}{(U V, C^2)},\] equipped with trivial
  differential.  The complex $\lsub{\OurRing}\Cwz(-\orK)$ is obtained
  from $\lsup{\AlgLoc}\PartInv({\mathcal D})$ 
  by tensoring with $\lsub{\OurRing}\OurRing_{\OurRing}\DT
  \lsup{\OurRing}
  \TerMin_{\AlgLoc}=\lsup{\OurRing}\OurRing_{\AlgLoc}$, where
  $\OurRing=\frac{\Field[U,V]}{UV}$ and the right action by $C$ is
  defined to vanish, and $U_1$ acts as multiplication by $U$; i.e.
  \[ \lsub{\OurRing}\Cwz(K)=\lsub{\OurRing}\OurRing_{\AlgLoc}\DT \lsup{\AlgLoc}\PartInv({\mathcal D}).\]

  But
  \begin{align*}
    V\backslash\lsub{\OurRing}\Cwz(-K)&=
        \lsub{\Field[U]}{\Field[U]}_{\AlgLoc}
        \DT \lsup{\AlgLoc}\PartInv({\mathcal D}) \\
        &= 
        \lsub{\Field[U]}{\Field[U]}_{\Blg}\DT \lsup{\Blg}[\phi]_{\AlgLoc}
        \DT \lsup{\AlgLoc}\PartInv({\mathcal D}) \\
    &\simeq 
        \lsub{\Field[U]}{\Field[U]}_{\Blg}\DT \KCm({\mathcal D})
        =\lsub{\Field[U]}\KCm(\orK)
        \end{align*}
        where the last homotopy equivalence uses Proposition~\ref{prop:ComparePieces}. 
\end{proof}

\subsection{Connected sums}

The previous result can be used to verify the following K{\"u}nneth property
for $\KHm$ under connected sums. Specifically, we have the following:

\begin{prop}
  \label{prop:ConnSum}
  Let $\orK_1$ and $\orK_2$ be two oriented knots. The invariant of their connected sum is given by
  \[\KCm(\orK_1\#\orK_2)\simeq \KCm(\orK_1)\otimes_{\Field[U]} \KCm(\orK_2);\]
  and hence 
  \[\KHm(\orK_1\#\orK_2)\cong (\KHm(\orK_1)\otimes_{\Field[U]} \KHm(\orK_2))
  \oplus \Tor_{\Field[U]}(\KHm(\orK_1),\KHm(\orK_2));\]
  where the $\Tor$ appearing above is equipped with its natural shift in
  bigrading.
\end{prop}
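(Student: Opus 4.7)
The strategy is to reduce to a chain-level equivalence $\KCm(\orK_1\#\orK_2)\simeq \KCm(\orK_1)\otimes_{\Field[U]}\KCm(\orK_2)$; the $\Tor$ statement in the homology then follows from the usual K\"unneth theorem for chain complexes over the PID $\Field[U]$.

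For the chain-level statement, I would first invoke Proposition~\ref{prop:Compare} to translate to the equivalence $\Cwz(\orK_1\#\orK_2)/V \simeq \bigl(\Cwz(\orK_1)/V\bigr)\otimes_{\Field[U]}\bigl(\Cwz(\orK_2)/V\bigr)$, then choose a diagram for $\orK_1\#\orK_2$ that manifestly exhibits this tensor structure. Specifically, pick bridge-position diagrams $\mathcal{D}_i$ for $\orK_i$, remove the global minimum arc of $\mathcal{D}_1$ and the global maximum arc of $\mathcal{D}_2$, and stack $\mathcal{D}_1^{\min}$ directly above $\mathcal{D}_2^{\max}$, identifying the four free endpoints. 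The global minimum of the combined diagram is the original global minimum of $\mathcal{D}_2$, and so the partial diagram with that arc removed decomposes as $\mathcal{D}_1^{\min}$ stacked above $\mathcal{D}_2^{\max,\min}$. Consequently,
\[
\Cwz(\orK_1\#\orK_2)\;\cong\;\OurRing\DT\TerMin\DT\PartInv(\mathcal{D}_2^{\max,\min})\DT\PartInv(\mathcal{D}_1^{\min}),
\]
while $\Cwz(\orK_1)\cong \OurRing\DT\TerMin\DT\PartInv(\mathcal{D}_1^{\min})$ and $\Cwz(\orK_2)\cong \OurRing\DT\TerMin\DT\PartInv(\mathcal{D}_2^{\max,\min})\DT\Max^1$, with $\Max^1$ the local-maximum bimodule needed to cap off $\mathcal{D}_2^{\max,\min}$ and recover $\mathcal{D}_2^{\min}$.

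Next I would analyze the intermediate gluings. By Proposition~\ref{prop:RestrictIdempotent}, all three partial invariants are supported on the single idempotent $\Idemp{\{1\}}$ at each two-strand gluing slice, so the relevant tensor products actually take place over $\AlgLoc\cong \Field[U_1,U_2,C_{\{1,2\}}]/(U_1U_2,C_{\{1,2\}}^2)$. Restricted to $\AlgLoc$, the bimodule $\TerMin$ is one-dimensional, with $U_1\mapsto U$, $U_2\mapsto V$ and $C_{\{1,2\}}\mapsto 0$; after quotienting by $V$ this becomes $U_1\mapsto U$, $U_2\mapsto 0$, $C_{\{1,2\}}\mapsto 0$, so the effective intermediate algebra is $\AlgLoc/(U_2,C_{\{1,2\}})\cong \Field[U]$. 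A direct inspection shows that $\Max^1$ carries a single generator in the $\Idemp{\{1\}}$-component with $\delta^1_1$ acting as $C_{\{1,2\}}\otimes\Id$, which becomes trivial after the substitution $C_{\{1,2\}}\mapsto 0$; consequently, once we pass to $/V$, $\PartInv(\mathcal{D}_2^{\max,\min})\DT\Max^1$ agrees with the $\Idemp{\{1\}}$-restricted reduction of $\PartInv(\mathcal{D}_2^{\max,\min})$ itself. Combining these observations yields the desired factorization
\[
\Cwz(\orK_1\#\orK_2)/V \simeq \bigl(\Cwz(\orK_2)/V\bigr)\otimes_{\Field[U]}\bigl(\Cwz(\orK_1)/V\bigr).
\]

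The main technical obstacle will be verifying rigorously that the $\DT$-tensor product $\PartInv(\mathcal{D}_2^{\max,\min})\DT\PartInv(\mathcal{D}_1^{\min})$ over $\AlgLoc$ reduces, after the substitution $U_2,C_{\{1,2\}}\mapsto 0$, to an honest chain-complex tensor product over $\Field[U]$; this requires controlling the iterated $\Ainfty$-actions of $C_{\{1,2\}}$ and of $\Blg$-elements entering from both sides of the gluing slice. This is the point at which Proposition~\ref{prop:InvariantIsStandard} is essential: by Definition~\ref{def:Standard}, the iterated $C_{\{1,2\}}$-actions of a standard DA bimodule are pinned down modulo $\Blg\otimes X$ by Property~(DA-4), and so after the reduction $U_2,C_{\{1,2\}}\mapsto 0$ all residual contributions land in $\Field[U]\otimes X$ and respect the claimed tensor decomposition.
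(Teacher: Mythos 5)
Your overall strategy (reduce to $\Cwz/V$ via Proposition~\ref{prop:Compare}, factor the diagram, use standardness to control the gluing, finish with K\"unneth) matches the paper's in outline, but your diagrammatic decomposition is genuinely different: the paper places ${\mathcal D}_1$ and ${\mathcal D}_2$ side by side, observes $\PartInv({\mathcal D}_1\cup{\mathcal D}_2)=\PartInv({\mathcal D}_1)\otimes_{\Field}\PartInv({\mathcal D}_2)$ on the nose, and then concentrates all the work into an explicit computation of the ``minimum in the middle'' bimodule $\GenMin^1\DT\Pos^2\DT\Pos^1$ acting on that tensor product, whereas you stack ${\mathcal D}_1$ vertically above ${\mathcal D}_2$ and glue along a two-strand slice. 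Your model is legitimate, but it relocates rather than removes the hard computation, and as written that computation is missing.

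The concrete gap is in your treatment of the gluing slice. First, the claim that $\PartInv({\mathcal D}_2^{\max,\min})\DT\Max^1$ becomes ``$\PartInv({\mathcal D}_2^{\max,\min})$ itself'' after $C_{\{1,2\}}\mapsto 0$ is not right: the cap is a rank-one type $D$ structure with $\delta^1=C_{\{1,2\}}\otimes\Id$, so $\DT$ with it feeds arbitrarily long strings of $C_{\{1,2\}}$ into the standard $DA$ bimodule, and Property~\ref{prop:CStandard} only says that $\sum_{\ell}\delta^1_{1+\ell}(\x,C,\dots,C)$ equals $C^2\otimes\x$ plus a term in $\Blg_2\otimes X$ --- that $\Blg_2$-term is generically nonzero (compare $\delta^1_2(\XX,C_1)=U_\alpha\otimes\YY$ in $\GenMin^1$) and it survives the reduction $U_2,C\mapsto 0$ applied at the \emph{output} slice. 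Second, and more seriously, standardness controls only the $C$-inputs; it says nothing about how $U_1$ and $U_2$ at the gluing slice act on $\Field[U]\DT\PartInv({\mathcal D}_2^{\max,\min})$. For your factorization you need this $\Ainfty$-module over $\AlgLoc$ to be, up to homotopy, $\bigl(\Cwz(\orK_2)/V\bigr)$ with one of $U_1,U_2$ acting as multiplication by $U$, the other acting as $0$, $C$ acting as $0$, and all higher actions vanishing; the gradings adapted to the one-manifold $W$ only constrain the weights of the outputs, not that the maps are (homotopic to) scalar multiplications. Establishing exactly this kind of identification is the content of the paper's explicit diagram chase (the computation of the bimodule $P$ and of $Q\DT P$, showing that outputs $U_4^k$ and $U_2^k$-followed-by-$C_{34}$-strings are both converted to $U_2^k$), and an analogous explicit computation is required in your model before the tensor decomposition can be asserted.
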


\begin{proof}
  Consider a connected sum diagram for $K_1$ and $K_2$, where the
  connected sum region is taken to be the global minimum and the next
  minimum above it, as pictured in Figure~\ref{fig:ConnSum}.

\begin{figure}[ht]
\input{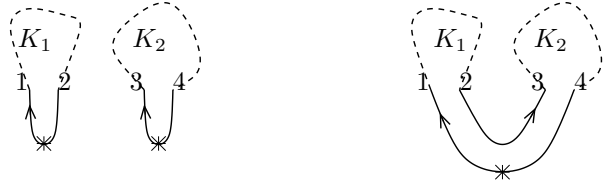}
\caption{\label{fig:ConnSum} {\bf{Connected sums.}}
  Form the disjoint union of the two diagrams on the left;
  and then form the connected sum as shown.}
\end{figure}

Let ${\mathcal D}_1$ and ${\mathcal D}_2$ be disjoint partial knot diagrams
for $K_1$ and $K_2$ with both global minima removed.
We can form the type $D$ structure
$\PartInv({\mathcal D}_1\cup {\mathcal D}_2)$
over $\Alg(2,2,\{\{1,2\},\{3,4\}\})$. By Proposition~\ref{prop:RestrictIdempotent},
the output algebra of this $D$ module is contained in 
$\Idemp{\{1,3\}}\cdot \Alg(2,2,\{\{1,2\},\{3,4\}\})
\cdot \Idemp{\{1,3\}}$. 
There is a natural identification
\begin{align*}
  \left(\Idemp{\{1\}}\cdot \Alg(1,1,\{\{1,2\}\})\cdot \Idemp{\{1\}}\right)
  &\otimes_{\Field}\left(\Idemp{\{1\}}\cdot \Alg(1,1,\{\{1,2\}\})\cdot \Idemp{\{1\}}\right) \\
& \cong 
\Idemp{\{1,3\}}\cdot \Alg(2,2,\{\{1,2\},\{3,4\}\})
\cdot \Idemp{\{1,3\}}.
\end{align*}
Under this identification,
\[ \PartInv({\mathcal D}_1\cup {\mathcal D}_2)=\PartInv({\mathcal
  D}_1)\otimes_{\Field}\PartInv({\mathcal D}_2).\] This can be seen by
drawing ${\mathcal D}_2$ so that all its crossings and critical points
occur above ${\mathcal D}_1$ (at which point there are simply two
strands extending from ${\mathcal D}_2$).

Consider the type $DA$ bimodule $\GenMin^2$ associated to the picture
on the left in Figure~\ref{fig:MinInMid}, with input algebra restricted to 
\[ \Alg=\Idemp{\{1,3\}}\cdot \Alg(2,2,\{\{1,2\},\{3,4\}\})\cdot \Idemp{\{1,3\}}
\cong
\frac{\Field[U_1,U_2,U_3,U_4,C_{\{1,2\}},C_{\{3,4\}}]}
{U_1 U_2=U_3 U_4=C_{\{1,2\}}^2=C_{\{3,4\}}^2=0}.\]
This bimodule is defined to
be the tensor product $\GenMin^1\DT \Pos^2\DT
\Pos^1=\PartInv({\mathcal D}_3)$, as shown on the right on the same
picture.

\begin{figure}[ht]
\input{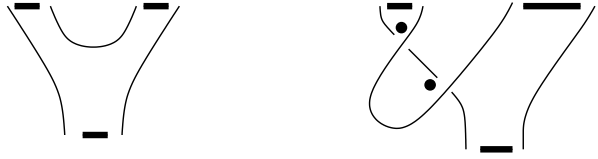}
\caption{\label{fig:MinInMid} {\bf{Bimodule for a minimum.}}
  The bimodule associated to the diagram on the left is a tensor
  product of the bimodules associated to the diagram on the right.}
\end{figure}

To understand the knot invariant for $K_1\# K_2$, we must understand
how $\PartInv({\mathcal D}_3))$ acts on the standard type $D$
structure $\PartInv({\mathcal D}_1\cup{\mathcal D}_2)$.  In fact,
since we consider only $\KCm(K_1\# K_2)\simeq \Cwz(\orK_1\#\orK_2)/V$
(in view of Proposition~\ref{prop:Compare}), it suffices to project
$\PartInv({\mathcal D}_3)$ to the part of the output algebra with
$w_1=0$, and restrict this to above algebra $\Alg$. Split
$\PartInv({\mathcal D}_3)=Q\DT P$, where $P$ is the bimodule
corresponding to the two crossing regions. The actions in $P$ (with
the algebras restricted as above, and $w_3=0$ in the output) are all
specified in the following diagram:

\[
    \begin{tikzpicture}
      \node at (-3,0) (WW) {${\North}\atop{\West}$} ;
      \node at (3,0) (SS) {$\North\atop\South$} ;
      \node at (6,0) (EE) {$\North\atop\East$};
      \draw[->] (WW) [bend left=5] to node[above,sloped] {\tiny{$L_2$}} (SS) ; 
      \draw[->] (SS) [bend left=5] to node[below,sloped] {\tiny{$R_2 U_4 \otimes C_{34}$}} (WW) ; 
      \draw[->] (WW) [loop above] to node[above] {\tiny{$U_4^k\otimes U_4^k+ C_{24}\otimes C_{34}$}} (WW);
      \draw[->] (SS) [loop above] to node[above] 
{\tiny{$U_1^k U_4^{\ell}\otimes U_2^k U_4^{\ell}+ C_{24}\otimes C_{34}$}} (SS);
      \draw[->] (EE) [loop below] to node[below] 
{\tiny{$U_1^k U_2^{\ell}\otimes U_2^k U_3^{\ell}+ C_{24}\otimes C_{34}$}} (SS);
    \end{tikzpicture}
\]

It follows readily from the description of the bimodule associated to
a minimum that the tensor product of this with $Q$ has one generator
(coming from $\North\atop\West$)
and actions 
\[ U_2^k\otimes U_4^k + U_2^{k+\ell} \otimes (U_2^k U_4^{\ell},C_{34}^{\otimes k}) .\] 
Since $\PartInv(\Diag_1\cup\Diag_2)$ is standard, it follows that an output
of $U_4^k$ from $\PartInv(\Diag_1\cup\Diag_2)$ is converted by this bimodule 
to an output $U_2^k$, and 
an output of $U_2^k$ from 
$\PartInv(\Diag_1\cup\Diag_2)$ (followed by a string of $C_{34}$ outputs, which
exist since the bimodule is standard) is converted to an output of $U_2^k$; see for example the
following diagram when $k=1$:
 \begin{equation}
   \begin{tikzpicture}
     \node at (-3,7) (m1in) {$\Min^1$} ;
     \node at (0,7) (m2in) {$\Pos^2$} ;
     \node at (2,7) (m3in) {$\Pos^1$} ;
     \node at (5,7) (m4in) {$\PartInv({\mathcal D}_1)\otimes \PartInv({\mathcal D}_2)$} ;
     \node at (5,6) (m4d1) {$\delta^1$} ;
     \node at (5,5) (m4d2) {$\delta^1$} ;
     \node at (2,5) (m3d1) {$\delta^1_{2}$} ;
     \node at (2,4) (m3d2) {$\delta^1_{2}$} ;
     \node at (0,5) (m2d1) {$\delta^1_{1}$} ;
     \node at (0,4) (m2d2) {$\delta^1_{2}$} ;
     \node at (0,3) (m2d3) {$\delta^1_{2}$} ;
     \node at (-3,3) (m1d) {$\delta^1_{4}$} ;
     \node at (-3,2) (m1out) {} ;
     \node at (0,2) (m2out) {} ;
     \node at (2,2) (m3out) {} ;
     \node at (5,2) (m4out) {} ;
     \node at (-4,2) (algout) {} ;
     \draw[modarrow] (m1in) to (m1d) ;
     \draw[modarrow] (m1d) to (m1out) ;
     \draw[modarrow] (m2in) to node[left] {\tiny{$\West$}} (m2d1) ;
     \draw[modarrow] (m2d1) to node[left] {\tiny{$\South$}}(m2d2) ;
     \draw[modarrow] (m2d2) to node[left] {\tiny{$\South$}} (m2d3) ;
     \draw[modarrow] (m2d3) to node[left] {\tiny{$\West$}} (m2out) ;
     \draw[modarrow] (m3in) to  node[left] {\tiny{$\North$}}(m3d1) ;
     \draw[modarrow] (m3d1) to  node[left] {\tiny{$\North$}}(m3d2) ;
     \draw[modarrow] (m3d2) to node[left]  {\tiny{$\North$}} (m3out) ;
     \draw[modarrow] (m4in) to (m4d1) ;
     \draw[modarrow] (m4d1) to (m4d2) ;
     \draw[modarrow] (m4d2) to (m4out) ;
     \draw[algarrow] (m4d1) to node[above, sloped] {\tiny{$U_2$}} (m3d1);
     \draw[algarrow] (m4d2) to node[above, sloped] {\tiny{$C_{\{3,4\}}$}} (m3d2);
     \draw[algarrow] (m2d1) to node[above,sloped] {\tiny{$L_2$}} (m1d) ;
     \draw[algarrow] (m3d1) to node[above,sloped] {\tiny{$U_1$}} (m2d2);
     \draw[algarrow] (m3d2) to node[above,sloped] {\tiny{$C_{\{3,4\}}$}} (m2d3);
     \draw[algarrow] (m2d2) to node[above,sloped] {\tiny{$U_1$}} (m1d);
     \draw[algarrow] (m2d3) to node[above,sloped] {\tiny{$U_4 R_2$}} (m1d);
     \draw[algarrow] (m1d) to node[above,sloped] {\tiny{$U_2$}} (algout) ;
   \end{tikzpicture}
 \end{equation}

It now follows from the above computation that
\begin{align*}
 (\OurRing/V) \otimes_{\OurRing}  \Cwz({\orK}_1\#{\orK}_2) &=
\lsub{\Field[U]}\Field[U]_{\Alg(1,1,\{1,2\})} \DT \GenMin^2 \DT
(\PartInv({\mathcal D}_1)\otimes\PartInv({\mathcal D}_2)) \\
&\cong
(\Cwz({\mathcal D}_1)/V) \otimes_{\Field[U]} (\Cwz({\mathcal D}_1)/V).
\end{align*} The conclusion about $\KCm$ now follows from
Proposition~\ref{prop:Compare}. The statement on the homological level
follows form the universal coefficient theorem.
\end{proof}

\section{Symmetries}
\label{sec:Symmetries}

We describe some natural symmetries of our knot invariant.

Let $S$ and $\OurRing$ be as in Section~\ref{sec:Intro}.

\begin{prop}
  Let $\orK$ be an oriented knot. Then,
  there is an isomorphism of bigraded $\OurRing$-modules
  $S(\Hwz(\orK))\cong \Hwz(-\orK)$.
\end{prop}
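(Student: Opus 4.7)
The plan is to exploit the fact that the orientation on $\orK$ enters the construction of $\Cwz$ only through two features: the orientation used to define $\TerMin$ in Section~\ref{subsec:ConstructInvariant}, and the section $\Upwards$ of the matching used to define the Alexander grading on the bordered algebras. First I would observe that, as \emph{unoriented} diagrams, $\Diag$ for $\orK$ and for $-\orK$ coincide, and that positive and negative crossings are preserved under a simultaneous orientation reversal of all strands. Consequently the $DA$ bimodules $\Pos^i$, $\Neg^i$, $\Max^c$, $\Min^c$ constructed in Sections~\ref{sec:Cross}, \ref{sec:Max}, and~\ref{sec:Min}, and hence the full type $D$ structure $\PartInv(\Diag_{[1,k]})$ over $\AlgLoc$, are literally identical for $\orK$ and $-\orK$.

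Next I would inspect the definition of $\TerMin$: when strand~$1$ is oriented downwards rather than upwards, the $\delta^1_2$ actions listed in Section~\ref{subsec:ConstructInvariant} differ only by swapping the output variables $u$ and $v$. After tensoring with $\lsub{\OurRing}\OurRing_{\OurRing}$ to form $\Cwz$, this is exactly the swap $U \leftrightarrow V$, since $U = u^2$ and $V = v^2$. Thus there is a canonical isomorphism of $\Field$-chain complexes $\sigma\colon \Cwz(\orK) \to \Cwz(-\orK)$ (essentially the identity on the underlying Kauffman-state vector space) satisfying $\sigma(U \cdot m) = V \cdot \sigma(m)$ and $\sigma(V \cdot m) = U \cdot \sigma(m)$, as required by the definition of $S$.

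The remaining task is to verify that $\sigma$ interacts with the bigrading as prescribed by $S$. The $\Delta$-grading on each algebra $\Alg(n,k,\Matching)$ is given by the orientation-independent formula $\Delta(a) = \#\{C_{i,j}\text{ dividing }a\} - \sum_i w_i(a)$, so it descends to a grading on each bimodule and on $\Cwz$ that is preserved by $\sigma$. The Alexander grading, by contrast, is defined by the orientation-dependent formula $\Alex(a) = -\sum_{s \in \Upwards} w_s(a) + \sum_{t \notin \Upwards} w_t(a)$, and reversing the orientation of $\orK$ replaces $\Upwards$ by its complement in $\{1,\dots,2n\}$ at every slice, which negates $\Alex$ on every homogeneous algebra element. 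Coupled with the facts $\Alex(u) = -\OneHalf$ and $\Alex(v) = +\OneHalf$ (so that $\Alex(U) = -1$ and $\Alex(V) = +1$), this shows $\sigma$ sends $\Cwz(\orK)_{\delta,s}$ to $\Cwz(-\orK)_{\delta,-s}$.

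Assembling these pieces yields a $\Delta$-preserving, Alexander-reversing, $U \leftrightarrow V$-intertwining chain isomorphism $\Cwz(\orK) \to S(\Cwz(-\orK))$; passing to homology and using $S \circ S = \mathrm{Id}$ gives the desired isomorphism $S(\Hwz(\orK)) \cong \Hwz(-\orK)$. The main bookkeeping obstacle will be checking at each step of the tensor product that the multigrading on the individual bimodules $\PartInv(\Diag_{[i,i+1]})$ does in fact transform by $\Alex \mapsto -\Alex$ when $\Upwards$ is complemented at both boundaries; this reduces to the local grading conventions recorded in Figure~\ref{fig:LocalCrossing} together with the analogous local conventions implicit in the construction of $\Max^c$ and $\Min^c$, but it is the place where a careful verification is required.
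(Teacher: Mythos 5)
Your proposal is correct and takes essentially the same approach as the paper's proof, which simply observes that reversing the orientation negates the local Alexander gradings of Figure~\ref{fig:LocalCrossing}, exchanges the roles of $U$ and $V$, and leaves the complex otherwise unchanged. The final bookkeeping step you flag is exactly the content of those local grading conventions, and the paper treats it as immediate.
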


\begin{proof}
  On $\Cwz(\orK)$, the local Alexander gradings
  (cf. Figure~\ref{fig:LocalCrossing}) all change by $-1$ when we
  reverse orientation.  Moreover, the roles of the $U$ and $V$
  variables are exchanged. The complexes otherwise remain the same. Taking homology gives the claimed symmetry.
\end{proof}

Taking mirrors also gives a symmetry. We begin with the following observation:

\begin{prop}
  \label{prop:OppositeBimodules}
  Under the identifications of algebras 
  \[ \Alg(n,k_1,\Matching)\cong
  \Alg(n,k_1,\Matching)^{\op} 
  \qquad{\text{and}}\qquad 
  \DuAlg(n,k_2,\Matching)\cong \DuAlg(n,k_2,\Matching)^{\op}\]
  (cf. Equation~\eqref{eq:OppositeIsomorphism}),
  there is a corresponding identification of type $DD$ bimodules
  \begin{equation}
    \label{eq:OppositeDD}
    \begin{array}{lllll}
    \CanonDD^{\op}\simeq \CanonDD &
    \Max_c^{\op} \simeq \Max_{c} &
    \Min_c^{\op} \simeq \Min_c &
    (\Pos_i)^{\op} \simeq \Neg_i &
    (\Neg_i)^{\op} \simeq \Pos_i
\end{array}
\end{equation}
Similarly, there is an identification of $DA$ bimodules 
  \begin{equation}
    \label{eq:OppositeDA}
    \begin{array}{lllll}
    (\Max^c)^{\op} \simeq (\Max)^{c} &
    (\Min^c)^{\op} \simeq \Min^c &
    (\Pos^i)^{\op} \simeq \Pos^i &
    (\Neg^i)^{\op} \simeq \Pos^i
\end{array}
\end{equation}
\end{prop}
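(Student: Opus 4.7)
The plan is to handle the type $DD$ identifications by direct inspection of the defining differentials, and then deduce the type $DA$ identifications by duality, using the invertibility of $\CanonDD$ (Theorem~\ref{thm:InvertibleDD}). Each of the $DD$ bimodules in question is specified by a single algebra element $A$ in the appropriate tensor product via $\delta^1(\x)=A\otimes \x$ on a distinguished basis. Forming the opposite module reverses multiplication in each tensor factor and swaps the tensor factors; under the isomorphisms $\Opposite$ from \eqref{eq:OppositeIsomorphism}, which exchange $L_i\leftrightarrow R_i$ while fixing $U_i$, $C_p$, and $E_i$, the building blocks $L_i\otimes R_i + R_i\otimes L_i$ (the two summands swap), $U_i\otimes E_i$, $L_c L_{c+1}+R_{c+1}R_c$, $C_{\{c,c+1\}}\otimes 1$, and $C_p\otimes \llbracket E_i,E_j\rrbracket$ (the bracket being symmetric in $i,j$) are each preserved. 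This immediately yields $\CanonDD^{\op}\simeq \CanonDD$, $\Max_c^{\op}\simeq \Max_c$, and $\Min_c^{\op}\simeq \Min_c$, once one notes that the extra terms in \eqref{eq:defDDmin} coming from the matched pair $\{\alpha,\beta\}$ are built out of the same symmetric pieces.

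For the crossing $DD$ bimodules, the claim is $(\Pos_i)^{\op}\simeq \Neg_i$ and conversely. This is verified by checking that reversing the arrows in \eqref{eq:PositiveCrossing} and applying $\Opposite$ to each label produces precisely \eqref{eq:NegativeCrossing}: for example, the arrow $\South\to \West$ labeled $R_i\otimes U_{i+1}+L_{i+1}\otimes R_{i+1}R_i$ becomes, after reversal and $\Opposite$, the arrow $\West\to \South$ labeled $L_i\otimes U_{i+1}+R_{i+1}\otimes L_iL_{i+1}$, which is one of the labels in \eqref{eq:NegativeCrossing}. All eight non-self arrows match up in this way, and the self-arrows of Types~\ref{type:OutsideLRP}--\ref{type:UCCP} are handled as in the previous paragraph.

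For the type $DA$ bimodules, the identification $(\Neg^i)^{\op}\simeq \Pos^i$ is immediate from Definition~\ref{def:NegCrossing}, since $\Neg^i$ was defined as the opposite of $\Pos^i$ composed with the $\Opposite$ identifications, and taking opposites is an involution up to canonical isomorphism. The remaining $DA$ statements are obtained by tensoring both sides with $\CanonDD$ and appealing to the duality results Proposition~\ref{prop:DualCross}, Proposition~\ref{prop:MaxDual}, and Theorem~\ref{thm:MinDual}, together with the formal identity $(X\DT Y)^{\op}\simeq Y^{\op}\DT X^{\op}$. Concretely, for instance,
\[
(\Max^c)^{\op}\DT \CanonDD \simeq (\Max^c)^{\op}\DT \CanonDD^{\op} \simeq (\CanonDD\DT \Max^c)^{\op} \simeq \Max_c^{\op}\simeq \Max_c \simeq \Max^c\DT \CanonDD,
\]
and invertibility of $\CanonDD$ then yields $(\Max^c)^{\op}\simeq \Max^c$; the arguments for $\Pos^i$ and $\Min^c$ are parallel.

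The main obstacle is the $\Min^c$ case with $c>1$, since that bimodule is defined inductively in Section~\ref{subsec:GenMin} by tensoring $\Min^{c-1}$ against positive-crossing bimodules, and the opposite operation reverses the order of tensor factors. Rather than try to match the inductive recipe with itself after reversal (which would require combining the crossing symmetries with the braid relations of Section~\ref{sec:BraidRelations}), I would bypass induction entirely and deduce $(\Min^c)^{\op}\simeq \Min^c$ from the already-established $\Min_c^{\op}\simeq \Min_c$ by tensoring with $\CanonDD$ and invoking Theorem~\ref{thm:MinDual} together with $\CanonDD^{\op}\simeq \CanonDD$. The only genuine bookkeeping task is checking that the $\Opposite$ isomorphisms of Section~\ref{sec:Symmetries} are compatible across the two algebras appearing in each $DD$ pair, so that the chain of homotopy equivalences above actually takes place within a single category of bimodules over fixed algebras.
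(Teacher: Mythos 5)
Your proposal is correct and follows essentially the same route as the paper: the $DD$ identifications are checked by observing that taking opposites reverses each arrow and applies $\Opposite$ to its label, and each pair of reversed arrows in the defining diagrams has $\Opposite$-related labels (with $(\Pos_i)^{\op}\simeq\Neg_i$ holding by the very definition of $\Neg_i$), after which the $DA$ statements are a formal consequence of tensoring with the invertible bimodule $\CanonDD$ and the duality results. The paper's own proof is exactly this, stated even more tersely; the only caveat is that in your displayed chain the tensor factors should be ordered as $(\Max^c\DT\CanonDD)^{\op}$ rather than $(\CanonDD\DT\Max^c)^{\op}$, a bookkeeping point of the kind you already flag at the end.
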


\begin{proof}
  The identification for $\CanonDD$ is clear.

  The identification $\Max_c^{\op}\cong \Max_c$ switches the roles of
  $\XX$ and $\YY$, and fixes $\ZZ$.  For each pair of generators in
  Equation~\eqref{eq:CritDiag}, there are two arrows, and the symmetry
  switches those two arrows; observe that if one arrow is labeled by
  $a\otimes b$, then the other is labeled by $\Opposite(a)\otimes
  \Opposite(b)$.

  There is identification $\Min_c^{\op}\cong \Min_c$ is defined similarly.
  
  The identity $(\Pos_i)^{\op}\simeq \Neg_i$ follows from the
  definition of $\Neg_i$ from Section~\ref{subsec:DDcross}.
  This completes the verification of Equation~\eqref{eq:OppositeDD}.

  Equation~\eqref{eq:OppositeDA} is now a formal consequence of these identities.
\end{proof}

The above proposition has the following immediate consequence:

\begin{prop}
  If $\orK$ is an oriented knot and $m(\orK)$ denotes its mirror,
  obtained by reversing all of the crossings in a diagram for $\orK$,
  then, $\Cwz(m(\orK))\cong\Hom(\Cwz(\orK),\OurRing)$. \qed
\end{prop}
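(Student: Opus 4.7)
The plan is to exploit the tensor-product structure of $\Cwz$ together with Proposition~\ref{prop:OppositeBimodules}, which records how taking opposites interacts with each elementary bimodule. By construction,
\[
\Cwz(\Diag) = \OurRing \DT \TerMin \DT \PartInv(\Diag_{[1,k]}),
\]
and $\PartInv(\Diag_{[1,k]})$ is itself an iterated $\DT$-product of the elementary $DA$ bimodules $\Pos^i$, $\Neg^i$, $\Max^c$, $\Min^c$ assigned to the slices. Mirroring $\Diag$ leaves this decomposition formally unchanged except that each $\Pos^i$ is exchanged with $\Neg^i$ while the critical-point bimodules are preserved.

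First, I would extend Proposition~\ref{prop:OppositeBimodules} to cover the two remaining pieces in the formula above: the ring $\OurRing=\Field[U,V]/(UV)$ is commutative and so is its own opposite, while $\TerMin$ is easily checked to be self-opposite as a $\OurRing$-$\AlgLoc$ bimodule via the $L_j \leftrightarrow R_j$ involution together with the identification $\Opposite\colon \AlgLoc \to \AlgLoc^{\op}$ of Section~\ref{subsec:Koszul}. Combined with Proposition~\ref{prop:OppositeBimodules}, this identifies each elementary piece of $\Cwz(m(\Diag))$ with the opposite of the corresponding piece of $\Cwz(\Diag)$.

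Next, I would apply the general identity $(X \DT Y)^{\op} \cong Y^{\op} \DT X^{\op}$ iteratively to identify $\Cwz(\Diag)^{\op}$ with the tensor product of the mirrored elementary bimodules, albeit assembled in the reverse order from how $\Cwz(m(\Diag))$ is built. Recognizing this reversed assembly as a legitimate alternative computation of $\Cwz(m(\Diag))$ then yields $\Cwz(\Diag)^{\op} \cong \Cwz(m(\Diag))$. Since $\Cwz(\Diag)$ is a finitely generated free bigraded $\OurRing$-module (with basis given by Kauffman states), the opposite chain complex coincides with the $\OurRing$-linear dual $\Hom_\OurRing(\Cwz(\Diag), \OurRing)$, yielding the proposition.

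The main obstacle is the order-reversal step: a priori the opposite construction produces a bottom-up assembly, whereas $\Cwz(m(\Diag))$ is defined via the top-down assembly of Section~\ref{sec:Construction}. I expect this to be resolved by a direct associativity argument, using the fact that adjacent slices can be tensored in either order to give the same result up to canonical homotopy equivalence, together with a careful audit of the terminal conventions near the global minimum (where $\OurRing$ and $\TerMin$ enter the construction).
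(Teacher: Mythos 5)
Your proposal is correct and is exactly the argument the paper intends: the paper offers no proof beyond declaring the statement an immediate consequence of Proposition~\ref{prop:OppositeBimodules}, and your write-up (piecewise opposites of the elementary bimodules, the identity $(X\DT Y)^{\op}\cong Y^{\op}\DT X^{\op}$, and the identification of the opposite of a finitely generated free complex with its $\OurRing$-dual) is the intended fleshing-out. One small clarification on the step you flag: the order reversal is forced rather than optional, since taking opposites swaps the incoming and outgoing algebras of each piece, so the reversed product amounts to reading the diagram bottom-to-top (i.e.\ rotated by $180^\circ$, an isotopic diagram), and the two assemblies are then identified by the invariance of the construction under the choice of bridge presentation rather than by associativity alone.
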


\section{On the module structure of the knot invariant}
\label{sec:CrossingChange}

We turn our attention now to verifying the claims from
Section~\ref{intro:NumKnotInv}.  A key step is a crossing change
morphism established in Section~\ref{subsec:CrossChangeMor}.
Consequences are derived in Section~\ref{subsec:NumericalInvariants}.

\subsection{Crossing change morphisms}
\label{subsec:CrossChangeMor}

\begin{prop}
  \label{prop:CrossingChangeHwz}
  Let $K_+$ and $K_-$ be two knots represented by knot projections ${\mathcal D}_+$ and ${\mathcal D}_-$,
  that differ in a single crossing, which is positive  in ${\mathcal D}_+$ and negative in ${\mathcal D}_-$.
  Then, there are maps
  \[ c_+\colon \Hwz(K_-)\to \Hwz(K_+)\qquad \text{and}\qquad 
   c_-\colon \Hwz(K_+)\to \Hwz(K_-),\]
   where $c_-$ preserves bidegree and $c_+$ is of degree $(-1,-1)$, so that
   $c_+\circ c_-=U$ and $c_-\circ c_+=U$.
\end{prop}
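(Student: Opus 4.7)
Since $\Diag_+$ and $\Diag_-$ differ only at a single crossing (say the $i^{\text{th}}$), the partial knot diagram invariants $\PartInv(\Diag_\pm)$ coincide except for one tensor factor, which is $\lsup{\Alg_2}\Pos^i_{\Alg_1}$ for $\Diag_+$ and $\lsup{\Alg_2}\Neg^i_{\Alg_1}$ for $\Diag_-$.  I therefore reduce the problem to constructing local morphisms of type $DA$ bimodules
\[
\chi_-\colon\lsup{\Alg_2}\Pos^i_{\Alg_1}\to\lsup{\Alg_2}\Neg^i_{\Alg_1},
\qquad
\chi_+\colon\lsup{\Alg_2}\Neg^i_{\Alg_1}\to\lsup{\Alg_2}\Pos^i_{\Alg_1},
\]
where $\chi_-$ preserves bigrading, $\chi_+$ has bigrading $(-1,-1)$, and the compositions $\chi_+\circ\chi_-$ and $\chi_-\circ\chi_+$ are chain-homotopic to multiplication by one of the local variables $U_j$ (with $j\in\{i,i+1\}$).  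The maps $c_\pm$ of the proposition are then obtained by tensoring $\chi_\pm$ with identity morphisms on the remaining bimodules in $\PartInv(\Diag_\pm)$ (and pairing with $\TerMin$), passing to homology.

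To construct the local morphisms, I would work first at the type $DD$ level: the bimodules $\lsup{\Alg_2,\DuAlg_1}\Pos_i$ and $\lsup{\Alg_2,\DuAlg_1}\Neg_i$ have identical underlying $(\IdempRing(\Alg_2)\otimes\IdempRing(\DuAlg_1))$-modules, with the same generator types $\North,\South,\East,\West$ and identical terms of Types~\ref{type:OutsideLRP},~\ref{type:UCP},~\ref{type:UCCP}; they differ only in the ``inside'' terms of Equations~\eqref{eq:PositiveCrossing} vs.~\eqref{eq:NegativeCrossing}.  A direct inspection shows that the difference of the two differentials is (up to homotopy) a commutator divisible by $U_j$ with a specific endomorphism of the common underlying vector space.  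I would accordingly take $\chi_-$ to be the identity on generators corrected by terms needed to absorb this difference (no $U$ factor, so the bigrading is preserved), and $\chi_+$ to be $U_j\cdot\Id$ plus analogous corrections (producing the required $(-1,-1)$ bigrading shift as forced by Equation~\eqref{eq:GradeCrossing} and the local contributions in Figure~\ref{fig:LocalCrossing}).  These $DD$ morphisms are then converted to $DA$ morphisms via duality (Proposition~\ref{prop:DualCross}) and invertibility of $\CanonDD$ (Theorem~\ref{thm:InvertibleDD}).

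Once $\chi_\pm$ are constructed, the composition identities $\chi_+\circ\chi_-\simeq U_j\cdot\Id_{\Pos^i}$ and $\chi_-\circ\chi_+\simeq U_j\cdot\Id_{\Neg^i}$ are verified by a direct expansion, writing down the chain homotopy explicitly.  To promote the local $U_j$-action to the global $U$-action on $\Cwz$, I would use the fact that the $U_j$-action for any strand label $j$ on a given knot component is chain homotopic to the $U$-action coming from $\TerMin$; this follows from the standardness (Proposition~\ref{prop:InvariantIsStandard}) of all bimodules involved, combined with the description of the terminal bimodule $\TerMin$ as in Section~\ref{subsec:ConstructInvariant}.

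The main obstacle will be the explicit combinatorial construction of the correction terms in $\chi_\pm$ and the verification of the composition identity at the $DD$ level.  This involves a case split according to whether $\{i,i+1\}\in\Matching$, together with careful bookkeeping of the central elements $C_{\{a,b\}}$ and how they propagate through the morphisms.  Once the local calculations are completed, the tensor-product arguments and the passage from the local $U_j$ to the global $U$-action should be a formal consequence of associativity of $\DT$, standardness, and the structural results already established.
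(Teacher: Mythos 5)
Your overall strategy is the one the paper uses: construct $DD$-level morphisms $\phi_\pm$ between $\Pos_i$ and $\Neg_i$ whose compositions are homotopic to a local variable times the identity (this is Lemma~\ref{lem:CrossingChange}), convert to $DA$ morphisms via Proposition~\ref{prop:DualCross} and Theorem~\ref{thm:InvertibleDD}, and tensor with identities on the remaining factors. Up to that point the plan is sound, modulo the explicit formulas you defer.

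The gap is in your last step, where you assert that ``the $U_j$-action for any strand label $j$ on a given knot component is chain homotopic to the $U$-action coming from $\TerMin$,'' and that this follows from standardness. This is false. In $\Alg(n,k,\Matching)$ one has $dC_{\{j,j'\}}=U_jU_{j'}$, so for every matched pair the product $U_jU_{j'}$ acts null-homotopically on $\Cwz(\Diag)$; if every $U_j$ acted as $U$ this would force $U^2\simeq 0$, contradicting already the unknot, where $\Hwz\cong\OurRing$. In fact roughly half of the strand variables act as $V$ (cf.\ $\TerMin$ itself, where $U_1\mapsto u^2=U$ but $U_2\mapsto v^2=V$), and which half depends on orientation data that standardness does not see. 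The composition $\phi_+\circ\phi_-$ is homotopic to $U_{i+1}\otimes 1$ (and a variant $\phi_-'$ gives $U_i\otimes 1$), so one must actually verify that the \emph{specific} local variable produced by the chosen morphisms flows to $U$ rather than $V$. The paper handles this by normalizing the diagram---isotoping the distinguished crossing so that it feeds directly into the local minimum adjacent to the global minimum, with both strands oriented upwards---and then explicitly tracing $U_{i+1}=U_3$ through the unique nonzero operation $\delta^1_2(\x,U_3)=U_1\otimes\x$ of $\Min^1$ and into $\TerMin$, where $U_1$ becomes $U$. You need either this normalization argument or a (currently unproved) lemma identifying which strand variables act as $U$; without one of these, your proof does not establish that the compositions equal $U$ rather than $V$.
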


The above proposition hinges on the following local result, stated in
terms of the type $DD$ bimodules $\Pos_i=\lsup{\Alg_2,\Alg_1'}\Pos_i$
and $\Neg_i=\lsup{\Alg_2,\Alg_1'}\Neg_i$ 
in the notation of Section~\ref{subsec:DDcross}.  
For any $j\in\{1,\dots,2n\}$, $U_j\otimes 1$ times the
identity map gives bimodule homomorphisms
\[ U_j\otimes 1 \colon \Pos_i\to \Pos_i\qquad{\text{and}}
\qquad
U_j\otimes 1 \colon \Neg_i\to \Neg_i.\]

\begin{lemma}
  \label{lem:CrossingChange}
  There are bimodule homomorphisms 
  \[ \phi_-\colon \Pos_i\to \Neg_i~\qquad{\text{and}}\qquad\phi_+\colon \Neg_i\to \Pos_i\] so that $\phi_+\circ \phi_-$ and $\phi_-\circ \phi_+$, thought of 
  as endomorphisms of $\Pos_i$ and $\Neg_i$ respectively, are homotopic to 
  $U_{i+1}\otimes 1$ times the corresponding identity maps.
\end{lemma}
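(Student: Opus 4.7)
The plan is to prove Lemma~\ref{lem:CrossingChange} by writing down explicit type $DD$-bimodule morphisms $\phi_\pm$ generator-by-generator and verifying the two required properties by finite computation. The key observation is that $\lsup{\Alg_1,\DuAlg_2}\Pos_i$ and $\lsup{\Alg_1,\DuAlg_2}\Neg_i$ share the same underlying $\IdempRing_1\otimes \IdempRing_2'$-module (generators labeled $\North,\South,\West,\East$ in matching idempotents, as in Equation~\eqref{eq:PosGens}), and their differentials agree on the ``external'' terms of Types~\ref{type:OutsideLRP}, \ref{type:UCP}, and \ref{type:UCCP}. Therefore defining $\phi_\pm$ amounts to giving a $(\Alg_1\otimes \DuAlg_2)$-valued label on each generator, and checking the chain-map relation reduces to balancing the ``internal'' terms of Equations~\eqref{eq:PositiveCrossing} and~\eqref{eq:NegativeCrossing}.

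First I will write down candidate formulas for $\phi_-$. Inspecting the diamonds in Equations~\eqref{eq:PositiveCrossing} and~\eqref{eq:NegativeCrossing}, one sees that the $U_{i+1}$ factor ``migrates'' from the $\South$-to-$\{\West,\East\}$ arrows (in $\Pos_i$) to the $\{\West,\East\}$-to-$\South$ arrows (in $\Neg_i$). This suggests taking the leading term of $\phi_-(\South)$ to be $(U_{i+1}\otimes 1)\cdot \South$ and the leading terms of $\phi_-(X)$ for $X\in\{\North,\West,\East\}$ to be $X$ itself, with further corrections in the remaining three generator types (weighted by algebra elements in $\{L_i,R_i,L_{i+1},R_{i+1}\}\subset \Alg_1$) whose coefficients are forced by the chain relation. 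The Alexander multi-grading (Equation~\eqref{eq:GradeCrossing}) restricts these corrections substantially, so the arising linear system is easy to solve in closed form.

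Second I will define $\phi_+\colon \Neg_i\to \Pos_i$ by a symmetric recipe: either directly, using the analogous balancing analysis, or by appealing to the identification of $(\Pos_i)^{\op}$ with $\Neg_i$ coming from Proposition~\ref{prop:OppositeBimodules} to transfer the formula for $\phi_-$ to a formula for $\phi_+$. With $\phi_\pm$ in hand, the verification of the chain-map relation is a finite diagram chase on the four generators, in which only the ``internal'' arrows contribute.

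Third I will compute $\phi_+\circ \phi_-$ as an endomorphism of $\lsup{\Alg_1,\DuAlg_2}\Pos_i$ and display an explicit nullhomotopy $h\colon \Pos_i\to (\Alg_1\otimes\DuAlg_2)\otimes \Pos_i$ with $\phi_+\circ \phi_- - (U_{i+1}\otimes 1)\cdot\Id = [\delta^1,h]$. The natural candidate for $h$ is supported on the off-diagonal generator pairs and built from the same $L,R$ algebra factors that appear in $\phi_\pm$. The computation for $\phi_-\circ \phi_+$ is then obtained by the same symmetry argument.

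The main obstacle is simply bookkeeping: the number of terms in the chain-map relation and in the composition $\phi_+\circ \phi_-$ is not small, and assembling the correct correction terms and homotopy requires care. However, the grading constraints and the left-right idempotent matching sharply restrict the possibilities, so once the leading-order terms of $\phi_\pm$ are fixed as above, the remaining coefficients are determined uniquely, and the resulting verification is a compact (if lengthy) finite computation, in the spirit of the explicit computations already carried out in the proof of Theorem~\ref{thm:BraidRelation}.
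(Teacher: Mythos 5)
Your overall strategy -- define $\phi_\pm$ by explicit generator-by-generator formulas, verify the chain-map relation on the internal arrows of the two diamonds, and exhibit an explicit nullhomotopy correcting $\phi_+\circ\phi_-$ to $U_{i+1}\otimes 1$ times the identity -- is exactly what the paper does (including the homotopies, which are supported on a single generator each: $h_+(\South_+)=(L_{i+1}\otimes 1)\otimes\East_+$ and $h_-(\East_-)=(R_{i+1}\otimes 1)\otimes\South_-$). Two of your specific claims, however, would not survive the computation. First, your proposed leading terms for $\phi_-$ are off: in the map that actually closes up, $\phi_-(\East_+)=0$ (this is precisely why a nontrivial homotopy is unavoidable on the $\East$ generator), the $\South$ component is $1\otimes U_{i+1}$ with the $U_{i+1}$ in the $\DuAlg_2$ factor rather than $U_{i+1}\otimes 1$, and the $\North$ component carries $U_{i+1}\otimes 1$; so the ``corrections forced by the chain relation'' are not small perturbations of your ansatz but a different distribution of the $U_{i+1}$ weight among the four generators. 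Second, the shortcut of producing $\phi_+$ from $\phi_-$ via Proposition~\ref{prop:OppositeBimodules} cannot work for degree reasons: the opposite of $\phi_-$ has the same bidegree as $\phi_-$, so the composite would shift degree by twice that of $\phi_-$, whereas $\phi_+\circ\phi_-\simeq (U_{i+1}\otimes 1)\cdot\Id$ forces $\phi_+$ and $\phi_-$ to have complementary (not equal) degrees -- indeed in Proposition~\ref{prop:CrossingChangeHwz} one of $c_\pm$ preserves bidegree and the other has degree $(-1,-1)$. You do offer the direct balancing analysis as an alternative for $\phi_+$, and that route is the one that works, so the proof is salvageable as written, but the symmetry argument should be dropped.
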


\begin{proof}
  We write the formulas when $i=1$; the general case is obtained with
  minor notational changes.  In the diagram below, $\Pos_{1}$ is
  represented by the top row, $\Neg_{1}$ by the bottom row; arrows
  within each row represent the differentials (suppressing all
  self-loops within each generator type; i.e. terms of 
  Type~\ref{type:OutsideLRP},~\ref{type:UCP}, and~\ref{type:UCCP},
  in the notation of Section~\ref{subsec:DDcross}); and
  the maps from the top row to the bottom represent components of a
  map $\phi_-\colon \Pos_1\to\Neg_1$.
  \[
    \begin{tikzpicture}[scale=1.4]
      \node at (0,0) (Wp) {$=\West_+$} ;
      \node at (8,0) (WpR) {$\West_+=$} ;
      \node at (2,0) (Np) {$\North_+$} ;
      \node at (4,0) (Ep) {$\East_+$} ;
      \node at (6,0) (Sp) {$\South_+$} ;
      \node at (0,-2.5) (Wm) {$=\West_-$} ;
      \node at (8,-2.5) (WmR) {$\West_-=$} ;
      \node at (2,-2.5) (Nm) {$\North_-$} ;
      \node at (4,-2.5) (Em) {$\East_-$} ;
      \node at (6,-2.5) (Sm) {$\South_-$} ;

      \draw[->] (Sp) [bend left=7] to node[above,sloped] {\tiny{$R_{1}\!\otimes\! U_{2}+L_{2}\!\otimes\! R_{2}R_{1}$}}  (WpR)  ;
      \draw[->] (WpR) [bend left=7] to node[below,sloped] {\tiny{$L_{1}\otimes 1$}}  (Sp)  ;
      \draw[->] (Ep)[bend right=7] to node[below,sloped] {\tiny{$R_{2}\otimes 1$}}  (Sp)  ;
      \draw[->] (Sp)[bend right=7] to node[above,sloped] {\tiny{$L_{2}\!\otimes\! U_{1} + R_{1} \!\otimes\! L_{1} L_{2}$}} (Ep) ;
      \draw[->] (Wp)[bend right=7] to node[below,sloped] {\tiny{$1\otimes L_{1}$}} (Np) ;
      \draw[->] (Np)[bend right=7] to node[above,sloped] {\tiny{$U_{2}\!\otimes\! R_{1} + R_{2} R_{1} \!\otimes\! L_{2}$}} (Wp) ;
      \draw[->] (Ep)[bend left=7] to node[below,sloped]{\tiny{$1\otimes R_{2}$}} (Np) ;
      \draw[->] (Np)[bend left=7] to node[above,sloped]{\tiny{$U_{1}\!\otimes\! L_{2} + L_{1} L_{2}\!\otimes\! R_{1}$}} (Ep) ;

    \draw[->] (Wm) [bend right=7] to node[below,sloped] {\tiny{${U_{2}}\!\otimes\!{L_{1}}+{L_{1} L_{2}}\!\otimes\!{R_{2}}$}}  (Nm)  ;
    \draw[->] (Nm) [bend right=7] to node[above,sloped] {\tiny{${1}\otimes{R_{1}}$}}  (Wm)  ;
    \draw[->] (Nm)[bend left=7] to node[above,sloped] {\tiny{${1}\otimes{L_{2}}$}}  (Em)  ;
    \draw[->] (Em)[bend left=7] to node[below,sloped] {\tiny{${U_{1}}\!\otimes\!{R_{2}} + {R_{2} R_{1}}\!\otimes\!{L_{1}}$}} (Nm) ;
    \draw[->] (Sm)[bend left=7] to node[above,sloped] {\tiny{${R_{1}}\otimes{1}$}} (WmR) ;
    \draw[->] (WmR)[bend left=7] to node[below,sloped] {\tiny{${L_{1}}\!\otimes\!{U_{2}} + {R_{2}}\!\otimes\!{L_{1} L_{2}}$}} (Sm) ;
    \draw[->] (Sm)[bend right=7] to node[above,sloped]{\tiny{${L_{2}}\otimes{1}$}} (Em) ;
    \draw[->] (Em)[bend right=7] to node[below,sloped]{\tiny{${R_{2}}\!\otimes\!{U_{1}} + {L_{1}}\!\otimes\!{R_{2} R_{1} }$}} (Sm) ;

      \draw[->] (Np) to node[above,sloped]{\tiny{$U_{2}\otimes 1$}} (Nm);
      \draw[->] (Sp) to node[above,sloped]{\tiny{$1\otimes U_{2}$}} (Sm);
      \draw[->] (Np) to node[above,sloped,pos=.7]{\tiny{$R_{2}\otimes L_{2}$}} (Sm);
      \draw[->] (Sp) to node[above,sloped,pos=.7]{\tiny{$L_{2}\otimes R_{2}$}} (Nm);
      \draw[->] (Wp) to node[above,sloped]{\tiny{$1$}} (Wm);
    \end{tikzpicture}
    \]
    (Both terms labelled $\West_+$ are identified in the above diagram
    as are both terms labelled $\West_-$.)
    In the next diagram,  $\Neg_{1}$ is represented by the top row, $\Pos_{1}$ by the bottom row,
  and the maps from the top row to the bottom represent components of a map $\phi_+\colon \Neg_1\to\Pos_1$.
  \[
    \begin{tikzpicture}[scale=1.4]
      \node at (0,-2.5) (Wp) {$=\West_+$} ;
      \node at (8,-2.5) (WpR) {$\West_+=$} ;
      \node at (2,-2.5) (Np) {$\North_+$} ;
      \node at (4,-2.5) (Ep) {$\East_+$} ;
      \node at (6,-2.5) (Sp) {$\South_+$} ;
      \node at (0,0) (Wm) {$=\West_-$} ;
      \node at (8,0) (WmR) {$\West_-=$} ;
      \node at (2,0) (Nm) {$\North_-$} ;
      \node at (4,0) (Em) {$\East_-$} ;
      \node at (6,0) (Sm) {$\South_-$} ;

      \draw[->] (Sp) [bend right=7] to node[below,sloped] {\tiny{$R_{1}\!\otimes\! U_{2}+L_{2}\!\otimes\! R_{2}R_{1}$}}  (WpR)  ;
      \draw[->] (WpR) [bend right=7] to node[above,sloped] {\tiny{$L_{1}\otimes 1$}}  (Sp)  ;
      \draw[->] (Ep)[bend left=7] to node[above,sloped] {\tiny{$R_{2}\otimes 1$}}  (Sp)  ;
      \draw[->] (Sp)[bend left=7] to node[below,sloped] {\tiny{$L_{2}\!\otimes\! U_{1} + R_{1} \!\otimes\! L_{1} L_{2}$}} (Ep) ;
      \draw[->] (Wp)[bend left=7] to node[above,sloped] {\tiny{$1\otimes L_{1}$}} (Np) ;
      \draw[->] (Np)[bend left=7] to node[below,sloped] {\tiny{$U_{2}\!\otimes\! R_{1} + R_{2} R_{1} \!\otimes\! L_{2}$}} (Wp) ;
      \draw[->] (Ep)[bend right=7] to node[above,sloped]{\tiny{$1\otimes R_{2}$}} (Np) ;
      \draw[->] (Np)[bend right=7] to node[below,sloped]{\tiny{$U_{1}\!\otimes\! L_{2} + L_{1} L_{2}\!\otimes\! R_{1}$}} (Ep) ;

    \draw[->] (Wm) [bend left=7] to node[above,sloped] {\tiny{${U_{2}}\!\otimes\!{L_{1}}+{L_{1} L_{2}}\!\otimes\!{R_{2}}$}}  (Nm)  ;
    \draw[->] (Nm) [bend left=7] to node[below,sloped] {\tiny{${1}\otimes{R_{1}}$}}  (Wm)  ;
    \draw[->] (Nm)[bend right=7] to node[below,sloped] {\tiny{${1}\otimes{L_{2}}$}}  (Em)  ;
    \draw[->] (Em)[bend right=7] to node[above,sloped] {\tiny{${U_{1}}\!\otimes\!{R_{2}} + {R_{2} R_{1}}\!\otimes\!{L_{1}}$}} (Nm) ;
    \draw[->] (Sm)[bend right=7] to node[below,sloped] {\tiny{${R_{1}}\otimes{1}$}} (WmR) ;
    \draw[->] (WmR)[bend right=7] to node[above,sloped] {\tiny{${L_{1}}\!\otimes\!{U_{2}} + {R_{2}}\!\otimes\!{L_{1} L_{2}}$}} (Sm) ;
    \draw[->] (Sm)[bend left=7] to node[below,sloped]{\tiny{${L_{2}}\otimes{1}$}} (Em) ;
    \draw[->] (Em)[bend left=7] to node[above,sloped]{\tiny{${R_{2}}\!\otimes\!{U_{1}} + {L_{1}}\!\otimes\!{R_{2} R_{1} }$}} (Sm) ;
      \draw[->] (Nm)[bend right=7] to node[above,sloped,pos=.7]{\tiny{$1$}} (Np);
      \draw[->] (Wm) to node[above,sloped]{\tiny{$U_{2}\otimes 1$}} (Wp);
      \draw[->] (Em) to node[above,sloped]{\tiny{$U_{1}\otimes 1$}} (Ep);
      \draw[->] (Wm) to node[above,sloped,pos=.7]{\tiny{$L_{1} L_{2}\otimes 1$}} (Ep);
      \draw[->] (Em) to node[above,sloped,pos=.7]{\tiny{$R_{2} R_{1}\otimes 1$}} (Wp);
    \end{tikzpicture}
    \]
    Finally, define 
\begin{align*}
  h_+(\North_+)&=h_+(\West_+)=h_+(\East_+)=0 \\
  h_+ (\South_+)&=(L_{2}\otimes 1) \otimes \East_+
\end{align*}
and
\begin{align*}
  h_-(\North_-)&=h_-(\West_-)=h_-(\South_-)=0 \\
  h_-(\East_-)&=(R_{2}\otimes 1)\otimes \South_-.
\end{align*}
    It is straightforward to verify that
    \begin{align*}
      d h_+ &= \phi_+\circ \phi_- + (U_2\otimes 1)\Id_{\Pos_1}   \\
      d h_- &= \phi_-\circ \phi_+ + (U_2\otimes 1)\Id_{\Neg_1},
    \end{align*}
    where the first equation is to be taken as endomorphisms of $\Pos_1$ and the second
    as endomorphisms of $\Neg_1$.
\end{proof}

\begin{remark}
  Observe that there is another map $\phi_-'\colon \Pos_i\to \Neg_i$ with
  the property that $\phi_+\circ \phi_-'$ and $\phi_-'\circ \phi_+$ are chain
  homotopic to $U_{i}\otimes 1$ times the corresponding identity
  maps. This map is obtained by modifying the definition of $\phi_-$,
  switching the roles of $\West_+$ and $\East_+$, the strands $1$ and
  $2$, and $L$ and $R$.
\end{remark}

\begin{proof}[Proof of Proposition~\ref{prop:CrossingChangeHwz}]
  Note that Lemma~\ref{lem:CrossingChange} is stated for type $DD$
  bimodules; but tensoring with the inverse of $\CanonDD$ gives the
  corresponding maps
  \[ \phi^+\colon \Neg^i\to \Pos^i \qquad \phi^-\colon \Pos^i\to
  \Neg^i \] so that $\phi^+\circ\phi^-$ and $\phi^-\circ\phi^+$ are
  homotopic to the bimodule endomorphism 
  of $\Pos^i$ (and $\Neg^i$), $t\colon \Pos^i\to \Pos^i$ 
  (and $t\colon \Neg^i\to \Neg^i$) with
  $t^1_1(\x)=U_{i+1}\otimes \x$ and $t^1_j=0$ for all $j>1$,
  which we call simply ``multiplication by $U_{i+1}$'', and denote $T^{U_{i+1}}$.

  For simplicity, we draw the diagram for ${\mathcal D}_+$ so that the
  distinguished crossing feeds into a local minimum, which is to the lower left of the crossing,
  and to the global
  minimum, which is to the lower right; 
  and so that that both strands through the distinguished
  crossing are oriented upwards (i.e. so that the invariant for
  ${\mathcal D}_+$ uses the bimodule $\Pos^i$ where ${\mathcal D}_-$
  uses $\Neg^i$),  as shown in
  Figure~\ref{fig:OrientOutOfMinimum}.
 \begin{figure}[h]
 \centering
 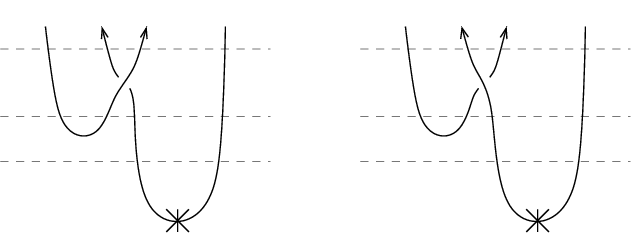
 \caption{{\bf Crossing change.}
 In the proof of Proposition~\ref{prop:CrossingChangeHwz}, we place the global minimum as pictured here.}
 \label{fig:OrientOutOfMinimum}
 \end{figure}

 Thus, 
 \[ \Cwz(\Diag_+)=\TerMin\DT\Min^1\DT\Pos^2\DT \PartInv(\Diag) \qquad \text{and}\qquad
 \Cwz(\Diag_-)=\TerMin\DT\Min^1\DT\Neg^2\DT \PartInv(\Diag).\]
 Moreover, for the given crossing change, the variable $U_{i+1}$
 corresponds to $U_3$ in the output algebra $\Pos^2$ (resp. $\Neg^2$), which is connected to $U_1$ 
 in the output algebra for $\Min^1$, which in turn corresponds to 
 the variable $U$ for $\PartInv({\mathcal D}_+)$ (resp.
 $\PartInv({\mathcal D}_-)$). 

 We claim that the bimodule endomorphisms of $\Min^1\DT \Pos^2$ given by 
 $\Id_{\Min^1}\DT T^{U_3}_{\Pos^2}$ coincides with the endomorphism $T^{U_1}_{\Min^1\DT\Pos^2}$.
 To see this, recall (see for example~\cite[Figure~5]{Bimodules})
 that $\Id_{\Min^1}\DT T^{U_3}_{\Pos^2}$ is defined by
  \[
\mathcenter{
  \begin{tikzpicture}
    \node at (0,0) (tlblank) {$\Min^1$};
    \node at (1.5,0) (trblank) {$\Pos^2$};
    \node at (1.5,-2) (delta1) {$\delta^{\Pos^2}$};
    \node at (1.5,-3) (g) {$t^1$};
    \node at (1.5,-4) (delta2) {$\delta^{\Pos^2}$};
    \node at (0,-5) (deltaM) {$\delta^{\Min^1}$};
    \node at (2.5, -1) (Delta) {$\Delta$};
    \node at (0,-6) (blblank) {};
    \node at (1.5,-6) (brblank) {};
    \node at (2.5,0) (trrblank) {};
    \node at (-1,-6) (bllblank) {};
    \draw[Amodar] (tlblank) to (deltaM);
    \draw[Amodar] (deltaM) to (blblank);
    \draw[Dmodar] (trblank) to (delta1);
    \draw[Dmodar] (delta1) to (g);
    \draw[Dmodar] (g) to (delta2);
    \draw[Dmodar] (delta2) to(brblank);
    \draw[tensorblgarrow, bend right=10] (delta1) to (deltaM);
    \draw[blgarrow] (g) to  node[above,sloped]{\tiny{$U_3$}}  (deltaM);
    \draw[tensorblgarrow] (delta2) to (deltaM);
    \draw[algarrow] (deltaM) to (bllblank);
    \draw[tensorclgarrow] (Delta) to (delta1);
    \draw[tensorclgarrow, bend left=10] (Delta) to (delta2);
    \draw[tensorclgarrow] (trrblank) to (Delta);
  \end{tikzpicture}
}  
  \]        
  By the construction of $\Min^1$, the only non-zero operation
  $\delta^1_\ell(\x,a_1,\dots,a_{\ell-1})$ with some $a_i=U_3$ is 
  $\delta^1_2(\x,U_3)=U_1\otimes \x$; i.e. the only non-zero
  term of the above type is
  \[
\mathcenter{
  \begin{tikzpicture}
    \node at (0,0) (tlblank) {};
    \node at (1.5,0) (trblank) {};
    \node at (1.5,-1) (g) {$t^1$};
    \node at (0,-2) (deltaM) {$\delta^{\Min^1}$};
    \node at (0,-3) (blblank) {};
    \node at (1.5,-3) (brblank) {};
    \node at (2.5,0) (trrblank) {};
    \node at (-1.5,-3) (bllblank) {};
    \draw[Amodar] (tlblank) to (deltaM);
    \draw[Amodar] (deltaM) to (blblank);
    \draw[Dmodar] (trblank) to (g);
    \draw[Dmodar] (g) to (brblank);
    \draw[blgarrow] (g) to  node[above,sloped]{\tiny{$U_3$}}  (deltaM);
    \draw[algarrow] (deltaM) to node[above,sloped]{\tiny{$U_1$}}(bllblank);
  \end{tikzpicture}
},
  \]        
  verifying the claim.

  Feeding the $U_1$ action into $\TerMin$ gives the algebra element $U$. 

  It follows from these considerations that the map
  \[c_-=\Id_{\TerMin}\DT\Id_{\Min^1}\DT \phi_-\DT
  \Id_{\PartInv(\Diag)}\colon \Cwz(\Diag_+)\to\Cwz(\Diag_-),\] composed
  with
  \[c_+=\Id_{\TerMin}\DT\Id_{\Min^1}\DT \phi_+\DT
  \Id_{\PartInv(\Diag)}\colon \Cwz(\Diag_-)\to\Cwz(\Diag_+)\] is chain
  homotopic to multiplication by $U$.
 
 The grading properties of the maps follow immediately from the local descriptions of the maps
 $\phi_+$ and $\phi_-$ from Lemma~\ref{lem:CrossingChange} and the local formulas for the gradings
 from Figure~\ref{fig:LocalCrossing}.
\end{proof}

\subsection{Extracting numerical knot invariants}
\label{subsec:NumericalInvariants}

The chain complexes $\Cwz({\mathcal D})$ lead to natural numerical
knot invariants in the same way that knot Floer homology gives rise to
numerical invariants; see for example~\cite{FourBall}
and~\cite{RasmussenThesis}.  We recall the construction (standard in
knot Floer homology), reformulated slightly to fit with the
perspective of chain complexes over $\OurRing$ stated in the
introduction.

If $W$ is any $\OurRing$-module, and let $W=\bigoplus_{s\in\Z} W_s$ be
its splitting according to its Alexander grading. we can invert $U$ to
form a new module, which we can think of as $W\otimes_{\Field[U]}
\Field[U,U^{-1}]$, equipped with the left $\OurRing$ action. Note that
the action by $V$ on $W\otimes_{\Field[U]}\Field[U,U^{-1}]$ is
trivial.

\begin{lemma}
  \label{lem:NuMakesSense}
  If $W$ is a finitely generated, bigraded $\OurRing$-module,
  with 
  \begin{equation}
    \label{eq:Rank1}
    W\otimes_{\Field[U]} \Field[U,U^{-1}]\cong\Field[U,U^{-1}],
    \end{equation}
  then for all sufficiently large $s$,
  $W_{s}$ is $U$-torsion; 
  and for all sufficiently small $s$,
  there is an element of $W_{s}$ that is not $U$-torsion.
\end{lemma}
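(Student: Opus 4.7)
The plan is to reduce the analysis of $W$ to that of its quotient $W/T(W)$ by the $U$-torsion submodule $T(W)$, which will turn out to be a bigraded copy of $\Field[U]$ (up to an Alexander-grading shift); the two claims then follow immediately from this identification.

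First I would set $T(W)\subset W$ to be the bigraded submodule consisting of $U$-torsion elements; this is a bigraded sub-$\OurRing$-module since $U$-torsion is preserved by both $U$ and $V$ (using $UV=0$). Localizing $W$ at $U$ kills $T(W)$, so the hypothesis \eqref{eq:Rank1} rewrites as $(W/T(W))\otimes_{\Field[U]}\Field[U,U^{-1}]\cong \Field[U,U^{-1}]$. Since $U$ is invertible in $\Field[U,U^{-1}]$ and $UV=0$ in $\OurRing$, the $V$-action on the localization is forced to vanish, and the localization is an $\OurRing$-module in which $V$ acts as zero, consistent with the statement of the lemma. Because $W/T(W)$ is $U$-torsion-free, the natural map $W/T(W)\hookrightarrow (W/T(W))\otimes_{\Field[U]}\Field[U,U^{-1}]\cong \Field[U,U^{-1}]$ is an injection of bigraded $\OurRing$-modules; in particular the $V$-action on $W/T(W)$ is trivial.

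Next, since $W$ is finitely generated and $\OurRing$ is Noetherian, $W/T(W)$ is a finitely generated bigraded $\Field[U]$-submodule of $\Field[U,U^{-1}]$. The only such submodules have the form $U^{-N}\cdot \Field[U]$ for some integer $N$: given finitely many homogeneous generators $U^{k_1},\dots,U^{k_r}$, the submodule they generate is just $U^{\min(k_i)}\Field[U]$. Recalling $A(U)=-1$, this identifies $(W/T(W))_s$ with a one-dimensional $\Field$-vector space for $s\le N$ and with zero for $s>N$.

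From this identification both conclusions read off. For $s>N$, the quotient $(W/T(W))_s$ vanishes, so $W_s=T(W)_s$ is entirely $U$-torsion; for $s\le N$, the image of $W_s\to (W/T(W))_s$ is nonzero, and any element of $W_s$ mapping to a nonzero class is, by construction, not $U$-torsion. I do not anticipate any real obstacle; the only substantive point is the observation that $W/T(W)$ embeds as a bigraded $\OurRing$-submodule of $\Field[U,U^{-1}]$, after which the elementary structure theorem for finitely generated bigraded $\Field[U]$-submodules of $\Field[U,U^{-1}]$ completes the argument.
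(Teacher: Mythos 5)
Your proof is correct. For the ``sufficiently small $s$'' half it is essentially the paper's argument: both identify the $U$-non-torsion part of $W$ with a single (shifted) free summand $\Field[U]$ via the structure theorem, concentrated in Alexander degrees $s\leq N$. For the ``sufficiently large $s$'' half your route differs: the paper argues directly from a finite homogeneous generating set that for $s$ exceeding the Alexander degrees of the generators one has $W_s\subset V\cdot W_{s-1}$, whence $U\cdot W_s\subset UV\cdot W_{s-1}=0$ (so those graded pieces are in fact annihilated by $U$ itself, not merely $U$-power-torsion), whereas you deduce the vanishing of $(W/T(W))_s$ for $s>N$ from your identification $W/T(W)\cong U^{-N}\Field[U]$. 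Your version has the merit of making explicit the point that the paper's appeal to the structure theorem over the PID $\Field[U]$ really applies to $W/T(W)$ -- which is finitely generated over $\Field[U]$ precisely because $V$ acts trivially on it (as $U(Vw)=0$ forces $Vw\in T(W)$) -- rather than to $W$ itself, which need not be finitely generated over $\Field[U]$. The one step worth stating a bit more carefully is the graded version of the structure statement: the localization $(W/T(W))\otimes_{\Field[U]}\Field[U,U^{-1}]$ is a bigraded free rank-one module over the graded field $\Field[U,U^{-1}]$, hence admits a homogeneous generator, so the abstract isomorphism in \eqref{eq:Rank1} can indeed be taken homogeneous up to shift; with that in place your classification of the finitely generated homogeneous submodules as $U^{-N}\Field[U]$ is exactly right.
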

\begin{proof}
  Fix a finite graded generating set for $W$, and choose $s_0$ greater
  than $A(\x)$ for any $\x$ in that generating set.  Clearly, for any $s\geq s_0$, $W_s\subset
  V\cdot W_{s-1}$, and therefore $W_s$ contains only $U$-torsion
  elements.

  By classification of finitely generated modules over the PID
  $\Field[U]$, and Equation~\eqref{eq:Rank1}, we can conclude that $W$
  has a single free summand $\Field[U]$, so from our grading
  conventions, it follows that $W_s$ is non-torsion in all
  sufficiently negative grading.
\end{proof}

By Lemma~\ref{lem:NuMakesSense}, we can make the following definition:
\begin{defn}
  Let $W$ be a finitely generated $\OurRing$-module, with the property
  that $W\otimes_{\Field[U]} \Field[U,U^{-1}]\cong \Field[U,U^{-1}]$.
  The {\em $\nu$-invariant of $W$} is the integer
  \[ {\nu}_0(W)=-\max\{s\big| U^d \cdot W_s\neq 0~\forall d\geq 0\}.\]
\end{defn}

\begin{lemma} 
  \label{lem:MonotonicityOfNu}
  If $W^1$ and $W^2$ are two finitely generated
  $\OurRing$-modules with $W^i\otimes_{\Field[U]}\Field[U,U^{-1}]\cong\Field[U,U^{-1}]$,
  and $\phi\colon W^1\to W^2$ is a grading-preserving $\OurRing$-module homomorphism that
  induces an isomorphism after tensoring with $\Field[U,U^{-1}]$, then
  ${\nu}_0(W^1)\geq {\nu}_0(W^2)$.
\end{lemma}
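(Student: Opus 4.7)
The plan is to reformulate $-\nu_0(W)$ as the maximum Alexander grading $s$ such that $W_s$ contains a $U$-non-torsion element, and then push such an element through $\phi$. I will first observe that an element $x \in W$ is $U$-torsion (i.e., $U^d x = 0$ for some $d \geq 0$) precisely when it maps to zero in the localization $W \otimes_{\Field[U]} \Field[U,U^{-1}]$. Since $W$ is finitely generated and bigraded, each graded piece $W_s$ is a finite-dimensional $\Field$-vector space, so the descending filtration $W_s \supset \ker(U|_{W_s}) \supset \ker(U^2|_{W_s}) \supset \cdots$ must stabilize; its stable subspace is precisely the $U$-torsion part of $W_s$. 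Hence the condition $U^d \cdot W_s \neq 0$ for all $d \geq 0$ in the definition of $\nu_0$ is equivalent to the existence of a $U$-non-torsion element in $W_s$, and so $-\nu_0(W)$ is the maximum Alexander grading at which $W$ has a $U$-non-torsion element.

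With this reformulation in hand, by Lemma~\ref{lem:NuMakesSense} there exists a $U$-non-torsion element $y \in W^1$ with $A(y) = -\nu_0(W^1)$. Since $\phi$ is grading-preserving, $\phi(y) \in W^2_{-\nu_0(W^1)}$. The key point is that $\phi(y)$ is still $U$-non-torsion: if $\phi(y)$ were $U$-torsion, then $\phi(y) \otimes 1$ would vanish in $W^2 \otimes_{\Field[U]} \Field[U,U^{-1}]$, so by injectivity of the localized map $\phi \otimes \mathrm{Id}$ (which is in fact an isomorphism by hypothesis), $y \otimes 1$ would vanish in $W^1 \otimes_{\Field[U]} \Field[U,U^{-1}]$, contradicting the choice of $y$. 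Therefore $W^2_{-\nu_0(W^1)}$ contains a $U$-non-torsion element, giving $-\nu_0(W^2) \geq -\nu_0(W^1)$, that is, $\nu_0(W^1) \geq \nu_0(W^2)$.

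This argument is essentially a formal unraveling of the definitions; the only mildly delicate step is the reformulation of $\nu_0$ in terms of the existence of non-torsion elements, but this is forced by the finite-generation hypothesis and the rank-one condition on the localization, and presents no real obstacle. Note that the argument uses only the injectivity of the localized map $\phi \otimes \mathrm{Id}$, rather than the full isomorphism hypothesis; the isomorphism hypothesis is used implicitly in guaranteeing that $\nu_0(W^2)$ is defined via Lemma~\ref{lem:NuMakesSense}.
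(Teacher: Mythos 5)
Your proof is correct and follows essentially the same route as the paper, whose entire proof is the one-line observation that a homogeneous $U$-non-torsion element of $W^1$ has $U$-non-torsion image under $\phi$; you have simply spelled out the reformulation of $\nu_0$ in terms of non-torsion elements and the localization argument for why the image stays non-torsion. (One trivial slip: the chain $\ker(U|_{W_s})\subset\ker(U^2|_{W_s})\subset\cdots$ is ascending rather than descending, but the finite-dimensionality/stabilization argument is unaffected.)
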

\begin{proof}
  This is an immediate consequence of the fact that a homogeneous, $U$-non-torsion element in $W^1$ has $U$-non-torsion
  image.
\end{proof}

If $C_*$ is a finitely generated chain complex over $\OurRing$,
then since $\OurRing$ is Noetherian, it follows that $H(C_*)$ is also finitely generated as well.
If also $H(C_*)\otimes_{\Field[U]} \Field[U,U^{-1}]\cong\Field[U,U^{-1}]$, we can let
$\nu(C_*)={\nu}_0(H(C_*))$ and 
$\tau(C_*)={\nu}_0(H(C_*/V))$. Moreover, if 
$C^*=\Mor(C,\OurRing)$, we can form
\[ \nu'(C)=\nu(C^*),\qquad \tau'(C)=\tau(C^*).\]
Obviously, if $C^1$ and $C^2$ are quasi-isomorphic as bigraded chain complexes over $\OurRing$, then
${\nu}(C^1)={\nu}(C^2)$, ${\tau}(C^1)={\tau}(C^2)$, 
${\nu}'(C^1)={\nu}'(C^2)$, and
${\tau}'(C^1)={\tau}'(C^2)$.

Observe also that 
${\tau}(C)\leq {\nu}(C)$, 
since the  $\OurRing$-module chain map $C\to C/V$ induces a map on homology satisfying the 
hypotheses of Lemma~\ref{lem:MonotonicityOfNu}, so
that
\[{\nu}(C)={\nu}_0(H(C))
\geq 
{\nu}_0(H(C/V))={\tau}(C).\]

Following~\cite{HomCables}, let
\begin{equation}
  \label{eq:DefEpsilon}
  \epsilon(C)=(\tau(C)-\nu(C))-(\tau'(C)-\nu'(C)).
\end{equation} Using this
function applied to the chain complex computing knot Floer homology,
Hom constructs in~\cite{HomConc} infinitely many linearly independent
smooth concordance homomorphisms to $\Z$ that are non-trivial on
topologically slice knots.

\begin{proof}[Proof of Proposition~\ref{prop:UNonTorsion}]
  According to Proposition~\ref{prop:CrossingChangeHwz},
  $\Hwz(K)\otimes_{\Field[U]} \Field[U,U^{-1}]$ is independent of the choice of knot $K$.
  For $K$ the unknot, it is straightforward to see
  that $\Hwz(K)\cong \OurRing$.
\end{proof}

By Proposition~\ref{prop:UNonTorsion}, we can associate the invariants
$\nu$ and $\tau$ to the complex $\Cwz(\Diag)$, to extract the
corresponding numerical invariants denoted ${\underline \nu}(K)$ and
${\underline\tau}(K)$ in the introduction. We can also define
${\underline \epsilon}(K)$, as in Equation~\eqref{eq:DefEpsilon}
(again, for the chain complex $\Cwz(\Diag)$, to get bordered analogues
of Hom's invariants~\cite{HomCables}.

Proposition~\ref{prop:BoundUnknotting} is an easy consequence of Proposition~\ref{prop:CrossingChangeHwz};
compare~\cite[Chapter~6]{GridBook}:

\begin{proof}[Proof of Proposition~\ref{prop:BoundUnknotting}]
  Since $c_+$ from Proposition~\ref{prop:CrossingChangeHwz} is a bigraded $\OurRing$-module homomorphism
  inducing an isomorphism after we tensor with $\Field[U,U^{-1}]$,
  Lemma~\ref{lem:MonotonicityOfNu} shows that
  ${\underline\nu}(K_+)\geq {\underline\nu}(K_-)$.
  The map $c_-$ is a bigraded homomorphism, too, after introducing a suitable grading shift. This gives the inequality
  ${\underline\nu}(K_-)\geq {\underline\nu}(K_+)-1$.
  Specializing the complexes to $V=0$ gives the analogous bounds for ${\underline\tau}$.
\end{proof}

\newcommand\CwzZ{\Cwz_{\Z}}
\newcommand\HwzZ{\Hwz_{\Z}}
\newcommand\PartInvZ{\PartInv_{\Z}}
\newcommand\Trident{\mathcal T}
\section{Signs}
\label{sec:Signs}

Our aim here is to define a $\Z$ lift of the constructions from this
paper.  First, we define the algebras over $\Z$; then we recall some
generalities on $\Ainfty$ algebras and bimodules with signs.  The
basic bimodules associated to crossings, maxima, and minima are then
defined over $\Z$. Equipped with these definitions, we define
$\CwzZ(\Diag)$ as we defined $\Cwz(\Diag)$ before, only now using the
modules defined over $\Z$; and we can use it as before to define the
invariant of an oriented knot $\orK$; see
Theorem~\ref{thm:InvarianceZ} below.

\subsection{Signs in the algebra}
We define a $\Zmod{2}$ grading on all of the algebras, induced by a
function on homogeneous generators $a$. We call the variables
$C_{\{i,j\}} \in \Alg$ for $\{i,j\}\in\Matching$; and
$C_i\in\Blg(m,k,\Upwards)$ for $i\in\Upwards$, and $E_i\in\Alg'$ for
$i\in \{1,\dots,2n\}$ {\em exterior variables}.  All three algebras
have a mod $2$ grading, called the {\em exterior grading}, which, for
pure algebra elements $a$, counts the number of exterior variable
factors in $a$.  Given $t\in\Zmod{2}$, an algebra element is called
{\em homogeneous} if it can be written as a sum of pure algebra
elements, each of which has exterior grading equal
to $t$.  

The algebras are graded by this exterior grading. This
means that multiplication respects the mod $2$ grading; $d$ reverses
it; and multiplication and differentiation satisfy the Leibniz rule
\[ d(a\cdot b)=(da)\cdot b + (-1)^{|a|}a \cdot (db). \]
Thus, the basic idempotents, and the algebra generators $R_i$
and $L_i$ have even exterior grading; while the exterior grading
of $C_p$ or $E_i$ (in $\Alg$ or $\DuAlg$ respectively) is odd.

In particular, the
algebra $\Blg(2n,k)$ (now defined over $\Z$)
is  in exterior grading $0$. This algebra in turn 
is the quotient of an algebra $\BlgZ(2n,k)$, defined 
exactly as in~\cite[Section~\ref{BK:sec:BlgZ}]{BorderedKnots}, except now
over the base ring $\Z[U_1,\dots,U_{2n}]$. 
Note that the generators $L_i$ and $R_j$ for those algebras satisfy the following commutation 
relations for $|i-j|>1$:
\begin{equation}
  \label{eq:CommutingLsRs}
  \begin{array}{lll}
    L_i \cdot L_j = L_j \cdot L_i,  & L_i \cdot R_j = R_j \cdot L_i,
    & R_i \cdot R_j = R_j \cdot R_i.
    \end{array}
\end{equation}
We will also have some use for alternative generators $L_i'$ and $R_j'$ satisfying the following anti-commutation relations
for all $i,j\in\{1,\dots,2n\}$ with $|i-j|>1$:
\begin{equation}
  \label{eq:AnticommutingLsRs}
  \begin{array}{lll}
 L'_i \cdot L'_j = -L'_j \cdot L'_i,  & L'_i \cdot R'_j = -R'_j \cdot L'_i,
& R'_i \cdot R'_j = -R'_j \cdot R'_i.
\end{array}
\end{equation}
Letting
\[ f(i,\x)=\sum_{\{x\in \x\mid x<i-1\}}x,\]
these generators are related by
\begin{align*}
 \Idemp{\x}\cdot L_i'&= (-1)^{f(i,\x)} \Idemp{\x}\cdot L_i \\
 \Idemp{\x}\cdot R_i'&= (-1)^{f(i,\x)} \Idemp{\x}\cdot R_i.
\end{align*}
Note that $L_i \cdot R_i = L_i'\cdot R_i'$ and $R_i \cdot L_i=R_i'\cdot L_i'$.

With these choices, we also have the 
commutation relations:
\[ U_i\cdot a = a\cdot U_i \]
for all $a\in\Alg$.
Further commutation relations in $\Alg$ are:
\[ C_{\{i,j\}} \cdot a = (-1)^{|a|} a\cdot C_{\{i,j\}}.\]
Similarly, for $\Blg(m,k,\Upwards)$ (which will make a brief appearance 
in this section),
\[ C_i\cdot a = (-1)^{|a|}a \cdot C_i.\]
Finally, in $\DuAlg$,
\[ E_i \cdot E_j = -E_j \cdot E_i\]
if $\{i,j\}\not\in\Matching$;
and
\[ E_i\cdot b = b\cdot E_i\]
if $b\in\Blg(2n,k)$.

The differential in $\Blg(m,k,\Upwards)$ satisfies $dC_i = U_i$, and
differential in $\Alg$ satisfies $d C_{\{i,j\}}= U_i U_j$, and 
the differential in $\Alg'$ satisfies $d E_i = U_i$.

\subsection{Sign conventions for tensor products}
If $X$, $X'$, $Y$, and $Y'$ are $\Zmod{2}$-graded Abelian groups and
$f\colon X\to X'$ and $g\colon Y\to Y'$ are homomorphisms of
Abelian groups with $\Zmod{2}$-grading
$|f|$ and $|g|$ respectively, then
we think of their tensor product
$f\otimes g \colon X\otimes Y \to X'\otimes Y'$
as defined by
\begin{equation}
  \label{eq:TensorMaps}
 (f\otimes g) (x\otimes y)= (-1)^{|g| |x|} f(x) \otimes g(y),
\end{equation}
if
$x\in X$ is homogeneous of degree $|x|$. With this convention, if
$(X,\partial_X)$ and $(Y,\partial_Y)$ are chain complexes,
the endomorphism $\partial_X \otimes \Id_Y + \Id_X \otimes \partial_Y$
gives $X\otimes Y$ the structure of a chain complex.

\subsection{Sign conventions on type $D$ structures}
A graded $D$ module is also equipped with a $\Zmod{2}$ grading, with
the property that the map
\[ \delta^1\colon M_t \to (\Alg\otimes M)_{t-1}.
\]
satisfies the usual the structure relation
\[ (\mu_1\otimes \Id_M) \circ \delta^1 + (\mu_2\otimes \Id_M)\circ
(\Id\otimes \delta^1)\circ \delta^1 = 0. \] Note that certain signs
are introduced by convention from
Equation~\eqref{eq:TensorMaps}. Specifically, if $\{\x_i\}_{i=1}^n$ are generators
for the type $D$ module, and
\[ \delta^1(\x_i)=\sum_{j} a_{i,j}\otimes \x_j,\]
then the structure relations state that
\[ d a_{i,k} + \sum_{j} (-1)^{|a_{i,j}|} a_{i,j}\cdot a_{j,k} = 0.\]
for all pairs $i,k\in\{1,\dots,n\}$.
This map can be iterated to give a map
\[ \delta\colon M \to \Tensor^*\Alg\otimes M; \]
where here $\Tensor^*(\Alg)=\bigoplus_{i=0}^{\infty} \Alg^{\otimes i}$.
Explicitly,
letting
\[ f(a_1,\dots,a_\ell)=\sum_{i=1}^{\ell-1} (\ell-i)\cdot |a_{i}|,\]
we have that
\[ \delta(\x_i) = \sum_{j_1,\dots,j_{\ell}}
(-1)^{f(a_{i,j_1},\dots,a_{j_{\ell-1},j_{\ell}})} a_{i,j_1}\otimes
\dots\otimes a_{j_{\ell-1},j_{\ell}}\otimes \x_{j_{\ell}},\] where the
sum is taken over all sequences $j_1,\dots,j_\ell$ of elements of
$\{1,\dots,n\}$ (including the empty sequence, of length $\ell=0$,
whose contribution is simply $\x_i$).  Note that the sum defining
$\delta$ might have infinitely many non-zero terms; in that case, the
map $\delta$ is to be thought of as a map $\delta\colon M \to
{\overline \Tensor^*(\Alg)} \otimes M$, where
${\overline\Tensor^*(\Alg)}$ is the completion of the tensor
algebra. We will not make this notational distinction in the following
discussion.

Let $\lsup{\Alg}M$ and $\lsup{\Alg}N$ be two type $D$ structures. A morphism
from $M$ to $N$ is a map 
\[ h^1\colon M_t \to (\Alg\otimes N)_{t}.\] The morphism space is equipped with a differential
\[ d h^1 = (\mu_1\otimes \Id_N)\circ h^1 + (\mu_2 \otimes \Id_N) \circ (\Id\otimes \delta^1)\circ h^1 
- (\mu_2 \otimes \Id_N) \circ (\Id\otimes h^1)\circ \delta^1,\]
with sign conventions from Equation~\eqref{eq:TensorMaps}.
A homomorphism is a map $h^1\colon M\to \Alg\otimes N$ as above, with $dh^1=0$.

\subsection{Sign conventions on $\Ainfty$ algebras}
For an $\Ainfty$ algebra, there  is a $\Zmod{2}$ grading so that the maps
satisfy
\[ \mu_\ell \colon (\Alg \otimes \dots\otimes \Alg)_t
\to \Alg_{t+\ell-2}.\]
(We write here $t+\ell-2$; this of course could be written instead as $t+\ell$, since our grading takes values in $\Zmod{2}$. We have chosen
to write instead the expression $t+\ell-2$, which is valid also in the $\Z$-graded case.)
The $\Ainfty$ equation with $n\geq 1$ inputs  takes the form
\begin{equation}
  \label{eq:AinftyWithSigns}
 \sum_{n=r+s+t} (-1)^{r+st} \mu_{r+1+t}(\Id^{\otimes r}\otimes \mu_s \otimes \Id^{\otimes t}) = 0.
\end{equation}
The $\Ainfty$ relation with $n=2$ inputs takes the form
\[ \mu_1 \mu_2 = \mu_2(\mu_1\otimes \Id + \Id\otimes \mu_1);\]
and with $n=3$ inputs it takes the form
\[ \mu_2(\Id \otimes \mu_2 - \mu_2\otimes \Id)
= \mu_1 \mu_3 + \mu_3(\mu_1\otimes \Id \otimes \Id + \Id\otimes \mu_1\otimes \Id + \Id \otimes  \Id\otimes \mu_1).\]

It is easy to check that, with these conventions, the map
\[ \mu' \colon \Tensor^*(\Alg)\to\Tensor^*(\Alg) \]
defined by
\[ \mu'_n(a_1,\dots,a_{n})=
\sum_{n=r+s+t} (-1)^{r+st} \Id^{\otimes r}\otimes \mu_s \otimes \Id^{\otimes t}\]
is a differential; i.e.
\[ \mu'\circ\mu'=0.\]
Of course, the formulas are simpler for a $DG$ algebra, where the only non-zero contributions come from
$s=1$ and $2$.

\subsection{Sign conventions on $\Ainfty$ modules}

The $\Ainfty$ relation (for $\Ainfty$ algebras) has a straightforward generalization to $\Ainfty$ modules.
The maps 
\[m_s\colon M\otimes \overbrace{\Alg\otimes\dots\otimes \Alg}^{s-1}\to M\]
give a map
\[ m\colon M\otimes \Tensor^*(\Alg)\to M.\]
Consider the map
\[ m'\colon M\otimes \Tensor^*(\Alg)\otimes \Tensor^*(\Alg)\to M\otimes \Tensor^*(\Alg) \]
defined by
\begin{equation}
  \label{eq:mPrime}
  m'(\x,(a_1\otimes\dots\otimes a_{s-1})\otimes (a_{s}\otimes\dots a_{s+t-1}))=
(-1)^{st} m_s(\x,a_1\otimes\dots\otimes a_{s-1})\otimes (a_{s}\otimes\dots a_{s+t-1}).
\end{equation}
The $\Ainfty$ relation can be phrased now as the following relation between maps $M\otimes \Tensor^*(\Alg)\to M$:
\[ m\circ m'\circ (\Id_X\otimes \Delta) = m\circ (\Id_M\otimes \mu'), \]
where $\Delta\colon \Tensor^*(\Alg)\to\Tensor^*(\Alg)\otimes\Tensor^*(\Alg)$ is the map induced by
\[
\Delta(a_1\otimes\cdots\otimes
a_i)=\sum_{j=0}^{i}(a_1\otimes\cdots\otimes
a_j)\otimes(a_{j+1}\otimes\cdots\otimes a_i).
\]
The $\Ainfty$ condition is equivalent to the condition that the endomorphism
\begin{equation}
  \label{eq:mComplex}
  \partial_M=m'_M\circ (\Id_M\otimes \Delta)-\Id_M\otimes \mu'  
\end{equation}
is a differential on $M\otimes \Tensor^*(\Alg)$.

Concretely, if $M$ is an $\Ainfty$ module over a DG algebra (i.e. with
$\mu_n=0$ for all $n\geq 3$), the operations
\[ m_{\ell}\colon (M \otimes \Alg^{\otimes {\ell-1}})_t \to M_{t+\ell-2}=M_{t+\ell} \]
satisfy the following $\Ainfty$ relations (for integers $n\geq 1$ and
arbitrary homogeneous elements $x \in M$ and homogeneous $a_1,\dots,a_{n-1}\in \Alg$):
\begin{align*}
  0 =&\sum_{i=0}^{n-1} (-1)^{i}m_{i+1}(m_{n-i}(x,a_1,\dots,a_{n-i-1}),a_{n-i},\dots,a_{n-1}) \\
  &+\sum_{i=1}^{n-2} (-1)^{i} m_{n-1}(x,a_1,\dots,\mu_2(a_i, a_{i+1}),a_{i+2},\dots,a_{n-1}) \\
  &+ (-1)^{n-1}\sum_{i=1}^{n-1} (-1)^{|x|+|a_1|+\dots+|a_{i-1}|}
  m_n(x,a_1,\dots a_{i-1},\mu_1(a_i),a_{i+1},\dots,a_{n-1}).
\end{align*}

Signs for morphisms of $\Ainfty$-modules are constructed similarly. Fix an $\Ainfty$-algebra $\Alg$, and $\Ainfty$ modules $M_{\Alg}$ and $N_{\Alg}$.
A homomorphism
$f \colon M_{\Alg} \to N_{\Alg}$ is a collection of maps
$\{f_i\colon M\otimes \Alg^{\otimes{i-1}}\to N\}_{i=1}^n$ satisfying $\Ainfty$ relations. To phrase those
relations, view $f$ as giving rise to a map
$f\colon M\otimes \Tensor^*(\Alg)\to M$, and 
define 
\[ f'\colon M\otimes\Tensor^*(\Alg)\otimes\Tensor^*(\Alg)\to N\otimes \Tensor^*(\Alg)\] by
\[ f'(\x,(a_1\otimes\dots\otimes a_{s-1})\otimes (a_s\otimes\dots\otimes a_{s+t-1}))
=(-1)^{st} f_s(\x,a_1,\dots,a_{s-1})\otimes (a_s\otimes\dots\otimes a_{s+t-1}).\]
The $\Ainfty$ relation takes the form
\[ (f\circ m'_M)\circ (\Id_X\otimes \Delta)+(m_N\circ f')\circ (\Id_X\otimes \Delta)-f\circ (\Id_X \otimes \mu')=0;\]
i.e.
\[ 
\mathcenter{
\begin{tikzpicture}
  \node at(0,2) (inM) {};
  \node at (2,2) (inAlg) {};
  \node at (1,1) (Delta) {$\Delta$};
  \node at (0,0) (m) {$m'$};
  \node at (0,-1) (phi) {$f$};
  \node at (0,-2) (Mout) {};
  \draw[modarrow] (inM) to (m);
  \draw[tensoralgarrow] (inAlg) to (Delta);
    \draw[modarrow] (m) to (phi);
  \draw[tensoralgarrow] (Delta) to (phi);
  \draw[tensoralgarrow] (Delta) to (m);
  \draw[modarrow] (phi) to (Mout);
\end{tikzpicture}}
+
\mathcenter{
\begin{tikzpicture}
  \node at(0,2) (inM) {};
  \node at (2,2) (inAlg) {};
  \node at (1,1) (Delta) {$\Delta$};
  \node at (0,0) (phi) {$f'$};
  \node at (0,-1) (m) {$m$};
  \node at (0,-2) (Mout) {};
  \draw[modarrow] (inM) to (phi);
  \draw[tensoralgarrow] (inAlg) to (Delta);
    \draw[modarrow] (phi) to (m);
  \draw[tensoralgarrow] (Delta) to (m);
  \draw[tensoralgarrow] (Delta) to (phi);
  \draw[modarrow] (m) to (Mout);
\end{tikzpicture}}
- 
\mathcenter{
\begin{tikzpicture}
  \node at(0,2) (inM) {};
  \node at (2,2) (inAlg) {};
  \node at (1,1) (d) {$\mu'$};
  \node at (0,0) (phi) {$f$};
  \node at (0,-2) (Mout) {};
  \draw[modarrow] (inM) to (phi);
  \draw[tensoralgarrow] (inAlg) to (d);
  \draw[tensoralgarrow] (d) to (phi);
  \draw[modarrow] (phi) to (Mout);
\end{tikzpicture}}=0;\]
or, 
equivalently, the map 
\[ f'\circ (\Id\otimes\Delta)\colon (M\otimes \Tensor^*(\Alg),\partial_M)\to 
(N\otimes \Tensor^*(\Alg),\partial_N)
 \]
is a chain map.

\subsection{The $\Ainfty$ tensor product with signs}
\label{sec:TensorProdSigns}

Given an $\Ainfty$ module $X_{\Alg}$ and a type $D$ structure
$\lsup{\Alg}Y$, we can form their tensor product
$X_{\Alg}\DT\lsup{\Alg}Y$, defining
(as in the unsigned case,  bearing in mind the conventions from
Equation~\eqref{eq:TensorMaps})
\begin{equation}
  \label{eq:DefDBar}
  \partial(\x\otimes \y)=(m\otimes \Id_Y)\circ (\Id_X \otimes \delta).
\end{equation}
As in the unsigned case, a little care must be taken: the {\em a
  priori} infinite sums implicit in the above definition must be
guaranteed to be finite by some boundedness hypothesis.

To make Equation~\eqref{eq:DefDBar} more concrete, suppose that $\lsup{\Alg}Y$ is given with a basis
$\{\y_i\}_{i=1}^n$, so that
\[ \delta^1 (\y_i)=\sum_{j} a_{i,j} \otimes \y_j,\]
and let
\[ e(\x,a_1,\dots,a_{\ell})=\ell|\x|+\sum_{i=1}^{\ell} (\ell-i) |a_i|.\]
Then,
\[ \partial(\x\otimes \y_i)=
\sum_{j_1,\dots,j_{\ell}} (-1)^{e(\x,a_{i,j_1},\dots,a_{j_{\ell-1},j_{\ell}})}
m_{\ell+1}(\x,a_{i,j_1},a_{j_1,j_2},\dots,
a_{j_{\ell-1},j_\ell})\otimes \y_{j_{\ell}};\]
where the sum is taken over all sequences $j_1,\dots,j_\ell$ of elements of $\{1,\dots,n\}$ (including the empty sequence, of 
length $\ell=0$, which contributes $m_1(\x)\otimes \y_i$).

\begin{prop}
  \label{prop:TensorProdSigns}
  If $X_{\Alg}$ is an $\Ainfty$ module and $\lsup{\Alg}Y$ a type $D$
  structure satisfying the needed boundedness conditions
  (e.g. as in Proposition~\ref{prop:AdaptedTensorProd}), then
  the map $\partial$ from Equation~\ref{eq:DefDBar} is a differential.
\end{prop}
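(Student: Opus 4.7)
The approach will be the standard verification of $\partial^2=0$ for the box tensor product, using three ingredients: coassociativity of the iterated structure map $\delta$, the $\Ainfty$ relation for $X$ in the form $m\circ m'\circ(\Id_X\otimes\Delta)=m\circ(\Id_X\otimes\mu')$, and the type $D$ structure relation for $Y$ in the form $(\mu'\otimes\Id_Y)\circ\delta=0$ on the iterated $\delta$. Boundedness, as in Proposition~\ref{prop:AdaptedTensorProd}, ensures that all the {\em a priori} infinite sums appearing below collapse to finite ones, so the manipulations are legitimate term by term.

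First I would establish coassociativity of the iterated $\delta$, namely $(\Id_{\Tensor^*\Alg}\otimes\delta)\circ\delta=(\Delta\otimes\Id_Y)\circ\delta$. This follows from the description of $\delta$ as a sum over all finite iterations of $\delta^1$, together with the explicit sign $(-1)^{f(a_1,\ldots,a_\ell)}$ recorded in the excerpt: splitting an iterated tower of $\delta^1$s at any intermediate position recovers exactly the sign produced by the formula for $\Delta$ on $\Tensor^*(\Alg)$. Second, I would reformulate the type $D$ structure relation as the identity $(\mu'\otimes\Id_Y)\circ\delta=0$. Unwinding both sides into sums over tensor-factorizations produces the same terms: applying $\mu_1$ at one slot of the iterated $\delta$ tower accounts for the $\mu_1$-piece of the structure relation, while applying $\mu_2$ across two adjacent slots accounts for the $\mu_2$-piece, with the $(-1)^{r+st}$ signs from Equation~\eqref{eq:AinftyWithSigns} absorbed into the prescribed signs on $\delta$.

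With these two identities in hand, the computation proceeds as follows. By definition,
\[
\partial\circ\partial = (m\otimes\Id_Y)\circ(\Id_X\otimes\delta)\circ(m\otimes\Id_Y)\circ(\Id_X\otimes\delta).
\]
Since $m$ and $\delta$ act on disjoint tensor factors, the middle pair commutes (up to the sign dictated by Equation~\eqref{eq:TensorMaps}), and coassociativity of $\delta$ then lets me rewrite the composition as
\[
\bigl((m\circ m'\circ(\Id_X\otimes\Delta))\otimes\Id_Y\bigr)\circ(\Id_X\otimes\delta).
\]
By the $\Ainfty$ relation for $X$ (the very identity encoded by the assertion after Equation~\eqref{eq:mComplex} that $\partial_M$ is a differential), the inner composition equals $m\circ(\Id_X\otimes\mu')$. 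Substituting, $\partial\circ\partial$ becomes $(m\otimes\Id_Y)\circ\bigl(\Id_X\otimes((\mu'\otimes\Id_Y)\circ\delta)\bigr)$, which vanishes by the type $D$ relation.

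The main obstacle will be sign bookkeeping. The tensor-of-maps convention from Equation~\eqref{eq:TensorMaps}, the sign $(-1)^{st}$ in the definition of $m'$ from Equation~\eqref{eq:mPrime}, the signs $(-1)^{r+st}$ in the $\Ainfty$ relation, and the signs $(-1)^{f(a_1,\ldots,a_\ell)}$ and $(-1)^{e(\x,a_1,\ldots,a_\ell)}$ governing $\delta$ and $\partial$ must all interlock coherently. The cleanest bookkeeping comes from viewing the iterated $\delta$ as a coalgebra-like map into $\Tensor^*(\Alg)\otimes Y$ where all signs are dictated by the single $\Zmod{2}$ grading on $\Tensor^*(\Alg)$; with that perspective, the only nontrivial verification is that the prescribed sign on $\delta$ is simultaneously compatible with coassociativity and with the recasting of the type $D$ relation in terms of $\mu'$, and the cancellation in $\partial\circ\partial$ is then forced by the $\Ainfty$ relation.
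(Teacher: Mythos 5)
Your proposal is correct and follows essentially the same route as the paper's proof: both rest on the comodule (coassociativity) property of the iterated $\delta$, the reformulation of the type $D$ relation as $(\mu'\otimes\Id_Y)\circ\delta=0$, the $\Ainfty$ relation in the form $m\circ m'\circ(\Id_X\otimes\Delta)=m\circ(\Id_X\otimes\mu')$, and the signed commutation of $m$ past $\delta$, assembled into the identical chain of equalities for $\partial\circ\partial$. No gaps.
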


\begin{proof}
  The proof that Equation~\eqref{eq:DefDBar} specifies a differential
  follows as in~\cite{BorderedKnots}. For the reader's convenience, we recall the argument here, with a few remarks concerning signs.

  From its construction, $\delta$ satisfies the structure equation
  \[ (\Id_{\Tensor^*(\Alg)}\otimes \delta)\circ \delta + (\Delta\otimes \Id_Y)\circ \delta = 0;\]
  i.e. $\delta$ is a comodule over the coalgebra
  $\Tensor^*(\Alg)$. The structure equation for a type $D$ structure
  is equivalent to the condition
  \[ (\mu'\otimes \Id_Y) \circ \delta = 0;\]
  i.e. $\delta$ is a differential comodule.
  This latter verification makes a straightforward usage of the signs appearing in both $\mu'$ and $\delta$.
  It is similarly easy to verify that:
  \begin{align*} (\Id_X\otimes&\delta^t)\circ (m_s\otimes \Id_Y)(\x\otimes (a_1\otimes\dots\otimes a_{s-1})\otimes \y) \\
  &=(-1)^{st} (m_s\otimes \Id_Y)\circ
  (\Id_{X\otimes \Alg^{\otimes (s-1)}}\otimes\delta^t)(\x\otimes (a_1\otimes\dots\otimes a_{s-1})\otimes \y)
.\end{align*}
 i.e.
 \[ (\Id_X\otimes \delta)\circ (m\otimes \Id_Y)=(m'\otimes \Id_Y)\circ (\Id_{X\otimes \Tensor^*(\Alg)}\circ \delta).\]
 Thus,
  \begin{align*} (m\otimes \Id_Y)&\circ (\Id_X \otimes \delta)\circ
  (m\otimes \Id_Y)\circ (\Id_X \otimes \delta)\\
  &=(m\otimes \Id_Y)\circ
  (m'\otimes \Id_Y)\circ
  (\Id_{X\otimes \Tensor^*(\Alg)}\otimes \delta)\circ
  (\Id_{X}\otimes\delta) \\
  &= (m\otimes \Id_Y)\circ
  (m'\otimes\Id_{Y})\circ
  (\Id_X \otimes \Delta\otimes \Id_Y)\circ (\Id_X\otimes \delta) \\
  &= \Bigg(\Big((m\circ m')\circ (\Id_X\otimes \Delta)
  \Big)\otimes \Id_Y\Bigg) \circ (\Id_X\otimes \delta) 
  \\
  &=((m\circ \mu')\otimes \Id_Y)\circ (\Id_X\otimes \delta) \\
  &=(m\otimes \Id_Y)\circ (\Id_X\otimes \mu'\otimes\Id_Y)\circ (\Id_X\otimes \delta) \\
  &= 0.
  \end{align*}
\end{proof}

\subsection{Bimodules over $\Z$}
A type $DA$ bimodule $\lsup{\Alg}X_{\Blg}$ is a bimodule over the idempotent rings, equipped with a map 
$\delta^1_j\colon X\otimes \Alg^{\otimes j-1}\to \Blg\otimes X$,
which we add up to get a map 
$\delta^1\colon X\otimes \Tensor^*(\Alg)\to \Blg\otimes X$,
satisfying certain $\Ainfty$ relations, which we state now with signs.

Consider the maps
$ X\otimes (\Alg^{\otimes (s-1)})\otimes (\Alg^{\otimes t}) \to \Blg\otimes X \otimes \Alg^{\otimes t}$
defined by
\[ \x\otimes (a_1\otimes\dots\otimes a_{s-1})\otimes (a_s\otimes\dots a_{s+t-1})
\to (-1)^{st}\delta^1_{s}(a_1\otimes\dots\otimes a_{s-1})\otimes (a_s\otimes\dots\otimes a_{s+t-1}).\]
These can be added together to give a map
\[ (\delta^1)'\colon X\otimes \Tensor^*(\Alg)\otimes\Tensor^*(\Alg)\to \Blg\otimes X\otimes \Tensor^*(\Alg).\]
The $\Ainfty$ relations now read:
\[ (\mu_1\otimes \Id_X)\circ \delta^1 - \delta^1\circ (\Id_X\otimes (\mu^\Alg)') +
(\mu_2^{\Blg}\otimes \Id_X)\circ (\Id_{\Blg}\otimes \delta)\circ (\delta^1)' \circ (\Id_X \otimes \Delta)=0,
\]
as maps from $X\otimes \Tensor^*(\Alg)\to\Blg\otimes X$.

Concretely, for any integer $i\geq 1$, 
the $DA$ bimodule relation takes the form
\begin{align*} 0 &= (\mu_1^\Alg\otimes \Id_X)\circ \delta^1_i  \\
&+ (-1)^{i-1} \sum_{j=1}^{i-1} \delta^1_{i}(\Id_{X \otimes \Blg^{j-1}}\otimes \mu_1^{\Blg}\otimes 
\Id_{\Blg^{\otimes (i-j-1)}}) \\
&+ \sum_{j=1}^{i-2} (-1)^{j} \delta^1_{i-1}(\Id_{X \otimes \Blg^{j-1}}\otimes \mu_2^{\Blg}\otimes 
\Id_{\Blg^{\otimes (i-j-2)}}) \\
&+ \sum_{j=1}^i (-1)^{j(i-j)}(\mu_2^{\Alg}\otimes \Id_X)\circ (\Id_{\Blg}\otimes \delta^1_{i-j+1})
\circ (\delta^1_j\otimes \Id_{\Blg^{\otimes (i-j)}}),
\end{align*}
using the conventions of 
compositions from Equation~\eqref{eq:TensorMaps}. 
So, for example, if
$\delta^1_{j}(x,b_1,\dots,b_{j-1})$ contains $a_1\otimes y$, 
and $\delta^1_{i-j+1}(y,b_{j},\dots,b_{i-1})$ contains $a_2\otimes z$,
the corresponding contribution  (to the last term) in the $\Ainfty$ relation with inputs
$(x,b_1,\dots,b_{i-1})$ is
$(-1)^{(|a_1|+j)(i-j+1)} (a_1\cdot a_2) \otimes z$. With this
understood, we can form the $\Ainfty$ tensor products as in
Section~\ref{sec:TensorProdSigns}.

Iterating $\delta^1_j$, we obtain a map
$\lsup{X}\delta \colon X \otimes {\overline \Tensor}^*(\Alg)\to \Tensor^*(\Blg)\otimes X$.

If $\Alg$, $\Blg$, and $\Clg$ are DG algebras, and
$\lsup{\Clg}Y_{\Blg}$, $\lsup{\Blg}X_{\Alg}$ are type $DA$ bimodules,
we define their tensor product
$\lsup{\Clg}Y\DT X_{\Blg}$ to be the DA bimodule with operations given by
$\lsup{Y\DT X}\delta^1 = \lsup{Y}\!\!\delta^1\circ \lsup{X}\!\!\delta$,
when suitable boundedness hypotheses are satisfied.
For example, we have the following signed version of Proposition~\ref{prop:AdaptedTensorProd}:

\begin{prop}
  \label{prop:AdaptedTensorProdZ}
  Let $W_1$ be a disjoint union of finitely many intervals joining
  $Y_1$ to $Y_2$; and let $W_2$ be a disjoint union of finitely many
  intervals joining $Y_2$ to $Y_3$.  Suppose moreover that $W_1\cup
  W_2$ has no closed components, i.e. it is a disjoint union of
  finitely many intervals joining $Y_1$ to $Y_3$.  Given any two
  bimodules $\lsup{\Alg_2}X^1_{\Alg_1}$ and
  $\lsup{\Alg_3}X^2_{\Alg_2}$ adapted to $W_1$ and $W_2$ respectively,
  we can form their tensor product
  $\lsup{\Alg_3}X^2_{\Alg_2}\DT~\lsup{\Alg_2}X^1_{\Alg_1}$ 
  with actions $\lsup{X_2\DT X_1}\delta^1$  as defined above; and moreover, it is a
  bimodule that is adapted to $W_1\cup W_2$.
\end{prop}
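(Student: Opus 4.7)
The plan is to reduce Proposition~\ref{prop:AdaptedTensorProdZ} to its unsigned counterpart (Proposition~\ref{prop:AdaptedTensorProd}) by separating two orthogonal concerns: (i) the finiteness of the infinite sums implicit in $\lsup{X_2\DT X_1}\delta^1 = \lsup{X_2}\!\!\delta^1\circ \lsup{X_1}\!\!\delta$, and (ii) the verification of the type $DA$ structure relations in the signed setting.

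For (i), note that the grading arguments in the proof of Proposition~\ref{prop:AdaptedTensorProd} are purely about the Alexander multi-grading values on the algebras $\Alg_1, \Alg_2, \Alg_3$ and on the $\Delta$-grading of the underlying vector spaces, and they do not interact with the $\Zmod{2}$-grading carried by the signs. In particular, the weights $w_i$ of pure algebra elements, the equivalence classes of points in $Y_1\cup Y_2\cup Y_3$ cut out by $W_1\cup W_2$, and the bound on $|w_i(b_1\otimes\dots\otimes b_j)|$ in terms of the inputs, all transfer verbatim. Thus, exactly as in the unsigned case, for any input $(\x,a_1,\dots,a_\ell)$ only finitely many intermediate algebra outputs $(b_1,\dots,b_j)$ in the $\Alg_2$ tensor factor can contribute, and so $\lsup{X_2\DT X_1}\delta^1$ is well-defined as a finite sum. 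The conclusion that the result is adapted to $W_1\cup W_2$ is an immediate consequence of the definitions and is also unchanged from the unsigned argument.

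For (ii), I would recast the verification in the coalgebraic language of Section~\ref{sec:TensorProdSigns}. The signed $DA$ relation for $\lsup{\Alg_i}X_{i-1}\!\!_{\Alg_{i-1}}$ is equivalent to the statement that $\lsup{X_i}\!\!\delta$ gives $X_i\otimes \Tensor^*(\Alg_{i-1})$ the structure of a differential comodule over $\Tensor^*(\Alg_{i-1})$ whose output into $\Tensor^*(\Alg_i)\otimes X_i$ is a chain map with respect to the differential $\partial$ of Equation~\eqref{eq:mComplex}. The $DA$ relation for the composite $\lsup{\Alg_3}X_2\DT X_1_{\Alg_1}$ then follows formally, by the same manipulation as in the proof of Proposition~\ref{prop:TensorProdSigns}: one chains the two comodule maps, uses the $\Ainfty$ relation for $X_2$ in the form $m\circ m'\circ(\Id\otimes\Delta) = m\circ(\Id\otimes \mu')$ on $X_2\otimes\Tensor^*(\Alg_2)$, and uses the $D$-side relation for $X_1$ (as a type $D$ structure over the bar construction of $\Alg_1$) to cancel the remaining terms. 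Every sign that appears is already accounted for by the conventions of Equations~\eqref{eq:TensorMaps}--\eqref{eq:mPrime} and the definition of $(\delta^1)'$, so no new sign bookkeeping is required.

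The main obstacle, such as it is, will be making sure the $\Ainfty$ formalism is applied cleanly when $X_2$ is a $DA$ bimodule (rather than simply an $\Ainfty$ module as in Proposition~\ref{prop:TensorProdSigns}). The cleanest way is to observe that if one forgets the left $\Alg_3$-action, $X_2$ is an $\Ainfty$ module over $\Alg_2$ with operations valued in $\Alg_3\otimes X_2$, and the $\Alg_3$-output is carried along as a passive tensor factor throughout the computation; the proof of Proposition~\ref{prop:TensorProdSigns} then applies after tensoring on the left by $\Alg_3$, with the extra outputs combined by $\mu_2^{\Alg_3}$ at the end (which contributes no new signs beyond those already present in $\mu'$ on the $\Alg_2$-side). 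I would write this out as a short diagrammatic computation in the style of those appearing around Equations~\eqref{eq:DefDBar}--\eqref{eq:mComplex}, and note that the finiteness established in (i) justifies each step.
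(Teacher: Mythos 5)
Your proposal is correct and follows essentially the same route as the paper: the paper's (very terse) proof likewise cites the unsigned Proposition~\ref{prop:AdaptedTensorProd} for finiteness of the sums and then appeals to the sign conventions of the proof of Proposition~\ref{prop:TensorProdSigns} for the $\Ainfty$ relation. Your elaboration of how to carry the passive $\Alg_3$-output through the coalgebraic computation is a reasonable expansion of what the paper leaves as ``a straightforward application of the usual rules.''
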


\begin{proof}
  The proof from Proposition~\ref{prop:AdaptedTensorProd} shows that
  sums defining $\lsup{X_2\DT X_1}\delta^1$ are in fact finite.  The
  $\Ainfty$ relation holds by a straightforward application of the
  usual rules, applying sign conventions as in the proof of
  Proposition~\ref{prop:TensorProdSigns}.
\end{proof}

\subsection{Homological perturbation lemma over $\Z$}

The homological perturbation lemma plays a central role in the
construction of the bimodule associated to a minimum,
Section~\ref{sec:Min}; where it is used for type $DA$ bimodules.  We
review here how to introduce signs into this basic result, starting
with the case of modules. The proof is standard; see~\cite{Keller} for
a general discussion; see also~\cite[Lemma~9.6]{HFa}.

\begin{lemma}
  \label{lem:HomologicalPerturbation}
  Let $Y_{\Blg}$ be a strictly unital module over $\Blg$
  with grading set
  $\MGradingSet$, and let $Z$ be a chain complex.
  Suppose that there are chain maps $f\colon Z\to Y$ 
  (i.e. as the notation suggests, we are forgetting here about the right $\Blg$-action)
  and $g\colon Y\to Z$
  and a map  $T\colon Y \to~ \!Y$  so that $f$ and $g$ preserve gradings, and $T$ preserves Alexander gradings and shifts $\Delta$ grading by $+1$.
  Suppose moreover that the following identities hold
  \[\begin{array}{lllll}
    g\circ f = \Id_{Z}, & 
    \Id_{Y}+\partial T+T\partial= f\circ g, &
    T\circ T =0, & f\circ T=0, & T\circ g=0.
    \end{array}\]
    Then $Z$ can be turned into
  a strictly unital type $A$ module, denoted $Z_{\Blg}$; and there is an 
  $\Ainfty$ homotopy equivalence 
  $\phi\colon Z_{\Blg}\to  Y_{\Blg}$ with $\phi_{1}=f$. 
\end{lemma}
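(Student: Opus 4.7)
The strategy is the classical Kadeishvili tree construction, adapted to the signed setting.  Set $m^Z_1=\partial_Z$, and for $n\geq 2$ define
\[
m^Z_n(z,a_1,\dots,a_{n-1})=\sum_{\Gamma}\epsilon(\Gamma)\,g\circ\mathcal{O}_\Gamma\bigl(f(z),a_1,\dots,a_{n-1}\bigr),
\]
where the sum runs over planar rooted trees $\Gamma$ whose leaves consist of one distinguished leftmost module-leaf and $n-1$ ordered algebra-leaves; each internal vertex $v$ of valence $k_v+1$ is labelled by the operation $m^Y_{k_v}$; each edge joining two internal vertices carries the homotopy $T$; the composition is pre-composed with $f$ at the module leaf and post-composed with $g$ at the root; and $\epsilon(\Gamma)\in\{\pm 1\}$ is the Koszul sign dictated by Equation~\eqref{eq:TensorMaps}.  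Set $\phi_1=f$ and, for $n\geq 2$,
\[
\phi_n(z,a_1,\dots,a_{n-1})=\sum_{\Gamma}\epsilon(\Gamma)\,T\circ\mathcal{O}_\Gamma\bigl(f(z),a_1,\dots,a_{n-1}\bigr),
\]
using the same trees but with the root-level $g$ replaced by $T$.

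The $\Ainfty$-relations for $Z_{\Blg}$ are verified by expanding the $\Ainfty$-relation for $Y$ term-by-term at each tree.  Differentiating an internal vertex contributes either an $m^Y$-contraction merging two adjacent vertices or a $\mu_1^{\Blg}$- or $\mu_2^{\Blg}$-term acting on the algebra inputs; the latter match the corresponding terms in the $\Ainfty$-relation for $Z$.  Differentiating an internal edge uses
\[
\partial\circ T+T\circ\partial=\Id_Y-f\circ g\colon
\]
the $\Id_Y$-piece collapses the edge and cancels against the vertex-contractions, while the $-f\circ g$-piece splits the tree at that edge into two subtrees whose contributions reassemble precisely as $(\mu_2^{\Alg}\otimes\Id)\circ(\Id\otimes m^Z_{n-j+1})\circ m^Z_j$.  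The boundary terms at the module-leaf and at the root use $f\circ\partial_Z=\partial_Y\circ f$ and $g\circ\partial_Y=\partial_Z\circ g$ and combine with the $m^Z_1$-contributions.  The side conditions $T^2=0$, $T\circ f=0$, $g\circ T=0$ eliminate degenerate trees (with $T$ adjacent to $T$, $f$, or $g$), so no spurious term appears.  An analogous expansion, with $T$ at the root and the $\partial T+T\partial$ substitution made along the root edge, yields the $\Ainfty$-morphism equation $m^Y\circ\phi=\phi\circ m^Z$.  Strict unitality of $Z_{\Blg}$ follows from strict unitality of $Y_{\Blg}$: if some $a_i=\unit$, the only surviving trees place $a_i$ as the sole algebra-input of a bivalent $m^Y_2$-vertex, and after deleting that vertex (which acts as the identity) the two incident edges compose as $T^2$, $T\circ f$, or $g\circ T$ (all zero), except for the lone one-vertex tree at $n=2$, which yields $g\circ f=\Id_Z$.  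Finally, $\phi$ is a homotopy equivalence: its underlying chain map $\phi_1=f$ admits $g$ as chain-homotopy inverse via $T$, and an explicit $\Ainfty$-inverse $\psi$ with $\psi_1=g$ is built by the same tree formula with $f$ and $g$ exchanged.

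The main obstacle is the sign bookkeeping.  Because $T$ shifts the $\Delta$-grading by $+1$, it is odd in the $\Zmod{2}$-grading, as is each internal edge of $\Gamma$; the Koszul signs of Equation~\eqref{eq:TensorMaps} accumulated along a tree produce exactly the parities needed so that the $\Id_Y$- and $-f\circ g$-contributions of $\partial T+T\partial$ pair with the correct signs against the tree-contraction and tree-splitting terms, and so that the totals match the sign $(-1)^{r+st}$ appearing in the $\Ainfty$-axiom~\eqref{eq:AinftyWithSigns}.  This is a routine but delicate parity computation, carried out by induction on the number of internal vertices and edges of $\Gamma$.
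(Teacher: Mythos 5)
Your proof is correct and follows essentially the same route as the paper's: the transferred operations and the quasi-isomorphism are given by the standard caterpillar/tree formula with $T$ on internal edges, and the $\Ainfty$ relations are checked by feeding $\Id_Y - f\circ g = -(\partial T + T\partial)$ into each edge and the $\Ainfty$ relation for $Y$ into each vertex, with the side conditions $T^2=0$, $f\circ T=0$, $T\circ g=0$ killing the degenerate trees. The only real difference is presentational: where you invoke an unspecified Koszul sign $\epsilon(\Gamma)$ and call the parity check routine, the paper makes those signs concrete via the primed operators $T'$, $m'$, $\mu'$ (which carry the $(-1)^{st}$ factors for moving operators past blocks of algebra inputs), so that the cancellations can be exhibited explicitly by commuting $\mu'$-labelled nodes past $m'$-labelled nodes.
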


\begin{proof}
By hypothesis,  $Z$ is already equipped with an action
$m_1$. 
For $j>1$, given ${\underline b}=b_1\otimes \dots \otimes
b_{j-1}$, the operations $m_{j}$ are specified by
the following pictures:
\[   m_+(\x\otimes{\underline{b}})=
    \begin{tikzpicture}[scale=.8,baseline=(x.base)]
        \node at (-1,1) (terminal) {};
        \node at (-1,3) (g) {$g$};
        \node at (-1,4) (m1) {$m_+$};
        \node at (-1,5) (f) {$f$};
        \node at (-1,6) (x) {${\mathbf x}$};
        \node at (0,6) (alg) {${\underline b}$};
        \node at (-2,1) (aout) {};
        \draw[modarrow] (g) to (terminal);
        \draw[othmodarrow] (m1) to (g);
        \draw[modarrow] (x) to (f);
        \draw[othmodarrow] (f) to (m1);
        \draw[tensorblgarrow, bend left=15] (alg) to (m1);
      \end{tikzpicture}
      +
    \begin{tikzpicture}[scale=.8,baseline=(x.base)]
        \node at (-1,0) (terminal) {};
        \node at (-1,2) (g) {$g$};
        \node at (-1,3) (m2) {$m_+$};
        \node at (-1,4) (T1) {$T'$};
        \node at (-1,5) (m1) {$m'_+$};
        \node at (-1,6) (f) {$f$};
        \node at (0,6) (mu) {$\Delta$};
        \node at (-1,7) (x) {${\mathbf x}$};
        \node at (0,7) (alg) {${\underline b}$};
        \node at (-2,0) (aout) {};
        \draw[othmodarrow] (m2) to (g);
        \draw[modarrow] (g) to (terminal);
        \draw[othmodarrow] (T1) to (m2);
        \draw[othmodarrow] (m1) to (T1);
        \draw[modarrow] (x) to (f);
        \draw[othmodarrow] (f) to (m1);
        \draw[tensorblgarrow] (alg) to (mu);
        \draw[tensorblgarrow, bend left=15] (mu) to (m2);
        \draw[tensorblgarrow, bend left=15] (mu) to (m1);
      \end{tikzpicture}
      +
          \begin{tikzpicture}[scale=.8,baseline=(x.base)]
        \node at (-1,-1.5) (terminal) {};
        \node at (-1,0) (g) {$g$};
        \node at (-1,1) (m3) {$m_+$};
        \node at (-1,2) (T2) {$T'$};
        \node at (0,4) (D2) {$\Delta$};
        \node at (-1,3) (m2) {$m'_+$};
        \node at (-1,4) (T1) {$T'$};
        \node at (-1,5) (m1) {$m'_+$};
        \node at (-1,6) (f) {$f$};
        \node at (0,6) (D) {$\Delta$};
        \node at (-1,7) (x) {${\mathbf x}$};
        \node at (0,7) (alg) {${\underline b}$};
        \node at (-2,-1.5) (aout) {};
        \draw[othmodarrow] (m3) to (g);
        \draw[modarrow] (g) to (terminal);
        \draw[othmodarrow] (T2) to (m3);
        \draw[othmodarrow] (m2) to (T2);
        \draw[othmodarrow] (T1) to (m2);
        \draw[othmodarrow] (m1) to (T1);
        \draw[modarrow] (x) to (f);
        \draw[othmodarrow] (f) to (m1);
        \draw[tensorblgarrow] (alg) to (D);
        \draw[tensorblgarrow, bend left=15] (D) to (m1);
        \draw[tensorblgarrow, bend left=15] (D) to (D2);
        \draw[tensorblgarrow, bend left=15] (D2) to (m2);
        \draw[tensorblgarrow, bend left=15] (D2) to (m3);
              \end{tikzpicture}
              + \cdots \] These graphs represent compositions of maps;
              and the primes indicate that maps are weighted with
              signs indicated as in
              Equation~\eqref{eq:AinftyWithSigns}; i.e. $T'\colon
              Y\otimes \Alg^{\otimes t}\to Y\otimes \Alg^{\otimes t}$
              given by $T'(\y\otimes a_1\otimes\dots\otimes
              a_{t})=(-1)^t\cdot T(\y)\otimes (a_1\otimes\dots\otimes
              a_t)$; while $m'$ is as in Equation~\eqref{eq:mPrime};
              and the subscript $+$ indicates that we require there to
              be a non-zero number of algebra inputs into the
              corresponding node. 
              Note that there are no algebra
              elements moving past the $m_+$-labeled nodes, so for those nodes, $m'_+=m_+$.

It follows that $m'\circ (\Id \otimes \Delta)$ is represented by the pictures
\[   m(\x\otimes{\underline{b}})=
    \begin{tikzpicture}[scale=.8,baseline=(x.base)]
        \node at (-1,1) (terminal) {};
        \node at (-1,3) (g) {$g$};
        \node at (-1,4) (m1) {$m'_+$};
        \node at (-1,5) (f) {$f$};
        \node at (0,5) (Delta) {$\Delta$};
        \node at (-1,6) (x) {${\mathbf x}$};
        \node at (0,6) (alg) {${\underline b}$};
        \node at (0,1) (aout) {};
        \draw[modarrow] (g) to (terminal);
        \draw[othmodarrow] (m1) to (g);
        \draw[modarrow] (x) to (f);
        \draw[othmodarrow] (f) to (m1);
        \draw[tensorblgarrow] (alg) to (Delta);
        \draw[tensorblgarrow, bend left=15] (Delta) to (m1);
        \draw[tensoralgarrow, bend left=15] (Delta) to (aout);
      \end{tikzpicture}
      +
    \begin{tikzpicture}[scale=.8,baseline=(x.base)]
        \node at (-1,0) (terminal) {};
        \node at (-1,2) (g) {$g$};
        \node at (-1,3) (m2) {$m_+$};
        \node at (0,4) (D2) {$\Delta$};
        \node at (-1,4) (T1) {$T'$};
        \node at (-1,5) (m1) {$m'_+$};
        \node at (-1,6) (f) {$f$};
        \node at (0,6) (D1) {$\Delta$};
        \node at (-1,7) (x) {${\mathbf x}$};
        \node at (0,7) (alg) {${\underline b}$};
        \node at (0,0) (aout) {};
        \draw[othmodarrow] (m2) to (g);
        \draw[modarrow] (g) to (terminal);
        \draw[othmodarrow] (T1) to (m2);
        \draw[othmodarrow] (m1) to (T1);
        \draw[modarrow] (x) to (f);
        \draw[othmodarrow] (f) to (m1);
        \draw[tensorblgarrow] (alg) to (D1);
        \draw[tensorblgarrow, bend left=15] (D1) to (D2);
        \draw[tensorblgarrow, bend left=15] (D1) to (m1);
        \draw[tensorblgarrow, bend left=15] (D2) to (m2);
        \draw[tensorblgarrow, bend left=15] (D2) to (aout);
      \end{tikzpicture}
      +
          \begin{tikzpicture}[scale=.8,baseline=(x.base)]
        \node at (-1,-1.5) (terminal) {};
        \node at (-1,0) (g) {$g$};
        \node at (-1,1) (m3) {$m_+$};
        \node at (-1,2) (T2) {$T'$};
        \node at (0,4) (D2) {$\Delta$};
        \node at (0,2) (D3) {$\Delta$};
        \node at (-1,3) (m2) {$m'_+$};
        \node at (-1,4) (T1) {$T'$};
        \node at (-1,5) (m1) {$m'_+$};
        \node at (-1,6) (f) {$f$};
        \node at (0,6) (D) {$\Delta$};
        \node at (-1,7) (x) {${\mathbf x}$};
        \node at (0,7) (alg) {${\underline b}$};
        \node at (0,-1.5) (aout) {};
        \draw[othmodarrow] (m3) to (g);
        \draw[modarrow] (g) to (terminal);
        \draw[othmodarrow] (T2) to (m3);
        \draw[othmodarrow] (m2) to (T2);
        \draw[othmodarrow] (T1) to (m2);
        \draw[othmodarrow] (m1) to (T1);
        \draw[modarrow] (x) to (f);
        \draw[othmodarrow] (f) to (m1);
        \draw[tensorblgarrow] (alg) to (D);
        \draw[tensorblgarrow, bend left=15] (D) to (m1);
        \draw[tensorblgarrow, bend left=15] (D) to (D2);
        \draw[tensorblgarrow, bend left=15] (D2) to (m2);
        \draw[tensorblgarrow, bend left=15] (D2) to (D3);
        \draw[tensorblgarrow, bend left=15] (D3) to (m3);
        \draw[tensorblgarrow, bend left=15] (D3) to (aout);
              \end{tikzpicture}
              + \cdots \]

              We must verify that the above specified maps satisfy the
              $\Ainfty$ relation.  To this end, consider 
              $m\circ (\Id\otimes \mu')(\x\otimes {\underline b})$;
              and think of $\mu'$ as labeling some node just below
              the input labeled ${\underline b}$ in some tree above.
              The signs on $T'$, $m'$, and $\mu'$ ensure that if a
              $\mu'$-labeled node is commuted past an $m'$-marked
              node (provided this commutation makes sense; i.e. the
              output of the $\mu'$-marked node does not feed into the
              input of the $m'$-marked one), then the contributions of
              the two trees before and after the contribution add up
              to zero.  Commute each $\mu'$-marked node as far down as
              possible, until it appears immediately above the
              $m'$-marked node into which its output is channeled (an
              even number of commutations), in which case the
              contribution of this tree coincides with the original
              tree in $m\circ (\Id\otimes \mu')$.

              With this observation in place, the proof
              from~\cite[Lemma~9.6]{HFa} applies.  Specifically, for
              trees of the second kind apply the $\Ainfty$ relation to
              the $m'$ labeled node which takes its input from the
              $\mu'$-labeled node.  The $\Ainfty$ relation then
              guarantees that the $m'(\x\otimes \mu'(\underline b))$
              is the count of trees as shown in the definition of
              $m'(\x\otimes{\underline b})$, except that some
              $m'$-labeled node is replaced by two consecutive
              $m'$-labeled nodes, exactly one of which might have no
              algebra inputs.

              Compare with terms in $(m\circ m')\circ (\Id_Y\otimes
              \Delta)$. This latter contribution counts trees as in
              the definition of $m'$, except that some $m$-marked node
              is replaced by a $g$ marked node above an $f$-marked
              node.  Applying the formula $\Id_Y+\partial T +
              T \partial = f\circ g$, we find that the contribution of
              these trees can be thought of as the contributions of
              the following types of trees: trees as in the definition
              of $m'$, only with some $T'$-marked vertex erased (this
              is the contribution of the identity map); or trees as in
              the definition of $m'$, only with one $m'$-marked vertex
              consisting of $m'_1$ (i.e. with no incoming algebra
              element). This is the same as the contributions of
              $m'(\x\otimes \mu'(\underline b))$ from above, verifying
              the $\Ainfty$ relation for the claimed $\Ainfty$ module.

              A morphism $\phi\colon Z \to Y$ is defined by letting
              $\phi_1=f$ and $\phi_k$ with $k>1$ be counts of trees
              similar to the ones used to define $m$, except that the
              final $g$-labeled node is removed. A morphism
              $\psi\colon Y\to Z$ is defined similarly, with
              $\psi_1=g$ and $\psi_k$ with $k>1$ be counts of trees as
              above, only with the initial $f$ node removed.
              Verification of the $\Ainfty$ relations for these
              morphisms proceeds in a similar manner to the above
              proof.
\end{proof}

For example, if $m_k=0$ on $Y$ for $k>2$, then the trees for computing
$m_{j+1}$ on $Z$ are obtained by counting the trees appearing as above
(without the primes on $T$ and $m$); but weighted by $(-1)^{\eta(j)}$,
where
\begin{equation} 
  \label{eq:DefEta}
  \eta(j)=\left\{\begin{array}{ll}
 0 & {\text{if $j\equiv 0~\text{or}~1\pmod{4}$}} \\
 1 & {\text{if $j\equiv 2~\text{or}~3\pmod{4}$,}}
 \end{array}\right.
\end{equation}

In the form we will need it, the lemma has the following form:
\begin{lemma}
  \label{lem:HomologicalPerturbationDA}
  Let $\lsup{\Alg}Y_{\Blg}$ be a strictly unital type $DA$ bimodule
  over algebras $\Alg$ and $\Blg$
  with grading set
  $\MGradingSet$, and let $\lsup{\Alg}Z$ be a type $D$ structure over $\Alg$.
  Suppose that there are chain maps $f\colon \lsup{\Alg}Z\to \lsup{\Alg}Y$ 
  (i.e. again forgetting here about the right $\Blg$-action)
  and $g\colon \lsup{\Alg}Y\to \lsup{\Alg}Z$
  and a type $D$ structure morphism  $T\colon \lsup{\Alg}Y \to~ \lsup{\Alg}Y$  
  so that $f$ and $g$ preserve gradings, and $T$ preserves Alexander gradings and shifts $\Delta$ grading by $+1$.
  Suppose moreover that the following identities hold
  \[\begin{array}{lllll}
    g\circ f = \Id_{Z}, & 
    \Id_{Y}+dT= f\circ g, &
    T\circ T =0 & f\circ T=0 & T\circ g=0.
    \end{array}\]
    Then $Z$ can be turned into
  a strictly unital type $A$ module, denoted $Z_{\Blg}$; and there is an 
  $\Ainfty$ homotopy equivalence 
  $\phi\colon Z_{\Blg}\to  Y_{\Blg}$ with $\phi_{1}=f$. 
\end{lemma}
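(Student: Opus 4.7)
The plan is to reduce Lemma~\ref{lem:HomologicalPerturbationDA} to Lemma~\ref{lem:HomologicalPerturbation} via the standard dictionary that rephrases a type $DA$ bimodule $\lsup{\Alg}X_{\Blg}$ as a strictly unital right $\Ainfty$-module over $\Blg$ whose underlying object is a type $D$ structure over $\Alg$, where Hom-spaces are the morphism complexes $\Mor^{\Alg}(\cdot,\cdot)$ of type $D$ structures (equipped with the differential recalled earlier in this section), and composition is the composition of type $D$ morphisms implemented via iterated coproducts $\lsup{Y}\!\delta$ as in Section~\ref{sec:TensorProdSigns}. Under this dictionary, the operations $\delta^1_k\colon X\otimes\Blg^{\otimes(k-1)}\to\Alg\otimes X$ correspond to $\Ainfty$-module operations $m_k\colon X\otimes\Blg^{\otimes(k-1)}\to X$ in the enriched setting; and the DA structure relations written in signed form in the previous subsections are exactly the $\Ainfty$-module relations with respect to the morphism differential and composition.

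With this translation in hand, the hypotheses of Lemma~\ref{lem:HomologicalPerturbationDA} become precisely the hypotheses of Lemma~\ref{lem:HomologicalPerturbation} in the enriched category: $f$ and $g$ are closed, grading-preserving morphisms; $T$ is a morphism of $\Delta$-degree $+1$; and the identities $g\circ f=\Id_Z$, $\Id_Y+dT=f\circ g$, $T\circ T=0$, $f\circ T=0$, $T\circ g=0$ are the required side conditions for a deformation retract. I would therefore carry out the same tree sums displayed in the proof of Lemma~\ref{lem:HomologicalPerturbation} one step at a time, but now interpreting every $f$-, $g$-, $T$-, and $m'_+$-labeled node as the corresponding type $D$ morphism (which secretly carries additional algebra outputs in $\Alg$), with primed operations weighted by the sign conventions of Equations~\eqref{eq:AinftyWithSigns} and~\eqref{eq:mPrime}. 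This produces, in one stroke, both the desired operations $m_k$ on $Z$ (equivalently the new actions $\delta^1_k\colon Z\otimes\Blg^{\otimes(k-1)}\to\Alg\otimes Z$) and the $\Ainfty$-homotopy equivalence $\phi\colon Z_{\Blg}\to Y_{\Blg}$ with $\phi_1=f$, by retranslating through the dictionary.

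The main obstacle is bookkeeping signs across the two algebra directions at once. Because a node in the enriched tree now emits algebra elements in $\Alg$ as well as consuming algebra elements in $\Blg$, one must commute each $\Alg$-output past any $\Blg$-outputs of inner nodes that still lie above it, and these commutations generate extra signs governed by the $\Zmod{2}$-grading of $\Alg$. I would address this once and for all by verifying that the differential $d$ and composition on $\Mor^{\Alg}(\cdot,\cdot)$, as defined here, satisfy the graded Leibniz rule $d(\phi\circ\psi)=(d\phi)\circ\psi+(-1)^{|\phi|}\phi\circ(d\psi)$ — so that type $D$ structures over $\Alg$ form a bona fide DG category — and that the primed version of composition introduced by the $\Ainfty$ framework is compatible with the primed version $\lsup{Y}\!\delta$ used to iterate $\delta^1$. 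Once that compatibility is in place, every tree-level identity from the proof of Lemma~\ref{lem:HomologicalPerturbation} transports verbatim: the $\Ainfty$ relations collapse exactly as in that proof, using $dT=\Id_Y-f\circ g$ to replace pairs of $f$- and $g$-nodes by either identity or $T$-nodes, and using $T\circ T=fT=Tg=0$ to discard all unwanted terms.
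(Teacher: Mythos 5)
Your proposal is correct and is essentially the paper's argument: the paper's proof simply reruns the tree sums of Lemma~\ref{lem:HomologicalPerturbation} with the $\Alg$-outputs of the nodes multiplied along the leftmost factor, which is exactly the composition in your enriched (DG-category-of-type-$D$-structures) formulation. The extra compatibility checks you flag (Leibniz rule for the $\Mor$ differential and compatibility of the primed conventions) are the same routine sign verifications the paper leaves implicit.
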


The proof is as in Lemma~\ref{lem:HomologicalPerturbationDA}, with now a multiplication in $\Alg$ appearing along the leftmost factor;
see~\cite[Lemma~\ref{BK:lem:HomologicalPerturbation2}]{BorderedKnots}.

\subsection{Koszul duality and signs}

The canonical type $DD$ bimodule is defined as in Section~\ref{sec:DefCanonDD}.
As before, its generators correspond to idempotent
states $\x=x_1<\dots<x_{k_1}$ over $\Alg$; or equivalently, the complementary idempotent
$\y=y_1<\dots,<y_{k_2}$. We define the exterior grading of the generator associated to 
the idempontent state $\x$ to be given by  
\begin{equation}
  \label{eq:ModTwoGradeDD}
  \|\y\|=y_1+\dots + y_{k_2}.
\end{equation} 
The algebra element defining the differential is now defined by 
\[
A = \sum_{i=1}^{2n} \left(L_i\otimes R_i' + R_i\otimes L_i'\right) - \sum_{i=1}^{2n}
U_i\otimes E_i + 
\sum_{\{i,j\}\in\Partition} C_{\{i,j\}}\otimes \llbracket E_i,E_j\rrbracket\in
  \Alg\otimes\DuAlg\]
where 
\[ \llbracket E_i,E_j\rrbracket=E_i E_j + E_j E_i.\]
With these choices, $\CanonDD$ is still a type $DD$ bimodule.

Note that the pure algebra elements form a $\Z$-basis for $\Alg$ and $\Alg'$.
Our candidate inverse module 
has the form
\[ \Ynew_{\Alg',\Alg}=\Mor^{\Alg}(\lsub{\Alg'}\Alg'_{\Alg'}\DT
\lsup{\Alg,\Alg'}\CanonDD,\lsup{\Alg}\Id_{\Alg}).\] It is a free
$\Z$-module with spanned by element $({\overline a}|b)$, where
$a\in\Alg'$ is a pure algebra element, $b\in \Alg$, and ${\overline a}\in \Hom(\Alg',\Z)$ takes
$a$ to $1$ and all other pure algebra elements to $0$.

\begin{thm}
  The canonical type $DD$ bimodule is quasi-invertible, in the sense that the tensor product of 
  $\Ynew_{\DuAlg,\Alg}$ with $\lsup{\Alg,\DuAlg}\CanonDD$ over either $\Alg$ or $\DuAlg$
  is homotopy equivalent to the identity type $DA$ bimodules over $\DuAlg$ or $\Alg$ respectively.
\end{thm}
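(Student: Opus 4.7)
The plan is to adapt the proof of Theorem~\ref{thm:InvertibleDD} to the signed setting, taking as a black box the corresponding signed Koszul duality for the pair $(\Blg_1,\Blg_2)$ established in~\cite{BorderedKnots} (cited above as BK:thm:DDisInvertible), and exporting its conclusion to the pair $(\Alg,\DuAlg)$ by the same two-step procedure used before: first realize the old candidate inverse $Y$ as a subcomplex of the new candidate inverse $\Ynew$, then construct a signed homotopy deformation retracting $\Ynew$ onto this subcomplex.

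First I would check that $\CanonDD$ is still a type $DD$ bimodule with the new sign-corrected algebra element $A$. The key verifications are that $d\llbracket E_i,E_j\rrbracket=0$ and that $\llbracket E_i,E_j\rrbracket$ is central in $\DuAlg$ in the graded sense (it is even in exterior grading), using the graded commutation relations for the primed generators $L_i',R_j'$ from Equation~\eqref{eq:AnticommutingLsRs}, and that the mod-$2$ grading $\|\y\|$ on generators from Equation~\eqref{eq:ModTwoGradeDD} is compatible with the tensor-product sign convention of Equation~\eqref{eq:TensorMaps}. These are direct calculations.

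Next I would establish the signed analogue of Lemma~\ref{lem:IdentifySubcomplex}. The algebra map $\alpha\colon\DuAlg\to\Blg_2$ sending $E_i\mapsto C_i$ is still a map of DG algebras with signs, and its dualization ${\overline\alpha}$ together with the inclusion $\Blg_1\hookrightarrow\Alg$ produces the chain map $\phi\colon{\overline\Blg_2}\otimes\Blg_1\to{\overline\Alg'}\otimes\Alg$. The subcomplexes $\SubCx^{\{i,j\}}\subset\Ynew$ are defined verbatim, and the identifications $\alpha(E_iE_j)=\alpha(E_jE_i)=C_iC_j$ together with $\overline\alpha(\overline{C_i\cdot C_j})=\overline{E_i\cdot E_j}+\overline{E_j\cdot E_i}$ still hold, so that $\mathrm{Image}(\phi)=\bigcap_{p\in\Matching}\SubCx^p$ by the same induction on the number of $E_k$'s.

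The main step is the homotopy argument. I would define a signed version of $h_0^{\{i,j\}}\colon{\overline\Alg'}\to{\overline\Alg'}$, choosing the signs on $\overline{E_i\cdot E_j\cdot a}$ versus $\overline{\llbracket E_i,E_j\rrbracket\cdot a}$ (for $a$ divisible by neither $E_i$ nor $E_j$) so that the three key identities
\[
\llbracket E_i,E_j\rrbracket\cdot h_0^{\{i,j\}}(\overline a)=\overline a,\qquad
{\overline d}\,h_0^{\{i,j\}}=h_0^{\{i,j\}}\,{\overline d},\qquad
c\cdot h_0^{\{i,j\}}(\overline a)=\pm h_0^{\{i,j\}}(c\cdot\overline a)
\]
all hold, where the sign in the last identity is governed by the parity of $c$ against the exterior grading of $E_iE_j$. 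Lifting to $h^{\{i,j\}}\colon\Ynew\to\Ynew$ by $h^{\{i,j\}}(\overline a\,|\,C_{\{i,j\}}\cdot b)=(h_0^{\{i,j\}}(\overline a)\,|\,b)$ and forming $\Pi^{\{i,j\}}=\Id+\partial h^{\{i,j\}}+h^{\{i,j\}}\partial$, the same calculation as before (checked now with signs) shows that $\Pi^{\{i,j\}}$ maps $\Ynew$ into $\SubCx^{\{i,j\}}$ and preserves each other $\SubCx^{\{i',j'\}}$. Composing all $\Pi^p$ for $p\in\Matching$ (they commute with each other up to chain homotopy, by the centrality of the $\llbracket E_{i'},E_{j'}\rrbracket$), the composition $\Pi$ is a chain homotopy equivalence between $\Ynew$ and $\bigcap_p\SubCx^p=\mathrm{Image}(\phi)\cong Y$. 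Combined with the integer Koszul duality for $(\Blg_1,\Blg_2)$ from \cite{BorderedKnots}, this proves one-sided quasi-invertibility. The other side follows either by repeating the argument symmetrically or by invoking the opposite-module symmetry $\CanonDD^{\mathrm{op}}\simeq\CanonDD$ of Proposition~\ref{prop:OppositeBimodules}.

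The main obstacle will be sign bookkeeping in the homotopy operator. Specifically, one must pin down a sign convention on $h_0^{\{i,j\}}$ for which all three identities above hold simultaneously and for which the operators $\Pi^{\{i,j\}}$ for distinct pairs commute up to homotopy; the tension is that the natural rescaling to make one identity signed-exact typically introduces an unwanted sign in another. Resolving this requires choosing the sign on $\overline{E_iE_j a}$ in terms of the exterior parity of $a$ and of the pair $\{i,j\}$, guided by the parity grading $\|\y\|$ and the rules of Equation~\eqref{eq:TensorMaps}; once this combinatorial sign is fixed, the remaining verifications are routine adaptations of the $\Zmod 2$ arguments.
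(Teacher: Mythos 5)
Your overall strategy is the same as the paper's: reduce to the signed Koszul duality for $(\Blg_1,\Blg_2)$ from~\cite{BorderedKnots}, realize $Y$ inside $\Ynew$ via $\phi$, and retract $\Ynew$ onto the image using the operators $\Pi^{\{i,j\}}$; the paper then concludes exactly as you do, using that $H(Y)$ is a free $\Z$-module on the idempotents so that the quasi-isomorphism upgrades to a homotopy equivalence.

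There is, however, one concrete sign error, and it sits at the only place where the signed argument genuinely differs from the $\Zmod{2}$ one. You assert that $\alpha(E_iE_j)=\alpha(E_jE_i)=C_iC_j$ and $\overline{\alpha}(\overline{C_i\cdot C_j})=\overline{E_i\cdot E_j}+\overline{E_j\cdot E_i}$ ``still hold.'' Over $\Z$ they do not: in the signed $\Blg_2$ the exterior variables anticommute, $C_iC_j=-C_jC_i$, so $\alpha(E_iE_j)=-\alpha(E_jE_i)=C_iC_j$ and dually $\overline{\alpha}(\overline{C_iC_j})=\overline{E_iE_j}-\overline{E_jE_i}$. This matters for the key identity $\llbracket E_i,E_j\rrbracket\cdot\overline{\alpha}(\overline{a})=0$ (Equation~\eqref{eq:RemainsToCheck}), which is needed for $\phi$ to be a chain map: the verification computes $\overline{a}\bigl(\alpha(E_iE_jx)+\alpha(E_jE_ix)\bigr)=\overline{a}\bigl((C_iC_j+C_jC_i)\alpha(x)\bigr)$, which vanishes precisely because of the anticommutation; with your stated formulas it would instead produce $2\,C_iC_j$, which is nonzero over $\Z$. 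Once this sign is corrected, the rest of your outline (the signed $h_0^{\{i,j\}}$, the projections $\Pi^{\{i,j\}}$, and the appeal to the integral statement of~\cite{BorderedKnots}) is the argument the paper gives.
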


\begin{proof}
To adapt the proof of Theorem~\ref{thm:InvertibleDD}, our goal is to
show that $\Ynew$ is homotopy equivalent to its $\Z$-submodule
generated by idempotents. As in Section~\ref{subsec:Koszul}, this can
be reduced to the case of algebras considered
in~\cite{BorderedKnots}. Namely, we consider algebras
$\Blg_1=\Blg(2n,n,\emptyset)$ and $\Blg_2=\Blg(2n,n,\{1,\dots,2n\})$.
The proof of Lemma~\ref{lem:IdentifySubcomplex} still applies, showing
that $\phi$ is a homotopy equivalence. In adapting the proof, note
that Equation~\eqref{eq:RemainsToCheck} remains true (with $\llbracket
E_i,E_j\rrbracket=E_i\cdot E_j+E_j\cdot E_i$), bearing in mind that
now $\alpha(E_i E_j)=-\alpha(E_j E_i)=C_{i} C_{j}$; so dually
\[ {\overline \alpha}(C_i C_j)=
\overline{E_i \cdot E_j}-{\overline{E_j\cdot E_i}}.\]

Having reduced to the $\Z$ analogue of the invertibility of the
canonical bimodule over $\Blg_1$ and $\Blg_2$, the proof
from~\cite[Theorem~\ref{BK:thm:DDisInvertible}]{BorderedKnots},
showing that $H(\Ynew)$ is a free $\Z$-module generated by the
idempotents in $\Alg$.  Since $\Z$ is free, it follows that $\Ynew$ is
homotopy equivalent to this $\Z$-submodule, as needed.
\end{proof}

\subsection{The $\Ainfty$ tensor product with signs}
Recall that if $\Alg$ and $\Blg$ are $DG$ algebras, $M_{\Alg}$ is a module, and $\lsup{\Alg,\Blg}X$ is 
a type $DD$ bimodule, then we can form their tensor product $(M_{\Alg}\DT\lsup{\Alg,\Blg}X)=\lsup{\Blg}(M\DT X)$.
We explain here how signs work in this construction.

First, $M_{\Alg}$ can be promoted to a type $DA$ bimodule $\lsup{\Blg}M_{\Alg\otimes\Blg}$, 
with the same underlying vector space, and with actions given by
\[ \delta^1_{\ell}(\x,(a_1\otimes b_1),\dots (a_{\ell-1}\otimes b_{\ell-1}))
= (-1)^{s}(b_1\cdots b_{\ell-1}) m_{\ell}(\x,a_1,\dots,a_{\ell-1}),\]
where 
\[ s = |\x|(\sum_{j} |b_j|) + \sum_{i\leq j} |a_i||b_j|.\]

We can think of $M_{\Alg}\DT\lsup{\Alg,\Blg}X$ as given by
$\lsup{\Blg} M_{\Alg\otimes\Blg} \DT \lsup{\Alg\otimes \Blg}X$,
where the signs on the latter tensor product are as specified in Section~\ref{sec:TensorProdSigns}.

For example, if $X_{\Alg}$ is a module and $\lsup{\Alg,\DuAlg}\CanonDD$ is the canonical type 
$DD$ bimodule, and $X$ is a generator with $X\cdot \Idemp{\x}=X$,
then $\partial (X\otimes \gen_\x)$ contains a term of the form $(-1)^{|X|}L_i' \otimes m_2(X,R_i)$
and another one of the form $-E_i\otimes m_2(X,U_i)$.

\subsection{Standard modules over $\Z$}

The notion of standard modules (as in Section~\ref{sec:Standard})
has a natural signed analogue. To state it, we modify Definition~\ref{def:Standard},
replacing Property~\ref{prop:CStandard},
with the 
condition
\begin{equation}
  \label{def:StandardDAZ}
C^2\otimes \x +    \sum_{\ell=0}^{\infty}(-1)^{\ell \cdot |\x|+\frac{\ell^2+\ell}{2}}\delta^1_{1+\ell}(\x,\overbrace{C^1,\dots,C^1}^{\ell})
\in \Blg_2\otimes X.
\end{equation}
With this definition, it is now true that the tensor product of two
standard type $DA$ bimodules is standard, generalizing
Lemma~\ref{lem:StandardTimesStandard} to the signed case.
To see where the sign comes from, note that
\[e=e(\x,\overbrace{C^1,\dots,C^1}^{\ell})=\ell\cdot |\x|+\frac{\ell^2-\ell}{2};\]
and we multiply $(-1)^e$ by $(-1)^{\ell}$, to turn each $C^1$ to $-C^1$.

\subsection{Positive crossing with signs}
\label{subsec:CrossSigns}
The exterior grading on the algebra is compatible with an exterior
grading on all the bimodules considered earlier. 

For example, consider the $DD$ bimodule of a positive crossing from
Subsection~\ref{subsec:DDcross},
$\lsup{\Alg_1,\DuAlg_2}\Pos_i$. Generators of $\Pos_i$ are determined
by an idempotent state $\y$ in $\DuAlg_2$ and a label, which can be
$\North$, $\South$, $\West$, or $\East$.  
Let 
\begin{equation}
  \label{eq:GradeCross}
 |\North|=|\West|=|\East|=0\qquad 
|\South|=-1.
\end{equation}
Let the exterior grading of each generator $X$ with $(1\otimes \Idemp{\y})\cdot X=X$
be given by $\|\y\|+|X|$, 
where the terms are defined in Equations~\eqref{eq:ModTwoGradeDD} and~\eqref{eq:GradeCross} respectively.
To obtain a type $DD$ bimodule, 
the diagram from 
Equation~\eqref{eq:PositiveCrossing} is modified as follows:
\[
    \begin{tikzpicture}[scale=1.8]
    \node at (0,3) (N) {$\North$} ;
    \node at (-2,2) (W) {$\West$} ;
    \node at (2,2) (E) {$\East$} ;
    \node at (0,1) (S) {$\South$} ;
    \draw[->] (S) [bend left=7] to node[below,sloped] {\tiny{$-R_i\otimes U_{i+1}-L_{i+1}\otimes R'_{i+1}R'_i$}}  (W)  ;
    \draw[->] (W) [bend left=7] to node[above,sloped] {\tiny{$-L_{i}\otimes 1$}}  (S)  ;
    \draw[->] (E)[bend right=7] to node[above,sloped] {\tiny{$R_{i+1}\otimes 1$}}  (S)  ;
    \draw[->] (S)[bend right=7] to node[below,sloped] {\tiny{$L_{i+1}\otimes U_i + R_i \otimes L'_{i} L'_{i+1}$}} (E) ;
    \draw[->] (W)[bend right=7] to node[below,sloped] {\tiny{$1\otimes L_i'$}} (N) ;
    \draw[->] (N)[bend right=7] to node[above,sloped] {\tiny{$U_{i+1}\otimes R_i' + R_{i+1} R_i \otimes L_{i+1}'$}} (W) ;
    \draw[->] (E)[bend left=7] to node[below,sloped]{\tiny{$1\otimes R_{i+1}'$}} (N) ;
    \draw[->] (N)[bend left=7] to node[above,sloped]{\tiny{$U_{i}\otimes L_{i+1}' + L_{i} L_{i+1}\otimes R_i'$}} (E) ;
  \end{tikzpicture}
\]
together with outside arrows (connecting generators $X$ of the same type)
$(R_j \otimes L_j'+L_j\otimes R_j')
(-1)^{|X|}$ for all $j\in \{1,\dots,2n\}\setminus \{i,i+1\}$, $-U_{j}\otimes E_{\tau(j)}$
for all $j=1,\dots,2n$, and $C_{\{\alpha,\beta\}}\otimes \llbracket
E_{\tau(\alpha)},E_{\tau(\beta)}\rrbracket$, for all
$\{\alpha,\beta\}\in\Partition$.

For the type $DA$ bimodule, the exterior grading is specified in Equation~\eqref{eq:GradeCross}.
The differential $\delta^1$ is determined by
\[ \delta^1_1(\East)=R_2 \otimes \South, \qquad \delta^1_1(\West)=-L_1
\otimes \South.\] For $b\in\Blg$, the actions $\delta^1_2(\x,b)$ all
give positive multiples (in $\Blg\otimes \Pos^1$).  For
$\delta^1_3(\South,a,b),$ if in $b$ $U_2$ dominates, then the
contribution is positive; while if in $b$ $U_1$ dominates, the sign is
negative.  Recall that in all $\delta^1_3$ actions,
$I(a,I(b,Y))\in\{\East,\West\}$.  When $I(a,I(b,Y))=\East$ then the
sign is $-1$, when $I(a,I(b,Y))=\West$ then the sign is $+1$.

Consider the case where the inputs are not in $\Blg$.
We modify the rules from Equation~\eqref{eq:CEquivariance} as follows:
\begin{align}
  \delta^1_2(X,C_{p}\cdot a_1)&=(-1)^{|X|}C_{\tau(p)}\cdot \delta^1_2(X,a_1) 
  \label{eq:CEquivarianceZ} \\
  \delta^1_3(\South,a_1 \cdot C_{p},a_2)&=
  \delta^1_3(\South,a_1,C_{p}\cdot a_2) \\
    \delta^1_3(\South,C_{p}\cdot a_1, a_2)&=
    C_{\tau(p)}\cdot   \delta^1_3(\South,a_1,a_2). \nonumber
\end{align}
for all $p\in \Matching$.
When $\{1,2\}\not\in\Matching$, 
the actions must be further modified by adding the following terms
(compare Equation~\eqref{ExtendingCs}):
\begin{equation}
  \begin{array}{lll}
  \delta^1_2(\South,C_{2})\rightarrow -U_\beta R_1\otimes \West & \\
  \delta^1_2(\South,C_{1})\rightarrow U_\alpha L_2\otimes \East &
  \delta^1_2(\South,C_{1} C_{2})\rightarrow U_{\beta}{\widetilde C}_{2} R_1\otimes \West + U_{\alpha}{\widetilde C}_{1} L_2\otimes \East \\
  \delta^1_{2}(\South,U_1C_{2})\rightarrow -U_{\beta}U_1 L_2 \otimes \East &
  \delta^1_{2}(\South,U_1C_{1} C_{2})\rightarrow U_{\beta} U_1 {\widetilde C}_{2} L_2 \otimes \East \\
  \delta^1_{2}(\South,C_{1} U_2)\rightarrow U_{\alpha} R_1  U_2 \otimes \West &
    \delta^1_{2}(\South,C_{1} C_{2} U_2)\rightarrow U_{\alpha}{\widetilde C}_{1} R_1  U_2 \otimes \West \\
  \delta^1_{2}(\South, R_1 C_{2})\rightarrow -U_{\beta}R_1\otimes \North &
  \delta^1_{2}(\South, R_1 C_{1} C_{2})\rightarrow U_{\beta} R_1 {\widetilde C}_{2}\otimes \North \\
  \delta^1_{2}(\South, L_2 C_{1})\rightarrow U_{\alpha}L_2\otimes \North
  &
  \delta^1_{2}(\South, C_{1} L_2 C_{2})\rightarrow U_{\alpha}{\widetilde C}_{1} L_2\otimes \North \\
  \delta^1_{2}(\South, R_1 C_{1} U_2)\rightarrow U_{\alpha}R_1 U_2\otimes \North 
  &
  \delta^1_{2}(\South, R_1 C_{1} U_2 C_{2})\rightarrow U_{\alpha}R_1 {\widetilde C}_{1} U_2\otimes \North \\
  \delta^1_{2}(\South, U_1 L_2 C_{2})\rightarrow -U_{\beta} L_2 U_1\otimes \North & 
  \delta^1_{2}(\South, U_1 C_{1} L_2 C_{2})\rightarrow U_{\beta}U_1 L_2  {\widetilde C}_{2}\otimes \North 
\end{array}
\label{eq:ExtendCsSigns}
\end{equation}

We now have the following:

\begin{prop}
  \label{prop:PosExtZ}
  The operations defined above give $\Pos^i$ the structure of a type $DA$ bimodule (over $\Z$),
  $\lsup{\Alg(n,k,\tau(\Partition))}\Pos^i_{\Alg(n,k,\Partition)}$.
  Moreover, $\Pos^i$ is standard (with signs specified in Equation~\eqref{def:StandardDAZ}).
\end{prop}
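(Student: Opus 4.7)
The plan is to parallel the unsigned proof of Proposition~\ref{prop:PosExt}, reducing the verification to the already-established $DA$ bimodule structure on $\lsup{\Blg(2n,k)}\Pos^i_{\Blg(2n,k)}$ from \cite{BorderedKnots} (now taken with signs exactly as in Section~\ref{subsec:CrossSigns}) together with a careful check of the signs governing the new actions involving exterior variables. As in the unsigned case, there are two sub-cases according to whether $\{i,i+1\}\in\Matching$ or not; we assume $i=1$ for notational ease. First I would record the $\Zmod{2}$-grading conventions: by Equation~\eqref{eq:GradeCross} only the $\South$ generators are odd, and the $\delta^1_\ell$ outputs must respect the total grading. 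Combined with the exterior grading of the algebra elements $L_i, R_i$ (even) and $C_p, U_i$ (even as algebra elements of $\Alg$, with $C_p$ exterior-odd), this uniquely constrains the signs in the existing $\delta^1_1, \delta^1_2, \delta^1_3$ already coming from~\cite{BorderedKnots}; the few sign choices in Equation~\eqref{eq:ExtendCsSigns} are then pinned down by forcing $C$-equivariance as in Equation~\eqref{eq:CEquivarianceZ}.

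Next I would verify the $DA$ bimodule relations. The key observation, as in the unsigned case, is that $dC_p = U_{i_1}U_{i_2}$ for $p=\{i_1,i_2\}\in\Matching$. When $\{i,i+1\}\in\Matching$, the entire verification reduces to the signed $DA$ relations in the underlying $\Blg$-bimodule from \cite{BorderedKnots} combined with $C_p$-equivariance: multiplication by $d C_p$ must commute with the bimodule structure, which the sign in Equation~\eqref{eq:CEquivarianceZ} (with its $(-1)^{|X|}$ factor) is designed to accomplish, because commuting $C_p$ past a $\South$ generator produces exactly the sign needed to match $dC_p=U_{i_1}U_{i_2}$ acting from the left. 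When $\{i,i+1\}\not\in\Matching$, one must check that the auxiliary terms introduced in Equation~\eqref{eq:ExtendCsSigns} correctly absorb the failure of $C$-equivariance for $C_{\{\alpha,i\}}$ and $C_{\{i+1,\beta\}}$. Each line of Equation~\eqref{eq:ExtendCsSigns} gives a term in a $\delta^1_2$ or $\delta^1_3$ whose boundary contribution to the $\Ainfty$ relation is forced to match a particular $U_\alpha$ or $U_\beta$ multiple of an existing action; the signs are determined by requiring pairwise cancellation in the $DA$ relation applied to the standard inputs.

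Verifying the standard property via Equation~\eqref{def:StandardDAZ} amounts to showing that for any $\x$ the finite sum
\[
C^2\otimes\x + \sum_{\ell=0}^{\infty} (-1)^{\ell|\x|+\binom{\ell+1}{2}} \delta^1_{1+\ell}(\x,C^1,\dots,C^1)
\]
lies in $\Blg_2\otimes \Pos^i$. The higher terms in this sum vanish when $\{i,i+1\}\in\Matching$ (because $\delta^1_{m+1}(\x,C_{\{\alpha,\beta\}},\dots)=0$ for $m>1$, the signed analogue of Equation~\eqref{eq:HighCsVanish}), reducing the claim to the observation that $\delta^1_2(\x,C_p)+(-1)^{|\x|}C_{\tau(p)}\otimes\x\in\Blg_2\otimes\Pos^i$, which follows from Equation~\eqref{eq:CEquivarianceZ}. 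When $\{i,i+1\}\not\in\Matching$, the exceptional terms of Equation~\eqref{eq:ExtendCsSigns} contribute only to the $\Blg_2\otimes \Pos^i$ part, so the same conclusion holds; the sign $(-1)^{\ell|\x|+\binom{\ell+1}{2}}$ in Equation~\eqref{def:StandardDAZ} is precisely what converts the iterated $C_{\{\alpha,i\}}$ and $C_{\{i+1,\beta\}}$ actions (each introducing its own $(-1)^{|X|}$) into a sum compatible with the tensor-product formula.

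The main obstacle is the bookkeeping in the $\{i,i+1\}\not\in\Matching$ case: each of the sixteen terms in Equation~\eqref{eq:ExtendCsSigns} is a new contribution to either $\delta^1_2$ or $\delta^1_3$ (after extension over $U_iU_{i+1}$), and the $DA$ relation must be checked against all ways of splitting a sequence of inputs that involves one or two of $C_{\{\alpha,i\}}, C_{\{i+1,\beta\}}, C_{\{\alpha,\beta\}}$ together with some $L_i, R_i, U_i, U_{i+1}$ factors. The signs are forced by the Leibniz rule for $d C_{\{\alpha,i\}}=U_\alpha U_i$ and by requiring that commuting two $C_p$'s past each other introduces the same sign as in $\Alg$, so once the pattern of signs in Equation~\eqref{eq:ExtendCsSigns} is correct, cancellation is automatic; the real work is checking that these signs are internally consistent across all seven $\Ainfty$ compositions. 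As in Proposition~\ref{prop:PosExt}, this reduces to the analysis already carried out in \cite[Proposition~\ref{BK:prop:CrossingGeneral}]{BorderedKnots}, now with the additional factor of $U_\alpha$ or $U_\beta$ in each output arising from $dC_{\{\alpha,i\}}$ instead of $dC_i$.
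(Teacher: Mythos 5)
Your proposal follows the same route as the paper, which itself only remarks that the proof is obtained by modifying that of Proposition~\ref{prop:PosExt}: split into the cases $\{i,i+1\}\in\Matching$ and $\{i,i+1\}\notin\Matching$, reduce to the signed bimodule over $\Blg(2n,k)$ together with $C_p$-equivariance, and handle the extra terms of Equation~\eqref{eq:ExtendCsSigns} by comparison with the analysis in \cite{BorderedKnots} of the elements $C_i$ with $dC_i=U_i$, now with the additional $U_\alpha$, $U_\beta$ factors coming from $dC_{\{\alpha,i\}}=U_\alpha U_i$. One small slip: in your verification of the standard property the membership claim should read $\delta^1_2(\x,C_p)-(-1)^{|\x|}C_{\tau(p)}\otimes\x\in\Blg_2\otimes\Pos^i$ (not $+$), since the $\ell=1$ term of Equation~\eqref{def:StandardDAZ} carries the sign $(-1)^{|\x|+1}$ and it is this minus sign that cancels $C^2\otimes\x$.
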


The proof of the above is obtained by modifying the proof of Proposition~\ref{prop:PosExt}.
We also have the following:

\begin{prop}
  \label{prop:DualCrossZ}
  Let 
  \[ \Alg_1=\Alg(n,k,\Matching), \qquad
  \Alg_2=\Alg(n,k,\tau_i(\Matching)),\qquad 
  \DuAlg_1=\DuAlg(n,2n+1-k,\Matching).\]
  $\Pos^i$ is dual to $\Pos_i$, in the sense that
  \[ \lsup{\Alg_2}\Pos^i_{\Alg_1}\DT~
    \lsup{\Alg_1,\DuAlg_1}\CanonDD \simeq \lsup{\Alg_2,\DuAlg_1}\Pos_i.\]
\end{prop}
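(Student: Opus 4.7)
The plan is to adapt the proof of Proposition~\ref{prop:DualCross} to the signed setting, proceeding in three main stages: compute the tensor product $\lsup{\Alg_2}\Pos^i_{\Alg_1}\DT\lsup{\Alg_1,\DuAlg_1}\CanonDD$ explicitly over $\Z$, exhibit a signed version of the homotopy equivalence $h^1\colon \Pos^i\DT\CanonDD\to \Pos_i$ from the mod $2$ proof, and verify that it is a chain map and has an inverse. The two cases $\{i,i+1\}\in\Matching$ and $\{i,i+1\}\not\in\Matching$ are treated in parallel, with the first being a minor simplification of the second; for notational simplicity I would set $i=1$ throughout.

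First I would compute the differential on $\Pos^1\DT\CanonDD$ using the signed $\Ainfty$ tensor product of Section~\ref{sec:TensorProdSigns}. The algebra element defining $\delta^1$ on $\CanonDD$ now contains $-U_i\otimes E_i$, so the $\delta^1_2$-actions of $\Pos^1$ at $C_p$ (and the auxiliary $\delta^1_2$ terms from Equation~\eqref{eq:ExtendCsSigns}) feed through with signs governed by Equation~\eqref{eq:CEquivarianceZ} and the exterior grading convention $|\South|=-1$. This reproduces the arrows of Equation~\eqref{eq:TensorProdCross} with a definite choice of signs on each edge; those signs are forced and only need to be recorded.

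Next I would define the candidate homotopy equivalence by setting $h^1$ to be the identity on $\North,\West,\East$ and $h^1(\South)=\South+\epsilon_1(L_2\otimes E_1)\East+\epsilon_2(R_1\otimes E_2)\West$, with $\epsilon_1,\epsilon_2\in\{\pm 1\}$ to be determined. Requiring that $h^1$ be a type $DD$ bimodule homomorphism reduces to checking finitely many arrow-by-arrow identities; the correct choice of $\epsilon_1,\epsilon_2$ is pinned down by comparing the signs on the arrows $\North\to\East$, $\North\to\West$, $\South\to\East$, $\South\to\West$, and the self-arrow $\East\to\West$ (and its mirror). I would then write down $g^1$ by the same formula (with the same $\epsilon_i$) and verify $g^1\circ h^1=\Id$ and $h^1\circ g^1=\Id$, which is essentially formal once the signs are consistent, using $E_1^2=E_2^2=0$ in $\DuAlg_1$.

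The main obstacle is bookkeeping: each individual cancellation is short, but the signs come from three independent sources (the $(-1)^{|x|}$ rule from Equation~\eqref{eq:TensorMaps} for tensoring maps, the $(-1)^{|X|}$ in Equation~\eqref{eq:CEquivarianceZ}, and the explicit signs in Equations~\eqref{ExtendingCs}/\eqref{eq:ExtendCsSigns} and in the canonical $DD$ algebra element $A$), and they must combine to agree edge-by-edge with the signs on $\Pos_1$ from Section~\ref{subsec:CrossSigns}. I would organize the verification by first checking all arrows not involving $\{E_i,E_{i+1}\}$ (where the sign changes come only from the $L_i\leftrightarrow L_i'$, $R_i\leftrightarrow R_i'$ substitution and are local and routine), then the arrows involving $E_i$ and $E_{i+1}$ separately (these are responsible for fixing $\epsilon_1$ and $\epsilon_2$), and finally the arrows involving $\llbracket E_i,E_{i+1}\rrbracket$ or $C_{\{i,i+1\}}\otimes 1$ in the matched case, which come for free from the previous ones once standardness (Equation~\eqref{def:StandardDAZ}) has been invoked. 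When $\{i,i+1\}\in\Matching$, the $E_\alpha,E_\beta$-terms in Equation~\eqref{eq:TensorProdCross} are absent and the same $h^1$ works without modification.
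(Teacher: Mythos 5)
Your overall strategy is exactly the paper's: compute $\Pos^1\DT\CanonDD$ explicitly over $\Z$, and exhibit an explicit isomorphism to $\Pos_1$ whose only nontrivial component is supported on the $\South$ generator. One step as written would fail, though. You propose to define $g^1$ ``by the same formula (with the same $\epsilon_i$)'' as $h^1$, and to deduce $h^1\circ g^1=\Id$ ``using $E_1^2=E_2^2=0$.'' Over $\Z$ this is wrong on both counts. In the composition $(h^1\circ g^1)(\South)$, the cross terms are $(L_2\otimes E_1)\cdot h^1(\East)$ and $(R_1\otimes E_2)\cdot h^1(\West)$; since $h^1$ is the identity on $\East$ and $\West$, no product of two $E$'s is ever formed, so the relations $E_i^2=0$ play no role. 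What must happen instead is that these cross terms cancel against the correction terms of $h^1(\South)$ itself, and that cancellation forces $g^1$ to carry the \emph{opposite} signs from $h^1$ on its correction terms; with equal signs one gets $\South\pm 2(L_2\otimes E_1)\East\mp 2(R_1\otimes E_2)\West$, which is the identity only mod $2$ (this is why ``the same formula'' suffices in Proposition~\ref{prop:DualCross} but not here). The paper's choice is $h^1(\South)=\South-(L_2\otimes E_1)\East+(R_1\otimes E_2)\West$ and $g^1(\South)=\South+(L_2\otimes E_1)\East-(R_1\otimes E_2)\West$. With that correction, the rest of your outline — fixing the signs by the arrow-by-arrow chain-map check, and treating the matched case as the degeneration in which the $E_\alpha,E_\beta$ terms disappear — is precisely the argument given in the paper.
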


\begin{proof}
  This follows as in Proposition~\ref{prop:DualCross}.
  For example when $i=1$ and $1$ and $2$ are not matched, then 
  we find that $\Pos^1\DT \CanonDD$ is given by:
\[  \begin{tikzpicture}[scale=2]
    \node at (0,4) (N) {${\mathbf N}$} ;
    \node at (-2,2.5) (W) {${\mathbf W}$} ;
    \node at (2,2.5) (E) {${\mathbf E}$} ;
    \node at (0,0) (S) {${\mathbf S}$} ;
    \draw[->] (S) [bend left=10] to node[below,sloped] {\tiny{$U_\beta R_1 \otimes 
        \llbracket E_2,E_\beta\rrbracket
        -R_1 U_2 \otimes E_2 E_1-L_{2}\otimes R_{2}'R_{1}'$}}  (W)  ;
    \draw[->] (W) [bend left=10] to node[above,sloped] {\tiny{$-L_{1}\otimes 1$}}  (S)  ;
    \draw[->] (E) [bend right=10] to node[above,sloped] {\tiny{$R_{2}\otimes 1$}}  (S)  ;
    \draw[->] (S)[bend right=10] to node[below,sloped] {\tiny{$R_{1} \otimes L_{1}' L_{2}' + L_2 U_1\otimes E_1 E_2
        - U_{\alpha} L_2 \otimes \llbracket E_1,E_\alpha\rrbracket$}} (E) ;
    \draw[->] (W)[bend right=10]to node[below,sloped] {\tiny{$1\otimes L_{1}'$}} (N) ;
    \draw[->] (N)[bend right=10] to node[above,sloped] {\tiny{$U_{2}\otimes R_{1}' + R_{2} R_{1} \otimes L_{2}'$}} (W) ;
    \draw[->] (E)[bend left=10]to node[below,sloped]{\tiny{$1\otimes R_{2}'$}} (N) ;
    \draw[->] (N)[bend left=10] to node[above,sloped]{\tiny{$U_{1}\otimes L_{2}' + L_{1} L_{2}\otimes R_{1}'$}} (E) ;
    \draw[->] (N) [loop above] to node[above]{\tiny{$-U_1\otimes E_2 - U_2\otimes E_1$}} (N);
    \draw[->] (W) [loop left] to node[above,sloped]{\tiny{$-U_2\otimes E_{1}$}} (W);
    \draw[->] (E) [loop right] to node[above,sloped]{\tiny{$-U_1\otimes E_{2}$}} (E);
    \draw[->] (E) [bend right=5] to node[above,pos=.3] {\tiny{$-R_2 R_1 \otimes E_2$}} (W) ;
    \draw[->] (W) [bend right=5] to node[below,pos=.3] {\tiny{$-L_1 L_2\otimes E_1$}} (E) ;
    \draw[->] (S) to node[below,sloped,pos=.3] {\tiny{$L_2\otimes E_1 R_2' - R_1\otimes L_1'E_2$}} (N) ;
    \end{tikzpicture}
    \]
  along with self-arrows $(-1)^{|X|}L_t\otimes R_t'$, $(-1)^{|X|}R_t\otimes L_t'$, 
  $-U_t\otimes E_t'$
  for $t\neq 1,2$; 
  and $C_{\{m,\ell\}}\otimes \llbracket E_{\tau(m)},E_{\tau(\ell)}\rrbracket$.
    Consider maps $h^1\colon \Pos^1\DT \CanonDD \to \Pos_1$
    and $g^1\colon \Pos_1\to \Pos^1\DT \CanonDD$ given by
    \begin{align*} h^1(X) &= \left\{\begin{array}{ll}
        \South-(L_2\otimes E_1)\cdot \East + (R_1\otimes E_2)\cdot 
        \West & {\text{if $X=\South$}} \\
        X &{\text{otherwise.}}
      \end{array}
    \right.\\
    g^1(X) &= \left\{\begin{array}{ll}
        \South+(L_2\otimes E_1)\cdot \East - (R_1\otimes E_2)\cdot 
        \West & {\text{if $X=\South$}} \\
        X &{\text{otherwise.}}
      \end{array}
    \right.
    \end{align*}
    It is easy to verify that $h^1$ and $g^1$ are homomorphisms of type $DD$ structures, 
    $h^1\circ g^1 = \Id$, and $g^1\circ h^1=\Id$. 
\end{proof}

\subsection{The maximum with signs}

For the type $DD$ bimodule associated to a maximum
$\lsup{\Alg_2,\DuAlg_1}\Max_c$ from Section~\ref{subsec:MaxDD}, the
generators ${\mathbf P}_\x$ correspond to idempotent states $\x$ for $\Alg_2$.
Let
\begin{equation}
  \label{eq:DefExtGradP}
  \gamma(\x)=\#(\x\cap \{0,\dots,c-1\}),
\end{equation}
and define the exterior grading of ${\mathbf P}_\x$ to be the sum
$\gamma(\x)+\|\y\|$,
where the second term is as in  Equation~\eqref{eq:ModTwoGradeDD}
and $\y$ is specified by $(\Idemp{\x}\otimes \Idemp{\y})\cdot {\mathbf P}_\x={\mathbf P}_\x$
(i.e. $\y=\psi'(\x)$ as in  Equation~\eqref{eq:SpecifyPsiPrime}).
Define 
\[ \epsilon = \sum_{\x} (-1)^{\gamma(\x)}\cdot \Idemp{\x}.\]

Signs in the differential are specified by
\begin{align*}
A&=(L_{c} L_{c+1}\otimes 1) +
(R_{c+1} R_{c}\otimes 1)
 + \sum_{i=1}^{2n} \left(L_{\phi(i)}\otimes R_i' + R_{\phi(i)}\otimes L_i'\right)(\epsilon\otimes 1) \\
& \qquad -  C_{\{c,c+1\}}\otimes 1
-\sum_{i=1}^{2n} U_{\phi(i)}\otimes E_i 
+ \sum_{\{i,j\}\in\Matching} C_{\{\phi(i),\phi(j)\}} \otimes \llbracket E_i,E_j\rrbracket.
  \end{align*}

\begin{lemma} With the above definition, $\lsup{\Alg_2,\Alg'}\Max_c$ is a type $DD$ bimodule over $\Z$,
with the specified exterior grading.
\end{lemma}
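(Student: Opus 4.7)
The plan is to verify the type $DD$ structure equation $(\mu_1 \otimes \Id) \circ \delta^1 + (\mu_2 \otimes \Id) \circ (\Id \otimes \delta^1) \circ \delta^1 = 0$. Since $\delta^1(\mathbf{P}_\y) = (\Idemp{\y}\otimes\Idemp{\psi'(\y)}) \cdot A \otimes \sum_\z \mathbf{P}_\z$ and the generators ${\mathbf P}_\x$ are a free $\Idemp{\Alg_2}\otimes\Idemp{\Alg_1'}$-basis, this reduces to the identity $dA + A \cdot A = 0$ in $\Alg_2 \otimes \Alg_1'$, to be checked separately on each component $(\Idemp{\x}\otimes\Idemp{\psi'(\x)}) \cdot (\Alg_2\otimes\Alg_1')$ corresponding to an allowed idempotent state $\x$. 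I would begin by verifying that with the exterior grading specified via $\gamma(\x)+\|\y\|$, each summand of $A$ has the parity required for the structure equation's signs (from Section~\ref{sec:TensorProdSigns}) to be consistent.

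First I would decompose $A$ into four families: the cup piece $A_{\mathrm{cup}} = L_c L_{c+1}\otimes 1 + R_{c+1}R_c \otimes 1$; the outside piece $A_{\mathrm{out}} = \sum_i (L_{\phi(i)}\otimes R_i' + R_{\phi(i)}\otimes L_i')(\epsilon\otimes 1)$; the $U$-$E$ piece $A_{UE} = -\sum_i U_{\phi(i)}\otimes E_i$; and the central piece $A_C = -C_{\{c,c+1\}}\otimes 1 + \sum_{\{i,j\}\in\Matching} C_{\{\phi(i),\phi(j)\}}\otimes\llbracket E_i, E_j\rrbracket$. Then $dA$ is computed termwise: only $A_{UE}$ and $A_C$ contribute, yielding $-\sum_i U_{\phi(i)}\otimes U_i$ from $A_{UE}$, and $-U_c U_{c+1}\otimes 1 + \sum_{\{i,j\}} U_{\phi(i)}U_{\phi(j)}\otimes \llbracket E_i, E_j\rrbracket$ from $A_C$, with signs tracked per the Leibniz rule across the signed tensor product.

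Next I would compute $A\cdot A$ and match its terms against $dA$. The principal cancellations are: the cup-cup products $(L_c L_{c+1})(R_{c+1}R_c)\otimes 1$ and $(R_{c+1}R_c)(L_c L_{c+1})\otimes 1$ collapse (one is zero on each allowed idempotent, the other equals $U_c U_{c+1}$) and cancel $d(-C_{\{c,c+1\}}\otimes 1)$; the outside-outside ``diagonal'' products $(L_{\phi(i)}\otimes R_i')(R_{\phi(i)}\otimes L_i')(\epsilon\otimes 1)^2 + (R_{\phi(i)}\otimes L_i')(L_{\phi(i)}\otimes R_i')(\epsilon\otimes 1)^2$ reduce to $U_{\phi(i)}\otimes (R_i'L_i'+L_i'R_i')=U_{\phi(i)}\otimes U_i$ (using $(\epsilon\otimes 1)^2=1$), cancelling $d(A_{UE})$; and the $A_{UE}\cdot A_{UE}$ products for $\{i,j\}\in\Matching$ combine into $\pm U_{\phi(i)}U_{\phi(j)}\otimes \llbracket E_i,E_j\rrbracket$ and cancel the corresponding component of $dA_C$. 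For distinct strands $i\neq j$ with $\{\phi(i),\phi(j)\}\cap\{c,c+1\}=\emptyset$, the off-diagonal outside-outside products cancel in pairs: the unprimed factors on the $\Alg_2$ side commute (Equation~\eqref{eq:CommutingLsRs}) while the primed factors on the $\Alg_1'$ side anticommute (Equation~\eqref{eq:AnticommutingLsRs}), producing the required minus sign.

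Finally I would handle the ``inside'' cross-terms between $A_{\mathrm{cup}}$ and $A_{\mathrm{out}}$, which produce precisely the arrows of Equation~\eqref{eq:CritDiag} connecting the generator types $\XX$, $\YY$, $\ZZ$; these cancel in pairs (e.g.\ $L_c L_{c+1}\cdot R_{c+1}\otimes L_{c+1}'$ against a matching product), and here the $\epsilon$ factor is precisely calibrated so that the signs in the $L_{c-1}$, $R_{c-1}$, $L_{c+2}$, $R_{c+2}$ arrows through the cup are consistent on both sides of $\XX\leftrightarrow\ZZ$ and $\YY\leftrightarrow\ZZ$. The main obstacle will be systematic sign bookkeeping from three interacting sources: (i) the signed-tensor product rule $(a\otimes b)(c\otimes d)=(-1)^{|b||c|}(ac)\otimes(bd)$, (ii) the asymmetry between the commuting unprimed generators in $\Alg_2$ and the anticommuting primed generators in $\Alg_1'$, and (iii) the insertion of $\epsilon$ and the $(-1)^{\gamma(\x)}$ signs it carries past primed generators across the index $c-1$. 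Once these sign sources are compatible, every cancellation is parallel to the corresponding unsigned identity already checked in Subsection~\ref{subsec:MaxDD}.
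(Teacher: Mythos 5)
Your proposal is correct and follows essentially the same route as the paper: the paper's proof is just the direct verification of $dA+A\cdot A=0$ (with the structure-equation signs), and it singles out exactly the two anticommutation checks you identify as the crux — that the $\epsilon$-twisted outside terms anticommute with $L_cL_{c+1}\otimes 1$ via $(L_cL_{c+1})(L_i\epsilon)+(L_i\epsilon)(L_cL_{c+1})=0$, and that the $U_{\phi(k)}\otimes E_k$ terms anticommute with the $C\otimes\llbracket E_i,E_j\rrbracket$ terms. Only your description of the cup--outside cross-terms as "producing the arrows of Equation~\eqref{eq:CritDiag}" is slightly off (those arrows are summands of $A$ itself, not products), but your substantive claim that $\epsilon$ is calibrated precisely so these cross-terms cancel is the paper's point.
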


\begin{proof}
  The proof is mostly straightforward.  
  To see that terms in the third sum anti-commute with $(L_c L_{c+1}\otimes 1)$, it helps
  to note that
  \[ (L_c L_{c+1}) \cdot (L_i \cdot \epsilon) + (L_i \cdot\epsilon)
  \cdot (L_c L_{c+1})=0\]
  for $i\neq c, c+1$.

  Note also that the algebra elements $U_{\phi(k)}\otimes E_i$ and
  $C_{\{\phi(i),\phi(j)}\otimes \llbracket E_i,E_j\rrbracket$ anti-commute with each other.
\end{proof}

For the $DA$ bimodule, the exterior grading of
the generator ${\mathbf Q}_\x$
associated to an allowed idempotent state $\x$ (for $\Alg_2$) is given by 
$\gamma(\x)$ from Equation~\eqref{eq:DefExtGradP}.

The actions by $\delta^1_1$ as specified in
Equation~\eqref{eq:DeltaOneOnePos} are given the following sign
refinements:
\begin{align*}
\delta^1_1(\XX)=& -C_{\{c,c+1\}}\otimes \XX + R_{c+1}R_{c} \otimes \YY \nonumber \\
\delta^1_1(\YY)=& -C_{\{c,c+1\}}\otimes \YY + L_{c}L_{c+1} \otimes \XX \\
\delta^1_1(\ZZ)= &  -C_{\{c,c+1\}}\otimes \ZZ; \nonumber 
\end{align*}
Similarly, 
the maps $\delta^1_2$ are defined now by
\[ \delta^1_2({\mathbf Q}_{\x},a)=(-1)^{\gamma(\x)|a|}\Phi_{\x}(a)\otimes 
{\mathbf Q}_\z,\] 
where  $\z$ and $\Phi$ are as in Lemma~\ref{lem:ConstructDeltaTwo},
and $a$ is a pure algebra element. For example,
\[ \delta^1_2({\mathbf Q}_\y,C_{\{i,j\}})=(-1)^{\gamma(\y)} C_{\{\phi_c(i),\phi_c(j)\}} \otimes {\mathbf Q}_\y, \]
for $c\not\in\{i,j\}$.

These are the modifications needed for the constructions from
Section~\ref{sec:Max} (notably, Theorem~\ref{thm:MaxDA} and
Proposition~\ref{prop:MaxDual}) to hold over $\Z$.

\subsection{The minimum with signs}

We consider now changes needed to adapt Section~\ref{sec:Min} to work over
$\Z$. 

First, recall that in Section~\ref{subsec:DDmin} we defined a type
bimodule $\lsup{\Alg_2,\Alg_1'}\DDmin_c$ associated to a minimum,
whose generators ${\mathbf P}_\y$ correspond to allowed idempotent
states $\y$ for $\DuAlg_1$.  The exterior grading is 
given by $\|\y\|+\gamma(\y)$ from Equations~\eqref{eq:ModTwoGradeDD} and~\eqref{eq:DefExtGradP} respectively.  To specify the differential, we refine
Equation~\eqref{eq:defDDmin} as follows:
\begin{align*}
A&=(1\otimes L'_{c} L'_{c+1}) + 
(1\otimes R'_{c+1} R'_{c}) 
 + \sum_{j=1}^{2n} \left(R_{j} \otimes L'_{\phi(j)} + L_{j} \otimes R'_{\phi(j)}\right) (1\otimes \epsilon)
 \\
&
  - 1\otimes E_{c} U_{c+1}- \sum_{j=1}^{2n} U_{j}\otimes E_{\phi(j)}   + U_{\alpha}\otimes \llbracket E_{\phi(\alpha)},E_{c}\rrbracket E_{c+1}  \nonumber\\
&  - C_{\{\alpha,\beta\}}\otimes \llbracket E_{\phi(\alpha)},E_{c}\rrbracket \llbracket E_{c+1},E_{\phi(\beta)}\rrbracket + \sum_{\{i,j\} \in \Matching_2\setminus \{\alpha,\beta\}} C_{\{i,j\}}\otimes \llbracket E_{\phi(i)},E_{\phi(j)}\rrbracket.
  \end{align*}
With these choices, it is clear that $\lsup{\Alg_2,\DuAlg_1}\DDmin_c$ is a type $DD$ bimodule over $\Z$.

We can alternatively replace the roles of $c$ with $c+1$ and $\alpha$
with $\beta$ to obtain a homotopy equivalent bimodule.

Next, we turn to the signed version of the $DA$ bimodule of a minimum.

Consider first the larger bimodule given in Section~\ref{subsec:AltConstr}, denoted $M$.
That module is generated by two generators $\XX$ and $\YY$. We specify the exterior grading
by defining $|\XX|=1$ and $|\YY|=0$. (This coincides with $\gamma$.)
Signs are put in the model from Equation~\eqref{eq:ModuleVersionMin}, as follows:
\[
    \mathcenter{\begin{tikzpicture}[scale=1.5]
    \node at (0,0) (X) {$\XX$} ;
    \node at (6,0) (Y) {$\YY$} ;
    \draw[->] (X) [bend left=15] to node[above,sloped] {\tiny{$U_2 - U_{\alpha} \otimes 
C_{1} - C \cdot U_2\otimes C_{1}\cdot C_{2}$}}  (Y)  ;
    \draw[->] (Y) [bend left=15]to node[below,sloped]  {\tiny{$U_1+ U_{\beta} \otimes 
C_{2} + C \cdot U_1\otimes C_{1}\cdot C_{2}$}}  (X);
    \draw[->] (X) [loop] to node[above,sloped]{\tiny{$C\otimes
(C_{1},C_{2})$}} (X);
    \draw[->] (Y) [loop] to node[above,sloped]{\tiny{$C\otimes
(C_{2},C_{1})$}} (Y);
\end{tikzpicture}} \] 
With these signs, the bimodule relations hold; indeed,  homological perturbation theory gives 
the following analogue of Lemma~\ref{lem:BigDAGens2}:

\begin{lemma}
  \label{lem:BigDAGens2s}
  The operations described above make $\lsup{\Alg_2}{\BigMin}_{\Alg_1}$ into a type $DA$ bimodule,
  with the generating set described in Lemma~\ref{lem:BigDAGens}
\end{lemma}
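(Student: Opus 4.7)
The generating set assertion is independent of signs: it was established in Lemma~\ref{lem:BigDAGens} by invoking the splitting of $M$ as a left $\Blg_2$-module from~\cite[Lemma~\ref{BK:lem:BimoduleSplitting}]{BorderedKnots}, applied here integrally, and then promoting the right $\Blg$-action to a type $DA$ structure over $\Blg_2$--$\Blg$ by tensoring each summand with the canonical identification $\Blg_2 \cdot \Idemp{\x} \cong \Blg_2 \cdot a$. Thus it remains to check that the additional operations specified by Equation~\eqref{eq:ModuleVersionMin} (with the indicated signs) are compatible with the type $DA$ structure relations over $\Z$; i.e.\ that the extension from $\lsup{\Blg_2}{\BigMin}_{\Blg}$ to $\lsup{\Alg_2}{\BigMin}_{\Alg_1}$ is consistent.

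The plan is to organize the verification according to how many factors of $C_1$, $C_2$ (in $\Alg_1$) appear in the input sequence. With no such factors, the $\Ainfty$ relation is precisely the one already known for the $\Blg_2$--$\Blg$ bimodule structure on $\BigMin$, whose signs are fixed by Lemma~\ref{lem:BigDAGens}. With exactly one $C_1$ or one $C_2$ factor, we must check that
\[
\delta^1_1 \circ \delta^1_2(\x,C_i) \;+\; \delta^1_2(\delta^1_1(\x),C_i) \;+\; (\pm)\,\delta^1_2(\x, dC_i) \;=\; 0,
\]
for $\x \in \{\XX,\YY\}$ and $i\in\{1,2\}$, together with the signed compatibility coming from the right $\Blg$-module structure. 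This amounts to a bookkeeping that the signs $-U_\alpha\otimes C_1$ on $\XX \to \YY$ and $+U_\beta\otimes C_2$ on $\YY\to\XX$ match $dC_1=U_\alpha$, $dC_2=U_\beta$, and the sign twist introduced in Equation~\eqref{eq:CEquivarianceZ}. With two $C$-factors, the check pairs the $\delta^1_3(\XX,C_1,C_2)$ and $\delta^1_3(\YY,C_2,C_1)$ contributions against the $\delta^1_2(-, C_1\cdot C_2)$ action and the splitting of $\mu_2(C_1,C_2)$; here the sign conventions on $m'$ from Equation~\eqref{eq:mPrime} are exactly what make the signed $\Ainfty$ relation with $n=3$ inputs close up, using $dC = U_\alpha U_\beta = -(-U_\alpha)(U_\beta)$ and the anticommutation of $C_1$ and $C_2$ in the $DG$ algebra $\Alg_1$.

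With three or more $C$-factors, cancellation is automatic from the fact that the bimodule is standard in the sense of Equation~\eqref{def:StandardDAZ}: the higher $\delta^1_\ell$ vanish outside of the specified family, and the remaining $\Ainfty$ relations reduce to the two-input check above tensored on both sides with monomials from the exterior subalgebra. The main obstacle in the argument is not any single relation, but keeping the signs from the four distinct sources---the Koszul-type sign $(-1)^{st}$ in the definition of $(\delta^1)'$, the right-action sign $(-1)^{|X|}$ in Equation~\eqref{eq:CEquivarianceZ}, the anticommutation $C_i\cdot a = (-1)^{|a|}a \cdot C_i$ in $\Alg_1$, and the sign $(-1)^{\ell|\x|+(\ell^2+\ell)/2}$ appearing in the standardness condition~\eqref{def:StandardDAZ}---mutually consistent. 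Once these sign conventions are fixed as above, the remaining verifications reduce to the same combinatorics as in the unsigned proof of Lemma~\ref{lem:BigDAGens2}.
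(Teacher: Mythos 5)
Your overall strategy---direct verification of the signed $\Ainfty$ relations, organized by how many $C_1,C_2$ factors occur among the inputs---is the same as the paper's, which simply checks the handful of nontrivial tree identities by hand (using $|\XX|=|C|=1$, $|\YY|=|U_i|=0$). Two of your steps, however, contain genuine errors.

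First, the claim ``$dC_1=U_\alpha$, $dC_2=U_\beta$'' is false: in $\Alg_1$ one has $dC_1=d C_{\{1,\alpha+2\}}=U_1U_{\alpha+2}$ and $dC_2=U_2U_{\beta+2}$. The one-$C$-input relations close up precisely because $U_{\alpha+2}$ (resp.\ $U_{\beta+2}$) acts through the left $\Blg_2$-structure as $U_\alpha$ (resp.\ $U_\beta$) while the remaining factor $U_1$ (resp.\ $U_2$) acts on the right of $M=\IdempRing\Blg/L_1L_2\Blg\oplus\IdempRing\Blg/L_1L_2\Blg$; this interaction between the two module structures is exactly what the check is about, and conflating $dC_i$ with its output-algebra shadow hides it. (Relatedly, Equation~\eqref{eq:CEquivarianceZ} is a sign rule for the crossing bimodule $\Pos^i$, not for $\BigMin$; the relevant signs here are the ones written into the modified Equation~\eqref{eq:ModuleVersionMin}.)

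Second, the assertion that inputs with three or more $C$-factors cancel ``automatically from standardness'' is backwards and, as stated, wrong. Standardness (Equation~\eqref{def:StandardDAZ}) is a property one establishes \emph{after} knowing the structure maps satisfy the $\Ainfty$ relations; it cannot be invoked to prove them. Moreover the case is not vacuous: for the input sequence $(C_1,C_2,C_1)$ at $\XX$ the relation pairs $\delta^1_2\bigl(\delta^1_3(\XX,C_1,C_2),C_1\bigr)$, contributing $\pm\,C\cdot U_\alpha\otimes\YY$, against $\delta^1_3\bigl(\delta^1_2(\XX,C_1),C_2,C_1\bigr)$, contributing $\pm\,U_\alpha\cdot C\otimes\YY$, and one must verify that the Koszul sign $(-1)^{|U_\alpha|}$ together with the centrality rule $C\cdot a=(-1)^{|a|}a\cdot C$ makes these cancel. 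This is one of the identities the paper checks explicitly, and your argument needs to include it (or a reason of the same concreteness) rather than appeal to standardness.
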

 
\begin{proof} 
  We verify the $\Ainfty$ relations with these sign choices. 
  Half of the non-trivial cases are shown below;
  the remaining half follow the same way:
\begin{align*}
  0&= 
        \mathcenter{\begin{tikzpicture}[scale=.8]
  \node at (0,1) (inD) { };
  \node at (2,1) (inB2) {$C_2$};
  \node at (0,-0.5) (d1) {$\delta^1_1$};
  \node at (0,-2) (d2) {$\delta^1_2$};
  \node at (-1.5,-2.3) (mu) {$\mu_2$};
  \node at (0,-3.5) (outD) {};
  \node at (-3,-3) (outA) {};
  \draw[algarrow] (inB2) to (d2);
  \draw[modarrow] (inD) to node[left]{\tiny{$\XX$}}(d1); 
  \draw[algarrow] (d1) [bend right=15] to  (mu);
  \draw[modarrow] (d1) to node[left]{\tiny{$U_2 \YY$}}(d2);
  \draw[algarrow] (d2) to node[above,sloped]{\tiny{$U_\beta$}}(mu);
  \draw[algarrow] (mu) to node[above,sloped] {\tiny{$U_\beta$}}(outA);
  \draw[modarrow] (d2) to node[left]{\tiny{$U_2 \XX$}} (outD);
\end{tikzpicture}}
      +
      (-1)^{|\XX|}
    \mathcenter{\begin{tikzpicture}[scale=.8]
        \node at (-1,2) (inD) { };
        \node at (-1,-1) (d) {$\delta^1_2$};
        \node at (0.5,.5) (mu) {$\mu_1$};
        \node at (-3,-2) (outA) {};
        \node at (1,2) (inB2) {$C_2$};
        \node at (-1,-2.5) (outD) {};
        \draw[modarrow] (inD) to node[left]{\tiny{$\XX$}} (d); 
        \draw[algarrow] (inB2) to (mu);
        \draw[algarrow] (mu) to node[above,sloped]{\tiny{$U_b$}} (d);
        \draw[modarrow] (d) to node[left]{\tiny{$U_2 \XX$}} (outD);
        \draw[algarrow] (d) to node[above,sloped]{\tiny{$U_\beta$}} (outA);
      \end{tikzpicture}} 
\\
  0&=
        (-1)^{|U_\beta|}\mathcenter{\begin{tikzpicture}[scale=.8]
  \node at (0,1) (inD) { };
  \node at (2,1) (inB2) {$C_2$};
  \node at (0,-0.5) (d1) {$\delta^1_2$};
  \node at (0,-2) (d2) {$\delta^1_1$};
  \node at (-1.5,-2.3) (mu) {$\mu_2$};
  \node at (0,-3) (outD) {};
  \node at (-3,-3) (outA) {};
  \draw[algarrow] (inB2) to (d1);
  \draw[modarrow] (inD) to node[left]{\tiny{$\YY$}}(d1); 
  \draw[algarrow] (d1) [bend right=15] to node[above,sloped] {\tiny{$U_\beta$}} (mu);
  \draw[modarrow] (d1) to node[left]{\tiny{$U_2 \XX$}}(d2);
  \draw[algarrow] (d2) to node[above,sloped]{\tiny{$1$}}(mu);
  \draw[algarrow] (mu) to node[above,sloped] {\tiny{$U_\beta$}}(outA);
  \draw[modarrow] (d2) to node[left]{\tiny{$U_2 \YY$}} (outD);
\end{tikzpicture}}
        -
      (-1)^{|\YY|}
    \mathcenter{\begin{tikzpicture}[scale=.8]
        \node at (-1,2) (inD) { };
        \node at (-1,-1) (d) {$\delta^1_2$};
        \node at (0.5,-.5) (mu) {$\mu_1$};
        \node at (-2.5,-2.5) (outA) {};
        \node at (1,2) (inB2) {$C_2$};
        \node at (-1,-2.5) (outD) {};
        \draw[modarrow] (inD) to node[left]{\tiny{$\YY$}} (d); 
        \draw[algarrow] (inB2) to (mu);
        \draw[algarrow] (mu) to node[above,sloped]{\tiny{$U_2 U_b$}} (d);
        \draw[modarrow] (d) to node[left]{\tiny{$U_2  \YY$}} (outD);
        \draw[algarrow] (d)[bend right=15] to node[above,sloped]{\tiny{$U_\beta$}} (outA);
      \end{tikzpicture}}  
\\
  0&=
        \mathcenter{\begin{tikzpicture}
  \node at (0,1) (inD) { };
  \node at (1,1) (inB1) {$C_1$};
  \node at (2,1) (inB2) {$C_2$};
  \node at (0,-0.5) (d1) {$\delta^1_2$};
  \node at (0,-2) (d2) {$\delta^1_2$};
  \node at (-1.5,-2.3) (mu) {$\mu_2$};
  \node at (0,-3) (outD) {};
  \node at (-3,-3) (outA) {};
  \draw[algarrow] (inB1) to (d1);
  \draw[algarrow] (inB2) to (d2);
  \draw[modarrow] (inD) to node[left]{\tiny{$\XX$}}(d1); 
  \draw[algarrow] (d1) to node[above,sloped] {\tiny{$-U_\alpha$}} (mu);
  \draw[modarrow] (d1) to node[left]{\tiny{$\YY$}}(d2);
  \draw[algarrow] (d2) to node[above,sloped]{\tiny{$U_\beta$}}(mu);
  \draw[algarrow] (mu) to node[above,sloped] {\tiny{$-U_\alpha U_\beta$}}(outA);
  \draw[modarrow] (d2) to node[left]{\tiny{$\XX$}} (outD);
\end{tikzpicture}}
+
        \mathcenter{\begin{tikzpicture}
  \node at (0,1) (inD) { };
  \node at (1,1) (inB1) {$C_1$};
  \node at (2,1) (inB2) {$C_2$};
  \node at (0,-0.5) (d1) {$\delta^1_3$};
  \node at (-1,-1.5) (mu) {$\mu_1$};
  \node at (0,-3) (outD) {};
  \node at (-3,-3) (outA) {};
  \draw[algarrow] (inB1) to (d1);
  \draw[algarrow] (inB2) to (d1);
  \draw[modarrow] (inD) to node[left]{\tiny{$\XX$}}(d1); 
  \draw[algarrow] (d1) to node[above,sloped] {\tiny{$C$}} (mu);
  \draw[modarrow] (d1) to node[left]{\tiny{$\XX$}}(outD);
  \draw[algarrow] (mu) to node[above,sloped] {\tiny{$U_\alpha U_\beta$}}(outA);
\end{tikzpicture}} \\
  0&=
        \mathcenter{\begin{tikzpicture}
  \node at (0,1) (inD) { };
  \node at (1,1) (inB1) {$C_1$};
  \node at (2,1) (inB2) {$C_2$};
  \node at (3,1) (inB3) {$C_1$};
  \node at (0,-0.5) (d1) {$\delta^1_3$};
  \node at (0,-2) (d2) {$\delta^1_2$};
  \node at (-1.5,-2.3) (mu) {$\mu_2$};
  \node at (0,-3) (outD) {};
  \node at (-3,-3) (outA) {};
  \draw[algarrow] (inB1) to (d1);
  \draw[algarrow] (inB2) to (d1);
  \draw[algarrow] (inB3) to (d2);
  \draw[modarrow] (inD) to node[left]{\tiny{$\XX$}}(d1); 
  \draw[algarrow] (d1) to node[above,sloped] {\tiny{$C$}} (mu);
  \draw[modarrow] (d1) to node[left]{\tiny{$\XX$}}(d2);
  \draw[algarrow] (d2) to node[above,sloped]{\tiny{$-U_\alpha$}}(mu);
  \draw[algarrow] (mu) to node[above,sloped] {\tiny{$-C U_\alpha$}}(outA);
  \draw[modarrow] (d2) to node[left]{\tiny{$\YY$}} (outD);
\end{tikzpicture}}
-
(-1)^{|U_{\alpha}|}
        \mathcenter{\begin{tikzpicture}
  \node at (0,1) (inD) { };
  \node at (1,1) (inB1) {$C_1$};
  \node at (2,1) (inB2) {$C_2$};
  \node at (3,1) (inB3) {$C_1$};
  \node at (0,-0.5) (d1) {$\delta^1_2$};
  \node at (0,-2) (d2) {$\delta^1_3$};
  \node at (-1.5,-2.3) (mu) {$\mu_2$};
  \node at (0,-3) (outD) {};
  \node at (-3,-3) (outA) {};
  \draw[algarrow] (inB1) to (d1);
  \draw[algarrow] (inB2) to (d2);
  \draw[algarrow] (inB3) to (d2);
  \draw[modarrow] (inD) to node[left]{\tiny{$\XX$}}(d1); 
  \draw[algarrow] (d1) to node[above,sloped] {\tiny{$-U_\alpha$}} (mu);
  \draw[modarrow] (d1) to node[left]{\tiny{$\YY$}}(d2);
  \draw[algarrow] (d2) to node[above,sloped]{\tiny{$C$}}(mu);
  \draw[algarrow] (mu) to node[above,sloped] {\tiny{$-C U_\alpha$}}(outA);
  \draw[modarrow] (d2) to node[left]{\tiny{$\YY$}} (outD);
\end{tikzpicture}} 
\end{align*}
\begin{align*}
0&=
    (-1)\mathcenter{\begin{tikzpicture}
        \node at (-1,2) (inD) { };
        \node at (-1,-1) (d) {$\delta^1_2$};
        \node at (0,0) (mu) {$\mu_2$};
        \node at (-3,-2) (outA) {};
        \node at (1,2) (inB2) {$C_1$};
        \node at (2,2) (inB3) {$C_2$};
        \node at (-1,-2) (outD) {};
        \draw[modarrow] (inD) to node[left]{\tiny{$\XX$}} (d); 
        \draw[algarrow] (inB2) to (mu);
        \draw[algarrow] (inB3) to (mu);
        \draw[algarrow] (mu) to node[above,sloped]{\tiny{$C_1 C_2$}} (d);
        \draw[modarrow] (d) to node[left]{\tiny{$U_2\YY$}} (outD);
        \draw[algarrow] (d) to node[above,sloped]{\tiny{$-C $}} (outA);
      \end{tikzpicture}} 
    + (-1)^{|C|}
        \mathcenter{\begin{tikzpicture}
  \node at (0,1) (inD) { };
  \node at (1,1) (inB1) {$C_1$};
  \node at (2,1) (inB2) {$C_2$};
  \node at (0,-0.5) (d1) {$\delta^1_3$};
  \node at (0,-2) (d2) {$\delta^1_1$};
  \node at (-1.5,-2.3) (mu) {$\mu_2$};
  \node at (0,-3.5) (outD) {};
  \node at (-3,-3) (outA) {};
  \draw[algarrow] (inB1) to (d1);
  \draw[algarrow] (inB2) to (d1);
  \draw[modarrow] (inD) to node[left]{\tiny{$\XX$}}(d1); 
  \draw[algarrow] (d1) [bend right=15] to node[above,sloped] {\tiny{$C$}} (mu);
  \draw[modarrow] (d1) to node[left]{\tiny{$U_2 \XX$}}(d2);
  \draw[algarrow] (d2) to node[above,sloped]{\tiny{$1$}}(mu);
  \draw[algarrow] (mu) to node[above,sloped] {\tiny{$C$}}(outA);
  \draw[modarrow] (d2) to node[left]{\tiny{$U_2 \YY$}} (outD);
\end{tikzpicture}}
\end{align*}
The above relations hold since $|U_i|=0=|\YY|$, $|\XX|=1=|C|$.
Another consistency check is offered by considering the $\Ainfty$ relation with a single algebra input, that is $C_1 C_2$.
\end{proof}

We modify the the homotopy operator 
$h^1\colon \BigMin \to \BigMin$
considered in Section~\ref{subsec:AltConstr}, to include signs.
The operator is characterized by the property that for pure algebra element $a$,
\begin{align*}
h^1(\XX\otimes a)=\left\{\begin{array}{ll}
      -\YY\otimes a' &{\text{if there is an $a'\in \Gamma$ with $a=U_1 a'$}} \\
      0 &{\text{otherwise}}
      \end{array}\right. \\
    h^1(\YY\otimes a)=\left\{\begin{array}{ll}
      -\XX\otimes a' &{\text{if there is an $a'\in \Gamma$ with $a=U_2 a'$}} \\
      0 &{\text{otherwise,}}
      \end{array}\right. 
\end{align*}
where $\Gamma$ is as in Equation~\eqref{eq:DefGamma}. This operator
satisfies the equation
\[ i\circ \pi =  \Id + \partial \circ h^1 + h^1 \circ \partial.\]

For the purpose of the next lemma, we adapt the associated element from Definition~\ref{def:AssociatedElement}:
\begin{defn}
  \label{def:AssociatedElementSign}
For a preferred sequence, where each pure algebra generator appears with coefficient $+1$,
there is at most one pure algebra generator (again with appearing with coefficient $+1$) $b\in\Blg(2n,k)\subset \Alg_2$,
characterized by the following properties:
\begin{enumerate}[label=(PS-\arabic*),ref=(PS-\arabic*)]
  \item $b=\Idemp{\psi(\x_1)}\cdot b$
  \item 
    For all $i$ with $1\leq i\leq 2n$,
    $ w_{i}(b)=\sum_{j=1}^{m} w_{i+2}(a_j).$
\end{enumerate}
\end{defn}

\begin{lemma}
  \label{lem:MinWithSigns}
  The $\Ainfty$ action of standard sequences 
  on $\Min_1$ are given by
  \begin{align*}
    \delta^1_3(\XX L_1,C_1,C_2)&=C\otimes \XX L_1 \\
    \delta^1_3(\YY R_2,C_2,C_1)&=C\otimes \YY R_2 
  \end{align*}
  with further actions  governed  by the diagram
\begin{equation}
  \label{eq:MinActionsZ}
  \begin{tikzpicture}
    \node at (-3,0) (XL1) {$\XX L_1$} ;
    \node at (3,0) (YR2) {$\YY R_2$} ;
    \node at (0,2) (Y) {$\YY$} ;
    \node at (0,-2) (X) {$\XX$} ;
    \draw[->] (XL1) [bend left=15] to node[above,sloped] {\tiny{$U_1^m R_1
\otimes C_2^{\otimes m}$}} (Y) ;
    \draw[->] (Y) [bend left=30] to node[above,sloped] {\tiny{$U_2^{m+1}\otimes (-C_1)^{\otimes m}$}} (X) ;
    \draw[->] (X) [bend left=15] to node[below,sloped] {\tiny{$ U_1^{m} L_1\otimes C_2^{\otimes m}$}} (XL1) ;
    \draw[->] (XL1) [loop left] to node[above,sloped] {\tiny{$U_1^{m}\otimes C_2^{\otimes m}$}} (XL1);
    \draw[->] (YR2) [bend left=15] to node[below,sloped] {\tiny{$U_2^m L_2\otimes (-C_1)^{\otimes m}$}} (X) ;
    \draw[->] (X) [bend left=30] to node[above,sloped] {\tiny{$U_1^{m+1}\otimes C_2^{\otimes m}$}} (Y) ;
    \draw[->] (Y) [bend left=15] to node[above,sloped] {\tiny{$U_2^{m} R_2\otimes (-C_1)^{\otimes m}$}} (YR2) ;
    \draw[->] (YR2) [loop right] to node[above,sloped] {\tiny{$U_2^{m}\otimes (-C_1)^{\otimes m}$}} (YR2);
    \end{tikzpicture}
\end{equation}
  The contribution of a path of $k$ algebra elements
  in this diagram is multiplied by $(-1)^{\eta(k)+k-1}$, where
  $\eta$ is as in Equation~\eqref{eq:DefEta},
 to give the $\Ainfty$ operations on the type $DA$ bimodule;
 i.e. if $a_1,\dots,a_k$ is a preferred 
 sequence of pure algebra elements
 with $a_1=\Idemp{\x}\cdot a_1$ and $a_k=a_k\cdot \Idemp{\y}$
 and $b$ is a pure algebra element 
 as in Definition~\eqref{def:AssociatedElementSign}, then 
 $\delta^1_{k+1}(\MinGen_{\x},a_1,\dots,a_k)$  is given by
 $(-1)^{\#(C_1\in (a_1,\dots,a_k))+\eta(k)+k-1}b\otimes \MinGen_{\y}$.
\end{lemma}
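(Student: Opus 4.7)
The plan is to establish this as the signed analogue of Theorem \ref{thm:MinDA}, using the signed versions of the alternative construction from Section \ref{subsec:AltConstr} together with the signed homological perturbation lemma (Lemma \ref{lem:HomologicalPerturbationDA}).

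First, I would take as my starting point the larger bimodule $\lsup{\Alg_2}\BigMin_{\Alg_1}$, whose signed operations were already specified in Lemma \ref{lem:BigDAGens2s}; in particular, the sign modifications appear only in the $\delta^1_2(\XX, C_1) = -U_\alpha \otimes \YY$ and $\delta^1_2(\XX, C_1 C_2) = -C U_2 \otimes \YY$ terms, together with their propagation under the right $\Blg$-action. I would then upgrade the homotopy operator $h^1$ of Lemma \ref{lem:SubComplex} by inserting the sign conventions already chosen in the signed paragraph preceding the lemma statement, namely $h^1(\XX\otimes a) = -\YY\otimes a'$ when $a = U_1 a'$ and $h^1(\YY\otimes a) = -\XX\otimes a'$ when $a = U_2 a'$. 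The inclusion $i^1\colon Q \hookrightarrow \BigMin$ of the subspace spanned by $\XX L_1$ and $\YY R_2$, and its projection $\pi^1$, remain unchanged.

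Second, I would verify the signed contraction identities $\pi^1 \circ i^1 = \Id_Q$, $i^1 \circ \pi^1 = \Id_\BigMin + dh^1$, $h^1\circ h^1 = 0$, $h^1 \circ i^1 = 0$, $\pi^1 \circ h^1 = 0$. The first, third, fourth, and fifth are immediate from the definitions. The second requires checking, for each basis element $\XX\otimes a$ or $\YY\otimes a$ of $\BigMin$ not lying in $Q$, that the signed differential from Lemma \ref{lem:BigDAGens2s} composed with $h^1$ produces the identity; here the key computations are along the alternating chain $\XX L_1 \to \YY \to \XX \to \YY R_2$ from Equation \eqref{eq:MinActions}, where the signs of $h^1$ are chosen precisely to cancel the $-1$'s on the $\delta^1_1$-edges coming into $\YY$.

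Third, with these pieces in place, I would invoke Lemma \ref{lem:HomologicalPerturbationDA} to obtain the induced type $DA$ bimodule structure on $Q = \XX L_1 \oplus \YY R_2$ and identify it with $\GenMin^1$. The induced $\Ainfty$-operations count the trees pictured at the end of Section \ref{subsec:AltConstr}: a sequence of $k$ algebra inputs is processed by alternating $\delta^1$ nodes from $\BigMin$ with instances of $h^1$, preceded by $f$ and followed by $g$. For a standard path of length $k$ in the diagram of Equation \eqref{eq:MinActionsZ}, the contribution carries three sign sources: the signs built into $\BigMin$'s operations, the sign $(-1)^{\eta(k)}$ coming from the primes on $T'$ and $m'$ as recorded after Lemma \ref{lem:HomologicalPerturbation}, and an additional factor of $(-1)^{k-1}$ arising from the $k-1$ instances of the homotopy $h^1$ (each contributing a $-1$).

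The main obstacle will be the bookkeeping in the third step: one must check that after rerouting the signs of the edges in Equation \eqref{eq:MinActionsZ} (for instance the $-C_1$ loops on $\YY$ and $\YY R_2$, which come from the sign flips in $\delta^1_2(\XX,C_1)$ propagated by the right action), the only net contribution along a closed path of length $k$ is precisely $(-1)^{\#(C_1 \in (a_1,\dots,a_k)) + \eta(k) + k - 1}$ times the expected algebra output $b$ determined as in Definition \ref{def:AssociatedElementSign}. The two explicit three-input actions $\delta^1_3(\XX L_1, C_1, C_2) = C \otimes \XX L_1$ and $\delta^1_3(\YY R_2, C_2, C_1) = C \otimes \YY R_2$ provide a sanity check: both correspond to $k=2$ with $\eta(2)=1$, one factor of $C_1$ in the first and one in the second, so the overall sign is $(-1)^{1+1+1} = -1$ in one case and $(-1)^{1+1+1} = -1$ in the other; this is consistent with the fact that the single-edge loops at $\XX L_1$ and $\YY R_2$ in Equation \eqref{eq:MinActionsZ} encode $C \otimes (C_1, C_2)$ and $C \otimes (C_2, C_1)$ up to the stated sign convention, matching the signed loops already given in Equation \eqref{eq:ModuleVersionMin} for the ambient bimodule $\BigMin$.
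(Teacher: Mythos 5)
Your proposal follows essentially the same route as the paper: apply the signed homological perturbation lemma to the signed $\BigMin$ with the signed homotopy operator $h^1$, with the factor $(-1)^{\eta(k)}$ coming from the primed operators in the perturbation trees and $(-1)^{k-1}$ from the $k-1$ insertions of $h^1$, each of which carries a sign. One small caveat: your concluding ``sanity check'' applies the path-sign formula to the input sequence $(C_1,C_2)$, but that is not a preferred sequence (paths in the diagram carry $\Blg$-labels, with the $C$'s only as augmentations), which is precisely why the two $\delta^1_3$ operations are stated separately in the lemma; they inherit their $+$ sign directly from the loops in Lemma~\ref{lem:BigDAGens2s} via $f$ and $g$ with no $h^1$ insertions, rather than carrying the $(-1)^{\#C_1+\eta(k)+k-1}$ factor you compute.
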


\begin{proof}
  The homological perturbation lemma (applied to type $DA$ bimodules)
  endows $\XX L_1\oplus \YY R_2$ with the structure of a type $DA$
  bimodule.  To find the sign, observe in the descriptions of the new
  $\delta^1_{k+1}$ actions for $k>1$, only the $\delta^1_2$ actions on
  $M$ appear, and the $h^1$ nodes contribute $(-1)^{\sum_{i=1}^{k-1}
    i}=\eta(k)$; moreover each $\delta^1_{k+1}$ action uses the
  homotopy operator $h^1$ a total of $k-1$ times.
\end{proof}

For example,
\[
\begin{array}{l}
  \delta^1_{3}(\YY R_2,U_2,C_1)= -U_{\alpha} \YY R_2 \\
  \delta^1_{4}(\YY R_2,U_2^2,C_1,C_1)= -U_{\alpha}^2 \YY R_2 \\
  \delta^1_{4}(\YY R_2, L_2, U_1, R_2)= - \YY R_2 \\
  \delta^1_{6}(\YY R_2, L_2, U_1, U_2,U_1, R_2)=  \YY R_2 
\end{array}
\]

These are the modifications needed for the constructions from
Section~\ref{sec:Min} (notably, Theorem~\ref{thm:MinDA} and
Lemma~\ref{lem:MinDual}) to hold over $\Z$.

\subsection{Invariance over $\Z$}

We can now define $\PartInvZ({\mathcal D})$ to be the chain complex
over $\Z$ defined by forming the tensor products of the above
bimodules. If ${\mathcal D}$ denotes the diagram for $\orK$ with the
minimum removed, then $\PartInvZ({\mathcal D})$ can be used to define
a complex denoted $\CwzZ(\orK)$ over $\Z[U,V]/U V=0$ following methods from
Section~\ref{subsec:ConstructInvariant}.

By its construction, $\PartInvZ({\mathcal D})$ and hence also
$\CwzZ(\orK)$ inherits the exterior $\Zmod{2}$ grading, which was
constructed in order to impose a sign convention.  The homology of the
complex, thought of as equipped with an absolute $\Zmod{2}$ grading,
cannot be a knot invariant: it evidently changes under certain
Reidemeister 1 moves. However, as a relative $\Zmod{2}$ grading, it
agrees with the Maslov grading. This can be shown by identify the
generators of the complex with Kauffman states, and proving that both
the exterior and the Maslov gradings change parity under clock
transformations. Since any two Kauffman states can be connected
by
a sequence of clock transformations (see~\cite{Kauffman}) it follows
that the two relative $\Zmod{2}$-gradings agree.

Let $\HwzZ(\orK)$ denote the homology of $\CwzZ(\orK)$, thought of as a module over $\Z[U,V]/U V =0$,
equipped with its Alexander and Maslov gradings.
To see that $\HwzZ(\orK)$ is a knot invariant, we follow the logic from
Section~\ref{sec:Construction}, with a few adaptations.

Theorem~\ref{thm:BraidRelation} can be proved by direct computation,
in the spirit of the proof of the analogous result for the earlier
algebras~\cite[Theorem~\ref{BK:thm:BraidRelation}]{BorderedKnots}.
We leave the details to the interested reader. 

The proof of the trident relations with a maximum,
Proposition~\ref{prop:BasicTrident}, works with the following
adjustments.  Consider
$\lsup{\Alg_3}\Pos^{2}_{\Alg_4}\DT~^{\Alg_4,\DuAlg_1}\Max_{1}$, and
let $X$ be a generator of this tensor product. $X$ can be decomposed
as a tensor product of a generator $\Pos^2$, which has type ($\North$,
$\South$, $\West$, $\East$) and a generator of $\Max_1$, which is
determined by its idempotent $\x$ in $\Alg_4$. 
Let $\y$ denote the corresponding idempotent in $\DuAlg_1$.
The type of $X$ agrees with the type of its $\Pos^2$ factor.
Let
\[ \sigma(X)=
 \left(\begin{array}{rl}
-1 & {\text{if $X$ is of type $\South$}} \\
+1 & {\text{otherwise}}
\end{array}\right)
\cdot
\left(\begin{array}{rl}
-1 &{\text{if $0\in\x$}} \\
1 &{\text{otherwise}}
\end{array}
\right)
\]
Clearly, $\sigma(X)(-1)^{\|\y\|}$ is $(-1)$ to the
mod two grading of $X$. 

The module 
$\lsup{\Alg_3}\Pos^{2}_{\Alg_4}\DT~^{\Alg_4,\DuAlg_1}\Max_{1}$
has 
outside arrows (connecting generators $X$ of the same type) of the 
form $-C_{\{1,3\}}\otimes 1$;
$\sigma \cdot (L_{j+2}\otimes R'_j + R_{j+2}\otimes L'_j)$ 
for all $j=1,\dots 2n$;
$-U_{j+2}\otimes E_{j}$ for all $j=1,\dots,2n$; 
and $C_{\{m+2,\ell+2\}}\otimes \llbracket E_{m},E_{\ell}\rrbracket$
for all $\{m,\ell\}\in\Matching_1$ with $1\not\in\{m,\ell\}$,
and $C_{\{2,\alpha+2\}}\otimes \llbracket E_1,E_{\alpha}\rrbracket$ with
$\{1,\alpha\}\in\Matching_1$.
The arrows connecting different types 
appear in the following sign-refined version of
Equation~\eqref{eq:TensorProdTrident}:
  \begin{equation}
    \label{eq:TensorProdTridentZ}
    \begin{tikzpicture}[scale=1.5]
    \node at (0,4) (N) {${\mathbf N}$} ;
    \node at (-2,2.5) (W) {${\mathbf W}$} ;
    \node at (2,2.5) (E) {${\mathbf E}$} ;
    \node at (0,0) (S) {${\mathbf S}$} ;
    \draw[->] (S) [bend left=10] to node[below,sloped] {\tiny{$U_\alpha R_2 \otimes 
        \llbracket E_\alpha,E_1\rrbracket
        +L_1 L_3 \otimes R'_1$}}  (W)  ;
    \draw[->] (W) [bend left=10] to node[above,sloped] {\tiny{$-L_{2}\otimes 1$}}  (S)  ;
    \draw[->] (E) [bend right=10] to node[above,sloped] {\tiny{$R_{3}\otimes 1$}}  (S)  ;
    \draw[->] (S)[bend right=10] to node[below,sloped] {\tiny{$-U_1 L_3 \otimes 1+R_2 R_1\otimes L'_1$}} (E) ;
    \draw[->] (W)[bend right=10]to node[below,sloped] {\tiny{$R_1\otimes 1$}} (N) ;
    \draw[->] (N)[bend right=10] to node[above,sloped] {\tiny{$L_{1} U_{3}\otimes 1 + R_{3} R_{2} \otimes L'_1$}} (W) ;
    \draw[->] (E)[bend left=10]to node[below,sloped]{\tiny{$1\otimes R'_1$}} (N) ;
    \draw[->] (N)[bend left=10] to node[above,sloped]{\tiny{$U_{2}\otimes L'_1 + L_{1} L_{2} L_{3}\otimes 1$}} (E) ;
    \draw[->] (N) [loop above] to node[above]{\tiny{$-U_2\otimes E_1$}} (N);
    \draw[->] (E) [loop right] to node[above,sloped]{\tiny{$-U_2\otimes E_1$}} (E);
    \draw[->] (E)to node[above,pos=.3] {\tiny{$-R_3 R_2 \otimes E_1$}} (W) ;
    \draw[->] (S) to node[below,sloped,pos=.3] {\tiny{$-R_2 R_1\otimes E_1$}} (N) ;
    \end{tikzpicture}
  \end{equation}
Consider 
the more symmetric bimodule $\lsup{\Alg_3,\DuAlg_1}\Trident$
with the same generators, and differentials
\begin{equation}
\label{eq:SymmWithSign}    \begin{tikzpicture}[scale=1.5]
    \node at (0,4) (N) {${\mathbf N}$} ;
    \node at (-2,2) (W) {${\mathbf W}$} ;
    \node at (2,2) (E) {${\mathbf E}$} ;
    \node at (0,0) (S) {${\mathbf S}$} ;
    \draw[->] (S) [bend left=10] to node[below,sloped] {\tiny{$-R_2\otimes U_1
        +L_1 L_3 \otimes R'_1$}}  (W)  ;
    \draw[->] (W) [bend left=10] to node[above,sloped] {\tiny{$-L_{2}\otimes 1$}}  (S)  ;
    \draw[->] (E) [bend right=10] to node[above,sloped] {\tiny{$R_{3}\otimes 1$}}  (S)  ;
    \draw[->] (S)[bend right=10] to node[below,sloped] {\tiny{$-U_1 L_3 \otimes 1+R_2 R_1\otimes L'_1$}} (E) ;
    \draw[->] (W)[bend right=10]to node[below,sloped] {\tiny{$R_1\otimes 1$}} (N) ;
    \draw[->] (N)[bend right=10] to node[above,sloped] {\tiny{$L_{1} U_{3} \otimes 1 + R_{3} R_{2} \otimes L'_1$}} (W) ;
    \draw[->] (E)[bend left=10]to node[below,sloped]{\tiny{$1\otimes R'_1$}} (N) ;
    \draw[->] (N)[bend left=10] to node[above,sloped]{\tiny{$U_{2}\otimes L'_1 + L_{1} L_{2} L_{3}\otimes 1$}} (E) ;
    \end{tikzpicture}
\end{equation}
equipped with the above self-arrows; compare Equation~\eqref{eq:SymmetricTrident}.
The map
\[ h^1\colon \lsup{\Alg_3}\Pos^2_{\Alg_4}\DT \lsup{\Alg_4,\DuAlg_1}\Max_1 
\to \lsup{\Alg_3,\DuAlg_1}\Trident \]
defined by
\[
    h^1(X) = \left\{\begin{array}{ll}
        \South+(R_2\otimes E_1)\cdot \West \\
        X &{\text{otherwise}}
      \end{array}\right.
\]
gives a homotopy equivalence between the two $DD$ bimodules.

The relation from Proposition~\ref{prop:BasicTrident} now works.

For the proof of the trident relation involving minima,
Lemma~\ref{lem:DuBasicTrident}, we worked in the dual algebra
$\DuAlg$. That algebra can be given signs as explained earlier.
Moreover, as needed in the proof, we can construct the $DA$ bimodule
of crossings over $\DuAlg$.  This is constructed by following the
rules from Section~\ref{subsec:DuAlgCross}, and using the signs
outlined in Section~\ref{subsec:CrossSigns}. So, for example, we have
relations as in Equation~\eqref{eq:CEquivarianceZ} and further
extensions as in Equation~\eqref{eq:ExtendCsSigns}, except now with
$E$-variables instead of $C$-variables.  With these straightforward
changes, we find that, as in the proof of
Lemma~\ref{lem:DuBasicTrident},
$\lsup{\DuAlg_3}\Pos^{2}_{\DuAlg_4}\DT~^{\DuAlg_4,\Alg_1}\Min_1$ is
the bimodule 
whose arrows are obtained from Equation~\eqref{eq:SymmWithSign}, switching
the prime markings from the second to the first tensor factor, 
equipped with
further self-arrows from each generator $X$ to itself
of the form $-U_1 E_3\otimes 1$, $-E_2\otimes
U_1$, $\sigma\cdot L'_{j+2}\otimes R_{j}$ for $j=1,\dots,2n$ 
(with $\sigma$ defined
as before),
$\sigma R'_{j+2}\otimes L_{j}$ for $j=1\dots,2n$, $-E_{j+2}\otimes U_j$
for $j=1,\dots,2n$,
$\llbracket E_{m+2},E_{\ell+2}\rrbracket\otimes C_{\{m,\ell\}}$
for all $\{m,\ell\}\in\Matching_1$; and additional self-arrows of the
form $\llbracket E_{\beta+2},E_{3}\rrbracket\cdot E_1 \otimes U_{\beta}$ and
$-\llbracket E_1,E_{\alpha+2}\rrbracket\cdot\llbracket E_3,E_{\beta+2}\rrbracket\otimes C_{\alpha,\beta}$.
By a natural symmetry of this bimodule, the trident relation involving a minimum holds.

\begin{defn}
  Let $\HwzZ(\orK)$ denote the bigraded module over $\Z[U,V]/U V = 0$, which is obtained by taking the homology of $\CwzZ(\Diag)$.
\end{defn}

\begin{thm}
  \label{thm:InvarianceZ}
  The bigraded module $\HwzZ(\orK)$ is an invariant of the oriented knot $\orK$.
 \qed \end{thm}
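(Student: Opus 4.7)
The plan is to follow the unsigned invariance argument used for Theorem~\ref{thm:Invariance}, one bridge/Reidemeister move at a time, and to verify that at each step the required bimodule identity over $\Z$ has already been established (or is a routine signed refinement of its $\Zmod 2$ counterpart). The moves to check are: (1) commutation of distant crossings, (2) trident moves (passing a local maximum or minimum through a crossing), (3) commutation of a crossing with a distant critical point, (4) commutation of two distant critical points, and (5) pair creation/annihilation.

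First I would note that invariance is intrinsically a statement about the underlying chain homotopy type of $\CwzZ(\Diag)$, so once the signed bimodules associated to crossings, maxima, and minima have been shown to fit together into type $DA$ bimodules (via Propositions~\ref{prop:PosExtZ} and the signed analogues of Theorem~\ref{thm:MaxDA} and Lemma~\ref{lem:MinWithSigns}), and once tensor products over $\Z$ are well-defined (Proposition~\ref{prop:AdaptedTensorProdZ}), the whole construction $\CwzZ(\Diag)$ is canonical up to homotopy equivalence of chain complexes of bigraded $\Z[U,V]/UV$-modules. Thus it is enough to exhibit, for each move, a homotopy equivalence between the relevant signed tensor products. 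The key technical tool throughout is the signed version of Theorem~\ref{thm:InvertibleDD} (Koszul duality over $\Z$), which allows us to transfer identities between type $DA$ and type $DD$ bimodules; the signed dualities (signed versions of Proposition~\ref{prop:DualCross}, Lemma~\ref{lem:DuAlgDualCross}, Proposition~\ref{prop:MaxDual}, and Theorem~\ref{thm:MinDual}) then let us reduce every computation to a comparison of type $DD$ bimodules, which is combinatorially tractable.

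Next I would dispatch the moves in order. Commutation of distant crossings is the content of the signed braid relation, the $\Z$-analogue of Equation~\eqref{eq:FarBraids} in Theorem~\ref{thm:BraidRelation}; as the author remarks, this is by a direct (now signed) check, exactly as in \cite[Theorem~\ref{BK:thm:BraidRelation}]{BorderedKnots}. Trident moves for a maximum follow from the signed trident relation in the form given in Equation~\eqref{eq:TensorProdTridentZ} and~\eqref{eq:SymmWithSign}, together with the signed duality for $\Max^c$; trident moves for a minimum are handled by the signed analogue of Lemma~\ref{lem:DuBasicTrident} established at the end of the excerpt via the sign-refined computation of $\Pos^{2}_{\DuAlg}\DT\Min_1$, combined with the Koszul inverse. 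Commutations of distant crossings with critical points, and of two distant critical points, are verified at the level of type $DD$ bimodules by splitting off the idempotent factors away from the interacting strands and noting that the signs from Equations~\eqref{eq:CommutingLsRs}--\eqref{eq:AnticommutingLsRs} are compatible. For the distant minimum/maximum commutations we first do the case $c=1$ using the explicit signed model from Lemma~\ref{lem:BigDAGens2s}, and then invoke the symmetry Lemma~\ref{lem:Rotate} (in its obvious signed lift) to cover general $c$. Finally, pair creation/annihilation reduces to $\Min^1\DT\Max^2\simeq \Id$, which in turn reduces to the signed identification $\Min^1\DT\Max_2\simeq\CanonDD$ via the invertibility of $\CanonDD$; this is a direct computation of a four-term complex and is exactly as in the proof of Theorem~\ref{thm:Invariance}, with signs read off from the definitions.

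The main obstacle will be the bookkeeping of signs in the trident and pair-creation identities, where two signed bimodules must be shown homotopy equivalent via an explicit chain map. Following the pattern set by the unsigned proofs in Sections~\ref{subsec:MaxTrident} and~\ref{subsec:MinTrident}, the homotopy equivalences are given by adding to the identity a single correction term involving an exterior generator (e.g.\ $E_1E_2$ or $R_2\otimes E_1$); the nontrivial check is that, with the signs introduced in Sections~\ref{subsec:CrossSigns} and in the signed versions of $\Max$ and $\Min$, these correction terms still define chain maps, and that the resulting square of compositions is the identity on the nose. I expect this to be straightforward but tedious, and to reduce in each case to verifying a handful of sign identities of the form $(-1)^{|a||b|}+\text{(correction)}=0$ governed by the exterior grading conventions fixed in Equations~\eqref{eq:ModTwoGradeDD}, \eqref{eq:GradeCross}, and~\eqref{eq:DefExtGradP}. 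Once these signed moves are in place, the identification of generators with Kauffman states and the behavior of the bigrading are unchanged from the $\Zmod 2$ story, so $\HwzZ(\orK)$ is a bigraded $\Z[u,v]/uv$-module invariant of $\orK$, as claimed.
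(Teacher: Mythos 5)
Your proposal matches the paper's argument: the paper likewise proves Theorem~\ref{thm:InvarianceZ} by rerunning the move-by-move invariance proof of Theorem~\ref{thm:Invariance}, citing the direct signed verification of the braid relations, the sign-refined trident computations (Equations~\eqref{eq:TensorProdTridentZ} and~\eqref{eq:SymmWithSign} for the maximum, the dual-algebra computation for the minimum), and the signed Koszul duality to pass between type $DA$ and type $DD$ bimodules. The approach and the supporting lemmas you invoke are the same as the paper's, so no further comment is needed.
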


Obviously, $\HwzZ(\orK)=H(\CwzZ(\orK))$ refines  
$\Hwz(\orK)=H(\Cwz(\orK))$ considered earlier;
since
\[ \CwzZ(\orK)\otimes_{\Z}{\Zmod{2}}\cong \Cwz(\orK);\] so their
homologies are related by a universal coefficient theorem.

\subsection{Crossing change morphisms over $\Z$}

Proposition~\ref{prop:CrossingChangeHwz} has the following obvious generalization:

\begin{prop}
  \label{prop:CrossingChangeHwzZ}
  Let $K_+$ and $K_-$ be two knots represented by knot projections ${\mathcal D}_+$ and ${\mathcal D}_-$,
  that differ in a single crossing, which is positive  in ${\mathcal D}_+$ and negative in ${\mathcal D}_-$.
  Then, there are maps
  \[ c_+\colon \HwzZ(K_-)\to \HwzZ(K_+)\qquad \text{and}\qquad 
   c_-\colon \HwzZ(K_+)\to \HwzZ(K_-),\]
   where $c_-$ preserves bidegree and $c_+$ is of degree $(-1,-1)$, so that
   $c_+\circ c_-=U$ and $c_-\circ c_+=U$.
\end{prop}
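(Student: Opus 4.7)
The plan is to lift the proof of Proposition~\ref{prop:CrossingChangeHwz} to the signed setting, by first establishing a $\Z$-coefficient analogue of Lemma~\ref{lem:CrossingChange} and then running the same global argument. So first I would construct signed bimodule homomorphisms
\[ \phi_-\colon \lsup{\Alg_1,\DuAlg_2}\Pos_i \to \lsup{\Alg_1,\DuAlg_2}\Neg_i \qquad \text{and} \qquad \phi_+\colon \lsup{\Alg_1,\DuAlg_2}\Neg_i \to \lsup{\Alg_1,\DuAlg_2}\Pos_i \]
together with homotopies $h_\pm$ witnessing that $\phi_+ \circ \phi_-$ and $\phi_- \circ \phi_+$ are the endomorphisms $U_{i+1}\otimes 1$, now interpreted over $\Z$ with the signed $DD$ bimodule structure from Section~\ref{subsec:CrossSigns}. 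The formulas are the same as in the $\Zmod{2}$ setting, but each component must be multiplied by an appropriate sign: the components of $\phi_\pm$ and $h_\pm$ change the exterior grading (since $|\North_\pm|=|\West_\pm|=|\East_\pm|=0$ while $|\South_\pm|=-1$), and the compatibility with the primed generators $L_i', R_i'$ must be checked on a per-arrow basis.

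Next I would verify the $DD$ bimodule relations $d\phi_\pm=0$ and the homotopy identities $dh_\pm = \phi_\mp\circ\phi_\pm - (U_{i+1}\otimes 1)\cdot \Id$ by direct computation, using the two signed arrow diagrams derived from Equations~\eqref{eq:PositiveCrossing} and~\eqref{eq:NegativeCrossing} via the sign refinement of Section~\ref{subsec:CrossSigns}. The main bookkeeping obstacle here is to make sure that the signs in $\phi_\pm$ are compatible both with the Leibniz rule for the exterior grading and with the occurrence of $L_i'$ versus $L_i$ in the various differential components; in practice this will likely force adjusting the signs in the definitions of $\phi_\pm$ and $h_\pm$ relative to the $\Zmod 2$ versions, but there is enough flexibility since we have the freedom to rescale each homogeneous component.

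Once the local statement over $\Z$ is in hand, the passage to the global statement proceeds verbatim as in the proof of Proposition~\ref{prop:CrossingChangeHwz}: tensor with $\CanonDD$ over $\Z$ to obtain the analogous $DA$-level maps $\phi^\pm$, then draw the diagram so that the distinguished crossing feeds into a local minimum adjacent to the global minimum (as in Figure~\ref{fig:OrientOutOfMinimum}), and use the property that in the signed $\Min^1$ bimodule the only non-trivial standard action involving $U_3$ is $\delta^1_2(\x, U_3)=U_1\otimes \x$ (see Lemma~\ref{lem:MinWithSigns}) to conclude that $\Id_{\Min^1}\DT T^{U_3}_{\Pos^2}$ agrees with $T^{U_1}_{\Min^1\DT\Pos^2}$. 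Feeding $U_1$ into $\TerMin$ yields the algebra element $U$, so the tensor products
\[ c_- = \Id_{\TerMin}\DT\Id_{\Min^1}\DT \phi_- \DT \Id_{\PartInvZ(\Diag)}, \qquad c_+ = \Id_{\TerMin}\DT\Id_{\Min^1}\DT \phi_+ \DT \Id_{\PartInvZ(\Diag)} \]
give the desired maps, with $c_+\circ c_-$ and $c_-\circ c_+$ both chain homotopic to multiplication by $U$. The grading properties follow, as before, from the local form of $\phi_\pm$ combined with the local Alexander/$\Delta$ contributions in Figure~\ref{fig:LocalCrossing}. The principal obstacle is purely the sign bookkeeping in the first step; everything afterward is formal once the signed local statement is secured.
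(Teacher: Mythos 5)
Your proposal is correct and follows essentially the same route as the paper: the paper's proof simply inserts signs into the local maps $\phi_\pm$ and homotopies $h_\pm$ from Lemma~\ref{lem:CrossingChange}, verifies the signed $DD$ bimodule relations and the identities $dh_\pm = (U_2\otimes 1)\Id - \phi_\mp\circ\phi_\pm$ by direct computation, and lets the global argument carry over verbatim from Proposition~\ref{prop:CrossingChangeHwz}. The sign bookkeeping you flag as the principal obstacle is indeed where all the work lies, and the paper resolves it exactly as you anticipate, by adjusting individual arrow components (with primed generators $L_i', R_i'$ appearing in the second tensor factor) relative to the $\Zmod{2}$ formulas.
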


\begin{proof}
  We insert signs into the homomorphisms from Lemma~\ref{lem:CrossingChange}.
  The 
  map $\phi_-\colon \lsup{\Alg_1,\DuAlg_2}\Pos_1\to\lsup{\Alg_1,\DuAlg_2}\Neg_1$ is represented by
  \[
    \begin{tikzpicture}[scale=1.4]
      \node at (0,0) (Wp) {$=\West_+$} ;
      \node at (8,0) (WpR) {$\West_+=$} ;
      \node at (2,0) (Np) {$\North_+$} ;
      \node at (4,0) (Ep) {$\East_+$} ;
      \node at (6,0) (Sp) {$\South_+$} ;
      \node at (0,-2.5) (Wm) {$=\West_-$} ;
      \node at (8,-2.5) (WmR) {$\West_-=$} ;
      \node at (2,-2.5) (Nm) {$\North_-$} ;
      \node at (4,-2.5) (Em) {$\East_-$} ;
      \node at (6,-2.5) (Sm) {$\South_-$} ;

      \draw[->] (Sp) [bend left=7] to node[above,sloped] {\tiny{\hskip.3cm$-R_{1}\!\otimes\! U_{2}-L_{2}\!\otimes\! R'_{2}R'_{1}$}}  (WpR)  ;
      \draw[->] (WpR) [bend left=7] to node[below,sloped] {\tiny{\hskip.3cm$-L_{1}\otimes 1$}}  (Sp)  ;
      \draw[->] (Ep)[bend right=7] to node[below,sloped] {\tiny{$R_{2}\otimes 1$}}  (Sp)  ;
      \draw[->] (Sp)[bend right=7] to node[above,sloped] {\tiny{$L_{2}\!\otimes\! U_{1} + R_{1} \!\otimes\! L'_{1} L'_{2}$}} (Ep) ;
      \draw[->] (Wp)[bend right=7] to node[below,sloped] {\tiny{$1\otimes L'_{1}$}} (Np) ;
      \draw[->] (Np)[bend right=7] to node[above,sloped] {\tiny{$U_{2}\!\otimes\! R'_{1} + R_{2} R_{1} \!\otimes\! L'_{2}$}} (Wp) ;
      \draw[->] (Ep)[bend left=7] to node[below,sloped]{\tiny{$1\otimes R'_{2}$}} (Np) ;
      \draw[->] (Np)[bend left=7] to node[above,sloped]{\tiny{$U_{1}\!\otimes\! L'_{2} + L_{1} L_{2}\!\otimes\! R'_{1}$}} (Ep) ;

    \draw[->] (Wm) [bend right=7] to node[below,sloped] {\tiny{${U_{2}}\!\otimes\!{L'_{1}}+{L_{1} L_{2}}\!\otimes\!{R'_{2}}$}}  (Nm)  ;
    \draw[->] (Nm) [bend right=7] to node[above,sloped] {\tiny{${1}\otimes{R'_{1}}$}}  (Wm)  ;
    \draw[->] (Nm)[bend left=7] to node[above,sloped] {\tiny{${1}\otimes{L'_{2}}$}}  (Em)  ;
    \draw[->] (Em)[bend left=7] to node[below,sloped] {\tiny{${U_{1}}\!\otimes\!{R'_{2}} + {R_{2} R_{1}}\!\otimes\!{L'_{1}}$}} (Nm) ;
    \draw[->] (Sm)[bend left=7] to node[above,sloped] {\tiny{$-{R_{1}}\otimes{1}$}} (WmR) ;
    \draw[->] (WmR)[bend left=7] to node[below,sloped] {\tiny{\hskip.3cm$-{L_{1}}\!\otimes\!{U_{2}} - {R_{2}}\!\otimes\!{L'_{1} L'_{2}}$}} (Sm) ;
    \draw[->] (Sm)[bend right=7] to node[above,sloped]{\tiny{${L_{2}}\otimes{1}$}} (Em) ;
    \draw[->] (Em)[bend right=7] to node[below,sloped]{\tiny{${R_{2}}\!\otimes\!{U_{1}} + {L_{1}}\!\otimes\!{R'_{2} R'_{1} }$}} (Sm) ;

      \draw[->] (Np) to node[above,sloped]{\tiny{$U_{2}\otimes 1$}} (Nm);
      \draw[->] (Sp) to node[above,sloped]{\tiny{$1\otimes U_{2}$}} (Sm);
      \draw[->] (Np) to node[above,sloped,pos=.7]{\tiny{$-R_{2}\otimes L'_{2}$}} (Sm);
      \draw[->] (Sp) to node[above,sloped,pos=.7]{\tiny{$-L_{2}\otimes R'_{2}$}} (Nm);
      \draw[->] (Wp) to node[above,sloped]{\tiny{$1$}} (Wm);
    \end{tikzpicture}
    \]
    (where once again we have suppressed self-arrows of 
    Type~\ref{type:OutsideLRP},~\ref{type:UCP}, and~\ref{type:UCCP}, which now occur with signs;
    e.g. including self-arrows at each vertex labeled by $-U_2\otimes E_1-U_1\otimes E_2$)
    and
    $\phi_+\colon \Neg_1\to\Pos_1$ is represented (with self-arrows suppressed) by
  \[
    \begin{tikzpicture}[scale=1.4]
      \node at (0,-2.5) (Wp) {$=\West_+$} ;
      \node at (8,-2.5) (WpR) {$\West_+=$} ;
      \node at (2,-2.5) (Np) {$\North_+$} ;
      \node at (4,-2.5) (Ep) {$\East_+$} ;
      \node at (6,-2.5) (Sp) {$\South_+$} ;
      \node at (0,0) (Wm) {$=\West_-$} ;
      \node at (8,0) (WmR) {$\West_-=$} ;
      \node at (2,0) (Nm) {$\North_-$} ;
      \node at (4,0) (Em) {$\East_-$} ;
      \node at (6,0) (Sm) {$\South_-$} ;

      \draw[->] (Sp) [bend right=7] to node[below,sloped] {\tiny{-$R_{1}\!\otimes\! U_{2}-L_{2}\!\otimes\! R'_{2}R'_{1}$}}  (WpR)  ;
      \draw[->] (WpR) [bend right=7] to node[above,sloped] {\tiny{$-L_{1}\otimes 1$}}  (Sp)  ;
      \draw[->] (Ep)[bend left=7] to node[above,sloped] {\tiny{$R_{2}\otimes 1$}}  (Sp)  ;
      \draw[->] (Sp)[bend left=7] to node[below,sloped] {\tiny{$L_{2}\!\otimes\! U_{1} + R_{1} \!\otimes\! L'_{1} L'_{2}$}} (Ep) ;
      \draw[->] (Wp)[bend left=7] to node[above,sloped] {\tiny{$1\otimes L'_{1}$}} (Np) ;
      \draw[->] (Np)[bend left=7] to node[below,sloped] {\tiny{$U_{2}\!\otimes\! R'_{1} + R_{2} R_{1} \!\otimes\! L'_{2}$}} (Wp) ;
      \draw[->] (Ep)[bend right=7] to node[above,sloped]{\tiny{$1\otimes R'_{2}$}} (Np) ;
      \draw[->] (Np)[bend right=7] to node[below,sloped]{\tiny{$U_{1}\!\otimes\! L'_{2} + L_{1} L_{2}\!\otimes\! R'_{1}$}} (Ep) ;

    \draw[->] (Wm) [bend left=7] to node[above,sloped] {\tiny{${U_{2}}\!\otimes\!{L'_{1}}+{L_{1} L_{2}}\!\otimes\!{R'_{2}}$}}  (Nm)  ;
    \draw[->] (Nm) [bend left=7] to node[below,sloped] {\tiny{${1}\otimes{R'_{1}}$}}  (Wm)  ;
    \draw[->] (Nm)[bend right=7] to node[below,sloped] {\tiny{${1}\otimes{L'_{2}}$}}  (Em)  ;
    \draw[->] (Em)[bend right=7] to node[above,sloped] {\tiny{${U_{1}}\!\otimes\!{R'_{2}} + {R_{2} R_{1}}\!\otimes\!{L'_{1}}$}} (Nm) ;
    \draw[->] (Sm)[bend right=7] to node[below,sloped] {\tiny{$-{R_{1}}\otimes{1}$}} (WmR) ;
    \draw[->] (WmR)[bend right=7] to node[above,sloped] {\tiny{$-{L_{1}}\!\otimes\!{U_{2}} - {R_{2}}\!\otimes\!{L'_{1} L_{2}}$}} (Sm) ;
    \draw[->] (Sm)[bend left=7] to node[below,sloped]{\tiny{${L_{2}}\otimes{1}$}} (Em) ;
    \draw[->] (Em)[bend left=7] to node[above,sloped]{\tiny{${R_{2}}\!\otimes\!{U_{1}} + {L_{1}}\!\otimes\!{R'_{2} R'_{1} }$}} (Sm) ;
      \draw[->] (Nm)[bend right=7] to node[above,sloped,pos=.7]{\tiny{$1$}} (Np);
      \draw[->] (Wm) to node[above,sloped]{\tiny{$U_{2}\otimes 1$}} (Wp);
      \draw[->] (Em) to node[above,sloped]{\tiny{$U_{1}\otimes 1$}} (Ep);
      \draw[->] (Wm) to node[above,sloped,pos=.7]{\tiny{$L_{1} L_{2}\otimes 1$}} (Ep);
      \draw[->] (Em) to node[above,sloped,pos=.7]{\tiny{$R_{2} R_{1}\otimes 1$}} (Wp);
    \end{tikzpicture}
    \]
It is straightforward to check that $\phi_+$ and $\phi_-$ are $DD$ bimodule homomorphisms.

Letting
\begin{align*}
  h_+(\North_+)&=h_+(\West_+)=h_+(\East_+)=0 \\
  h_+ (\South_+)&=(L_{2}\otimes 1) \otimes \East_+
\end{align*}
and
\begin{align*}
  h_-(\North_-)&=h_-(\West_-)=h_-(\South_-)=0 \\
  h_-(\East_-)&=(R_{2}\otimes 1)\otimes \South_-,
\end{align*}
    it is straightforward to verify that
    \begin{align*}
      d h_+ &=  (U_2\otimes 1)\Id_{\Pos_1}  - \phi_+\circ \phi_-  \\
      d h_- &= (U_2\otimes 1)\Id_{\Neg_1} - \phi_-\circ \phi_+.,
    \end{align*}
\end{proof}

For any prime $p$, we can define 
$\Hwz(\orK,s;\Zmod{p})$ to be the portion of $H(\CwzZ(\orK)\otimes \Z/p\Z)$ in Alexander grading $s$.
We can define invariants ${\underline\nu}_{p}(\orK)$ for any prime $p$ by
\[ {\underline\nu}_p(\orK)=-\max\{s\big| U^d\cdot H(\orK,s;\Zmod{p})\neq 0~~ \forall d\geq 0\}.\]
It would be interesting to find examples of knots $\orK$ for which ${\underline\nu}_p(\orK)\neq {\underline\nu}_q(\orK)$
for $p\neq q$.

\newcommand\AverageD{\gamma}
\section{Fast computations}
\label{sec:Fast}

\subsection{Working with standard $D$ modules}

We introduce a shorthand.  Given a sequence of algebra elements
$(\beta_1,\dots,\beta_k)$ in $\Blg$, we say that a standard sequence
$(a_1,\dots,a_\ell)$ augments the sequence, if it is obtained from
$(\beta_1,\dots,\beta_k)$ by inserting copies of $-C$. 

Let $X$ be a standard type $DA$ module, and let
$\AverageD_{k}(\x,\beta_1,\dots,\beta_{k})$ be a suitable signed count of the sum
of $\delta^1_{k+m}$ over all the ways of extending the sequence
$(\beta_1,\dots,\beta_{k})$ by sequences of $-C$.  Specifically,
\begin{equation}
  \AverageD_k(\x,\beta_1,\dots,\beta_k)=\sum_{\ell=k}^{\infty} (-1)^{\epsilon(|\x|,|a_1|,\dots,|a_\ell|)} \delta^1_{\ell+1}(\x,a_1,\dots,a_{\ell}),
\end{equation}
where the sum is taken over all standard sequences $(a_1,\dots,a_\ell)$ augmenting $(\beta_1,\dots,\beta_k)$.

To justify our use of $\AverageD$, 
note that if $X$ is also adapted to a one-manifold with $W$ with
$\partial W = Y_1 \cup Y_2$ with non-empty $Y_2$, then the sum
defining $\AverageD_k(\x,\beta_1,\dots,\beta_k)$ is finite. If the sum were
not finite, then there would be output algebra elements with
arbitrarily large weight, since $X$ respects the Alexander
multigrading, and $\sum_{p} C_p$ has positive weight everywhere.  But
this contradicts the universal weight bound supplied by the
$\Delta$-grading, which is guaranteed since each term in the sum
contributes the same $\Delta$ grading.

In practice, if $X$ and $Y$ are standard type $DA$ bimodules, then
their $\AverageD$-operations of $X\DT Y$ are determined by the
$\AverageD$-operations of $X$ and $Y$. Moreover, to determine the
chain complex $\Cwz$, we need only understand these
$\AverageD$-operations.  (Retaining only the $\gamma$-operations, we
can think of $X$ as a curved module over $\Blg$, and the tensor product
then corresponds to a suitable curved analogue of the tensor product.
This is the point of
view taken in~\cite{HolKnot}; compare also~\cite{LOTtorus}.)

We describe these data explicitly in the case of $\GenMin_1$. Since the
idempotent of $\XX L_1$ does not appear in the output algebras, we
will describe the part of the module with input $\YY R_2$. The
function $\AverageD_{2k+1}(\YY R_2,\beta_1,\dots,\beta_{2k+1})$ is
determined by the graph:
\begin{equation}
  \label{eq:MinGraph}
  \begin{tikzpicture}
    \node at (3,0) (YR2) {$\YY R_2$} ;
    \node at (0,2) (Y) {$\YY$} ;
    \node at (0,-2) (X) {$\XX$} ;
    \draw[->] (Y) [bend left=30] to node[above,sloped] {\tiny{$U_2^{m+1}$}} (X) ;
    \draw[->] (YR2) [bend left=15] to node[below,sloped] {\tiny{$U_2^m L_2$}} (X) ;
    \draw[->] (X) [bend left=30] to node[above,sloped] {\tiny{$U_1^{m+1}$}} (Y) ;
    \draw[->] (Y) [bend left=15] to node[above,sloped] {\tiny{$U_2^{m} R_2$}} (YR2) ;
    \draw[->] (YR2) [loop right] to node[above,sloped] {\tiny{$U_2^{m}$}} (YR2);
    \end{tikzpicture}
\end{equation}
as given in the following lemma
(compare Equation~\eqref{eq:MinActions}):

The $\gamma$-actions are computed by the following proposition (extended as usual to the full 
algebra as in Definition~\ref{def:AssociatedElement}):
\begin{prop}
  Suppose that $\beta_1,\dots,\beta_{2k+1}$ is a sequence of algebra
  elements labeling the edges in the graph of
  Equation~\eqref{eq:MinGraph}, to give a path from $\YY R_2$ to
  itself, so that the intermediate vertices alternate between $\XX$
  and $\YY$, then letting
  \[ v_1= \left(\sum_{i=1}^{2k+1} w_1(\beta_i)\right)-k; \qquad v_2=\left(\sum_{i=1}^{2k+1} w_2(\beta_i)\right)-k.\]
  \[ \gamma_{2k+1}(\x,\beta_1,\dots,\beta_{2k+1})=
  (-1)^{k} U_{\alpha}^{v_2} U_{\beta}^{v_1}\otimes \x.\] 
\end{prop}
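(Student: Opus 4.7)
The plan is to prove the formula in three conceptual steps: identify the unique non-vanishing preferred augmentation of $(\beta_1,\dots,\beta_{2k+1})$, compute the resulting output algebra element using the associated-element recipe, and finally reconcile the signs.

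For the first step, the alternating vertex condition pins down the edge types: the initial edge $\beta_1\colon \YY R_2\to\XX$ has $\Blg$-label $U_2^{m_1} L_2$ with $m_1\geq 0$; the even-indexed edges $\beta_{2j}\colon \XX\to \YY$ are labelled $U_1^{m_{2j}+1}$; the interior odd-indexed edges $\beta_{2j+1}\colon \YY\to \XX$ for $j=1,\dots,k-1$ are labelled $U_2^{m_{2j+1}+1}$; and the final edge $\beta_{2k+1}\colon \YY\to \YY R_2$ is labelled $U_2^{m_{2k+1}} R_2$. Setting $M_1=\sum_{j=0}^{k}m_{2j+1}$ and $M_2=\sum_{j=1}^{k}m_{2j}$, direct weight arithmetic gives $\sum_i w_1(\beta_i)= k+M_2$ and $\sum_i w_2(\beta_i)=k+M_1$, whence $v_1=M_2$ and $v_2=M_1$. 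Among augmentations by $-C$ insertions, only those corresponding to a genuine preferred sequence in the graph $\Gamma_0$ of Lemma~\ref{lem:MinWithSigns} can contribute; these are uniquely pinned down by inserting exactly $m_{2j+1}$ copies of $-C_1$ after each odd-indexed $\beta$ and $m_{2j}$ copies of $-C_2$ after each even-indexed $\beta$, since the $m_i$ are determined by the $\Blg$-labels. Insertions of $C_p$ with $p \notin \{\{\alpha+2,1\},\{2,\beta+2\}\}$ factor through the $\delta^1_2$ action of Theorem~\ref{thm:MinDA} and produce outputs with surviving $C$-factors rather than pure $U$-monomials, so they do not affect the claimed formula.

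For the second step, applying Definition~\ref{def:AssociatedElementSign} to the unique preferred sequence computes the output $b\in\Blg_2$: since $C_{\{\alpha+2,1\}}$ carries weight $1$ at position $\alpha+2$ and $C_{\{2,\beta+2\}}$ carries weight $1$ at position $\beta+2$, while the $\beta_i$'s only carry weight at positions $1$ and $2$, the shift relation $w_i(b)=\sum_\ell w_{i+2}(a_\ell)$ yields $b = U_\alpha^{M_1} U_\beta^{M_2}= U_\alpha^{v_2} U_\beta^{v_1}$ as required.

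For the third step, which I expect to be the main obstacle, three sign contributions must be combined. Lemma~\ref{lem:MinWithSigns} applied to the augmentation with positive $C$'s produces $(-1)^{M_1+\eta(L)+L-1}$, where $L=(2k+1)+M_1+M_2$ is the length of the preferred sequence and $\#C_1=M_1$. Converting each inserted $-C_i$ back to its positive counterpart contributes an additional $(-1)^{M_1+M_2}$. Finally, the ``suitable signed count'' $(-1)^{\epsilon}$ in the definition of $\gamma$ must absorb the remaining discrepancy. I plan to pin down $\epsilon$ by tracing the sign rule of Equation~\eqref{eq:mPrime} through the iterated application of $-C$-insertions into the input sequence, and to verify that the three contributions telescope to $(-1)^k$ by a case analysis on $L \bmod 4$ (controlled by the values of $\eta$). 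I will benchmark the sign convention against the base cases $(M_1,M_2)=(0,0)$, $(M_1,M_2)=(M_1,0)$ (where the formula reduces to Lemma~\ref{lem:MinWithSigns}'s sign rule for $\delta^1_{M_1+2}(\YY R_2, U_2^{m_1} L_2,-C_1,\dots,-C_1,U_1,\dots)$ on a purely $C_1$-augmented path), and $(M_1,M_2)=(0,M_2)$ (symmetrically with only $C_2$'s), before extending to the general case by additivity of the sign under concatenation of subpaths.
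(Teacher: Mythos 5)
Your first two steps are correct and match the paper's (brief) treatment of the non-sign content: the edge labels force $v_1=\#C_2$ and $v_2=\#C_1$ for the unique relevant augmentation, and Definition~\ref{def:AssociatedElementSign} then yields the output $U_\alpha^{v_2}U_\beta^{v_1}$. Your decomposition of the sign into three factors — the sign from Lemma~\ref{lem:MinWithSigns} on the positively-augmented sequence, the factor $(-1)^{\#C_1+\#C_2}$ from replacing each $C_i$ by $-C_i$, and the factor $(-1)^{\epsilon}$ built into the definition of $\gamma$ — is also exactly the right bookkeeping, and it is what the paper does.

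The genuine gap is that the third factor, which you yourself flag as ``the main obstacle,'' is never computed: you announce a plan to ``pin down $\epsilon$'' and ``verify that the three contributions telescope,'' but this computation is the entire substance of the proof. Concretely, one must evaluate $\epsilon$ on the sequence $(\beta_1,(-C_1)^{j_{2k+1}},\beta_2,(-C_2)^{j_{2k}},\dots,\beta_{2k+1},(-C_1)^{j_1})$ using $|\YY R_2|=|\beta_i|=0$ and $|C_i|=1$; the answer is $\epsilon\equiv\eta(j)+\eta(2k+1)+\#C_1\pmod 2$, where $j$ is the total number of algebra inputs. Combined with your factors (a) and (b), which multiply to $(-1)^{\eta(j)+\#C_1}$ after using $j-1\equiv\#C_1+\#C_2$, the product is $(-1)^{\eta(2k+1)}=(-1)^k$. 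Two further remarks: the proposed ``case analysis on $L\bmod 4$'' is unnecessary — the congruences $\eta(m)\equiv\sum_{i=1}^{m-1}i\pmod 2$ and $\eta(2k+1)\equiv k\pmod 2$ make the $\eta(j)$ terms cancel in pairs with no case split — and Equation~\eqref{eq:mPrime} is not where $\epsilon$ lives; the relevant sign is the function $e(\x,a_1,\dots,a_\ell)=\ell|\x|+\sum_i(\ell-i)|a_i|$ governing the box tensor product in Section~\ref{sec:TensorProdSigns}, so ``tracing the sign rule of Equation~\eqref{eq:mPrime}'' would start you in the wrong place. Until the $\epsilon$ computation is actually carried out, the claimed sign $(-1)^k$ is unverified.
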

\begin{proof}
  Clearly, $v_1$ and $v_2$ are the number of algebra elements
  $C_2$ and $C_1$ to be added to the sequence to make
  it a sequence as in Equation~\eqref{eq:MinActions}.
  Thus, we see that the output algebra element contains $\pm U_{\alpha}^{v_2} U_{\beta}^{v_1}$,
  as in Definition~\ref{def:AssociatedElement}.

  We wish to compute the sign in
  $\gamma_{2k+1}(\x,\beta_1,\dots,\beta_{2k+1})$.
  By Lemma~\eqref{lem:MinWithSigns}, (suppressing the module generator $\x=\YY R_2$ from the notation),
  \begin{align}
    \delta^1_{j+1}&(\beta_1,(-C_1)^{j_{2k+1}},\beta_2,(-C_2)^{j_{2k}},\dots,\beta_{2k+1},(-C_1)^{j_{1}}) \nonumber \\
    &=(-1)^{\# C_1 + \#C_2} \delta^1_{j+1}(\beta_1,C_1^{j_{2k+1}},\beta_2,C_2^{j_{2k}},\dots,\beta_{2k+1},C_1^{j_{1}}) \nonumber \\
    &= (-1)^{\#C_2 + \eta(j)+ j-1} U_\alpha^{v_2} U_\beta^{v_1}
    = (-1)^{\eta(j)+\# C_1} U_\alpha^{v_2} U_\beta^{v_1}, \label{eq:D1}
  \end{align}
  since $j-1\equiv \# C_1 + \#C_2\pmod{2}$.
  Moreover, since $|\YY R_2|=0=|\beta_i|$, and $|C_i|=1$,
  \begin{align*} \epsilon(\YY &R_2,\beta_1,(-C_1)^{j_{2k+1}},\beta_2,(-C_2)^{j_{2k}},\dots,\beta_{2k+1},(-C_1)^{j_{1}}) \\
  =& 
  \left(\sum_{i=1}^{j-1} i\right) - j_1- (j_1+j_2+1)-(j_1+j_2+j_3+2)+\dots -(j_1+\dots+j_{2k+1}+2k). \\
  \end{align*}
  Note that $\eta(m)\equiv \sum_{i=1}^{m-1}i \pmod{2}$.
  It follows that
  \begin{align}
    \epsilon(\YY R_2,\beta_1,&(-C_1)^{j_{2k+1}},\beta_2,(-C_2)^{j_{2k}},\dots,\beta_{2k+1},(-C_1)^{j_{1}}) \nonumber \\
    &\equiv \eta(j) + \eta(2k+1)+\sum_{i=0}^{k} j_{2i+1}=\eta(j)+\eta(2k+1)+\# C_1. \label{eq:D2}
    \end{align}
    Combining Equations~\eqref{eq:D1}, ~\eqref{eq:D2}, and the observation that $\eta(2k+1)\equiv k\pmod{2}$,
    the result follows.
\end{proof}

\subsection{Contracting arrows}

In practice, we inductively construct the complexes, say, over a field
as follows. The top gives a standard type $D$ module,
tensor that with the next type $DA$ bimodule, to obtain a new standard
type $D$ module. Next, cancel differentials until algebra elements
appearing as the coefficients of $\delta^1(\x)$ all have non-zero
weight, and then proceed. 
More formally:

\begin{defn}
  A $\Delta$-graded standard type $D$ structure $X$ over $\Alg$ is said 
  to be {\em small}, if for any two generators $\x,\y\in X$ with 
  $\Delta(\y)=\Delta(\x)-1$, the $\Alg\otimes \y$ coefficient of
  $\delta^1(\x)=0$.
\end{defn}

Since all algebra elements $a\in\Alg$ with $\Delta(a)=0$ are in the idempotent ring,
the above condition is equivalent to the condition that 
for all $\x\in X$ with $\Idemp{\x}\cdot \x=\x$, 
$\Idemp{\x}\y$ does not appear with non-zero coefficient in $\delta^1(\x)$.

The cancellation is done via the following:

\begin{lemma}
  \label{lem:Small}
  Any finitely generated, $\Delta$-graded standard type $D$ module
  over $\Alg\otimes\Field$ (where $\Field$ is a field; consider
  for example $\Field=\Zmod{p}$ or $\Q$), 
  is homotopy equivalent to a small, finitely generated,
  $\Delta$-graded standard type $D$ module over $\Alg\otimes\Field$.
\end{lemma}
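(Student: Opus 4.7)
The plan is to reduce to a small model by iterated Gaussian elimination (edge cancellation) applied to ``identity edges'' in $\delta^1$.

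\emph{Identifying identity edges.} The first step is to observe that every generator $L_i,R_i,U_i,C_p$ of $\Alg$ has strictly negative $\Delta$, so the only $\Delta=0$ elements are $\Field$-linear combinations of idempotents. Since the $\y$-coefficient $a_{\x,\y}\in\Idemp{\x_0(\x)}\cdot\Alg\cdot\Idemp{\x_0(\y)}$ of $\delta^1(\x)$ satisfies $\Delta(a_{\x,\y})=\Delta(\x)-\Delta(\y)-1$, an $a_{\x,\y}$ of $\Delta$-grading $0$ forces $\Delta(\y)=\Delta(\x)-1$ and $\x_0(\x)=\x_0(\y)$; in that case $a_{\x,\y}=\lambda\,\Idemp{\x_0(\x)}$ for some $\lambda\in\Field$. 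Hence $X$ fails to be small precisely when some such $\lambda$ is nonzero, which is what I shall call an \emph{identity edge} from $\x$ to $\y$.

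\emph{Cancellation step.} Given an identity edge with $\lambda\neq 0$, I would form $X'=X/\langle\x,\y\rangle$ as an $\Idemp{}$-bimodule and set
\[
\delta'^1(\w)=\pi_{X'}\bigl(\delta^1(\w)\bigr)-\lambda^{-1}\,a_{\w,\y}\cdot\pi_{X'}\bigl(\delta^1(\x)\bigr),
\]
where $\pi_{X'}$ projects onto the $X'$-summand. A direct computation from the type $D$ structure relation $(\mu_1\otimes\Id)\circ\delta^1+(\mu_2\otimes\Id)\circ(\Id\otimes\delta^1)\circ\delta^1=0$ verifies that $\delta'^1$ satisfies the structure relation, and the obvious inclusion/projection intertwined with the homotopy $h(\y)=\lambda^{-1}\Idemp{\x_0(\x)}\otimes\x$ (and zero elsewhere) exhibits $X'\simeq X$ as type $D$ structures. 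The $\Delta$-grading on $X'$ is inherited directly; this is all standard.

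\emph{Preserving standardness.} This is the only content specific to the present setting, and the main obstacle I anticipate. By the standard property, the only appearance of the elements $C_p$ in $\delta^1$ is in the self-endomorphism $(\sum_p C_p)\otimes\z$; every off-diagonal coefficient $a_{\z,\z'}$ (with $\z\neq\z'$) lies in $\Blg(2n,k)\subset\Alg$. In particular $a_{\w,\y}\in\Blg$ (since $\w\neq\y$ in $X'$) and $\pi_{X'}(\delta^1(\x))\in\Blg\otimes X'$ (since every target $\w'\in X'$ satisfies $\w'\neq\x$). Hence the correction $\lambda^{-1}a_{\w,\y}\cdot\pi_{X'}(\delta^1(\x))$ lies in $\Blg\cdot\Blg\otimes X'\subseteq\Blg\otimes X'$. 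Combined with $\pi_{X'}(\delta^1(\w))=(\sum_p C_p)\otimes\w+\pi_{X'}(\epsilon^1(\w))$, this shows
\[
\delta'^1(\w)=\Bigl(\sum_p C_p\Bigr)\otimes\w+\epsilon'^1(\w),\qquad \epsilon'^1(\w)\in\Blg\otimes X',
\]
so $X'$ is standard.

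\emph{Termination.} Since $X$ is a finitely generated type $D$ structure (so finite-dimensional over $\Field$), and each cancellation reduces $\dim_\Field X$ by $2$, iterating the procedure terminates after finitely many steps at a small, finitely generated, $\Delta$-graded standard type $D$ structure homotopy equivalent to the original $X$.
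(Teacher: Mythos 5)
Your proof is correct and follows essentially the same route as the paper's: Gaussian elimination of arrows whose coefficient is a (nonzero multiple of an) idempotent, with the usual zig-zag correction term, observing that standardness survives because the correction is a product of two $\Blg$-coefficients, and terminating by finite-dimensionality. The paper simply asserts the standardness preservation as clear, whereas you spell it out; the arguments are otherwise identical.
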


\begin{proof}
  Given a type $D$ module $Y$ with a generating set $\y_1,\dots,\y_m$,
  with the property that 
  $\y_i=\Idemp{\x_i}\cdot \y_i$ for some idempotent state $\x_i$.
  We can write
  \[ \delta^1(\y_i) = \sum_{i,j} a_{i,j}\otimes \y_j,\] where
  $\Idemp{\x_i}\cdot a_{i,j}\cdot \Idemp{\x_j}=a_{i,j}$. 
  If $Y$ is not small, we can assume without loss of generality (after
  renumbering generators $\y_i$ and possibly rescaling some by an 
  element of $\Field$, if needed) that
  $\Idemp{\x_1}=\Idemp{\x_2}=a_{1,2}$.  Then, we can find a new type $D$
  structure $Y'$ with two fewer generators, obtained by canceling
  $\y_1$ and $\y_2$. Explicitly, $Y'$ has generators $\y_3',\dots,\y_m'$
  with $a'_{i,j}= a_{i,j}-a_{i,2} \cdot a_{1,j}$. Here, $Y'$ is the
  submodule of $Y$ with $\y_j'=\y_j - a_{j,2}\cdot \y_1$.  Clearly, if
  $Y$ is standard, then so is $Y'$.
\end{proof}

By applying this cancellation after each step, we gain some control
over the complexity of the calculations.  Software for implementing
this algorithm is included with this preprint submission.

\bibliographystyle{plain}
\bibliography{biblio}

\end{document}